\newcommand{\noun}[1]{\textsc{#1}}
\numberwithin{equation}{section}
\numberwithin{figure}{section}
\theoremstyle{plain}
\newtheorem{thm}{\protect\theoremname}
  \theoremstyle{definition}
  \newtheorem{defn}[thm]{\protect\definitionname}
  \theoremstyle{plain}
  \newtheorem{cor}[thm]{\protect\corollaryname}
  \theoremstyle{plain}
  \newtheorem{lem}[thm]{\protect\lemmaname}
  \theoremstyle{plain}
  \newtheorem{prop}[thm]{\protect\propositionname}
\patchcmd{\subsubsection}{\normalfont}{\bfseries}{}{}
\patchcmd{\subsubsection}{\itshape}{}{}{}
  \providecommand{\corollaryname}{Corollary}
  \providecommand{\definitionname}{Definition}
  \providecommand{\lemmaname}{Lemma}
  \providecommand{\propositionname}{Proposition}
\providecommand{\theoremname}{Theorem}
\begin{document}

\title{Magnetic Schr\"odinger operators and Ma\~n\'e's critical value}

\author{Peter Herbrich}
\begin{abstract}
We study periodic magnetic Schr\"odinger operators on covers of closed
manifolds in relation to Ma\~n\'e's critical energy values of the
corresponding classical Hamiltonian systems. In particular, we show
that if the covering transformation group is amenable, then the bottom
of the spectrum is bounded from above by Ma\~n\'e's critical energy
value. We also determine the spectra for various homogeneous spaces
with left-invariant magnetic fields.
\end{abstract}
\maketitle
\global\long\def\multipliedBy{\,}

\global\long\def\baseManifold{M}

\global\long\def\metric{g}

\global\long\def\exteriorDifferential{d}

\global\long\def\volumeForm{\exteriorDifferential vol}

\global\long\def\volume#1{\mathrm{vol}(#1)}

\global\long\def\smoothFunctions#1{C^{\infty}(#1)}
 \global\long\def\smoothCompactlySupportedFunctions#1{C_{0}^{\infty}(#1)}

\global\long\def\smoothOneForms#1{\Omega^{1}(#1)}
 \global\long\def\smoothCompactlySupportedOneForms#1{\Omega_{0}^{1}(#1)}

\global\long\def\smoothTwoForms#1{\Omega^{2}(#1)}

\global\long\def\magneticField{\boldsymbol{\beta}}

\global\long\def\magneticFieldStrength{\boldsymbol{B}}

\global\long\def\magneticPotential{\boldsymbol{\alpha}}

\global\long\def\electricPotential{\boldsymbol{V}}

\global\long\def\magneticDifferential#1{\exteriorDifferential_{#1}}

\global\long\def\cover#1{\widehat{#1}}

\global\long\def\coverTwo#1{\widetilde{#1}}

\global\long\def\coveringTransformationGroup{G}

\global\long\def\groupElement{\gamma}

\global\long\def\abelianCover#1#2{\widehat{#1}_{#2}^{\mathrm{abel}}}

\global\long\def\universalCover#1#2{\widehat{#1}_{#2}^{\mathrm{univ}}}

\global\long\def\abelianCoverWithoutHat#1#2{#1_{#2}^{\mathrm{abel}}}

\global\long\def\universalCoverWithoutHat#1#2{#1_{#2}^{\mathrm{univ}}}

\global\long\def\manesCriticalValue{\boldsymbol{c}}

\global\long\def\eigenvalue{\lambda}

\global\long\def\groundStateEnergy{\lambda_{0}}

\global\long\def\groundStateEnergyOfContinuousSpectrum{\lambda_{0,\mathrm{cont}}}

\global\long\def\homomorphismGroup{\mathrm{Hom}}

\global\long\def\homologyGroup{\mathrm{H}}

\global\long\def\norm#1{|#1|}

\global\long\def\normThatAdapts#1{\left|#1\right|}

\global\long\def\innerProduct#1#2{\langle#1,#2\rangle}

\global\long\def\innerProductThatAdapts#1#2{\left\langle #1,#2\right\rangle }

\global\long\def\normComingFromInnerProduct#1{\Vert#1\Vert}

\global\long\def\normComingFromInnerProductThatAdapts#1{\left\Vert #1\right\Vert }

\global\long\def\adjoint#1{#1^{*}}

\global\long\def\closure#1{\overline{#1}}

\global\long\def\domain#1{\mathrm{Dom}(#1)}

\global\long\def\targetC{}

\global\long\def\conjugate#1{\overline{#1}}

\global\long\def\derivative#1{\mathcal{D}_{#1}}

\global\long\def\schroedingerOperator{\mathcal{H}}

\global\long\def\spectrum#1{\mathrm{spec}(#1)}

\global\long\def\essentialSpectrum#1{\mathrm{spec}_{\mathrm{ess}}(#1)}

\global\long\def\purePointSpectrum#1{\mathrm{spec}_{\mathrm{pp}}(#1)}

\global\long\def\continuousSpectrum#1{\mathrm{spec}_{\mathrm{cont}}(#1)}

\global\long\def\solvableGeometry{\mathrm{Sol}}

\global\long\def\momentumVariable#1#2{\mathrm{\xi}_{#1}^{#2}}

\global\long\def\re{\mathrm{Re}}

\global\long\def\im{\mathrm{Im}}

\global\long\def\sectionOfBundle#1{\Gamma(#1)}

\global\long\def\heisenbergGroup{\mathrm{Nil}}

\section{Introduction \label{First_Page_of_MSOaMCV}}

Ever since quantum mechanics was formulated by Heisenberg and Schr\"odinger
in the 1920s, it has been one of the main objectives of mathematical
physics to understand its relations with classical mechanics. We exploit
one instance of this interplay and relate Ma\~n\'e's critical energy
value of classical electromagnetic Hamiltonians with the ground state
energy of the associated magnetic Schr\"odinger operators.

More precisely, let $\baseManifold$ be a connected closed Riemannian
manifold that is equipped with an electric potential and a magnetic
field given by a smooth function $\electricPotential\in\smoothFunctions{\baseManifold,\mathbb{R}}$
and a closed $2$-form $\magneticField\in\smoothTwoForms{\baseManifold,\mathbb{R}}$,
respectively. Any regular cover $\cover{\pi}\colon\cover{\baseManifold}\to\baseManifold$,
for which $\cover{\magneticField}=\cover{\pi}^{*}\magneticField$
has a magnetic potential $\cover{\magneticPotential}\in\smoothOneForms{\cover{\baseManifold},\mathbb{R}}$
satisfying $\cover{\magneticField}=d\cover{\magneticPotential}$,
gives rise to a Hamiltonian $H_{\cover{\magneticPotential},\cover{\electricPotential}}\colon T^{*}\cover{\baseManifold}\to\mathbb{R}$
defined as $H_{\cover{\magneticPotential},\cover{\electricPotential}}(x,p)=\frac{1}{2}\left|p+\cover{\magneticPotential}\right|_{x}^{2}+\cover{\electricPotential}(x)$,
where $\cover{\electricPotential}=\cover{\pi}^{*}\electricPotential=\electricPotential\circ\cover{\pi}$.
Ma\~n\'e's critical value of the corresponding Lagrangian $L_{\cover{\magneticPotential},\cover{\electricPotential}}\colon T\cover{\baseManifold}\to\mathbb{R}$
is given by~\cite{Mane1997,ContrerasIturriagaPaternainPaternain1998,BurnsPaternain2002}
\[
\manesCriticalValue(L_{\cover{\magneticPotential},\cover{\electricPotential}})=\inf_{f\in\smoothFunctions{\cover{\baseManifold},\mathbb{R}}}\,\sup_{x\in\cover{\baseManifold}}\left(\frac{1}{2}\left|\cover{\magneticPotential}+df\right|_{x}^{2}+\cover{\electricPotential}(x)\right).
\]
For the sake of convenience, all covers are implicitly assumed to
be connected. In Section~\ref{sec:MCV_of_magnetic_Hamiltonians},
we generalize results from~\cite{PaternainPaternain1997,FathiMaderna2007}
concerning the exact case in which $\cover{\magneticPotential}=\cover{\pi}^{*}\magneticPotential$
for some $\magneticPotential\in\smoothOneForms{\baseManifold,\mathbb{R}}$.
We prove that any regular cover $\coverTwo{\baseManifold}$ of $\baseManifold$
whose covering transformation group $\coveringTransformationGroup$
is amenable satisfies 
\begin{equation}
\manesCriticalValue(L_{\coverTwo{\magneticPotential},\coverTwo{\electricPotential}})=\manesCriticalValue(L_{\cover{\magneticPotential},\cover{\electricPotential}})=\min_{[\omega]\in\homologyGroup^{1}(\baseManifold,\mathbb{R})\colon\cover{\omega}\textrm{ is exact}}\manesCriticalValue(L_{\magneticPotential-\omega,\electricPotential}),\label{eq:MCV_on_abelian_covers}
\end{equation}
where $L_{\coverTwo{\magneticPotential},\coverTwo{\electricPotential}}$,
$L_{\cover{\magneticPotential},\cover{\electricPotential}}$ and $\cover{\omega}$
denote the lifts of $L_{\magneticPotential,\electricPotential}$ and
$\omega$ to $\coverTwo{\baseManifold}$ and to a subcover $\cover{\baseManifold}$
of $\coverTwo{\baseManifold}$ whose covering transformation group
is isomorphic to the abelianization $\coveringTransformationGroup/[\coveringTransformationGroup,\coveringTransformationGroup]$.

In Section~\ref{sec:MSO}, we study the quantum analogue of magnetic
Hamiltonians of the form $H_{\cover{\magneticPotential},\cover{\electricPotential}}$,
that is, magnetic Schrödinger operators $\schroedingerOperator_{\cover{\magneticPotential},\cover{\electricPotential}}$
initially defined on $\smoothCompactlySupportedFunctions{\cover{\baseManifold},\mathbb{C}}$
as 
\[
\schroedingerOperator_{\cover{\magneticPotential},\cover{\electricPotential}}u=\frac{1}{2}\Delta u-i\innerProduct{\exteriorDifferential u}{\cover{\magneticPotential}}+\left(\frac{1}{2}\multipliedBy\exteriorDifferential^{*}\cover{\magneticPotential}+\frac{1}{2}\norm{\cover{\magneticPotential}}^{2}+\cover{\electricPotential}\right)u.
\]
It is known~\cite{Shubin2001,BravermanMilatovichShubin2002}, that
for arbitrary $\cover{\magneticPotential}$ and periodic $\cover{\electricPotential}=\cover{\pi}^{*}\electricPotential$
as above, the closure $\closure{\schroedingerOperator_{\cover{\magneticPotential},\cover{\electricPotential}}}$
is a self-adjoint operator in $L^{2}(\cover{\baseManifold},\mathbb{C})$.
The main object of study is the ground state energy defined as
\[
\groundStateEnergy(\cover{\magneticPotential},\cover{\electricPotential})=\inf\multipliedBy\spectrum{\closure{\schroedingerOperator_{\cover{\magneticPotential},\cover{\electricPotential}}}}.
\]
We generalize various results which are either known for $\cover{\magneticPotential}=0$
or $\cover{\baseManifold}=\mathbb{R}^{n}$. The exact case allows
for a detailed spectral analysis in terms of twisted operators and
representations of the covering transformation group of $\cover{\baseManifold}$~\cite{Sunada1989,KobayashiOnoSunada1989}.
We moreover prove theorems about amenable and abelian covers, which
are motivated by corresponding results for the discrete analogue of
$\schroedingerOperator_{\cover{\magneticPotential},\cover{\electricPotential}}$
on periodic graphs~\cite{HiguchiShirai1999,HiguchiShirai2001}. In
particular, abelian covers $\cover{\pi}\colon\cover{\baseManifold}\to\baseManifold$
satisfy 
\begin{equation}
\groundStateEnergy(\cover{\magneticPotential},\cover{\electricPotential})=\min_{[\omega]\in2\pi\multipliedBy\homologyGroup^{1}(\baseManifold,\cover{\baseManifold},\mathbb{Z})}\groundStateEnergy(\magneticPotential-\omega,\electricPotential),\label{eq:GSE_on_abelian_covers}
\end{equation}
where
\[
\homologyGroup^{1}(\baseManifold,\cover{\baseManifold},\mathbb{Z})=\left\{ [\omega]\in\homologyGroup^{1}(\baseManifold,\mathbb{R})\,\Big|\,\int_{\gamma}\cover{\omega}\in\mathbb{Z}\textrm{ for any closed curve }\gamma\textrm{ in }\cover{\baseManifold}\right\} .
\]
The structural resemblance of (\ref{eq:MCV_on_abelian_covers}) and
(\ref{eq:GSE_on_abelian_covers}) motivated the main result in Section~\ref{sec:MCV_and_the_GSE},
which generalizes~\cite[Theorem B]{Paternain2001} to non-compact
amenable covers as follows.

\theoremstyle{theorem} \newtheorem*{GSE_smaller_MCV_Theorem}{Theorem \ref{thm:GSE_smaller_MCV_for_amenable_CTG}}
\begin{GSE_smaller_MCV_Theorem}

Let $\cover{\baseManifold}$ be a regular cover of $\baseManifold$
with amenable covering transformation group. If $\cover{\magneticPotential}\in\smoothOneForms{\cover{\baseManifold},\mathbb{R}}$
and $\cover{\electricPotential}\in\smoothFunctions{\cover{\baseManifold},\mathbb{R}}$
are potentials with $\inf\,\cover{\electricPotential}>-\infty$, then
the associated Lagrangian $L_{\cover{\magneticPotential},\cover{\electricPotential}}$
and the associated magnetic Schr\"odinger operator $\schroedingerOperator_{\cover{\magneticPotential},\cover{\electricPotential}}$
satisfy
\[
\groundStateEnergy(\cover{\magneticPotential},\cover{\electricPotential})\leq\manesCriticalValue(L_{\cover{\magneticPotential},\cover{\electricPotential}}).
\]

\end{GSE_smaller_MCV_Theorem}

In the last section, we explicitly determine $\spectrum{\closure{\schroedingerOperator_{\cover{\magneticPotential},\cover{\electricPotential}}}}$
on various covers $\cover{\baseManifold}$ of compact homogeneous
spaces $\baseManifold=\Lambda\backslash\Gamma$, which facilitates
comparisons with the corresponding classical data. In each case, $\Gamma$
is a Lie group that is equipped with a left-invariant metric and a
left-invariant magnetic field $\magneticField\in\smoothTwoForms{\Gamma,\mathbb{R}}$,
and $\Lambda\subset\Gamma$ is a cocompact lattice. The following
exact examples have not been studied before and exhibit unexpected
phenomena:
\begin{itemize}
\item $PSL(2,\mathbb{R})$ has a left-invariant magnetic potential $\cover{\magneticPotential}$,
such that $\groundStateEnergy(\magneticFieldStrength\cover{\magneticPotential},0)>\manesCriticalValue(L_{\magneticFieldStrength\cover{\magneticPotential},0})$
near $\magneticFieldStrength=0$, and the mapping $\magneticFieldStrength\mapsto\groundStateEnergy(\magneticFieldStrength\cover{\magneticPotential},0)$
has $2$ non-trivial local minima. The corresponding classical dynamics
are well-understood~\cite{CieliebakFrauenfelderPaternain2010}.
\item The compact quotient $\Lambda\backslash\heisenbergGroup$ of the Heisenberg
group $\heisenbergGroup$ by the lattice $\Lambda$ of integer matrices
has a magnetic potential $\magneticPotential$, such that the mapping
$\magneticFieldStrength\mapsto\groundStateEnergy(\magneticFieldStrength\cover{\magneticPotential}^{[\Lambda,\Lambda]\backslash\heisenbergGroup},0)$
has countably many local minima, where $\cover{\magneticPotential}^{[\Lambda,\Lambda]\backslash\heisenbergGroup}$
denotes the lift of $\magneticPotential$ to the maximal abelian cover
$[\Lambda,\Lambda]\backslash\heisenbergGroup$. This is the first
example in which the ground state energy is an unbounded function
of the strength of the magnetic field with infinitely many local minima.
\end{itemize}
Irrespective of amenability of $\pi_{1}(\baseManifold)$, all exact
examples in Section~\ref{sec:Hom_Ex_3_Body_Problem} satisfy
\[
\groundStateEnergy(\magneticFieldStrength\cover{\magneticPotential},0)=\groundStateEnergy(0,0)+\manesCriticalValue(L_{\magneticFieldStrength\cover{\magneticPotential},0})\qquad\textnormal{near }\magneticFieldStrength=0
\]
on the respective universal covers and various intermediate covers.
The planar restricted $3$-body problem is touched in the final subsection.
It lies beyond the scope of the theory that will be developed in the
next sections, and hints at possible further development.

\newpage{}

\section{Ma\~n\'e's critical value of magnetic Hamiltonians \label{sec:MCV_of_magnetic_Hamiltonians}}

Let $\baseManifold$ be a connected closed manifold with smooth Riemannian
metric $\metric$ and associated norms on $T\baseManifold$ and $T^{*}\baseManifold$
denoted by $\norm v_{x}$ and $\norm p_{x}$ for $v\in T_{x}\baseManifold$
and $p\in T_{x}^{*}\baseManifold$, respectively. Any pair consisting
of a smooth function $\electricPotential\in\smoothFunctions{\baseManifold,\mathbb{R}}$
and a closed $2$-form $\magneticField\in\smoothTwoForms{\baseManifold,\mathbb{R}}$
can be viewed as an electromagnetic field acting on charged particles
whose motion is confined to $\baseManifold$. More precisely, let
$\pi\colon T^{*}\baseManifold\to\baseManifold$ be the canonical projection
and let $\omega_{0}=-\exteriorDifferential\lambda$ be the canonical
symplectic form of $T^{*}M$ with Liouville $1$-form $\lambda$.
The triple $(\metric,\magneticField,\electricPotential)$ gives rise
to the Hamiltonian system $(T^{*}M,\omega_{\magneticField},H_{\electricPotential})$
with Hamiltonian $H_{\electricPotential}(x,p)=\frac{1}{2}\norm p_{x}^{2}+\electricPotential(x)$
and twisted symplectic structure $\omega_{\magneticField}=\omega_{0}+\pi^{*}\magneticField$.
The metric $\metric$ induces the canonical bundle isomorphism $\flat\colon T\baseManifold\to T^{*}\baseManifold$,
which in turn gives rise to the dual system $(TM,\flat^{*}\omega_{\magneticField},\flat^{*}H_{\electricPotential})$.
The corresponding Hamiltonian flow on $T\baseManifold$ is called
the electromagnetic flow\emph{ }of $(\metric,\magneticField,\electricPotential)$
since its orbits coincide with the trajectories of a particle of unit
mass and charge under the influence of the conservative force $\nabla\electricPotential$
and the Lorentz force $F\colon T\baseManifold\to T\baseManifold$
defined by~\cite{BurnsPaternain2002}
\[
\magneticField_{x}(v,w)=\metric_{x}(F_{x}(v),w)\qquad\textnormal{for all }x\in\baseManifold\textnormal{ and }v,w\in T_{x}\baseManifold.
\]
The flow of $(g,\magneticField,0)$ is called magnetic flow or twisted
geodesic flow~\cite{BurnsPaternain2002,Paternain2009} since the
triple $(g,0,0)$ gives rise to the geodesic flow of $\metric$.

In the following, let $\cover{\pi}\colon\cover{\baseManifold}\to\baseManifold$
be a regular cover such that $\cover{\magneticField}=\cover{\pi}^{*}\magneticField$
is exact with magnetic potential $\cover{\magneticPotential}\in\smoothOneForms{\cover{\baseManifold},\mathbb{R}}$,
that is, $d\cover{\magneticPotential}=\cover{\magneticField}$. Whenever
there exists a magnetic potential, one can remove the twist in the
lift $\cover{\omega_{\magneticField}}$ of the symplectic structure
$\omega_{\magneticField}$ as follows. Let $\cover{\electricPotential}$,
$\cover{\metric}$, $\cover{\omega_{0}}$ and $H_{\cover{\electricPotential}}$
denote the lifts of $\electricPotential$, $\metric$, $\omega_{0}$
and $H_{\electricPotential}$, respectively. The flow of the system
$(T^{*}\cover M,\cover{\omega_{\magneticField}},H_{\cover{\electricPotential}})$
is equivalent to the flow of the system $(T^{*}\cover M,\cover{\omega_{0}},H_{\cover{\magneticPotential},\cover{\electricPotential}})$
with Hamiltonian
\[
H_{\cover{\magneticPotential},\cover{\electricPotential}}(x,p)=H_{\cover{\electricPotential}}(x,p+\cover{\magneticPotential})=\frac{1}{2}\left|p+\cover{\magneticPotential}\right|_{x}^{2}+\cover{\electricPotential}(x).
\]
An equivalence is given by the mapping $(x,p)\mapsto(x,p+\cover{\magneticPotential}_{x})$.
Ma\~n\'e's critical value of the Hamiltonian $H_{\cover{\magneticPotential},\cover{\electricPotential}}$
is defined as~\cite{CieliebakFrauenfelderPaternain2010}
\begin{equation}
\manesCriticalValue(H_{\cover{\magneticPotential},\cover{\electricPotential}})=\inf_{\cover{\magneticPotential}'\in\smoothOneForms{\cover{\baseManifold},\mathbb{R}}\colon d\cover{\magneticPotential}'=\cover{\magneticField}}\,\sup_{x\in\cover{\baseManifold}}H_{\cover{\electricPotential}}(x,\cover{\magneticPotential}_{x}')=\inf_{\cover{\omega}\in\smoothOneForms{\cover{\baseManifold},\mathbb{R}}\colon\exteriorDifferential\cover{\omega}=0}\,\sup_{x\in\cover{\baseManifold}}H_{\cover{\magneticPotential},\cover{\electricPotential}}(x,\cover{\omega}_{x}).\label{eq:MCV_of_Cover}
\end{equation}

The existence of magnetic potentials allows for a description of $\manesCriticalValue(H_{\cover{\magneticPotential},\cover{\electricPotential}})$
in terms of the Legendre transform $L_{\cover{\magneticPotential},\cover{\electricPotential}}\colon T\cover{\baseManifold}\to\mathbb{R}$,
that is, in terms of the Lagrangian given by 
\begin{equation}
L_{\cover{\magneticPotential},\cover{\electricPotential}}(x,v)=\frac{1}{2}\norm v_{x}^{2}-\cover{\magneticPotential}_{x}(v)-\cover{\electricPotential}(x).\label{eq:Lagrangian}
\end{equation}
The solutions of the Euler-Lagrange equations
\[
\frac{d}{dt}\frac{\partial L_{\cover{\magneticPotential},\cover{\electricPotential}}}{\partial v}(x,v)=\frac{\partial L_{\cover{\magneticPotential},\cover{\electricPotential}}}{\partial x}(x,v)
\]
give rise to the Euler-Lagrange flow, whose orbits are known to coincide
with the orbits of the electromagnetic flow of $(\cover{\metric},\cover{\magneticField},\cover{\electricPotential})$.
We let $A_{\cover{\magneticPotential},\cover{\electricPotential}}$
denote the action of $L_{\cover{\magneticPotential},\cover{\electricPotential}}$
on the space of absolutely continuous curves $\gamma\colon[a,b]\rightarrow\cover{\baseManifold}$
given as 
\[
A_{\cover{\magneticPotential},\cover{\electricPotential}}(\gamma)=\int_{a}^{b}L_{\cover{\magneticPotential},\cover{\electricPotential}}\left(\gamma(t),\dot{\gamma}(t)\right)dt.
\]
Ma\~n\'e~\cite{Mane1997} defined the critical value of the Lagrangian
$L_{\cover{\magneticPotential},\cover{\electricPotential}}$ as 
\[
\manesCriticalValue(L_{\cover{\magneticPotential},\cover{\electricPotential}})=\inf\multipliedBy\left\{ k\in\mathbb{R}\cup\{\infty\}\,\big|\, A_{\cover{\magneticPotential},\cover{\electricPotential}+k}(\gamma)\geq0\textrm{ for any closed curve }\gamma\right\} .
\]
Burns and Paternain~\cite{BurnsPaternain2002} gave the following
Hamiltonian description of $\manesCriticalValue(L_{\cover{\magneticPotential},\cover{\electricPotential}})$,
which is a generalization of~\cite[Theorem A]{ContrerasIturriagaPaternainPaternain1998}
\begin{eqnarray}
\manesCriticalValue(L_{\cover{\magneticPotential},\cover{\electricPotential}}) & = & \inf_{f\in\smoothFunctions{\cover{\baseManifold},\mathbb{R}}}\,\sup_{x\in\cover{\baseManifold}}H_{\cover{\magneticPotential},\cover{\electricPotential}}(x,d_{x}f)\label{eq:MCV_in_Ham_Terms}\\
 & = & \inf\left\{ k\in\mathbb{R}\cup\{\infty\}\,\big|\,\text{there exists }f\in\smoothFunctions{\cover{\baseManifold},\mathbb{R}}\colon H_{\cover{\magneticPotential},\cover{\electricPotential}}(df)<k\right\} .\nonumber 
\end{eqnarray}
In other words, $\manesCriticalValue(L_{\cover{\magneticPotential},\cover{\electricPotential}})$
is the infimum of values $k\in\mathbb{R}\cup\{\infty\}$ for which
$H_{\cover{\magneticPotential},\cover{\electricPotential}}^{-1}(-\infty,k)$
contains an exact Lagrangian graph. Note that replacing $\cover{\magneticPotential}$
by $\cover{\magneticPotential}+df$ for some $f\in\smoothFunctions{\cover{\baseManifold},\mathbb{R}}$
does not effect $\manesCriticalValue(L_{\cover{\magneticPotential},\cover{\electricPotential}})$.
Hence, another magnetic potential $\cover{\magneticPotential}'$ may
yield a different critical value only if $\cover{\magneticPotential}-\cover{\magneticPotential}'$
corresponds to a non-zero cohomology class in $\homologyGroup^{1}(\cover{\baseManifold},\mathbb{R})$.
A comparison of (\ref{eq:MCV_of_Cover}) and (\ref{eq:MCV_in_Ham_Terms})
leads to 
\begin{equation}
\manesCriticalValue(H_{\cover{\magneticPotential},\cover{\electricPotential}})=\inf_{\left[\cover{\omega}\right]\in\homologyGroup^{1}(\cover M,\mathbb{R})}\manesCriticalValue(L_{\cover{\magneticPotential}-\cover{\omega},\cover{\electricPotential}}).\label{eq:MCV_in_Lag_Terms}
\end{equation}
If $L_{\coverTwo{\magneticPotential},\coverTwo{\electricPotential}}$
is the lift of $L_{\cover{\magneticPotential},\cover{\electricPotential}}$
to a regular cover $\coverTwo{\baseManifold}$ of $\cover{\baseManifold}$,
then 
\begin{equation}
\manesCriticalValue(L_{\coverTwo{\magneticPotential},\coverTwo{\electricPotential}})\leq\manesCriticalValue(L_{\cover{\magneticPotential},\cover{\electricPotential}})\qquad\textnormal{and}\qquad\manesCriticalValue(H_{\coverTwo{\magneticPotential},\coverTwo{\electricPotential}})\leq\manesCriticalValue(H_{\cover{\magneticPotential},\cover{\electricPotential}}),\label{eq:MCV_of_Lift_smaller}
\end{equation}
with equality if $\coverTwo{\baseManifold}$ is a finite cover of
$\cover{\baseManifold}$. In the so-called exact case in which $\magneticField=\exteriorDifferential\magneticPotential$
for some $\magneticPotential\in\smoothOneForms{\baseManifold,\mathbb{R}}$,
we have a Hamiltonian $H_{\magneticPotential,\electricPotential}$
and a Lagrangian $L_{\magneticPotential,\electricPotential}$ on $\baseManifold$.
Ma\~n\'e~\cite{Mane1997} coined the phrase strict critical value
for $\manesCriticalValue(H_{\magneticPotential,\electricPotential})$,
and denoted it by $\manesCriticalValue_{0}(L_{\magneticPotential,\electricPotential})$.
He related $\manesCriticalValue_{0}(L_{\magneticPotential,\electricPotential})$
to Mather's action functional$\alpha\colon\homologyGroup^{1}(\baseManifold,\mathbb{R})\to\mathbb{R}$
given by~\cite{Mather1991} 
\[
\alpha\left([\omega]\right)=-\min\left\{ \int L_{\magneticPotential+\omega,\electricPotential}d\mu\,\Big|\,\mu\in\mathcal{M}(L_{\magneticPotential,\electricPotential})\right\} ,
\]
where $\mathcal{M}(L_{\magneticPotential,\electricPotential})$ denotes
the set of probabilities on the Borel $\sigma$-algebra of $T\baseManifold$
that have compact support and are invariant under the Euler-Lagrange
flow. Ma\~n\'e~\cite{Mane1997} proved that 
\[
\alpha\left([\omega]\right)=\manesCriticalValue(L_{\magneticPotential+\omega,\electricPotential}).
\]
Since $\alpha$ is convex and superlinear~\cite[Theorem 1]{Mather1991},
one obtains
\[
\manesCriticalValue_{0}(L_{\magneticPotential,\electricPotential})=\manesCriticalValue(H_{\magneticPotential,\electricPotential})=\min_{\left[\omega\right]\in\homologyGroup^{1}(M,\mathbb{R})}\alpha\left([\omega]\right).
\]

We let $\universalCover{\pi}{}\colon\universalCover{\baseManifold}{}\to M$
and $\abelianCover{\pi}{}\colon\abelianCover{\baseManifold}{}\to\baseManifold$
denote the universal and the maximal abelian cover of $M$, respectively.
Their covering transformation groups are the fundamental group $\pi_{1}(\baseManifold)$
and the first homology group $\homologyGroup_{1}(M,\mathbb{Z})$.
More precisely, $\abelianCover{\baseManifold}{}$ is defined as the
regular cover of $\baseManifold$ whose fundamental group is the kernel
of the Hurewicz homomorphism $\pi_{1}(\baseManifold)\to\homologyGroup_{1}(\baseManifold,\mathbb{Z})$,
that is, the commutator subgroup $[\pi_{1}(\baseManifold),\pi_{1}(\baseManifold)]$.
Therefore, any regular cover with an abelian covering transformation
group is covered by $\abelianCover{\baseManifold}{}$.

In the monopole case in which $[\magneticField]\neq0\in\homologyGroup^{2}(\baseManifold,\mathbb{R})$,
one can define $\universalCoverWithoutHat L{\cover{\magneticPotential},\cover{\electricPotential}}\colon T\universalCover{\baseManifold}{}\to\mathbb{R}$,
respectively $\abelianCoverWithoutHat L{\cover{\magneticPotential},\cover{\electricPotential}}\colon T\abelianCover{\baseManifold}{}\to\mathbb{R}$,
as in~(\ref{eq:Lagrangian}) only if the lift of $\magneticField$
to $\universalCover{\baseManifold}{}$, respectively to $\abelianCover{\baseManifold}{}$,
is exact. If this is the case, we obtain $\manesCriticalValue(\universalCoverWithoutHat H{\cover{\magneticPotential},\cover{\electricPotential}})=\manesCriticalValue(\universalCoverWithoutHat L{\cover{\magneticPotential},\cover{\electricPotential}})$
directly from~(\ref{eq:MCV_in_Lag_Terms}) since $\homologyGroup^{1}(\universalCover{\baseManifold}{},\mathbb{R})$
is trivial. In the exact case, it is known that $\manesCriticalValue(\abelianCoverWithoutHat L{\cover{\magneticPotential},\cover{\electricPotential}})=\manesCriticalValue_{0}(L_{\magneticPotential,\electricPotential})$~\cite{PaternainPaternain1997}.
Recently, Fathi and Maderna~\cite{FathiMaderna2007} proved that
if, in addition, $\pi_{1}(\baseManifold)$ is amenable, then 
\begin{equation}
\manesCriticalValue(\universalCoverWithoutHat L{\cover{\magneticPotential},\cover{\electricPotential}})=\manesCriticalValue(\abelianCoverWithoutHat L{\cover{\magneticPotential},\cover{\electricPotential}})=\manesCriticalValue_{0}(L_{\magneticPotential,\electricPotential}),\label{eq:MCV_of_Univ_Abel_Cover}
\end{equation}
which also implies $\manesCriticalValue(\abelianCoverWithoutHat H{\cover{\magneticPotential},\cover{\electricPotential}})=\manesCriticalValue(\abelianCoverWithoutHat L{\cover{\magneticPotential},\cover{\electricPotential}})$.
For the reader's convenience, we recall the notion of amenability.
\begin{defn}
A discrete group $\coveringTransformationGroup$ is called amenable,
if there exists a continuous functional $m$ on the space $L^{\infty}(\coveringTransformationGroup,\mathbb{R})$
of bounded real-valued functions on $\coveringTransformationGroup$
such that\end{defn}
\begin{enumerate}
\item $m(1_{\coveringTransformationGroup})=1,$ where $1_{\coveringTransformationGroup}(\groupElement)=1$
for all $\groupElement\in\coveringTransformationGroup$,
\item if $f\geq0,$ then $m(f)\geq0$, and
\item $m(\groupElement\multipliedBy f)=m(f)$ for each $\groupElement\in\coveringTransformationGroup$
and $f\in L^{\infty}(\coveringTransformationGroup,\mathbb{R})$, where
$(\groupElement\multipliedBy f)(\groupElement')=f(\groupElement^{-1}\groupElement')$.
\end{enumerate}
For instance, groups with subexponential growth and finite extensions
of solvable groups are amenable whereas groups containing a free subgroup
on two generators are not amenable. The proof of~(\ref{eq:MCV_of_Univ_Abel_Cover})
given in~\cite{FathiMaderna2007} is based on an equivariant version
of the weak KAM theorem and can be extended as follows. Let $\coverTwo{\baseManifold}$
be a regular cover of $\baseManifold$ with amenable covering transformation
group $\coveringTransformationGroup$. Let $\cover{\baseManifold}$
denote a subcover whose covering transformation group $\cover{\coveringTransformationGroup}$
is isomorphic to the abelianization $\coveringTransformationGroup/[\coveringTransformationGroup,\coveringTransformationGroup]$.
Note that $\cover{\baseManifold}$ is covered by $\abelianCover{\baseManifold}{}$,
which entails a surjective homomorphism $\Phi\colon\homologyGroup_{1}(\baseManifold,\mathbb{Z})\twoheadrightarrow\cover{\coveringTransformationGroup}$,
whose kernel is the group of covering transformations of $\abelianCover{\baseManifold}{}$
with trivial projections to $\cover{\baseManifold}$. The transpose
of $\Phi$ can be extended to an injective linear map from $\homomorphismGroup(\cover{\coveringTransformationGroup},\mathbb{Z})\otimes\mathbb{R}\simeq\homomorphismGroup(\cover{\coveringTransformationGroup},\mathbb{R})\simeq\homomorphismGroup(\coveringTransformationGroup,\mathbb{R})$
into $\homomorphismGroup(\homologyGroup_{1}(\baseManifold,\mathbb{Z}),\mathbb{Z})\otimes\mathbb{R}\simeq\homologyGroup^{1}(\baseManifold,\mathbb{Z})\otimes\mathbb{R}\simeq\homologyGroup^{1}(\baseManifold,\mathbb{R})$,
whereby $\homomorphismGroup(\coveringTransformationGroup,\mathbb{R})$
can be identified with a subspace of $\homologyGroup^{1}(\baseManifold,\mathbb{R})$,
namely,
\begin{equation}
\left\{ [\omega]\in\homologyGroup^{1}(\baseManifold,\mathbb{R})\,\Big|\,\int_{\gamma}\cover{\omega}=0\textrm{ for any closed curve }\gamma\textrm{ in }\cover{\baseManifold}\right\} ,\label{eq:Hom_CTG_to_R_as_One_Forms}
\end{equation}
where $\cover{\omega}$ denotes the lift of a representative of $[\omega]$.
We provide an explicit isomorphism below. A similar argument appears
in~\cite{KotaniSunada2000}, which deals with the long-time asymptotics
of the heat kernel. Note that any $\cover{\omega}\in\smoothOneForms{\cover{\baseManifold},\mathbb{R}}$
that satisfies (\ref{eq:Hom_CTG_to_R_as_One_Forms}) is exact with
primitives of the form $f(x)=f(x_{0})+\int_{x_{0}}^{x}\cover{\omega}$,
where $x_{0}\in\cover{\baseManifold}$ and $f(x_{0})\in\mathbb{R}$
can be chosen arbitrarily. Hence, $[\omega]\in\homomorphismGroup(\coveringTransformationGroup,\mathbb{R})$
if and only if the lift of any representative of $[\omega]$ to $\cover{\baseManifold}$
is exact. The following theorem generalizes the aforementioned results
in~\cite{PaternainPaternain1997,FathiMaderna2007}.
\begin{thm}
\label{thm:MCV_of_Amenable_Covers_and_Abelian_Subcovers_coincide}
Let $\baseManifold$ be a connected closed Riemannian manifold. Let
$\coverTwo{\baseManifold}$ be a regular cover with amenable covering
transformation group $\coveringTransformationGroup$, and let $\cover{\baseManifold}$
be a subcover whose covering transformation group $\cover{\coveringTransformationGroup}$
is isomorphic to $\coveringTransformationGroup/[\coveringTransformationGroup,\coveringTransformationGroup]$.
Then, for any $\magneticPotential\in\smoothOneForms{\baseManifold,\mathbb{R}}$
and $\electricPotential\in\smoothFunctions{\baseManifold,\mathbb{R}}$,
the Lagrangian $L_{\magneticPotential,\electricPotential}\colon T\baseManifold\to\mathbb{R}$
given by
\[
L_{\magneticPotential,\electricPotential}(x,v)=\frac{1}{2}\norm v_{x}^{2}-\magneticPotential_{x}(v)-\electricPotential(x)
\]
and its lifts $L_{\coverTwo{\magneticPotential},\coverTwo{\electricPotential}}$
and $L_{\cover{\magneticPotential},\cover{\electricPotential}}$ to
$\coverTwo{\baseManifold}$ and $\cover{\baseManifold}$ satisfy
\[
\manesCriticalValue(L_{\coverTwo{\magneticPotential},\coverTwo{\electricPotential}})=\manesCriticalValue(L_{\cover{\magneticPotential},\cover{\electricPotential}})=\min_{[\omega]\in\homomorphismGroup(\coveringTransformationGroup,\mathbb{R})}\manesCriticalValue(L_{\magneticPotential-\omega,\electricPotential})=\min_{\omega\in\smoothOneForms{\baseManifold,\mathbb{R}}\colon\cover{\omega}\textrm{ is exact}}\manesCriticalValue(L_{\magneticPotential-\omega,\electricPotential}),
\]
where $\homomorphismGroup(\coveringTransformationGroup,\mathbb{R})$
denotes the vector space\textup{~(\ref{eq:Hom_CTG_to_R_as_One_Forms})},
and $\cover{\omega}$ denotes the lift of $\omega$ to $\cover{\baseManifold}$\textup{.}
\end{thm}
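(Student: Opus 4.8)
The plan is to prove the chain
\[
\manesCriticalValue(L_{\coverTwo{\magneticPotential},\coverTwo{\electricPotential}})\ \le\ \manesCriticalValue(L_{\cover{\magneticPotential},\cover{\electricPotential}})\ \le\ \min_{[\omega]\in\homomorphismGroup(\coveringTransformationGroup,\mathbb{R})}\manesCriticalValue(L_{\magneticPotential-\omega,\electricPotential})\ \le\ \manesCriticalValue(L_{\coverTwo{\magneticPotential},\coverTwo{\electricPotential}}),
\]
which forces equality throughout, and then to identify the middle minimum with the fourth expression. The first inequality is~(\ref{eq:MCV_of_Lift_smaller}) for the cover $\coverTwo{\baseManifold}\to\cover{\baseManifold}$. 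For the second, fix $[\omega]\in\homomorphismGroup(\coveringTransformationGroup,\mathbb{R})$: by the remark preceding the theorem the lift $\cover{\omega}$ is exact with a smooth primitive $\psi$, so $L_{\cover{\magneticPotential}-\cover{\omega},\cover{\electricPotential}}=L_{\cover{\magneticPotential}-d\psi,\cover{\electricPotential}}$ has the same critical value as $L_{\cover{\magneticPotential},\cover{\electricPotential}}$, while~(\ref{eq:MCV_of_Lift_smaller}) gives $\manesCriticalValue(L_{\cover{\magneticPotential}-\cover{\omega},\cover{\electricPotential}})\le\manesCriticalValue(L_{\magneticPotential-\omega,\electricPotential})$; minimising over $[\omega]$ yields the inequality. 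Finally, a $1$-form $\omega$ on $\baseManifold$ has $\cover{\omega}$ exact precisely when $\omega$ is closed with $[\omega]\in\homomorphismGroup(\coveringTransformationGroup,\mathbb{R})$, and $\manesCriticalValue(L_{\magneticPotential-\omega,\electricPotential})$ depends only on $[\omega]$, so the last two minima coincide; each is attained since $[\omega]\mapsto\manesCriticalValue(L_{\magneticPotential-\omega,\electricPotential})$ is Mather's $\alpha$-function precomposed with $[\omega]\mapsto-[\omega]$, hence convex and superlinear, so coercive on the finite-dimensional subspace $\homomorphismGroup(\coveringTransformationGroup,\mathbb{R})$.

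The heart of the matter is the third inequality, which I would prove by a F\o{}lner-averaging argument over the amenable group $\coveringTransformationGroup$, in the spirit of~\cite{FathiMaderna2007}. Let $k>\manesCriticalValue(L_{\coverTwo{\magneticPotential},\coverTwo{\electricPotential}})$; by~(\ref{eq:MCV_in_Ham_Terms}) there is $f\in\smoothFunctions{\coverTwo{\baseManifold},\mathbb{R}}$ with $\frac12\norm{\coverTwo{\magneticPotential}+df}_{x}^{2}+\coverTwo{\electricPotential}(x)<k$ for every $x$, and since $\coverTwo{\magneticPotential}$ and $\coverTwo{\electricPotential}$ are lifts of data on the closed manifold $\baseManifold$ and $\inf\electricPotential>-\infty$, the form $df$ is bounded on $\coverTwo{\baseManifold}$. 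Choosing a F\o{}lner sequence $(F_n)$ in $\coveringTransformationGroup$, put
\[
f_n=\tfrac1{\norm{F_n}}\sum_{\groupElement\in F_n}f\circ\groupElement,\qquad \eta_n=\coverTwo{\magneticPotential}+df_n=\tfrac1{\norm{F_n}}\sum_{\groupElement\in F_n}\groupElement^{*}(\coverTwo{\magneticPotential}+df),
\]
and (subtracting constants, which does not affect $df_n$) arrange $f_n(x_0)=0$ at a base point. Then the $f_n$ are smooth and uniformly Lipschitz, the forms $df_n$ are uniformly bounded, convexity of $\norm{\cdot}^{2}$ together with the $\coveringTransformationGroup$-invariance of $\coverTwo{\electricPotential}$ keeps $\frac12\norm{\eta_n}_{x}^{2}+\coverTwo{\electricPotential}(x)<k$ everywhere, and the F\o{}lner property gives $\normComingFromInnerProduct{\groupElement^{*}\eta_n-\eta_n}_{\infty}\to0$ for each fixed $\groupElement$. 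Passing to a subsequence, $\eta_n\rightharpoonup\bar{\eta}$ weak-$*$ in $L^{\infty}$ and $f_n\to f_{\infty}$ locally uniformly (Arzel\`a--Ascoli plus a diagonal argument), with $f_{\infty}$ Lipschitz and $df_{\infty}=\bar{\eta}-\coverTwo{\magneticPotential}$; the limit $\bar{\eta}$ is $\coveringTransformationGroup$-invariant (by the F\o{}lner estimate and weak-$*$ continuity of $\groupElement^{*}$), hence descends to a bounded measurable $1$-form $\zeta$ on $\baseManifold$ with $\zeta-\magneticPotential$ closed and, by weak lower semicontinuity, $\frac12\norm{\zeta}_{x}^{2}+\electricPotential(x)\le k$ for almost every $x$.

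It remains to locate $[\zeta-\magneticPotential]\in\homologyGroup^{1}(\baseManifold,\mathbb{R})$. Since $\bar{\eta}$ and $\coverTwo{\magneticPotential}$ are $\coveringTransformationGroup$-invariant, $d(f_{\infty}\circ\groupElement-f_{\infty})=(\groupElement^{*}\bar{\eta}-\bar{\eta})-(\groupElement^{*}\coverTwo{\magneticPotential}-\coverTwo{\magneticPotential})=0$, so $f_{\infty}\circ\groupElement-f_{\infty}$ is a constant $c(\groupElement)$, and $\groupElement\mapsto c(\groupElement)$ is a homomorphism $\coveringTransformationGroup\to\mathbb{R}$; it vanishes on $[\coveringTransformationGroup,\coveringTransformationGroup]$, so $f_{\infty}$ is invariant under the deck group of $\coverTwo{\baseManifold}\to\cover{\baseManifold}$ and descends to $\cover{\baseManifold}$. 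Thus the lift of $\zeta-\magneticPotential$ to $\cover{\baseManifold}$ is exact, i.e. $[\zeta-\magneticPotential]\in\homomorphismGroup(\coveringTransformationGroup,\mathbb{R})$. Choosing a smooth closed representative $\omega$ of $-[\zeta-\magneticPotential]$, the bounded closed form $\zeta-\magneticPotential+\omega$ has vanishing de Rham class on $\baseManifold$, hence equals $dh$ for some Lipschitz $h$ on $\baseManifold$ (on $\coverTwo{\baseManifold}$ it is $d(f_{\infty}+\psi)$ with $\psi$ a primitive of the exact lift $\coverTwo{\omega}$, and $f_{\infty}+\psi$ is $\coveringTransformationGroup$-invariant). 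Then $\frac12\norm{\magneticPotential-\omega+dh}_{x}^{2}+\electricPotential(x)\le k$ almost everywhere, so by the characterisation of $\manesCriticalValue$ through Lipschitz subsolutions (equivalent to~(\ref{eq:MCV_in_Ham_Terms})) we obtain $\manesCriticalValue(L_{\magneticPotential-\omega,\electricPotential})\le k$, whence $\min_{[\omega]\in\homomorphismGroup(\coveringTransformationGroup,\mathbb{R})}\manesCriticalValue(L_{\magneticPotential-\omega,\electricPotential})\le k$; letting $k\downarrow\manesCriticalValue(L_{\coverTwo{\magneticPotential},\coverTwo{\electricPotential}})$ completes the chain.

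The main obstacle is exactly this third inequality, and within it the limiting procedure. Averaging $\coverTwo{\magneticPotential}+df$ against an invariant mean of $\coveringTransformationGroup$ fails to produce a usable form because $f$ carries no control on its higher derivatives; averaging the \emph{differentials} over F\o{}lner sets instead yields forms that are uniformly bounded in $L^{\infty}$ and asymptotically $\coveringTransformationGroup$-invariant, so a weak-$*$ limit is genuinely invariant and descends. The cost is that $f_{\infty}$ is only Lipschitz, forcing one to work with Lipschitz subsolutions of the Hamilton--Jacobi inequality and with $L^{\infty}$ rather than smooth $1$-forms, and to carry out the bookkeeping turning the limiting form on $\baseManifold$ into $\magneticPotential-\omega+dh$ with $[\omega]\in\homomorphismGroup(\coveringTransformationGroup,\mathbb{R})$ and $h$ Lipschitz. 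The identity $d(f_{\infty}\circ\groupElement-f_{\infty})=0$, which pins down the relevant cohomology class, is where amenability pays off via the F\o{}lner condition.
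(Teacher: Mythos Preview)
Your argument is correct and the chain of inequalities is set up exactly as in the paper; the easy inequalities and the identification of the two minima are handled the same way (the paper also cites convexity and superlinearity of Mather's $\alpha$-function for attainment). The difference lies entirely in the hard step
\[
\min_{[\omega]\in\homomorphismGroup(\coveringTransformationGroup,\mathbb{R})}\manesCriticalValue(L_{\magneticPotential-\omega,\electricPotential})\ \le\ \manesCriticalValue(L_{\coverTwo{\magneticPotential},\coverTwo{\electricPotential}}).
\]
The paper does not argue this directly: after setting up the isomorphism $[\omega]\mapsto\rho_\omega$ between the subspace~(\ref{eq:Hom_CTG_to_R_as_One_Forms}) and $\homomorphismGroup(\coveringTransformationGroup,\mathbb{R})$, it invokes Fathi--Maderna's equivariant weak KAM machinery \cite[Lemma~7.3 and Theorem~7.4]{FathiMaderna2007} to produce $\rho\in\homomorphismGroup(\coveringTransformationGroup,\mathbb{R})$ with $\manesCriticalValue(\rho)=\manesCriticalValue(L_{\coverTwo{\magneticPotential},\coverTwo{\electricPotential}})$, and then reads off the corresponding $[\omega]$.

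Your route is a direct F\o{}lner averaging of the \emph{subsolution} $\coverTwo{\magneticPotential}+df$, followed by a weak-$*$ limit in $L^{\infty}$. This is genuinely more elementary and self-contained: you never need the weak KAM theorem or the notion of $\manesCriticalValue(\rho)$ for abstract homomorphisms. The price is the analytic bookkeeping you yourself flag: the limit $f_\infty$ is only Lipschitz, so you must invoke the equivalence of~(\ref{eq:MCV_in_Ham_Terms}) with its Lipschitz version (or mollify on the compact base at the very end), and you need the weak-$*$ closedness of the set $\{\eta:\tfrac12\norm{\eta}_x^{2}+\coverTwo{\electricPotential}(x)\le k\textrm{ a.e.}\}$, which follows since it is an intersection of weak-$*$ closed half-spaces $\{\eta:\int\langle\eta,v\rangle\le\int\sqrt{2(k-\coverTwo{\electricPotential})}\,\norm{v}\}$ over $v\in L^{1}$. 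All of these points are standard and you have identified them correctly. It is worth noting that Fathi--Maderna's own proof also rests on an averaging argument over the amenable group (via invariant means rather than F\o{}lner sets), so your approach is close in spirit to what happens inside their black box, just made explicit at the level of Hamilton--Jacobi subsolutions.
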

Note that forms with exact lifts are necessarily closed. For abelian
covers $\coverTwo{\baseManifold}=\cover{\baseManifold}$, one obtains
the following generalization of $\manesCriticalValue(\abelianCoverWithoutHat L{\cover{\magneticPotential},\cover{\electricPotential}})=\manesCriticalValue_{0}(L_{\magneticPotential,\electricPotential})$.
\begin{cor}
\label{cor:MCV_of_Abelian_covers}If $\cover{\baseManifold}$ is a
regular cover with abelian covering transformation group, then we
have
\[
\manesCriticalValue(L_{\cover{\magneticPotential},\cover{\electricPotential}})=\min_{\omega\in\smoothOneForms{\baseManifold,\mathbb{R}}\colon\cover{\omega}\textrm{ is exact}}\manesCriticalValue(L_{\magneticPotential-\omega,\electricPotential}).
\]
\end{cor}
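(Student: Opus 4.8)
The plan is to obtain Corollary~\ref{cor:MCV_of_Abelian_covers} as the special case of Theorem~\ref{thm:MCV_of_Amenable_Covers_and_Abelian_Subcovers_coincide} in which $\coverTwo{\baseManifold}=\cover{\baseManifold}$. First I would recall that every abelian group is amenable --- it is solvable, and solvable groups are amenable, as noted after the definition of amenability --- so the covering transformation group $\coveringTransformationGroup$ of the given abelian cover $\cover{\baseManifold}$ is eligible to play the role of the amenable group $\coveringTransformationGroup$ in the theorem, with $\coverTwo{\baseManifold}=\cover{\baseManifold}$.

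Next I would identify the ``abelian subcover'' demanded by the theorem. Because $\coveringTransformationGroup$ is abelian, the abelianization homomorphism $\coveringTransformationGroup\to\coveringTransformationGroup/[\coveringTransformationGroup,\coveringTransformationGroup]$ is the identity of $\coveringTransformationGroup$; hence the subcover of $\coverTwo{\baseManifold}=\cover{\baseManifold}$ through which $\cover{\baseManifold}\to\baseManifold$ factors and whose covering transformation group equals $\coveringTransformationGroup$ via this homomorphism is $\cover{\baseManifold}$ itself. Thus the theorem's ``$\cover{\baseManifold}$'' coincides with our $\cover{\baseManifold}$, the first equality $\manesCriticalValue(L_{\coverTwo{\magneticPotential},\coverTwo{\electricPotential}})=\manesCriticalValue(L_{\cover{\magneticPotential},\cover{\electricPotential}})$ is a tautology, and the remaining hypothesis --- that this subcover be covered by $\abelianCover{\baseManifold}{}$ --- holds for every regular cover with abelian covering transformation group, as recalled in the paragraph before the theorem.

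Feeding these identifications into the theorem yields
\[
\manesCriticalValue(L_{\cover{\magneticPotential},\cover{\electricPotential}})=\min_{\omega\in\smoothOneForms{\baseManifold,\mathbb{R}}\colon\cover{\omega}\textrm{ is exact}}\manesCriticalValue(L_{\magneticPotential-\omega,\electricPotential}),
\]
which is exactly the asserted identity. As a sanity check I would specialize once more to $\cover{\baseManifold}=\abelianCover{\baseManifold}{}$: there $\homomorphismGroup(\coveringTransformationGroup,\mathbb{R})=\homomorphismGroup(\homologyGroup_{1}(\baseManifold,\mathbb{Z}),\mathbb{R})\simeq\homologyGroup^{1}(\baseManifold,\mathbb{R})$ and every closed curve in $\abelianCover{\baseManifold}{}$ is null-homologous in $\baseManifold$, so \emph{every} closed form on $\baseManifold$ has an exact lift; the right-hand side then collapses to $\min_{[\omega]\in\homologyGroup^{1}(\baseManifold,\mathbb{R})}\manesCriticalValue(L_{\magneticPotential-\omega,\electricPotential})=\manesCriticalValue_{0}(L_{\magneticPotential,\electricPotential})$, recovering the result of~\cite{PaternainPaternain1997}. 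Since the corollary is a straight specialization of the theorem, there is no genuine obstacle; the one point meriting the short argument above is the identification of the theorem's abelian subcover with $\cover{\baseManifold}$ itself.
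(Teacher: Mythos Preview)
Your proposal is correct and matches the paper's approach exactly: the corollary is obtained by taking $\coverTwo{\baseManifold}=\cover{\baseManifold}$ in Theorem~\ref{thm:MCV_of_Amenable_Covers_and_Abelian_Subcovers_coincide}, using that an abelian $\coveringTransformationGroup$ is amenable and that its abelianization is $\coveringTransformationGroup$ itself, so the required abelian subcover is $\cover{\baseManifold}$.
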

\begin{proof}
[Proof of Theorem \ref{thm:MCV_of_Amenable_Covers_and_Abelian_Subcovers_coincide}]For
any $\omega\in\smoothOneForms{\baseManifold,\mathbb{R}}$ with exact
lift $\cover{\omega}\in\smoothOneForms{\cover{\baseManifold},\mathbb{R}}$,
(\ref{eq:MCV_in_Ham_Terms}) and (\ref{eq:MCV_of_Lift_smaller}) imply
that 
\[
\manesCriticalValue(L_{\coverTwo{\magneticPotential},\coverTwo{\electricPotential}})\leq\manesCriticalValue(L_{\cover{\magneticPotential},\cover{\electricPotential}})=\manesCriticalValue(L_{\cover{\magneticPotential}-\cover{\omega},\cover{\electricPotential}})\leq\manesCriticalValue(L_{\magneticPotential-\omega,\electricPotential}),
\]
see also~\cite[Lemma 2.4]{PaternainPaternain1997}. Hence, Theorem~\ref{thm:MCV_of_Amenable_Covers_and_Abelian_Subcovers_coincide}
is proven once we find $\omega\in\smoothOneForms{\baseManifold,\mathbb{R}}$
with $[\omega]\in\homomorphismGroup(\coveringTransformationGroup,\mathbb{R})$
such that $\manesCriticalValue(L_{\magneticPotential-\omega,\electricPotential})=\manesCriticalValue(L_{\coverTwo{\magneticPotential},\coverTwo{\electricPotential}})$.
This is established along the lines of~\cite[Theorem 1.5]{FathiMaderna2007},
more precisely, $\coverTwo{\baseManifold}$, $\cover{\baseManifold}$,
$\coveringTransformationGroup$ and $\cover{\coveringTransformationGroup}$
assume the roles of $\universalCover{\baseManifold}{}$, $\abelianCover{\baseManifold}{}$,
$\pi_{1}(\baseManifold)$ and $\homologyGroup_{1}(\baseManifold,\mathbb{Z})=\pi_{1}(\baseManifold)/[\pi_{1}(\baseManifold),\pi_{1}(\baseManifold)]$,
respectively. For any $\omega$ with $[\omega]\in\homomorphismGroup(\coveringTransformationGroup,\mathbb{R})$,
we can choose $f_{\omega}\in\smoothFunctions{\coverTwo{\baseManifold},\mathbb{R}}$
such that the lift of $\omega$ to $\coverTwo{\baseManifold}$ takes
the form $\coverTwo{\omega}=df_{\omega}$. Since $\coverTwo{\baseManifold}$
is connected, $f_{\omega}$ is determined up to a constant. For any
$\groupElement\in\coveringTransformationGroup$, we consider the function
$\rho_{\omega,\groupElement}=\groupElement^{*}f_{\omega}-f_{\omega}$.
Note that the definition of $\rho_{\omega,\groupElement}$ is independent
of the choice of $f_{\omega}$. Since $\groupElement^{*}\coverTwo{\omega}=\coverTwo{\omega}$
for any $\groupElement\in\coveringTransformationGroup$, the function
$\rho_{\omega,\groupElement}$ has vanishing derivative and is therefore
a constant that we denote by $\rho_{\omega}(\groupElement)$. One
easily verifies that the mapping $\groupElement\mapsto\rho_{\omega}(\groupElement)$
is an element of $\homomorphismGroup(\coveringTransformationGroup,\mathbb{R})$.
Moreover, the mapping $\omega\mapsto\rho_{\omega}$ is linear and
injective, and thus establishes the desired isomorphism. In~\cite[Section 7]{FathiMaderna2007},
it is shown that any $\rho\in\homomorphismGroup(\coveringTransformationGroup,\mathbb{R})$
gives rise to a critical value $\manesCriticalValue(\rho)$ such that
any $\rho_{\omega}$ as above satisfies $\manesCriticalValue(\rho_{\omega})=\manesCriticalValue(L_{\magneticPotential+\omega,\electricPotential})$.
Since $\coveringTransformationGroup$ is amenable, there exists $\rho\in\homomorphismGroup(\coveringTransformationGroup,\mathbb{R})$
such that $\manesCriticalValue(\rho)=\manesCriticalValue(L_{\coverTwo{\magneticPotential},\coverTwo{\electricPotential}})$
by virtue of~\cite[Lemma 7.3 and Theorem 7.4]{FathiMaderna2007}.
We already saw that $\rho=\rho_{\omega}$ for some $\omega$ with
$[\omega]\in\homomorphismGroup(\coveringTransformationGroup,\mathbb{R})$,
which completes the proof.
\end{proof}
Note that the functions $f_{\omega}\in\smoothFunctions{\coverTwo{\baseManifold},\mathbb{R}}$
in the proof of Theorem~\ref{thm:MCV_of_Amenable_Covers_and_Abelian_Subcovers_coincide}
are lifts of smooth functions on the subcover $\cover{\baseManifold}$.
Algebraically, this is reflected in $\homomorphismGroup(\coveringTransformationGroup,\mathbb{R})\simeq\homomorphismGroup(\cover{\coveringTransformationGroup},\mathbb{R})$.
The existence of $\rho_{\omega}\in\homomorphismGroup(\coveringTransformationGroup,\mathbb{R})$
with minimal associated critical value $\manesCriticalValue(\rho_{\omega})$
is a consequence of the convexity and superlinearity of the Mather
function~\cite[Proposition 7.2]{FathiMaderna2007} which is the mapping
$\homomorphismGroup(\coveringTransformationGroup,\mathbb{R})\ni\rho_{\omega}\mapsto\manesCriticalValue(\rho_{\omega})=\manesCriticalValue(L_{\magneticPotential+\omega,\electricPotential})$.
One easily extends Theorem~\ref{thm:MCV_of_Amenable_Covers_and_Abelian_Subcovers_coincide}
to any Lagrangian $L\colon T\baseManifold\to\mathbb{R}$ of class
$C^{2}$ that satisfies the following conditions:
\begin{enumerate}
\item Convexity: For every $x\in\baseManifold$, the restriction of $L$
to $T_{x}\baseManifold$ has positive definite Hessian everywhere.
\item Uniform\emph{ }superlinearity: For every $K\geq0$, there exists $C(K)\in\mathbb{R}$
such that 
\[
L(x,v)\geq K\multipliedBy\norm v_{x}-C(K)\qquad\textnormal{for all }(x,v)\in TM.
\]

\end{enumerate}
Corollary~\ref{cor:MCV_of_Abelian_covers} also allows for a direct
proof along the lines of~\cite[Theorem 1.1]{PaternainPaternain1997}.
More precisely, if $\coveringTransformationGroup$ denotes the abelian
covering transformation group of $\cover{\baseManifold}$, then $\cover{\baseManifold}$,
$\coveringTransformationGroup$, $\coveringTransformationGroup\otimes\mathbb{R}$
and $\homomorphismGroup(\coveringTransformationGroup,\mathbb{R})$
assume the roles of $\abelianCover{\baseManifold}{}$, $\homologyGroup_{1}(\baseManifold,\mathbb{Z})$,
$\homologyGroup_{1}(\baseManifold,\mathbb{R})$ and $\homologyGroup^{1}(\baseManifold,\mathbb{R})$
in~\cite{PaternainPaternain1997}, respectively. In particular, the
curves $\overline{x_{i}}$ appearing in the proof of~\cite[Theorem 1.1]{PaternainPaternain1997}
can be chosen as lifts of curves in $\cover{\baseManifold}$, for
which reason the slopes appearing in~\cite[Theorem 2.3]{PaternainPaternain1997}
can be taken from $\homomorphismGroup(\coveringTransformationGroup,\mathbb{R})$.
The necessary adaptions are routine but lengthly for which reason
we skip them.

\newpage{}

\section{Magnetic Schr\"odinger operators\label{sec:MSO}}

As in Section~\ref{sec:MCV_of_magnetic_Hamiltonians}, let $\baseManifold$
be a connected closed manifold with Riemannian metric $\metric$,
and let $\cover{\baseManifold}$ be a regular cover of $\baseManifold$
equipped with potentials $\cover{\magneticPotential}\in\smoothOneForms{\cover{\baseManifold},\mathbb{R}}$
and $\cover{\electricPotential}\in\smoothFunctions{\cover{\baseManifold},\mathbb{R}}$.
Recall that the system $(T^{*}\cover M,\cover{\omega_{0}},H_{\cover{\magneticPotential},\cover{\electricPotential}})$
with standard symplectic structure $\cover{\omega_{0}}$ on $T^{*}\cover{\baseManifold}$
and Hamiltonian $H_{\cover{\magneticPotential},\cover{\electricPotential}}\colon T^{*}\cover{\baseManifold}\to\mathbb{R}$
given by 
\begin{equation}
H_{\cover{\magneticPotential},\cover{\electricPotential}}(x,p)=\frac{1}{2}\left|p+\cover{\magneticPotential}\right|_{x}^{2}+\cover{\electricPotential}(x)\label{eq:Classical_Hamiltonian}
\end{equation}
describes the classical motion of a charged particle on $\cover{\baseManifold}$
under the influence of the electromagnetic field with magnetic and
electric potentials $\cover{\magneticPotential}$ and $\cover{\electricPotential}$,
respectively. In contrast, non-relativistic quantum mechanics is essentially
the study of self-adjoint, densely-defined operators on Hilbert spaces.
In our case, these are minimal Schr\"odinger operators, that is,
closures of differential operators of Schr\"odinger type that are
initially defined on the space $\smoothCompactlySupportedFunctions{\cover{\baseManifold}}=\smoothCompactlySupportedFunctions{\cover{\baseManifold},\mathbb{C}}$
of compactly-supported, smooth, $\mathbb{C}$-valued functions on
$\cover{\baseManifold}$. In comparison to Laplacians, the study of
magnetic potentials requires to consider complex-valued functions.
Let $L^{2}(\cover{\baseManifold})=L^{2}(\cover{\baseManifold},\volumeForm,\mathbb{C})$
denote the completion of $\smoothCompactlySupportedFunctions{\cover{\baseManifold}}$
with respect to the norm $\normComingFromInnerProduct u=\sqrt{\innerProduct uu}$
coming from the inner product 
\[
\innerProduct uv=\int_{\cover{\baseManifold}}u\multipliedBy\conjugate v\multipliedBy\volumeForm,
\]
where $u,v\in\smoothCompactlySupportedFunctions{\cover{\baseManifold}}$
and $\volumeForm$ denotes the volume form of $\metric$. Following~\cite{Paternain2001},
we use the Dirac quantization rule which says that in order to quantize
(\ref{eq:Classical_Hamiltonian}), we have to replace $p$ by the
operator $\frac{1}{i}\multipliedBy\exteriorDifferential$, where $\exteriorDifferential$
is the exterior differential. Let $\smoothCompactlySupportedOneForms{\cover{\baseManifold}}=\smoothCompactlySupportedOneForms{\cover{\baseManifold},\mathbb{C}}$
denote the space of compactly-supported, smooth, $\mathbb{C}$-valued
$1$-forms on $\cover{\baseManifold}$. This space is equipped with
an inner product given by integration over the fibrewise inner products
on $T^{*}\cover{\baseManifold}$. We denote the completion of $\smoothCompactlySupportedOneForms{\cover{\baseManifold}}$
by $L^{2}(\smoothOneForms{\cover{\baseManifold}})$. The magnetic
differential\emph{ }$\magneticDifferential{\cover{\magneticPotential}}\colon\smoothCompactlySupportedFunctions{\cover{\baseManifold}}\to\smoothCompactlySupportedOneForms{\cover{\baseManifold}}$
is the operator given by
\[
\magneticDifferential{\cover{\magneticPotential}}u=\frac{1}{i}\multipliedBy\exteriorDifferential u+u\multipliedBy\cover{\magneticPotential}.
\]
The associated magnetic Schr\"odinger operator $\schroedingerOperator_{\cover{\magneticPotential},\cover{\electricPotential}}$
with domain $\smoothCompactlySupportedFunctions{\cover{\baseManifold}\targetC}$
is defined as 
\begin{equation}
\schroedingerOperator_{\cover{\magneticPotential},\cover{\electricPotential}}=\frac{1}{2}\multipliedBy\adjoint{\magneticDifferential{\cover{\magneticPotential}}}\multipliedBy\magneticDifferential{\cover{\magneticPotential}}+\cover{\electricPotential},\label{eq:Def_of_MSO}
\end{equation}
where $\adjoint{\magneticDifferential{\cover{\magneticPotential}}}\colon\smoothCompactlySupportedOneForms{\cover{\baseManifold}\targetC}\to\smoothCompactlySupportedFunctions{\cover{\baseManifold}\targetC}$
denotes the formal adjoint of $\magneticDifferential{\cover{\magneticPotential}}$.
Recall that $\adjoint{\magneticDifferential{\cover{\magneticPotential}}}$
is the unique differential operator such that $\innerProduct u{\adjoint{\magneticDifferential{\cover{\magneticPotential}}}\omega}=\innerProduct{\magneticDifferential{\cover{\magneticPotential}}u}{\omega}$
holds for any $u\in\smoothCompactlySupportedFunctions{\cover{\baseManifold}}$
and $\omega\in\smoothCompactlySupportedOneForms{\cover{\baseManifold}}$.
As a differential operator, $\schroedingerOperator_{\cover{\magneticPotential},\cover{\electricPotential}}$
can be expressed in local terms.
\begin{lem}
The magnetic Schr\"odinger operator (\ref{eq:Def_of_MSO}) with domain
$\smoothCompactlySupportedFunctions{\cover{\baseManifold}}$ is given
by
\begin{equation}
\schroedingerOperator_{\cover{\magneticPotential},\cover{\electricPotential}}u=\frac{1}{2}\Delta u-i\innerProduct{\exteriorDifferential u}{\cover{\magneticPotential}}+\left(\frac{1}{2}\multipliedBy\exteriorDifferential^{*}\cover{\magneticPotential}+\frac{1}{2}\norm{\cover{\magneticPotential}}^{2}+\cover{\electricPotential}\right)u,\label{eq:Coordinatefree_Version_of_MSO}
\end{equation}
where $\adjoint{\exteriorDifferential}$ denotes the codifferential
and $\Delta=\adjoint{\exteriorDifferential}\multipliedBy\exteriorDifferential$
is the Laplace-Beltrami operator on $\cover{\baseManifold}$. With
respect to local coordinates $(x_{1},x_{2,}\ldots,x_{n})$, we have
\begin{equation}
\schroedingerOperator_{\cover{\magneticPotential},\cover{\electricPotential}}=\frac{1}{2}\frac{1}{\sqrt{\norm{\metric}}}\multipliedBy\sum_{j,k}\left(\frac{1}{i}\multipliedBy\frac{\partial}{\partial x_{j}}+\conjugate{\cover{\magneticPotential}_{j}}\right)\multipliedBy\metric^{jk}\multipliedBy\sqrt{\norm{\metric}}\multipliedBy\left(\frac{1}{i}\frac{\partial}{\partial x_{k}}+\cover{\magneticPotential}_{k}\right)+\cover{\electricPotential},\label{eq:Coordinate_Version_of_MSO}
\end{equation}
where $\cover{\magneticPotential}(x_{1},x_{2},\ldots,x_{n})=\sum_{k}\cover{\magneticPotential}_{k}(x_{1},x_{2},\ldots,x_{n})\multipliedBy\exteriorDifferential x^{k}$,
$\volumeForm=\sqrt{\norm{\metric}}\multipliedBy\exteriorDifferential x^{1}\ldots\exteriorDifferential x^{n}$
and $\metric^{jk}=\innerProduct{\exteriorDifferential x^{j}}{\exteriorDifferential x^{k}}$
are the entries of the inverse of the local matrix expression of $\metric$.\end{lem}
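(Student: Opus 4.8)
The plan is to unwind the definition~(\ref{eq:Def_of_MSO}) by first computing the formal adjoint $\adjoint{\magneticDifferential{\cover{\magneticPotential}}}$ in closed form and then composing it with $\magneticDifferential{\cover{\magneticPotential}}$. Write $\magneticDifferential{\cover{\magneticPotential}}=\tfrac{1}{i}\multipliedBy\exteriorDifferential+m_{\cover{\magneticPotential}}$, where $m_{\cover{\magneticPotential}}\colon u\mapsto u\multipliedBy\cover{\magneticPotential}$ is multiplication by the $1$-form $\cover{\magneticPotential}$, so that $\adjoint{\magneticDifferential{\cover{\magneticPotential}}}=\adjoint{\bigl(\tfrac{1}{i}\multipliedBy\exteriorDifferential\bigr)}+\adjoint{m_{\cover{\magneticPotential}}}$ by linearity. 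Since $\exteriorDifferential$ is $\mathbb{C}$-linear, a one-line integration by parts identifies its formal adjoint with respect to the Hermitian $L^{2}$-pairings on $\smoothCompactlySupportedFunctions{\cover{\baseManifold}}$ and $\smoothCompactlySupportedOneForms{\cover{\baseManifold}}$ with the codifferential $\adjoint{\exteriorDifferential}$; because those pairings are conjugate-linear in the second slot, the scalar $\tfrac{1}{i}$ is replaced by $\overline{1/i}=-1/i$ when carried across, so $\adjoint{\bigl(\tfrac{1}{i}\multipliedBy\exteriorDifferential\bigr)}=-\tfrac{1}{i}\multipliedBy\adjoint{\exteriorDifferential}$. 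From $\innerProduct{m_{\cover{\magneticPotential}}u}{\omega}=\int_{\cover{\baseManifold}}u\multipliedBy\overline{\innerProduct{\omega}{\cover{\magneticPotential}}}\multipliedBy\volumeForm$ one reads off that $\adjoint{m_{\cover{\magneticPotential}}}\omega=\innerProduct{\omega}{\cover{\magneticPotential}}$, the function produced by the fibrewise pairing. Hence $\adjoint{\magneticDifferential{\cover{\magneticPotential}}}\omega=-\tfrac{1}{i}\multipliedBy\adjoint{\exteriorDifferential}\omega+\innerProduct{\omega}{\cover{\magneticPotential}}$.

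Next I would substitute $\omega=\magneticDifferential{\cover{\magneticPotential}}u=\tfrac{1}{i}\multipliedBy\exteriorDifferential u+u\multipliedBy\cover{\magneticPotential}$ and simplify with three facts: $\adjoint{\exteriorDifferential}\exteriorDifferential u=\Delta u$; the Leibniz rule $\adjoint{\exteriorDifferential}(u\multipliedBy\cover{\magneticPotential})=u\multipliedBy\adjoint{\exteriorDifferential}\cover{\magneticPotential}-\innerProduct{\exteriorDifferential u}{\cover{\magneticPotential}}$ for the codifferential; and $\innerProduct{\tfrac{1}{i}\multipliedBy\exteriorDifferential u+u\multipliedBy\cover{\magneticPotential}}{\cover{\magneticPotential}}=\tfrac{1}{i}\multipliedBy\innerProduct{\exteriorDifferential u}{\cover{\magneticPotential}}+u\multipliedBy\norm{\cover{\magneticPotential}}^{2}$ by linearity of the fibrewise pairing, $\cover{\magneticPotential}$ being real so that all pairings with it are unambiguous. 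The factors of $\tfrac{1}{i}$ coming from $\adjoint{\bigl(\tfrac{1}{i}\multipliedBy\exteriorDifferential\bigr)}$ composed with $\tfrac{1}{i}\multipliedBy\exteriorDifferential$ recombine into $+\Delta u$, the two first-order terms in $\exteriorDifferential u$ add, and after multiplying by $\tfrac{1}{2}$ and adding $\cover{\electricPotential}$ one arrives at~(\ref{eq:Coordinatefree_Version_of_MSO}).

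For the coordinate expression~(\ref{eq:Coordinate_Version_of_MSO}) there are two equivalent routes. One may insert the standard local formulas $\exteriorDifferential u=\sum_{k}\tfrac{\partial u}{\partial x_{k}}\multipliedBy\exteriorDifferential x^{k}$, $\adjoint{\exteriorDifferential}\omega=-\tfrac{1}{\sqrt{\norm{\metric}}}\sum_{j,k}\tfrac{\partial}{\partial x_{j}}\bigl(\sqrt{\norm{\metric}}\multipliedBy\metric^{jk}\multipliedBy\omega_{k}\bigr)$, $\Delta u=-\tfrac{1}{\sqrt{\norm{\metric}}}\sum_{j,k}\tfrac{\partial}{\partial x_{j}}\bigl(\sqrt{\norm{\metric}}\multipliedBy\metric^{jk}\multipliedBy\tfrac{\partial u}{\partial x_{k}}\bigr)$ and $\innerProduct{\omega}{\eta}=\sum_{j,k}\metric^{jk}\multipliedBy\omega_{j}\multipliedBy\conjugate{\eta_{k}}$ into~(\ref{eq:Coordinatefree_Version_of_MSO}) and expand the derivatives. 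More transparently, from the local formula for $\adjoint{\exteriorDifferential}$ and from $\adjoint{m_{\cover{\magneticPotential}}}\omega=\innerProduct{\omega}{\cover{\magneticPotential}}=\sum_{j,k}\metric^{jk}\multipliedBy\conjugate{\cover{\magneticPotential}_{j}}\multipliedBy\omega_{k}$ one checks that $v\mapsto\tfrac{1}{\sqrt{\norm{\metric}}}\sum_{j,k}\bigl(\tfrac{1}{i}\multipliedBy\tfrac{\partial}{\partial x_{j}}+\conjugate{\cover{\magneticPotential}_{j}}\bigr)\bigl(\metric^{jk}\multipliedBy\sqrt{\norm{\metric}}\multipliedBy v_{k}\bigr)$ is precisely $\adjoint{\magneticDifferential{\cover{\magneticPotential}}}$ written in the coordinates $(x_{1},\dots,x_{n})$, while $\magneticDifferential{\cover{\magneticPotential}}u$ has components $\tfrac{1}{i}\multipliedBy\tfrac{\partial u}{\partial x_{k}}+u\multipliedBy\cover{\magneticPotential}_{k}$; composing the two reproduces~(\ref{eq:Coordinate_Version_of_MSO}) at once, and expanding the product there recovers~(\ref{eq:Coordinatefree_Version_of_MSO}) once more.

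The whole computation is purely algebraic once the standard identities are in hand, and involves no hard analysis; the only step that genuinely demands care is that the scalar $\tfrac{1}{i}$ must be conjugated each time it crosses a Hermitian pairing, so that $\adjoint{\bigl(\tfrac{1}{i}\multipliedBy\exteriorDifferential\bigr)}=-\tfrac{1}{i}\multipliedBy\adjoint{\exteriorDifferential}$ and not $\tfrac{1}{i}\multipliedBy\adjoint{\exteriorDifferential}$, together with fixing at the outset the conventions $\Delta=\adjoint{\exteriorDifferential}\multipliedBy\exteriorDifferential\geq0$ and that $\adjoint{\exteriorDifferential}$ is the codifferential (equal to $-\mathrm{div}$ on $1$-forms), which pin down the sign in the Leibniz rule used above. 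As a consistency check one may verify that the resulting operator is formally symmetric.
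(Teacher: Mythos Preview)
Your proposal is correct and essentially matches the paper's argument: the paper computes $\adjoint{\magneticDifferential{\cover{\magneticPotential}}}$ by writing out $\innerProduct{\magneticDifferential{\cover{\magneticPotential}}u}{\omega}$ in a local chart and integrating by parts---precisely your ``more transparent'' second route---then substitutes $\omega=\magneticDifferential{\cover{\magneticPotential}}v$ to obtain~(\ref{eq:Coordinate_Version_of_MSO}). The only difference is that for the coordinate-free formula~(\ref{eq:Coordinatefree_Version_of_MSO}) the paper simply cites an external reference, whereas you supply a direct derivation via the splitting $\magneticDifferential{\cover{\magneticPotential}}=\tfrac{1}{i}\,\exteriorDifferential+m_{\cover{\magneticPotential}}$ and the Leibniz rule for $\adjoint{\exteriorDifferential}$.
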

\begin{proof}
For compact $\cover{\baseManifold}$, \cite{Paternain2001} contains
a proof of (\ref{eq:Coordinatefree_Version_of_MSO}) that can be easily
generalized to our setting. Instead, we verify the coordinate version~(\ref{eq:Coordinate_Version_of_MSO}).
For any $u\in\smoothCompactlySupportedFunctions{\cover{\baseManifold}}$
with support in the given coordinate neighborhood and any $\omega\in\smoothCompactlySupportedOneForms{\cover{\baseManifold}}$
with local expression $\omega=\sum_{k}\omega_{k}\multipliedBy\exteriorDifferential x^{k}$,
partial integration yields
\begin{eqnarray*}
\innerProduct u{\adjoint{\magneticDifferential{\cover{\magneticPotential}}}\omega}=\innerProduct{-i\multipliedBy\exteriorDifferential u+u\multipliedBy\cover{\magneticPotential}}{\omega} & = & \innerProductThatAdapts{\sum_{j}\left(\frac{1}{i}\frac{\partial u}{\partial x_{j}}+u\multipliedBy\cover{\magneticPotential}_{j}\right)\exteriorDifferential x^{j}}{\sum_{k}\omega_{k}\multipliedBy\exteriorDifferential x^{k}}\\
 & = & \int\sum_{j,k}\left(\left(\frac{1}{i}\frac{\partial u}{\partial x_{j}}+u\multipliedBy\cover{\magneticPotential}_{j}\right)\conjugate{\omega_{k}}\multipliedBy\metric^{jk}\multipliedBy\sqrt{\norm{\metric}}\right)\multipliedBy\exteriorDifferential x^{1}\ldots\exteriorDifferential x^{n}\\
 & = & \int u\multipliedBy\sum_{j,k}\left(\left(-\frac{1}{i}\frac{\partial}{\partial x_{j}}+\cover{\magneticPotential}_{j}\right)\metric^{jk}\multipliedBy\sqrt{\norm{\metric}}\multipliedBy\conjugate{\omega_{k}}\right)\multipliedBy\exteriorDifferential x^{1}\ldots\exteriorDifferential x^{n}\\
 & = & \innerProductThatAdapts u{\frac{1}{\sqrt{\norm{\metric}}}\multipliedBy\sum_{j,k}\left(\frac{1}{i}\frac{\partial}{\partial x_{j}}+\conjugate{\cover{\magneticPotential}_{j}}\right)\metric^{jk}\multipliedBy\sqrt{\norm{\metric}}\multipliedBy\omega_{k}}.
\end{eqnarray*}
If $\omega=\magneticDifferential{\cover{\magneticPotential}}v$ for
some $v\in\smoothCompactlySupportedFunctions{\cover{\baseManifold}}$,
then $\omega_{k}=\left(\frac{1}{i}\frac{\partial}{\partial x_{k}}+\cover{\magneticPotential}_{k}\right)v$,
which completes the proof. \end{proof}
\begin{thm}
If $\cover{\electricPotential}$ is semi-bounded from below, meaning
$\inf\,\cover{\electricPotential}>-\infty$, then $\schroedingerOperator_{\cover{\magneticPotential},\cover{\electricPotential}}$
with domain $\smoothCompactlySupportedFunctions{\cover{\baseManifold}}$
is essentially self-adjoint, that is, its closure $\closure{\schroedingerOperator_{\cover{\magneticPotential},\cover{\electricPotential}}}$
is self-adjoint. \label{thm:MSO_are_Essentially_Selfadjoint}\end{thm}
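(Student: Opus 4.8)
The plan is to invoke the classical characterization of essential self-adjointness for semibounded symmetric operators: a symmetric operator bounded below by a constant $c$ is essentially self-adjoint if and only if, for one (equivalently every) $\eigenvalue<c$, the equation $\adjoint{\schroedingerOperator_{\cover{\magneticPotential},\cover{\electricPotential}}}f=\eigenvalue f$ has no nonzero solution $f\in L^{2}(\cover{\baseManifold})$. First I would reduce to the case $\cover{\electricPotential}\geq1$ by replacing $\cover{\electricPotential}$ with $\cover{\electricPotential}+1-\inf\cover{\electricPotential}$, which shifts $\schroedingerOperator_{\cover{\magneticPotential},\cover{\electricPotential}}$ by a bounded self-adjoint operator and hence does not affect essential self-adjointness; since $\innerProduct{\adjoint{\magneticDifferential{\cover{\magneticPotential}}}\magneticDifferential{\cover{\magneticPotential}}u}{u}=\normComingFromInnerProduct{\magneticDifferential{\cover{\magneticPotential}}u}^{2}\geq0$ for $u\in\smoothCompactlySupportedFunctions{\cover{\baseManifold}}$, the operator $\schroedingerOperator_{\cover{\magneticPotential},\cover{\electricPotential}}=\tfrac12\adjoint{\magneticDifferential{\cover{\magneticPotential}}}\magneticDifferential{\cover{\magneticPotential}}+\cover{\electricPotential}$ is then symmetric and bounded below by $1$. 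Thus it suffices to prove that every $f\in L^{2}(\cover{\baseManifold})$ with $\schroedingerOperator_{\cover{\magneticPotential},\cover{\electricPotential}}f=0$ in the distributional sense vanishes identically; here I use that $\schroedingerOperator_{\cover{\magneticPotential},\cover{\electricPotential}}$ is formally self-adjoint on $\smoothCompactlySupportedFunctions{\cover{\baseManifold}}$, so that $f\in\domain{\adjoint{\schroedingerOperator_{\cover{\magneticPotential},\cover{\electricPotential}}}}$ with $\adjoint{\schroedingerOperator_{\cover{\magneticPotential},\cover{\electricPotential}}}f=0$ is the same as satisfying the equation distributionally. Since $\schroedingerOperator_{\cover{\magneticPotential},\cover{\electricPotential}}$ is elliptic with smooth coefficients (its principal symbol is $\tfrac12\norm\xi^{2}$, and $\cover{\magneticPotential},\cover{\electricPotential}$ are smooth), elliptic regularity yields $f\in\smoothFunctions{\cover{\baseManifold},\mathbb{C}}$.

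The decisive second ingredient is completeness. As $\baseManifold$ is closed, the cover $\cover{\baseManifold}$ with the lifted metric is a complete Riemannian manifold, so for a fixed basepoint $x_{0}$ one can choose Lipschitz cutoff functions $\chi_{R}$, $R>0$, with $0\leq\chi_{R}\leq1$, $\chi_{R}\equiv1$ on the geodesic ball $B_{R}(x_{0})$, $\mathrm{supp}\,\chi_{R}\subseteq B_{2R}(x_{0})$ and $\norm{\exteriorDifferential\chi_{R}}\leq C/R$ almost everywhere, for instance $\chi_{R}=\phi\bigl(\mathrm{dist}(x_{0},\,\cdot\,)/R\bigr)$ for a fixed profile $\phi$. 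Since $\chi_{R}^{2}f$ is compactly supported and $f$ is smooth, integration by parts gives $\innerProduct{\schroedingerOperator_{\cover{\magneticPotential},\cover{\electricPotential}}f}{\chi_{R}^{2}f}=\tfrac12\innerProduct{\magneticDifferential{\cover{\magneticPotential}}f}{\magneticDifferential{\cover{\magneticPotential}}(\chi_{R}^{2}f)}+\innerProduct{\cover{\electricPotential}f}{\chi_{R}^{2}f}$, and using the Leibniz rule $\magneticDifferential{\cover{\magneticPotential}}(\chi_{R}^{2}f)=\chi_{R}^{2}\,\magneticDifferential{\cover{\magneticPotential}}f+\tfrac{2}{i}\,\chi_{R}\,f\,\exteriorDifferential\chi_{R}$ together with $\schroedingerOperator_{\cover{\magneticPotential},\cover{\electricPotential}}f=0$, taking real parts yields
\[
\frac12\int_{\cover{\baseManifold}}\chi_{R}^{2}\,\norm{\magneticDifferential{\cover{\magneticPotential}}f}^{2}\,\volumeForm+\int_{\cover{\baseManifold}}\cover{\electricPotential}\,\chi_{R}^{2}\,\norm f^{2}\,\volumeForm=-\re\Bigl(i\int_{\cover{\baseManifold}}\chi_{R}\,\conjugate f\,\innerProduct{\magneticDifferential{\cover{\magneticPotential}}f}{\exteriorDifferential\chi_{R}}\,\volumeForm\Bigr).
\]
It is worth stressing that $\cover{\magneticPotential}$ enters this identity only through the gauge-covariant derivative $\magneticDifferential{\cover{\magneticPotential}}f$ and never on its own, which is precisely why no growth or boundedness hypothesis on $\cover{\magneticPotential}$ is needed.

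Finally I would estimate the right-hand side by the Cauchy--Schwarz and Young inequalities, bounding it by $\tfrac14\int_{\cover{\baseManifold}}\chi_{R}^{2}\norm{\magneticDifferential{\cover{\magneticPotential}}f}^{2}\volumeForm+\int_{\cover{\baseManifold}}\norm{\exteriorDifferential\chi_{R}}^{2}\norm f^{2}\volumeForm$; absorbing the first summand on the left and using $\cover{\electricPotential}\geq1$ leaves
\[
\int_{B_{R}(x_{0})}\norm f^{2}\,\volumeForm\;\leq\;\int_{\cover{\baseManifold}}\cover{\electricPotential}\,\chi_{R}^{2}\,\norm f^{2}\,\volumeForm\;\leq\;\int_{B_{2R}(x_{0})\setminus B_{R}(x_{0})}\norm{\exteriorDifferential\chi_{R}}^{2}\,\norm f^{2}\,\volumeForm\;\leq\;\frac{C^{2}}{R^{2}}\,\normComingFromInnerProduct{f}^{2},
\]
which tends to $0$ as $R\to\infty$ because $f\in L^{2}(\cover{\baseManifold})$. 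Hence $f\equiv0$, completing the argument. There is no deep obstacle here; the two points that require care are the existence of the cutoffs $\chi_{R}$ with $\norm{\exteriorDifferential\chi_{R}}=O(1/R)$ --- which is exactly where completeness of $\cover{\baseManifold}$, guaranteed by compactness of $\baseManifold$, enters --- and the bookkeeping in the integration by parts, where one must keep $\cover{\magneticPotential}$ inside $\magneticDifferential{\cover{\magneticPotential}}$ so that the estimate is insensitive to the behaviour of $\cover{\magneticPotential}$ at infinity. Alternatively one can invoke Kato's diamagnetic inequality to reduce to the scalar operator $\tfrac12\Delta+\cover{\electricPotential}$ and then cite the classical essential self-adjointness of semibounded Schr\"odinger operators on complete manifolds; in any case this result is contained in~\cite{Shubin2001,BravermanMilatovichShubin2002}.
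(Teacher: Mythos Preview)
Your proof is correct and is essentially a self-contained unpacking of the references the paper itself invokes: the paper's proof merely notes that $\cover{\baseManifold}$ is complete and that $\innerProduct{\schroedingerOperator_{\cover{\magneticPotential},\cover{\electricPotential}}u}{u}\geq\inf\cover{\electricPotential}\cdot\innerProduct{u}{u}$, then cites \cite[Theorem~1.1]{Shubin2001} and \cite[Theorem~2.13]{BravermanMilatovichShubin2002}. Your cut-off/integration-by-parts argument is precisely the mechanism behind those results, so the approaches coincide.
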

\begin{proof}
As a cover of a closed manifold, $\cover{\baseManifold}$ is complete.
Moreover, $\schroedingerOperator_{\cover{\magneticPotential},\cover{\electricPotential}}$
is semi-bounded from below on $\smoothCompactlySupportedFunctions{\cover{\baseManifold}}$
since for any $u\in\smoothCompactlySupportedFunctions{\cover{\baseManifold}}$,
we have
\begin{equation}
\innerProduct{\schroedingerOperator_{\cover{\magneticPotential},\cover{\electricPotential}}u}u=\frac{1}{2}\innerProduct{\magneticDifferential{\cover{\magneticPotential}}u}{\magneticDifferential{\cover{\magneticPotential}}u}+\innerProduct{u\multipliedBy\cover{\electricPotential}}u\geq\inf\,\cover{\electricPotential}\multipliedBy\innerProduct uu.\label{eq:MSO_semibounded_from_below}
\end{equation}
The claim now follows from~\cite[Theorem 1.1]{Shubin2001} or~\cite[Theorem 2.13]{BravermanMilatovichShubin2002}.
\end{proof}
Ikeba and Kato~\cite{IkebeKato1962} were the first to prove essential
self-adjointness for a wide class of singular magnetic potentials
on $\mathbb{R}^{n}$. Kato~\cite{Kato1972} extended these results
using his famous inequality, which we discuss in Section~\ref{sub:GSE}.
Hess~et~al.~\cite{HessSchraderUhlenbrock1977} and Simon~\cite{Simon1979}
later revealed the functional analytic nature of Kato's inequality
in terms of domination of semigroups. Most proofs of essential self-adjointness
of magnetic Schr\"odinger operators use some sort of Kato inequality,
see also~\cite{LeinfelderSimader1981,Iwatsuka1990,BravermanMilatovichShubin2002,Milatovic2003,Milatovic2004,RozenblumMelgaard2005}.

The study of $\schroedingerOperator_{\cover{\magneticPotential},0}$
dates back to the 1930s when Landau considered the case $\cover{\baseManifold}=\mathbb{R}^{2}$;
however, major progress had not been made till the 1970s \cite{AvronHerbstSimon1978,Iwatsuka1986,Tamura1987,Tamura1988,Tamura1989}.
In this context, we point out the article~\cite{KondratievShubin2002},
in which Kondratiev and Shubin derive necessary and sufficient conditions
for magnetic Schr\"odinger operators to have discrete spectrum. In
particular, they recover a theorem of Avron~et~al.~\cite{AvronHerbstSimon1978}
which says that whenever $\closure{\schroedingerOperator_{0,0}}=\closure{\Delta}$
has discrete spectrum, the same is true for any $\closure{\schroedingerOperator_{\cover{\magneticPotential},0}}$
with $\cover{\magneticPotential}\in\smoothOneForms{\cover{\baseManifold},\mathbb{R}}$.

By virtue of~\cite[Theorem 4.3.1]{Davies1995}, the estimate (\ref{eq:MSO_semibounded_from_below})
implies $\spectrum{\closure{\schroedingerOperator_{\cover{\magneticPotential},\cover{\electricPotential}}}}\subseteq[\inf\,\cover{\electricPotential},\infty)$.
Moreover, $\schroedingerOperator_{\cover{\magneticPotential},\cover{\electricPotential}}$
and $\schroedingerOperator_{-\cover{\magneticPotential},\cover{\electricPotential}}$
are conjugate to each other with respect to complex conjugation on
$\smoothCompactlySupportedFunctions{\cover{\baseManifold}}$ which
yields $\spectrum{\closure{\schroedingerOperator_{\cover{\magneticPotential},\cover{\electricPotential}}}}=\spectrum{\closure{\schroedingerOperator_{-\cover{\magneticPotential},\cover{\electricPotential}}}}$.
If $\cover{\baseManifold}$ is compact, then $\spectrum{\closure{\schroedingerOperator_{\cover{\magneticPotential},\cover{\electricPotential}}}}$
is discrete and $L^{2}(\cover{\baseManifold})$ is the direct sum
of countably many finite-dimensional eigenspaces of $\closure{\schroedingerOperator_{\cover{\magneticPotential},\cover{\electricPotential}}}$
consisting of smooth eigenfunctions as is well-known, see~\cite[Theorem 2.1]{Shigekawa1987}
for instance. If, in addition, $\coverTwo{\baseManifold}$ is a finite
regular cover of $\cover{\baseManifold}$ with lifted potentials $\coverTwo{\magneticPotential}$
and $\coverTwo{\electricPotential}$, then we obtain $\spectrum{\closure{\schroedingerOperator_{\cover{\magneticPotential},\cover{\electricPotential}}}}\subseteq\spectrum{\closure{\schroedingerOperator_{\coverTwo{\magneticPotential},\coverTwo{\electricPotential}}}}$
by lifting eigenfunctions.

In the following, we collect results about magnetic Schr\"odinger
operators that describe periodic electromagnetic fields. Let $\coveringTransformationGroup$
denote the covering transformation group of the regular cover $\cover{\pi}\colon\cover{\baseManifold}\to\baseManifold$.
We assume that $\exteriorDifferential\cover{\magneticPotential}=\cover{\pi}^{*}\magneticField$
for some magnetic field $\magneticField\in\smoothTwoForms{\baseManifold,\mathbb{R}}$,
and that $\cover{\electricPotential}=\electricPotential\circ\cover{\pi}$
for some electric potential $\electricPotential\in\smoothFunctions{\baseManifold,\mathbb{R}}$.
As before, $[\magneticField]=0\in\homologyGroup^{2}(\baseManifold,\mathbb{R})$
is called the exact case, whereas $[\magneticField]\neq0\in\homologyGroup^{2}(\baseManifold,\mathbb{R})$
is called the monopole case. For any $\groupElement\in\coveringTransformationGroup$,
the $1$-form $\groupElement^{*}\cover{\magneticPotential}-\cover{\magneticPotential}$
is closed since $\exteriorDifferential(\groupElement^{*}\cover{\magneticPotential})=\groupElement^{*}\cover{\pi}^{*}\magneticField=\cover{\pi}^{*}\magneticField=\exteriorDifferential\cover{\magneticPotential}$,
and we define the $\cover{\magneticPotential}$-exact subgroup as
\[
\coveringTransformationGroup^{\cover{\magneticPotential}}=\left\{ \groupElement\in\coveringTransformationGroup\,|\,\groupElement^{*}\cover{\magneticPotential}-\cover{\magneticPotential}\textrm{ is exact}\right\} .
\]
Note that if $\cover{\baseManifold}=\universalCover{\baseManifold}{}$,
we have $\coveringTransformationGroup^{\cover{\magneticPotential}}=\coveringTransformationGroup=\pi_{1}(\baseManifold)$.
The subgroup property can be seen as follows. If $\groupElement,\groupElement'\in\coveringTransformationGroup^{\cover{\magneticPotential}}$
such that $\groupElement^{*}\cover{\magneticPotential}=\cover{\magneticPotential}+\exteriorDifferential f_{\groupElement}$
and $\groupElement'^{*}\cover{\magneticPotential}=\cover{\magneticPotential}+\exteriorDifferential f_{\groupElement'}$
where $f_{\groupElement},f_{\groupElement'}\in\smoothFunctions{\cover{\baseManifold},\mathbb{R}}$,
then 
\[
\left(\groupElement'\multipliedBy\groupElement\right)^{*}\cover{\magneticPotential}=\groupElement^{*}\left(\cover{\magneticPotential}+\exteriorDifferential f_{\groupElement'}\right)=\cover{\magneticPotential}+\exteriorDifferential\left(f_{\groupElement}+\groupElement^{*}f_{\groupElement'}\right).
\]

\begin{thm}
If $\coveringTransformationGroup^{\cover{\magneticPotential}}$ is
infinite, then all eigenspaces of $\closure{\schroedingerOperator_{\cover{\magneticPotential},\cover{\electricPotential}}}$
are infinite-dimensional, that is, $\closure{\schroedingerOperator_{\cover{\magneticPotential},\cover{\electricPotential}}}$
has no discrete spectrum.\label{thm:Eigenspaces_are_infinite_dimensional}
\end{thm}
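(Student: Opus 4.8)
The plan is to exploit the fact that elements of $\coveringTransformationGroup^{\cover{\magneticPotential}}$ act on $L^{2}(\cover{\baseManifold})$ by \emph{gauge-twisted} unitary operators that commute with $\closure{\schroedingerOperator_{\cover{\magneticPotential},\cover{\electricPotential}}}$, and then to show that such a family of operators, indexed by an infinite group, cannot preserve any finite-dimensional subspace unless it acts trivially there. First I would fix, for each $\groupElement\in\coveringTransformationGroup^{\cover{\magneticPotential}}$, a function $f_{\groupElement}\in\smoothFunctions{\cover{\baseManifold},\mathbb{R}}$ with $\groupElement^{*}\cover{\magneticPotential}=\cover{\magneticPotential}+\exteriorDifferential f_{\groupElement}$ (possible by definition of the $\cover{\magneticPotential}$-exact subgroup, and unique up to an additive constant since $\cover{\baseManifold}$ is connected), and define $U_{\groupElement}\colon L^{2}(\cover{\baseManifold})\to L^{2}(\cover{\baseManifold})$ by $(U_{\groupElement}u)(x)=e^{-if_{\groupElement}(\groupElement^{-1}x)}\,u(\groupElement^{-1}x)$. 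Since $\groupElement$ acts by isometries of $\cover{\metric}$ and preserves $\volumeForm$, each $U_{\groupElement}$ is unitary. A direct computation — using $\groupElement^{*}\cover{\electricPotential}=\cover{\electricPotential}$, $\groupElement^{*}\cover{\magneticPotential}=\cover{\magneticPotential}+\exteriorDifferential f_{\groupElement}$, and the intertwining identity $\magneticDifferential{\cover{\magneticPotential}}(e^{-if_{\groupElement}\circ\groupElement^{-1}}\cdot v\circ\groupElement^{-1}) = e^{-if_{\groupElement}\circ\groupElement^{-1}}\cdot (\groupElement^{-1})^{*}(\magneticDifferential{\cover{\magneticPotential}}v)$ on $\smoothCompactlySupportedFunctions{\cover{\baseManifold}}$ — shows that $U_{\groupElement}$ maps $\smoothCompactlySupportedFunctions{\cover{\baseManifold}}$ to itself and satisfies $U_{\groupElement}\schroedingerOperator_{\cover{\magneticPotential},\cover{\electricPotential}}=\schroedingerOperator_{\cover{\magneticPotential},\cover{\electricPotential}}U_{\groupElement}$ on that core; passing to the closure, $U_{\groupElement}$ preserves $\domain{\closure{\schroedingerOperator_{\cover{\magneticPotential},\cover{\electricPotential}}}}$ and commutes with $\closure{\schroedingerOperator_{\cover{\magneticPotential},\cover{\electricPotential}}}$. (The $U_{\groupElement}$ need not compose to a genuine representation — one gets a projective cocycle $U_{\groupElement}U_{\groupElement'}=e^{i\theta(\groupElement,\groupElement')}U_{\groupElement\groupElement'}$ because the $f_{\groupElement}$ are only defined up to constants — but this will not matter.)

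Next, suppose for contradiction that some eigenspace $E_{\eigenvalue}=\ker(\closure{\schroedingerOperator_{\cover{\magneticPotential},\cover{\electricPotential}}}-\eigenvalue)$ is finite-dimensional, say $\dim E_{\eigenvalue}=N<\infty$. Since each $U_{\groupElement}$ commutes with $\closure{\schroedingerOperator_{\cover{\magneticPotential},\cover{\electricPotential}}}$, it preserves $E_{\eigenvalue}$, so we obtain a map $\groupElement\mapsto U_{\groupElement}|_{E_{\eigenvalue}}$ into the unitary group $U(N)$, which is a projective unitary representation of the infinite group $\coveringTransformationGroup^{\cover{\magneticPotential}}$ on the finite-dimensional space $E_{\eigenvalue}$. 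The key point — and the main obstacle — is to derive a contradiction from this: I would argue that the $U_{\groupElement}$ cannot act as (projective) unitaries of bounded rank on a nonzero eigenfunction because $\groupElement$ pushes the support of any $L^2$ function ``off to infinity'' in the cover. Concretely, pick $0\neq\psi\in E_{\eigenvalue}$; elliptic regularity (the operator is elliptic with smooth coefficients) gives $\psi\in\smoothFunctions{\cover{\baseManifold},\mathbb{C}}$, and $\psi$ is not identically zero on any fundamental domain $D$ for the full group $\coveringTransformationGroup$ up to a set of measure zero — more precisely, $\|\psi\|_{L^{2}}^{2}=\sum_{\groupElement\in\coveringTransformationGroup}\|\psi\|_{L^{2}(\groupElement D)}^{2}$, so since $\coveringTransformationGroup$ may be infinite this alone is not enough. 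Instead I would use the following device: choose $\groupElement_{1},\groupElement_{2},\ldots$ in $\coveringTransformationGroup^{\cover{\magneticPotential}}$ so that the translates $\groupElement_{k}\,\mathrm{supp}(\chi)$ of a suitable cutoff $\chi$ are pairwise disjoint, and consider the vectors $U_{\groupElement_{k}}(\chi\psi)$ — wait, $\chi\psi\notin E_{\eigenvalue}$. So the cleaner route is: the functions $U_{\groupElement}\psi$ for $\groupElement$ ranging over $\coveringTransformationGroup^{\cover{\magneticPotential}}$ all lie in $E_{\eigenvalue}$, hence span a subspace of dimension $\leq N$; pick $\groupElement_{1},\ldots,\groupElement_{N+1}$ and find a nontrivial linear relation $\sum_{j=1}^{N+1}c_{j}U_{\groupElement_{j}}\psi=0$. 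Applying $U_{\groupElement_{1}}^{-1}$ and absorbing cocycle phases, this reads $\sum_{j}c_{j}'\,(h_{j}\cdot\psi\circ\eta_{j})=0$ for distinct $\eta_{j}=\groupElement_{1}^{-1}\groupElement_{j}\in\coveringTransformationGroup$ and unit-modulus functions $h_{j}$, with $\eta_{1}=\mathrm{id}$. Restricting to a small ball $B$ on which $\psi$ is nonvanishing and whose $\coveringTransformationGroup$-translates are disjoint, every term with $\eta_{j}\neq\mathrm{id}$ is supported away from $B$, forcing $c_{1}'=0$ on $B$; since $\psi\not\equiv0$ and (by unique continuation for the elliptic operator) $\psi$ is nonvanishing on a dense open set, we can repeat this to kill every $c_{j}'$, contradicting nontriviality of the relation.

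Thus every eigenspace of $\closure{\schroedingerOperator_{\cover{\magneticPotential},\cover{\electricPotential}}}$ is either trivial or infinite-dimensional, which is exactly the assertion that $\closure{\schroedingerOperator_{\cover{\magneticPotential},\cover{\electricPotential}}}$ has no discrete spectrum. The step I expect to be most delicate is making the ``translates move supports off to infinity'' argument fully rigorous: one must choose the group elements $\groupElement_{j}\in\coveringTransformationGroup^{\cover{\magneticPotential}}$ and the ball $B$ so that the relevant translates $\eta_{j}B$ are genuinely disjoint from $B$ — this uses that $\coveringTransformationGroup$ acts properly discontinuously and freely on $\cover{\baseManifold}$ (a cover of a closed manifold), so given finitely many nontrivial $\eta_{j}$ one can shrink $B$ to avoid all the $\eta_{j}B$ — together with unique continuation for $\closure{\schroedingerOperator_{\cover{\magneticPotential},\cover{\electricPotential}}}-\eigenvalue$ to guarantee that a nonzero eigenfunction cannot vanish on any open set. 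An alternative packaging of the same idea, which sidesteps unique continuation, is to note that $\sum_{\groupElement\in\coveringTransformationGroup}|U_{\groupElement_{0}\groupElement}\psi|^{2}$ is a $\coveringTransformationGroup$-periodic function whose integral over a fundamental domain equals $\|\psi\|_{L^{2}}^{2}$, and to play the linear-dependence relation against this periodicity; I would choose whichever version reads most cleanly once the constants are tracked.
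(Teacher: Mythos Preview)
Your setup (magnetic translations $U_{\groupElement}$, commutation with the operator, invariance of the eigenspace, projective action on $E_{\eigenvalue}$) matches the paper's Lemma~\ref{lem:Magnetic_translations_commute_with_MSO} essentially verbatim. The gap is in the step where you try to extract a contradiction from a linear dependence $\sum_{j}c_{j}'(h_{j}\cdot\psi\circ\eta_{j})=0$. You write that, restricted to a small ball $B$ whose $\coveringTransformationGroup$-translates are disjoint, ``every term with $\eta_{j}\neq\mathrm{id}$ is supported away from $B$''. That is false: on $B$ the $j$th term equals $h_{j}(x)\,\psi(\eta_{j}x)$, i.e.\ $\psi$ evaluated on $\eta_{j}B$, and there is no reason whatsoever for $\psi$ to vanish on $\eta_{j}B$ --- $\psi$ is an $L^{2}$ eigenfunction on all of $\cover{\baseManifold}$, not a function supported in $B$. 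Disjointness of the translates $\eta_{j}B$ tells you nothing about the restriction of $\psi$ to those translates. Unique continuation does not help here either; it says $\psi$ cannot vanish on an open set, which is the opposite of what you would need. So the main argument, as written, does not go through.

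The paper avoids this by a short trace computation in the style of Sunada. Fix an orthonormal basis $u_{1},\ldots,u_{N}$ of $E_{\eigenvalue}$ and write $T_{\groupElement}u_{j}=\sum_{k}U_{jk}(\groupElement)u_{k}$ with $U(\groupElement)$ unitary. Using $|T_{\groupElement}u_{j}|=|\groupElement^{*}u_{j}|$ and unitarity of $U(\groupElement)$ one gets, for any fundamental domain $F$ of $\coveringTransformationGroup^{\cover{\magneticPotential}}$,
\[
N=\sum_{j}\|u_{j}\|^{2}=\sum_{\groupElement\in\coveringTransformationGroup^{\cover{\magneticPotential}}}\sum_{j}\int_{F}|T_{\groupElement}u_{j}|^{2}
=|\coveringTransformationGroup^{\cover{\magneticPotential}}|\sum_{k}\int_{F}|u_{k}|^{2},
\]
which is absurd for $|\coveringTransformationGroup^{\cover{\magneticPotential}}|=\infty$. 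The crucial point is that working with the \emph{whole basis} and the unitarity of $U(\groupElement)$ makes $\sum_{j}|T_{\groupElement}u_{j}|^{2}$ independent of $\groupElement$; a single vector $\psi$ does not enjoy this. Your ``alternative packaging'' at the end is gesturing towards this, but to make it work you really need the full orthonormal basis, not one $\psi$. (A genuinely different route that \emph{does} work with a single vector: choose $\groupElement_{n}\to\infty$ in $\coveringTransformationGroup^{\cover{\magneticPotential}}$; then $U_{\groupElement_{n}}\psi\to 0$ weakly in $L^{2}$ because $|U_{\groupElement_{n}}\psi|=|\psi\circ\groupElement_{n}^{-1}|$, while all $U_{\groupElement_{n}}\psi$ have norm $\|\psi\|$ and lie in the finite-dimensional $E_{\eigenvalue}$, where weak and norm convergence coincide --- contradiction.)
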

The idea behind Theorem~\ref{thm:Eigenspaces_are_infinite_dimensional}
goes back to the following well-known argument for the special case
$\cover{\magneticPotential}=0$ and $\cover{\electricPotential}=0$.
Since Laplacians commute with isometries, any eigenfunction can be
translated by covering transformations to obtain linearly independent
eigenfunctions in the same eigenspace. In the presence of magnetic
potentials, one has to adapt the translations as follows. If $x_{0}\in\cover{\baseManifold}$
denotes some fixed reference point, then any $\groupElement\in\coveringTransformationGroup^{\cover{\magneticPotential}}$
gives rise to a unique $f_{\groupElement}\in\smoothFunctions{\cover{\baseManifold},\mathbb{R}}$
such that $\groupElement^{*}\cover{\magneticPotential}=\cover{\magneticPotential}+\exteriorDifferential f_{\groupElement}$
and $f_{\groupElement}(x_{0})=0$. Following~\cite{MathaiShubin2002},
we define the associated magnetic translation $T_{\groupElement}\colon L^{2}(\cover{\baseManifold})\to L^{2}(\cover{\baseManifold})$
as the unitary map given by $T_{\groupElement}u=e^{i\multipliedBy f_{\groupElement}}\groupElement^{*}u$.
\begin{lem}
\label{lem:Magnetic_translations_commute_with_MSO}Magnetic translations
map $\domain{\closure{\schroedingerOperator_{\cover{\magneticPotential},\cover{\electricPotential}}}}$
into itself and commute with $\closure{\schroedingerOperator_{\cover{\magneticPotential},\cover{\electricPotential}}}$,
that is, for any $\groupElement\in\coveringTransformationGroup^{\cover{\magneticPotential}}$
and $u\in\domain{\closure{\schroedingerOperator_{\cover{\magneticPotential},\cover{\electricPotential}}}}$,
we have $e^{i\multipliedBy f_{\groupElement}}\groupElement^{*}u\in\domain{\closure{\schroedingerOperator_{\cover{\magneticPotential},\cover{\electricPotential}}}}$
and 
\begin{equation}
\closure{\schroedingerOperator_{\cover{\magneticPotential},\cover{\electricPotential}}}\left(e^{i\multipliedBy f_{\groupElement}}\groupElement^{*}u\right)=e^{i\multipliedBy f_{\groupElement}}\groupElement^{*}\left(\closure{\schroedingerOperator_{\cover{\magneticPotential},\cover{\electricPotential}}}u\right).\label{eq:Action_of_symmetry_on_MSO}
\end{equation}
\end{lem}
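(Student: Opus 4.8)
The plan is to first establish the intertwining identity~(\ref{eq:Action_of_symmetry_on_MSO}) on the core $\smoothCompactlySupportedFunctions{\cover\baseManifold}$ and then to propagate it to $\domain{\closure{\schroedingerOperator_{\cover\magneticPotential,\cover\electricPotential}}}$ by a standard closed-operator argument. Write the magnetic translation as $T_\groupElement=e^{if_\groupElement}\groupElement^*$, the composition of the pullback $\groupElement^*$ with multiplication by the unimodular function $e^{if_\groupElement}$. The algebraic heart of the matter is the gauge behaviour of the magnetic differential: a one-line computation with the Leibniz rule gives, for any $f\in\smoothFunctions{\cover\baseManifold,\mathbb{R}}$ and $v\in\smoothCompactlySupportedFunctions{\cover\baseManifold}$,
\[
\magneticDifferential{\cover\magneticPotential}\big(e^{if}v\big)=e^{if}\,\magneticDifferential{\cover\magneticPotential+\exteriorDifferential f}v.
\]
Since multiplication by $e^{if}$ is unitary on $\smoothCompactlySupportedFunctions{\cover\baseManifold}$ and on compactly supported $1$-forms, taking formal adjoints gives $\adjoint{\magneticDifferential{\cover\magneticPotential}}\big(e^{if}\omega\big)=e^{if}\,\adjoint{\magneticDifferential{\cover\magneticPotential+\exteriorDifferential f}}\omega$, and since $\cover\electricPotential$ commutes with multiplication operators we conclude $\schroedingerOperator_{\cover\magneticPotential,\cover\electricPotential}\big(e^{if}v\big)=e^{if}\,\schroedingerOperator_{\cover\magneticPotential+\exteriorDifferential f,\cover\electricPotential}v$.

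Next I would use that the deck transformation $\groupElement$ is a Riemannian isometry of $\cover\baseManifold$ fixing $\cover\electricPotential=\electricPotential\circ\cover\pi$; hence $\groupElement^*$ is unitary on $L^2(\cover\baseManifold)$ and intertwines the building blocks, $\groupElement^*\magneticDifferential{\cover\magneticPotential}=\magneticDifferential{\groupElement^*\cover\magneticPotential}\groupElement^*$, so also $\groupElement^*\adjoint{\magneticDifferential{\cover\magneticPotential}}=\adjoint{\magneticDifferential{\groupElement^*\cover\magneticPotential}}\groupElement^*$ and therefore $\groupElement^*\schroedingerOperator_{\cover\magneticPotential,\cover\electricPotential}=\schroedingerOperator_{\groupElement^*\cover\magneticPotential,\cover\electricPotential}\groupElement^*$. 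Specializing $f=f_\groupElement$ for $\groupElement\in\coveringTransformationGroup^{\cover\magneticPotential}$, the defining relation $\groupElement^*\cover\magneticPotential=\cover\magneticPotential+\exteriorDifferential f_\groupElement$ makes the two intertwinings combine: for $u\in\smoothCompactlySupportedFunctions{\cover\baseManifold}$,
\[
\schroedingerOperator_{\cover\magneticPotential,\cover\electricPotential}\big(T_\groupElement u\big)=e^{if_\groupElement}\,\schroedingerOperator_{\groupElement^*\cover\magneticPotential,\cover\electricPotential}\big(\groupElement^* u\big)=e^{if_\groupElement}\,\groupElement^*\big(\schroedingerOperator_{\cover\magneticPotential,\cover\electricPotential}u\big)=T_\groupElement\big(\schroedingerOperator_{\cover\magneticPotential,\cover\electricPotential}u\big).
\]
Since $T_\groupElement$ obviously preserves $\smoothCompactlySupportedFunctions{\cover\baseManifold}$, this settles the lemma on the core.

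Finally I would pass to the closure. Note that $T_\groupElement$ is unitary on $L^2(\cover\baseManifold)$; the one subtle point is that on a non-compact cover $f_\groupElement$ need not be bounded, yet $e^{if_\groupElement}$ is unimodular, so multiplication by it remains unitary, while $\groupElement^*$ is unitary because $\groupElement$ preserves $\volumeForm$. Given $u\in\domain{\closure{\schroedingerOperator_{\cover\magneticPotential,\cover\electricPotential}}}$, choose $u_n\in\smoothCompactlySupportedFunctions{\cover\baseManifold}$ with $u_n\to u$ and $\schroedingerOperator_{\cover\magneticPotential,\cover\electricPotential}u_n\to\closure{\schroedingerOperator_{\cover\magneticPotential,\cover\electricPotential}}u$ in $L^2$; applying the bounded operator $T_\groupElement$ together with the core identity gives $T_\groupElement u_n\to T_\groupElement u$ and $\schroedingerOperator_{\cover\magneticPotential,\cover\electricPotential}\big(T_\groupElement u_n\big)=T_\groupElement\big(\schroedingerOperator_{\cover\magneticPotential,\cover\electricPotential}u_n\big)\to T_\groupElement\big(\closure{\schroedingerOperator_{\cover\magneticPotential,\cover\electricPotential}}u\big)$. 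Since $\closure{\schroedingerOperator_{\cover\magneticPotential,\cover\electricPotential}}$ is closed by Theorem~\ref{thm:MSO_are_Essentially_Selfadjoint}, it follows that $T_\groupElement u\in\domain{\closure{\schroedingerOperator_{\cover\magneticPotential,\cover\electricPotential}}}$ and $\closure{\schroedingerOperator_{\cover\magneticPotential,\cover\electricPotential}}\big(T_\groupElement u\big)=T_\groupElement\big(\closure{\schroedingerOperator_{\cover\magneticPotential,\cover\electricPotential}}u\big)$, which is~(\ref{eq:Action_of_symmetry_on_MSO}). I do not expect a genuine obstacle: the argument is bookkeeping with gauge transformations, and the only things to watch are the sign in $\magneticDifferential{\cover\magneticPotential}(e^{if}v)=e^{if}\magneticDifferential{\cover\magneticPotential+\exteriorDifferential f}v$, which must line up with $\groupElement^*\cover\magneticPotential=\cover\magneticPotential+\exteriorDifferential f_\groupElement$ so that the phase exactly absorbs the change of potential, and the fact that $T_\groupElement$ stays unitary even when $f_\groupElement$ is unbounded, which is precisely what lets the closure step go through without any domain estimates.
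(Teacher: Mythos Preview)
Your proof is correct and follows essentially the same strategy as the paper: establish the commutation on the core $\smoothCompactlySupportedFunctions{\cover{\baseManifold}}$ via the magnetic differential, then pass to the closure using that $T_{\groupElement}$ is unitary. The only cosmetic difference is that you factor $T_{\groupElement}=e^{if_{\groupElement}}\circ\groupElement^{*}$ and handle the gauge and pullback intertwinings separately, whereas the paper defines $T_{\groupElement}$ simultaneously on functions and $1$-forms and verifies $\magneticDifferential{\cover{\magneticPotential}}\circ T_{\groupElement}=T_{\groupElement}\circ\magneticDifferential{\cover{\magneticPotential}}$ in one step; the underlying computation and the closure argument are identical.
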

\begin{proof}
We use ideas from~\cite{MathaiShubin2002}, where the underlying
twisted group algebra structure is exploited in detail. In order to
see that (\ref{eq:Action_of_symmetry_on_MSO}) holds for any $u\in\domain{\schroedingerOperator_{\cover{\magneticPotential},\cover{\electricPotential}}}=\smoothCompactlySupportedFunctions{\cover{\baseManifold}}$,
it suffices to show that for any $v\in\smoothCompactlySupportedFunctions{\cover{\baseManifold}}$
\begin{equation}
\innerProduct{\schroedingerOperator_{\cover{\magneticPotential},\cover{\electricPotential}}\multipliedBy T_{\groupElement}u}v=\innerProduct{T_{\groupElement}\multipliedBy\schroedingerOperator_{\cover{\magneticPotential},\cover{\electricPotential}}u}v.\label{eq:Magnetic_translations_commute_with_MSO}
\end{equation}
One can prove (\ref{eq:Magnetic_translations_commute_with_MSO}) directly
using (\ref{eq:Coordinatefree_Version_of_MSO}). In order to avoid
this tedious calculation, we define a magnetic translation $T_{\groupElement}$
on $\smoothCompactlySupportedOneForms{\cover{\baseManifold}}$ given
as $T_{\groupElement}\omega=e^{i\multipliedBy f_{\groupElement}}\groupElement^{*}\omega$
for $\omega\in\smoothCompactlySupportedOneForms{\cover{\baseManifold}}$.
We obtain $\magneticDifferential{\cover{\magneticPotential}}\circ T_{\groupElement}=T_{\groupElement}\circ\magneticDifferential{\cover{\magneticPotential}}$
by noting that for any $u\in\smoothCompactlySupportedFunctions{\cover{\baseManifold}}$,
\begin{eqnarray*}
\magneticDifferential{\cover{\magneticPotential}}\multipliedBy T_{\groupElement}u & = & -i\multipliedBy\exteriorDifferential(e^{i\multipliedBy f_{\groupElement}}\groupElement^{*}u)+e^{i\multipliedBy f_{\groupElement}}\groupElement^{*}u\multipliedBy\cover{\magneticPotential}\\
 & = & e^{i\multipliedBy f_{\groupElement}}(-i\multipliedBy\groupElement^{*}\exteriorDifferential u+\groupElement^{*}u(\exteriorDifferential f_{\groupElement}+\cover{\magneticPotential}))=e^{i\multipliedBy f_{\groupElement}}\groupElement^{*}(-i\multipliedBy\exteriorDifferential u+u\multipliedBy\cover{\magneticPotential})=T_{\groupElement}\multipliedBy\magneticDifferential{\cover{\magneticPotential}}u,
\end{eqnarray*}
where we used that $\exteriorDifferential f_{\groupElement}+\cover{\magneticPotential}=\groupElement^{*}\cover{\magneticPotential}$.
Taking adjoints, we obtain $T_{\groupElement}^{-1}\circ\adjoint{\magneticDifferential{\cover{\magneticPotential}}}=\adjoint{\magneticDifferential{\cover{\magneticPotential}}}\circ T_{\groupElement}^{-1}$
on $\smoothCompactlySupportedOneForms{\cover{\baseManifold}}$. Since
$T_{\groupElement}^{-1}\circ\cover{\electricPotential}\circ T_{\groupElement}=\cover{\electricPotential}$
trivially holds on $\smoothCompactlySupportedFunctions{\cover{\baseManifold}}$,
the claim $\schroedingerOperator_{\cover{\magneticPotential},\cover{\electricPotential}}\multipliedBy T_{\groupElement}=T_{\groupElement}\multipliedBy\schroedingerOperator_{\cover{\magneticPotential},\cover{\electricPotential}}$
follows directly from (\ref{eq:Def_of_MSO}). Let now $u\in\domain{\closure{\schroedingerOperator_{\cover{\magneticPotential},\cover{\electricPotential}}}}$
and choose a sequence $(u_{n})_{n\in\mathbb{N}}$ in $\smoothCompactlySupportedFunctions{\cover{\baseManifold}}=\domain{\schroedingerOperator_{\cover{\magneticPotential},\cover{\electricPotential}}}$
such that $\normComingFromInnerProduct{u-u_{n}}\to0$ and $\normComingFromInnerProduct{\closure{\schroedingerOperator_{\cover{\magneticPotential},\cover{\electricPotential}}}u-\schroedingerOperator_{\cover{\magneticPotential},\cover{\electricPotential}}u_{n}}\to0$.
The translated sequence $(T_{\groupElement}u_{n})_{n\in\mathbb{N}}$
also lies in $\smoothCompactlySupportedFunctions{\cover{\baseManifold}}$
and satisfies $\normComingFromInnerProduct{T_{\groupElement}u-T_{\groupElement}u_{n}}=\normComingFromInnerProduct{u-u_{n}}\to0$
as well as 
\[
\normComingFromInnerProductThatAdapts{T_{\groupElement}\multipliedBy\closure{\schroedingerOperator_{\cover{\magneticPotential},\cover{\electricPotential}}}u-\schroedingerOperator_{\cover{\magneticPotential},\cover{\electricPotential}}\multipliedBy T_{\groupElement}u_{n}}=\normComingFromInnerProductThatAdapts{T_{\groupElement}\multipliedBy\closure{\schroedingerOperator_{\cover{\magneticPotential},\cover{\electricPotential}}}u-T_{\groupElement}\multipliedBy\schroedingerOperator_{\cover{\magneticPotential},\cover{\electricPotential}}u_{n}}\to0
\]
since $T_{\groupElement}$ is unitary. Hence, $T_{\groupElement}u\in\domain{\closure{\schroedingerOperator_{\cover{\magneticPotential},\cover{\electricPotential}}}}$
and $\closure{\schroedingerOperator_{\cover{\magneticPotential},\cover{\electricPotential}}}\multipliedBy T_{\groupElement}u=T_{\groupElement}\multipliedBy\closure{\schroedingerOperator_{\cover{\magneticPotential},\cover{\electricPotential}}}u$
as claimed.
\end{proof}

\begin{proof}
[Proof of Theorem \ref{thm:Eigenspaces_are_infinite_dimensional}]We
largely follow~\cite{Sunada1988} and assume that $\closure{\schroedingerOperator_{\cover{\magneticPotential},\cover{\electricPotential}}}$
has an eigenspace with finite orthonormal basis $\{u_{1},u_{2},\ldots,u_{N}\}$.
For each $\groupElement\in\coveringTransformationGroup^{\cover{\magneticPotential}}$
and $j\in\{1,2,\ldots,N\}$, the translate $T_{\groupElement}u_{j}$
is an eigenfunction of $\closure{\schroedingerOperator_{\cover{\magneticPotential},\cover{\electricPotential}}}$
by virtue of Lemma~\ref{lem:Magnetic_translations_commute_with_MSO}.
Hence, 
\[
T_{\groupElement}u_{j}=\sum_{k=1}^{N}U_{jk}(\groupElement)\multipliedBy u_{k}
\]
for some matrix $U(\groupElement)$, which is unitary since
\[
\sum_{k=1}^{N}U_{jk}(\groupElement)\multipliedBy\conjugate{U_{lk}(\groupElement)}=\sum_{k,m=1}^{N}U_{jk}(\groupElement)\multipliedBy\conjugate{U_{lm}}(\groupElement)\multipliedBy\innerProduct{u_{k}}{u_{m}}=\innerProduct{T_{\groupElement}u_{j}}{T_{\groupElement}u_{l}}=\innerProduct{u_{j}}{u_{l}}.
\]
Let $F$ denote a fundamental domain for the action of $\coveringTransformationGroup^{\cover{\magneticPotential}}$
on $\cover{\baseManifold}$. We obtain 
\begin{eqnarray*}
N=\sum_{j=1}^{N}\normComingFromInnerProduct{u_{j}}^{2} & = & \sum_{j=1}^{N}\sum_{\groupElement\in\coveringTransformationGroup^{\cover{\magneticPotential}}}\int_{F}\norm{\groupElement^{*}u_{j}}^{2}=\sum_{\groupElement\in\coveringTransformationGroup^{\cover{\magneticPotential}}}\sum_{j=1}^{N}\int_{F}\norm{T_{\groupElement}\multipliedBy u_{j}}^{2}\\
 & = & \sum_{\groupElement\in\coveringTransformationGroup^{\cover{\magneticPotential}}}\sum_{j=1}^{N}\sum_{k,l=1}^{N}\int_{F}U_{jk}(\groupElement)\multipliedBy u_{k}\multipliedBy\conjugate{U_{jl}(\groupElement)}\multipliedBy\conjugate{u_{l}}=\norm{\coveringTransformationGroup^{\cover{\magneticPotential}}}\multipliedBy\sum_{k=1}^{N}\int_{F}\norm{u_{k}}^{2},
\end{eqnarray*}
which contradicts $\norm{\coveringTransformationGroup^{\cover{\magneticPotential}}}=\infty$.
\end{proof}

\subsection{Ground state energy\label{sub:GSE}}

\global\long\def\unitaryRepresentation{\eta}

Let $\cover{\pi}\colon\cover{\baseManifold}\to\baseManifold$ be a
regular cover equipped with potentials $\cover{\magneticPotential}\in\smoothOneForms{\cover{\baseManifold},\mathbb{R}}$
and $\cover{\electricPotential}=\electricPotential\circ\cover{\pi}$,
where $\electricPotential\in\smoothFunctions{\baseManifold,\mathbb{R}}$.
\begin{defn}
The ground state energy\emph{ }is defined as
\[
\groundStateEnergy(\cover{\magneticPotential},\cover{\electricPotential})=\groundStateEnergy(\closure{\schroedingerOperator_{\cover{\magneticPotential},\cover{\electricPotential}}})=\inf\multipliedBy\spectrum{\closure{\schroedingerOperator_{\cover{\magneticPotential},\cover{\electricPotential}}}}.
\]
Any normalized eigenfunction of $\closure{\schroedingerOperator_{\cover{\magneticPotential},\cover{\electricPotential}}}$
with eigenvalue $\groundStateEnergy(\cover{\magneticPotential},\cover{\electricPotential})$
is called a ground state.
\end{defn}
If $\cover{\baseManifold}$ is non-compact, then $\groundStateEnergy(\cover{\magneticPotential},\cover{\electricPotential})$
can belong to the continuous spectrum $\continuousSpectrum{\closure{\schroedingerOperator_{\cover{\magneticPotential},\cover{\electricPotential}}}}$.
In any case, the variational principle says that $\groundStateEnergy(\cover{\magneticPotential},\cover{\electricPotential})$
is an infimum of Rayleigh quotients 
\[
\groundStateEnergy(\cover{\magneticPotential},\cover{\electricPotential})=\inf_{u\in\smoothCompactlySupportedFunctions{\cover{\baseManifold}\targetC}\backslash\{0\}}\frac{\innerProduct{\schroedingerOperator_{\cover{\magneticPotential},\cover{\electricPotential}}u}u}{\innerProduct uu}=\inf_{u\in\smoothCompactlySupportedFunctions{\cover{\baseManifold}\targetC}\backslash\{0\}}\frac{\int_{\cover{\baseManifold}}\frac{1}{2}\norm{\magneticDifferential{\cover{\magneticPotential}}u}^{2}+\cover{\electricPotential}\multipliedBy\norm u^{2}}{\int_{\cover{\baseManifold}}\norm u^{2}}\geq\min\,\electricPotential.
\]
The reader is referred to~\cite[Section 4]{Davies1995} for details.
If $\electricPotential=0$, we use the shortened notation $\groundStateEnergy(\cover{\magneticPotential})$
for $\groundStateEnergy(\cover{\magneticPotential},0)$. We recall
that a distribution $\nu$ on $\cover{\baseManifold}$ is called positive,
denoted by $\nu\geqslant0$, if $\nu(f)\geq0$ for every non-negative
$f\in\smoothCompactlySupportedFunctions{\cover{\baseManifold},\mathbb{R}}$.
The following version of Kato's inequality is a special case of~\cite[Proposition 5.9 and Corollary 5.10]{BravermanMilatovichShubin2002},
see also \cite[Proposition 2.2]{HessSchraderUhlenbrock1980}.
\begin{prop}
\label{prop:Katos_Inequality}For any $u\in\smoothCompactlySupportedFunctions{\cover{\baseManifold}}$
and $\varepsilon>0$, let $\norm u_{\varepsilon}\in\smoothCompactlySupportedFunctions{\cover{\baseManifold}}$
be defined as
\[
\norm u_{\varepsilon}=\sqrt{\norm u^{2}+\varepsilon^{2}}-\varepsilon.
\]
Then, we have the following inequality of smooth functions on $\cover{\baseManifold}$
\[
\left(\norm u_{\varepsilon}+\varepsilon\right)\multipliedBy\schroedingerOperator_{0,0}\norm u_{\varepsilon}\leq\re\innerProduct{\schroedingerOperator_{\cover{\magneticPotential},0}u}u.
\]
In the limit $\varepsilon\searrow0$, we moreover have the following
inequality of distributions 
\[
\schroedingerOperator_{0,0}\norm u\leqslant\re\innerProduct{\schroedingerOperator_{\cover{\magneticPotential},0}u}{\mathrm{sign}\multipliedBy u},
\]
where 
\[
\mathrm{sign}\multipliedBy u=\begin{cases}
\frac{u(x)}{\norm{u(x)}} & \textrm{for }u(x)\neq0,\\
0 & \textrm{otherwise.}
\end{cases}
\]

\end{prop}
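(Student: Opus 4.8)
The plan is to obtain the first, pointwise inequality from two Leibniz-type identities together with a diamagnetic bound on gradients, and then to derive the distributional inequality by dividing by $\norm u_{\varepsilon}+\varepsilon$, pairing with non-negative test functions, and letting $\varepsilon\searrow0$.

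Fix $u\in\smoothCompactlySupportedFunctions{\cover{\baseManifold}}$ and $\varepsilon>0$, and abbreviate $v=v_{\varepsilon}=\norm u_{\varepsilon}+\varepsilon=\sqrt{\norm u^{2}+\varepsilon^{2}}$, which is smooth (being bounded below by $\varepsilon$) and has the same support as $u$. Since $\Delta$ annihilates constants, $\schroedingerOperator_{0,0}\norm u_{\varepsilon}=\frac{1}{2}\Delta v$, and $\adjoint{\magneticDifferential{\cover{\magneticPotential}}}\magneticDifferential{\cover{\magneticPotential}}=2\multipliedBy\schroedingerOperator_{\cover{\magneticPotential},0}$ by definition, so the first assertion is equivalent to the pointwise inequality $v\multipliedBy\Delta v\leq\re\bigl(\conjugate u\multipliedBy\adjoint{\magneticDifferential{\cover{\magneticPotential}}}\magneticDifferential{\cover{\magneticPotential}}u\bigr)$. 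To reach it I would differentiate $v^{2}=\norm u^{2}+\varepsilon^{2}$ twice. From $\magneticDifferential{\cover{\magneticPotential}}u=\frac{1}{i}\multipliedBy\exteriorDifferential u+u\multipliedBy\cover{\magneticPotential}$ one has $\exteriorDifferential u=i\multipliedBy\magneticDifferential{\cover{\magneticPotential}}u-i\multipliedBy u\multipliedBy\cover{\magneticPotential}$, so that $\exteriorDifferential(\norm u^{2})=2\multipliedBy\re(\conjugate u\multipliedBy\exteriorDifferential u)=2\multipliedBy\re(i\multipliedBy\conjugate u\multipliedBy\magneticDifferential{\cover{\magneticPotential}}u)$, because $\cover{\magneticPotential}$ is real-valued and $i\multipliedBy\norm u^{2}\multipliedBy\cover{\magneticPotential}$ is hence purely imaginary. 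Dividing by $2v$ gives the first-order identity $v\multipliedBy\exteriorDifferential v=\re(i\multipliedBy\conjugate u\multipliedBy\magneticDifferential{\cover{\magneticPotential}}u)$, and since $\norm\omega^{2}=\norm{\re\omega}^{2}+\norm{\im\omega}^{2}\geq\norm{\re\omega}^{2}$ for the Hermitian fibre metric on complex $1$-forms, together with $\norm u\leq v$, this yields the diamagnetic gradient bound
\[
v^{2}\multipliedBy\norm{\exteriorDifferential v}^{2}=\norm{\re(i\multipliedBy\conjugate u\multipliedBy\magneticDifferential{\cover{\magneticPotential}}u)}^{2}\leq\norm{i\multipliedBy\conjugate u\multipliedBy\magneticDifferential{\cover{\magneticPotential}}u}^{2}=\norm u^{2}\multipliedBy\norm{\magneticDifferential{\cover{\magneticPotential}}u}^{2}\leq v^{2}\multipliedBy\norm{\magneticDifferential{\cover{\magneticPotential}}u}^{2},
\]
that is, $\norm{\exteriorDifferential v}\leq\norm{\magneticDifferential{\cover{\magneticPotential}}u}$ pointwise.

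Next I would apply $\exteriorDifferential^{*}$ to $\exteriorDifferential(v^{2})=2v\multipliedBy\exteriorDifferential v$ and to $\exteriorDifferential(\norm u^{2})=2\multipliedBy\re(i\multipliedBy\conjugate u\multipliedBy\magneticDifferential{\cover{\magneticPotential}}u)$. Using the Leibniz rule for $\exteriorDifferential^{*}$ on the product of a scalar and a $1$-form, and the formula for the formal adjoint $\adjoint{\magneticDifferential{\cover{\magneticPotential}}}$ to recognize the magnetic operator, one obtains
\[
\Delta(v^{2})=2v\multipliedBy\Delta v-2\multipliedBy\norm{\exteriorDifferential v}^{2},\qquad\Delta(\norm u^{2})=2\multipliedBy\re\bigl(\conjugate u\multipliedBy\adjoint{\magneticDifferential{\cover{\magneticPotential}}}\magneticDifferential{\cover{\magneticPotential}}u\bigr)-2\multipliedBy\norm{\magneticDifferential{\cover{\magneticPotential}}u}^{2}.
\]
The second identity is a magnetic Bochner formula and is the technical heart of the argument; deriving it is the step I expect to be the main obstacle, since it requires careful bookkeeping of the factors of $i$ in $\magneticDifferential{\cover{\magneticPotential}}$ and $\adjoint{\magneticDifferential{\cover{\magneticPotential}}}$ and of the difference between the metric contraction and the Hermitian fibre inner product on complex $1$-forms (the non-magnetic case being the familiar $\Delta(\norm u^{2})=2\multipliedBy\re(\conjugate u\multipliedBy\Delta u)-2\multipliedBy\norm{\exteriorDifferential u}^{2}$). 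Granting both identities, their left-hand sides agree because $v^{2}$ and $\norm u^{2}$ differ by the constant $\varepsilon^{2}$, so subtracting gives
\[
v\multipliedBy\Delta v=\re\bigl(\conjugate u\multipliedBy\adjoint{\magneticDifferential{\cover{\magneticPotential}}}\magneticDifferential{\cover{\magneticPotential}}u\bigr)-\bigl(\norm{\magneticDifferential{\cover{\magneticPotential}}u}^{2}-\norm{\exteriorDifferential v}^{2}\bigr)\leq\re\bigl(\conjugate u\multipliedBy\adjoint{\magneticDifferential{\cover{\magneticPotential}}}\magneticDifferential{\cover{\magneticPotential}}u\bigr)
\]
by the diamagnetic bound, which is the first assertion.

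Finally, for the distributional statement I would divide the first inequality by $\norm u_{\varepsilon}+\varepsilon>0$, rewriting it as $\schroedingerOperator_{0,0}\norm u_{\varepsilon}\leq\re\innerProductThatAdapts{\schroedingerOperator_{\cover{\magneticPotential},0}u}{u/(\norm u_{\varepsilon}+\varepsilon)}$, and pair both sides with an arbitrary $0\leq\phi\in\smoothCompactlySupportedFunctions{\cover{\baseManifold},\mathbb{R}}$. On the left, symmetry of $\Delta$ on compactly supported smooth functions gives $\int_{\cover{\baseManifold}}(\schroedingerOperator_{0,0}\norm u_{\varepsilon})\multipliedBy\phi=\int_{\cover{\baseManifold}}\norm u_{\varepsilon}\multipliedBy(\schroedingerOperator_{0,0}\phi)$, which tends to $\int_{\cover{\baseManifold}}\norm u\multipliedBy(\schroedingerOperator_{0,0}\phi)$ as $\varepsilon\searrow0$ since $\norm u_{\varepsilon}\to\norm u$ uniformly with support inside that of $u$. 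On the right, $\schroedingerOperator_{\cover{\magneticPotential},0}u\in\smoothCompactlySupportedFunctions{\cover{\baseManifold}}$ and $u/(\norm u_{\varepsilon}+\varepsilon)=u/\sqrt{\norm u^{2}+\varepsilon^{2}}$ is bounded by $1$ and converges pointwise to $\mathrm{sign}\multipliedBy u$, so dominated convergence makes the right-hand integral tend to $\int_{\cover{\baseManifold}}\re\innerProduct{\schroedingerOperator_{\cover{\magneticPotential},0}u}{\mathrm{sign}\multipliedBy u}\multipliedBy\phi$. Hence the distribution $\schroedingerOperator_{0,0}\norm u-\re\innerProduct{\schroedingerOperator_{\cover{\magneticPotential},0}u}{\mathrm{sign}\multipliedBy u}$ pairs non-positively with every $\phi\geq0$, which is precisely the asserted inequality of distributions.
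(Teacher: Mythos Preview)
Your proof is correct and is precisely the standard derivation of Kato's inequality (via the diamagnetic gradient bound $\norm{\exteriorDifferential v}\leq\norm{\magneticDifferential{\cover{\magneticPotential}}u}$ and the magnetic Bochner identity for $\Delta(\norm u^{2})$). The paper itself does not prove this proposition: it simply records it as a special case of \cite[Proposition~5.9 and Corollary~5.10]{BravermanMilatovichShubin2002} and \cite[Proposition~2.2]{HessSchraderUhlenbrock1980}, whose arguments proceed exactly along the lines you sketch.

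One small slip to fix: you write that $v=\sqrt{\norm u^{2}+\varepsilon^{2}}$ ``has the same support as $u$'', but in fact $v\geq\varepsilon>0$ everywhere, so $v$ is nowhere vanishing; it is $\norm u_{\varepsilon}=v-\varepsilon$ that has the same support as $u$. This does not affect the argument, since your pointwise inequality is between smooth functions that both vanish outside $\mathrm{supp}\,u$, and the integration by parts at the end is performed with $\norm u_{\varepsilon}\in\smoothCompactlySupportedFunctions{\cover{\baseManifold}}$.
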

For compact $\cover{\baseManifold}$, let $\homologyGroup^{1}(\cover{\baseManifold},\mathbb{Z})$
denote the first cohomology group with integer coefficients, which
we identify with the lattice group
\[
\left\{ [\omega]\in\homologyGroup^{1}(\cover{\baseManifold},\mathbb{R})\,\Big|\,\int_{\gamma}\omega\in\mathbb{Z}\textrm{ for any closed curve }\gamma\textrm{ in }\cover{\baseManifold}\right\} .
\]
The following theorem describes the diamagnetic effect of magnetic
fields and the gauge invariance group of compact covers.
\begin{thm}
\label{thm:Gauge_Invariance}For any regular cover $\cover{\pi}\colon\cover{\baseManifold}\to\baseManifold$
equipped with potentials $\cover{\magneticPotential}\in\smoothOneForms{\cover{\baseManifold},\mathbb{R}}$
and $\cover{\electricPotential}=\electricPotential\circ\cover{\pi}$,
where $\electricPotential\in\smoothFunctions{\baseManifold,\mathbb{R}}$,
the diamagnetic inequality holds, namely 
\begin{equation}
\groundStateEnergy(0,\cover{\electricPotential})\leq\groundStateEnergy(\cover{\magneticPotential},\cover{\electricPotential}).\label{eq:Diamagnetic_Inequality}
\end{equation}
Moreover, any $\omega=\frac{1}{i}\frac{\exteriorDifferential\varphi}{\varphi}$
with $\varphi\in\smoothFunctions{\cover{\baseManifold},\mathbb{S}^{1}}\subset\smoothFunctions{\cover{\baseManifold}}$
gives rise to a unitary gauge transformation $\mathcal{U}_{\varphi}\colon L^{2}(\cover{\baseManifold})\to L^{2}(\cover{\baseManifold})$
given by $\mathcal{U}_{\varphi}u=\varphi\multipliedBy u$ such that
\begin{equation}
\schroedingerOperator_{\cover{\magneticPotential}+\omega,\cover{\electricPotential}}=\adjoint{\mathcal{U}_{\varphi}}\multipliedBy\schroedingerOperator_{\cover{\magneticPotential},\cover{\electricPotential}}\multipliedBy\mathcal{U}_{\varphi}\quad\textnormal{on }\smoothCompactlySupportedFunctions{\cover{\baseManifold}}.\label{eq:Gauge_Transformation}
\end{equation}
In particular,
\begin{equation}
\groundStateEnergy(\cover{\magneticPotential}+\exteriorDifferential f,\cover{\electricPotential})=\groundStateEnergy(\cover{\magneticPotential},\cover{\electricPotential})\quad\textnormal{for }f\in\smoothFunctions{\cover{\baseManifold},\mathbb{R}}.\label{eq:GSE_of_Mag_Pot_plus_df}
\end{equation}
If $\cover{\baseManifold}$ is a finite and therefore compact regular
cover of $\baseManifold$, then the following are equivalent:
\begin{enumerate}
\item $\groundStateEnergy(\cover{\magneticPotential},\cover{\electricPotential})=\groundStateEnergy(0,\cover{\electricPotential}),$\label{enu:Equivalent_GSE}
\item \textup{$\closure{\schroedingerOperator_{\cover{\magneticPotential},\cover{\electricPotential}}}$
and $\closure{\schroedingerOperator_{0,\cover{\electricPotential}}}$
are unitarily equivalent via a gauge transformation, in particular,\label{enu:Gauge_Equivalence}
\[
\spectrum{\closure{\schroedingerOperator_{\cover{\magneticPotential},\cover{\electricPotential}}}}=\spectrum{\closure{\schroedingerOperator_{0,\cover{\electricPotential}}}},
\]
}
\item $\cover{\magneticPotential}=\frac{1}{i}\frac{\exteriorDifferential\varphi}{\varphi}$
for some $\varphi\in\smoothFunctions{\cover{\baseManifold},\mathbb{S}^{1}}\subset\smoothFunctions{\cover{\baseManifold}}$,\label{enu:Potential_comes_from_Map_to_Circle}
\item $\cover{\magneticPotential}$ is closed and $[\cover{\magneticPotential}]\in2\pi\multipliedBy\homologyGroup^{1}(\cover{\baseManifold},\mathbb{Z})$,
that is, for any closed curve $\gamma$ in $\cover{\baseManifold}$\label{enu:Potential_in_integral_Homology}
\[
\int_{\gamma}\cover{\magneticPotential}\in2\pi\multipliedBy\mathbb{Z}.
\]

\end{enumerate}
\end{thm}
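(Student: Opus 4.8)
The plan is to treat the four assertions roughly in order. For the diamagnetic inequality \eqref{eq:Diamagnetic_Inequality}, I would argue via Proposition~\ref{prop:Katos_Inequality} and the variational principle. Given any $u\in\smoothCompactlySupportedFunctions{\cover{\baseManifold}}$ with $\normComingFromInnerProduct u=1$, Kato's inequality in its smoothed form $\left(\norm u_{\varepsilon}+\varepsilon\right)\multipliedBy\schroedingerOperator_{0,0}\norm u_{\varepsilon}\leq\re\innerProduct{\schroedingerOperator_{\cover{\magneticPotential},0}u}u$ should be paired against $\norm u_\varepsilon$ and integrated; letting $\varepsilon\searrow0$ yields $\innerProduct{\schroedingerOperator_{0,0}\norm u}{\norm u}\le\re\innerProduct{\schroedingerOperator_{\cover{\magneticPotential},0}u}u$, so that $\norm u$ is an admissible test function for $\schroedingerOperator_{0,\cover{\electricPotential}}$ with Rayleigh quotient no larger than that of $u$ for $\schroedingerOperator_{\cover{\magneticPotential},\cover{\electricPotential}}$ (the electric term $\int\cover{\electricPotential}\norm u^2$ is untouched by taking $\norm u$). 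Taking the infimum over $u$ gives \eqref{eq:Diamagnetic_Inequality}.

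For the gauge transformation, I would verify \eqref{eq:Gauge_Transformation} by direct computation: writing $\omega=\frac{1}{i}\frac{\exteriorDifferential\varphi}{\varphi}$, one checks that $\magneticDifferential{\cover{\magneticPotential}+\omega}(\conjugate\varphi\multipliedBy u)=\conjugate\varphi\multipliedBy\magneticDifferential{\cover{\magneticPotential}}u$ on $\smoothCompactlySupportedFunctions{\cover{\baseManifold}}$, since $\frac1i\exteriorDifferential(\conjugate\varphi u)=\conjugate\varphi(\frac1i\exteriorDifferential u)+u\frac1i\exteriorDifferential\conjugate\varphi$ and $\frac1i\exteriorDifferential\conjugate\varphi=-\conjugate\varphi\multipliedBy\omega$ because $\norm\varphi=1$. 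Combining this intertwining relation with its adjoint and with the fact that multiplication by $\cover{\electricPotential}$ commutes with $\mathcal{U}_\varphi$, the definition \eqref{eq:Def_of_MSO} gives \eqref{eq:Gauge_Transformation}; unitarity of $\mathcal{U}_\varphi$ is clear. The special case $\varphi=e^{if}$ yields \eqref{eq:GSE_of_Mag_Pot_plus_df}. Note the resulting conjugacy also shows $\cover{\magneticPotential}$ and $\cover{\magneticPotential}+\omega$ have the same spectrum, which is the content later invoked for \eqref{enu:Gauge_Equivalence}.

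For the equivalences on a finite (hence compact) cover, I would close a cycle $(4)\Rightarrow(3)\Rightarrow(2)\Rightarrow(1)\Rightarrow(4)$. The implication $(3)\Rightarrow(2)$ is immediate from \eqref{eq:Gauge_Transformation} with $\cover{\magneticPotential}$ in place of $\omega$ and zero in place of $\cover{\magneticPotential}$, and $(2)\Rightarrow(1)$ is trivial since unitarily equivalent operators have equal spectra. For $(4)\Rightarrow(3)$: if $\cover{\magneticPotential}$ is closed with all periods in $2\pi\mathbb{Z}$, then $x\mapsto\exp\!\left(i\int_{x_0}^{x}\cover{\magneticPotential}\right)$ is a well-defined smooth map $\cover{\baseManifold}\to\mathbb{S}^1$ (independence of path follows from the period condition), call it $\varphi$, and then $\frac1i\frac{\exteriorDifferential\varphi}{\varphi}=\cover{\magneticPotential}$. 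The substantive implication—and the main obstacle—is $(1)\Rightarrow(4)$, i.e.\ showing that $\groundStateEnergy(\cover{\magneticPotential},\cover{\electricPotential})=\groundStateEnergy(0,\cover{\electricPotential})$ forces $\cover{\magneticPotential}$ to be closed with quantized periods. Here compactness is essential: $\groundStateEnergy(0,\cover{\electricPotential})$ is a genuine eigenvalue with a ground state $\psi_0$, and by the standard Perron--Frobenius/positivity argument for $\schroedingerOperator_{0,\cover{\electricPotential}}$ one may take $\psi_0>0$ everywhere. If the magnetic ground state energy equals it, then equality must hold throughout the Kato inequality chain used for the diamagnetic bound applied to a magnetic ground state $u$; tracing the equality case of Kato's inequality (as in the Euclidean arguments of Hess--Schrader--Uhlenbrock and Simon, here in the form of Proposition~\ref{prop:Katos_Inequality}) forces $u$ to be nowhere zero and $\magneticDifferential{\cover{\magneticPotential}}u$ to be pointwise parallel to $u$ times a real function, so that $u=\norm u\multipliedBy\varphi$ with $\varphi=u/\norm u\in\smoothFunctions{\cover{\baseManifold},\mathbb{S}^1}$ smooth and $\norm u=\psi_0$; substituting back into $\magneticDifferential{\cover{\magneticPotential}}u=\varphi\multipliedBy\magneticDifferential{0}\norm u$ isolates $\cover{\magneticPotential}=\frac1i\frac{\exteriorDifferential\varphi}{\varphi}$, which is closed with periods in $2\pi\mathbb{Z}$ since $\varphi$ is $\mathbb{S}^1$-valued. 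I expect the delicate point to be extracting from the distributional Kato inequality the pointwise structure of the equality case and the regularity needed to conclude $\varphi$ is smooth and unimodular; elliptic regularity for $\schroedingerOperator_{0,\cover{\electricPotential}}$ applied to $\psi_0$ and to $u$, together with $\psi_0>0$, should supply what is needed.
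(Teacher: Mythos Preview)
Your overall logical structure matches the paper's: the gauge identity \eqref{eq:Gauge_Transformation} is proved in essentially the same way (intertwining $\magneticDifferential{\cover{\magneticPotential}}$ with multiplication by $\varphi$ and taking adjoints), the easy implications $(4)\Rightarrow(3)\Rightarrow(2)\Rightarrow(1)$ are handled identically, and the hard implication is correctly identified as $(1)\Rightarrow(3)$.

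There are two genuine differences in method. For the diamagnetic inequality \eqref{eq:Diamagnetic_Inequality}, the paper invokes Kato's inequality to establish semigroup domination $e^{-t\closure{\schroedingerOperator_{0,\cover{\electricPotential}}}}\succeq e^{-t\closure{\schroedingerOperator_{\cover{\magneticPotential},\cover{\electricPotential}}}}$ in the sense of Hess--Schrader--Uhlenbrock, whereas you argue variationally. Your route is more elementary but note that the phrase ``paired against $\norm u_{\varepsilon}$'' is not quite what you want: simply integrating the smoothed pointwise inequality of Proposition~\ref{prop:Katos_Inequality} and integrating by parts already gives $\tfrac12\normComingFromInnerProduct{d\norm u_{\varepsilon}}^{2}\le\tfrac12\normComingFromInnerProduct{\magneticDifferential{\cover{\magneticPotential}}u}^{2}$, and then one must observe that $\norm u$ lies in the form domain (it is Lipschitz with compact support), so the variational principle still applies to it even though it is not in $\smoothCompactlySupportedFunctions{\cover{\baseManifold}}$.

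For $(1)\Rightarrow(3)$ in the compact case, the paper takes a more algebraic route. Starting from the positive ground state $u_{0}$ of $\schroedingerOperator_{0,\cover{\electricPotential}}$, it establishes the exact identity
\[
\normComingFromInnerProduct{u_{0}\,\magneticDifferential{\cover{\magneticPotential}}(u_{0}^{-1}u)}^{2}=2\innerProductThatAdapts{\left(\schroedingerOperator_{\cover{\magneticPotential},\cover{\electricPotential}}-\groundStateEnergy(0,\cover{\electricPotential})\right)u}{u},
\]
so that under the hypothesis $\groundStateEnergy(\cover{\magneticPotential},\cover{\electricPotential})=\groundStateEnergy(0,\cover{\electricPotential})$ the function $\varphi_{\cover{\magneticPotential}}=u_{0}^{-1}u_{\cover{\magneticPotential}}$ satisfies the linear first-order equation $\exteriorDifferential\varphi_{\cover{\magneticPotential}}=-i\varphi_{\cover{\magneticPotential}}\cover{\magneticPotential}$. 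A Gronwall-type estimate on geodesic balls then shows $\varphi_{\cover{\magneticPotential}}$ is nowhere zero, after which splitting into modulus and argument forces $\norm{\varphi_{\cover{\magneticPotential}}}$ constant and $\cover{\magneticPotential}=\tfrac{1}{i}\exteriorDifferential\varphi/\varphi$ for $\varphi=\conjugate{\varphi_{\cover{\magneticPotential}}}/\norm{\varphi_{\cover{\magneticPotential}}}$. Your approach instead deduces from the equality chain that $\norm{u_{\cover{\magneticPotential}}}$ itself is a ground state of $\schroedingerOperator_{0,\cover{\electricPotential}}$, hence a positive multiple of $u_{0}$ by simplicity, which immediately gives $u_{\cover{\magneticPotential}}$ nowhere zero; the remaining computation is then the same. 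Both arguments are correct. The paper's identity sidesteps the distributional equality-case analysis of Kato's inequality that you flag as delicate, at the cost of the explicit Gronwall step; your route is more conceptual and leans on simplicity of the nonmagnetic ground state, which the paper also uses but only to assert $u_{0}>0$.
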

\begin{proof}
The lifted potential $\cover{\electricPotential}$ is bounded from
below by $\min\electricPotential$, which exists by compactness of
$\baseManifold$. Hence, we may assume that $\cover{\electricPotential}\geq0$,
otherwise consider $\schroedingerOperator_{\cover{\magneticPotential},\cover{\electricPotential}}-\min\electricPotential$.
In order to prove inequality~(\ref{eq:Diamagnetic_Inequality}),
one can argue along the lines of~\cite[Theorem 3.3]{HessSchraderUhlenbrock1977}
and use Kato's inequality as stated in Proposition~\ref{prop:Katos_Inequality}
to verify that the semigroup $e^{-t\multipliedBy\closure{\schroedingerOperator_{0,\cover{\electricPotential}}}}$
dominates the semigroup $e^{-t\multipliedBy\closure{\schroedingerOperator_{\cover{\magneticPotential},\cover{\electricPotential}}}}$,
which implies (\ref{eq:Diamagnetic_Inequality}) by virtue of \cite[Corollary 2.13]{HessSchraderUhlenbrock1977}.
The gauge equivalence~(\ref{eq:Gauge_Transformation}) is the content
of~\cite[Proposition 3.2]{Shigekawa1987}, where the local expression~(\ref{eq:Coordinatefree_Version_of_MSO})
is used. In order to provide an alternative proof, we let $\mathcal{W}_{\varphi}\colon L^{2}(\smoothOneForms{\cover{\baseManifold}})\to L^{2}(\smoothOneForms{\cover{\baseManifold}})$
denote the unitary map given by $\mathcal{W}_{\varphi}\omega=\varphi\multipliedBy\omega$.
Note that $\mathcal{U}_{\varphi}(\smoothCompactlySupportedFunctions{\cover{\baseManifold}})=\smoothCompactlySupportedFunctions{\cover{\baseManifold}}$
and $\adjoint{\mathcal{U}_{\varphi}}=\mathcal{U}_{\varphi}^{-1}=\mathcal{U}_{\varphi^{-1}}$
as well as $\mathcal{W}_{\varphi}(\smoothCompactlySupportedOneForms{\cover{\baseManifold}})=\smoothCompactlySupportedOneForms{\cover{\baseManifold}}$
and $\adjoint{\mathcal{W}_{\varphi}}=\mathcal{W}_{\varphi}^{-1}=\mathcal{W}_{\varphi^{-1}}$.
For any $u\in\smoothCompactlySupportedFunctions{\cover{\baseManifold}}$,
we have
\[
\magneticDifferential{\cover{\magneticPotential}}\multipliedBy\mathcal{U}_{\varphi}u=\magneticDifferential{\cover{\magneticPotential}}(\varphi\multipliedBy u)=-i\multipliedBy\varphi\multipliedBy\exteriorDifferential\multipliedBy u+u\left(-i\multipliedBy\exteriorDifferential\varphi+\varphi\multipliedBy\cover{\magneticPotential}\right)=\varphi\multipliedBy\magneticDifferential{\cover{\magneticPotential}+\omega}u=\mathcal{W}_{\varphi}\multipliedBy\magneticDifferential{\cover{\magneticPotential}+\omega}u.
\]
Taking adjoints, we get $\adjoint{\mathcal{U}_{\varphi}}\multipliedBy\adjoint{\magneticDifferential{\cover{\magneticPotential}}}=\adjoint{\magneticDifferential{\cover{\magneticPotential}+\omega}}\multipliedBy\adjoint{\mathcal{W}_{\varphi}}$
on $\smoothCompactlySupportedOneForms{\cover{\baseManifold}}$. In
addition, $\adjoint{\mathcal{U}_{\varphi}}\multipliedBy\cover{\electricPotential}\multipliedBy\mathcal{U}_{\varphi}=\cover{\electricPotential}$
trivially holds on $\smoothCompactlySupportedFunctions{\cover{\baseManifold}}$.
Hence, we get~(\ref{eq:Gauge_Transformation}), which in turn implies
(\ref{eq:GSE_of_Mag_Pot_plus_df}) by setting $\varphi=e^{if}$.

In what follows, let $\cover{\baseManifold}$ be compact. The equivalence
of \ref{enu:Potential_comes_from_Map_to_Circle}. and \ref{enu:Potential_in_integral_Homology}.
is the content of~\cite[Proposition 3.1]{Shigekawa1987}. Since \ref{enu:Potential_comes_from_Map_to_Circle}.
implies \ref{enu:Gauge_Equivalence}. by the first part, and since
\ref{enu:Gauge_Equivalence}. immdiately gives \ref{enu:Equivalent_GSE}.,
it suffices to show that \ref{enu:Equivalent_GSE}. implies \ref{enu:Potential_comes_from_Map_to_Circle}.
We use ideas from~\cite{Helffer1988}. It is well-known that $\closure{\schroedingerOperator_{0,\cover{\electricPotential}}}$
has a smooth positive ground state $u_{0}$, in particular, for any
$u\in\smoothFunctions{\cover{\baseManifold}}$
\begin{equation}
\groundStateEnergy(0,\cover{\electricPotential})\multipliedBy\innerProduct{u_{0}^{-1}\multipliedBy\norm u^{2}}{u_{0}}=\innerProduct{u_{0}^{-1}\multipliedBy\norm u^{2}}{\schroedingerOperator_{0,\cover{\electricPotential}}u_{0}}=\frac{1}{2}\innerProduct{\exteriorDifferential(u_{0}^{-1}\multipliedBy\norm u^{2})}{\exteriorDifferential u_{0}}+\innerProduct{u_{0}^{-1}\multipliedBy\norm u^{2}\multipliedBy\cover{\electricPotential}}{u_{0}}.\label{eq:Inner_product_with_GS}
\end{equation}
This yields another proof of the diamagnetic inequality~(\ref{eq:Diamagnetic_Inequality})
as follows 
\begin{eqnarray}
\normComingFromInnerProduct{u_{0}\multipliedBy\magneticDifferential{\cover{\magneticPotential}}(u_{0}^{-1}\multipliedBy u)}^{2} & = & \normComingFromInnerProduct{i\multipliedBy u_{0}^{-1}\multipliedBy u\multipliedBy\exteriorDifferential u_{0}-i\exteriorDifferential u+u\multipliedBy\cover{\magneticPotential}}^{2}\nonumber \\
 & = & \normComingFromInnerProduct{-i\exteriorDifferential u+u\multipliedBy\cover{\magneticPotential}}^{2}+2\multipliedBy\mathrm{Re}\innerProduct{-i\exteriorDifferential u+u\multipliedBy\cover{\magneticPotential}}{i\multipliedBy u_{0}^{-1}\multipliedBy u\multipliedBy\exteriorDifferential u_{0}}+\normComingFromInnerProduct{u_{0}^{-1}\multipliedBy u\multipliedBy\exteriorDifferential u_{0}}^{2}\nonumber \\
 & = & \normComingFromInnerProduct{\magneticDifferential{\cover{\magneticPotential}}u}^{2}-2\multipliedBy\re\innerProduct{u_{0}^{-1}\multipliedBy\conjugate u\multipliedBy\exteriorDifferential u}{\exteriorDifferential u_{0}}+\innerProduct{u_{0}^{-2}\multipliedBy\norm u^{2}\multipliedBy\exteriorDifferential u_{0}}{\exteriorDifferential u_{0}}\nonumber \\
 & = & \normComingFromInnerProduct{\magneticDifferential{\cover{\magneticPotential}}u}^{2}-\innerProduct{\exteriorDifferential(u_{0}^{-1}\multipliedBy\norm u^{2})}{\exteriorDifferential u_{0}}\nonumber \\
 & = & \normComingFromInnerProduct{\magneticDifferential{\cover{\magneticPotential}}u}^{2}+2\innerProduct{u_{0}^{-1}\multipliedBy\norm u^{2}\multipliedBy\cover{\electricPotential}}{u_{0}}-2\groundStateEnergy(0,\cover{\electricPotential})\multipliedBy\innerProduct{u_{0}^{-1}\multipliedBy\norm u^{2}}{u_{0}},\nonumber \\
 & = & 2\innerProductThatAdapts{\left(\schroedingerOperator_{\cover{\magneticPotential},\cover{\electricPotential}}-\groundStateEnergy(0,\cover{\electricPotential})\right)u}u,\label{eq:Diamagnetic_Inequality_Alternative_Proof}
\end{eqnarray}
where we used $\re\innerProduct{\norm u^{2}\multipliedBy\cover{\magneticPotential}}{i\multipliedBy u_{0}^{-1}\multipliedBy\exteriorDifferential u_{0}}=0$
and (\ref{eq:Inner_product_with_GS}). Recall that $\closure{\schroedingerOperator_{\cover{\magneticPotential},\cover{\electricPotential}}}$
has a smooth normalized ground state~\cite[Theorem 2.1]{Shigekawa1987},
which we denote by $u_{\cover{\magneticPotential}}$. Assuming that
$\groundStateEnergy(0,\cover{\electricPotential})=\groundStateEnergy(\cover{\magneticPotential},\cover{\electricPotential})$,
we show that $u_{\cover{\magneticPotential}}$ is non-vanishing. Due
to~(\ref{eq:Diamagnetic_Inequality_Alternative_Proof}), the function
$\varphi_{\cover{\magneticPotential}}=u_{0}^{-1}\multipliedBy u_{\cover{\magneticPotential}}$
satisfies $\magneticDifferential{\cover{\magneticPotential}}\varphi_{\cover{\magneticPotential}}=0$,
that is,
\begin{equation}
\exteriorDifferential\varphi_{\cover{\magneticPotential}}=-i\multipliedBy\varphi_{\cover{\magneticPotential}}\multipliedBy\cover{\magneticPotential}.\label{eq:Differential_of_qoutient}
\end{equation}
We show that whenever $\varphi_{\cover{\magneticPotential}}(x_{0})=0$
for some $x_{0}\in\cover{\baseManifold}$, then $\varphi_{\cover{\magneticPotential}}$
vanishes in a neighborhood of $x_{0}$, which by continuity of $\varphi_{\cover{\magneticPotential}}$
and connectivity of $\cover{\baseManifold}$ leads to $\varphi_{\cover{\magneticPotential}}=0$
contradicting $\normComingFromInnerProduct{u_{\cover{\magneticPotential}}}=1$.
Using~(\ref{eq:Differential_of_qoutient}), we can find $r_{0}>0$
and $C>0$ such that the exponential map $\exp_{x_{0}}\colon T_{x_{0}}\cover{\baseManifold}\to\cover{\baseManifold}$
restricted to the ball $B(0,r_{0})$ or radius $r_{0}$ is a diffeomorphism
onto its image $B(x_{0},r_{0})=\exp_{x_{0}}(B(0,r_{0}))$, on which
\[
\norm{\exteriorDifferential\varphi_{\cover{\magneticPotential}}}\leq C\multipliedBy\norm{\varphi_{\cover{\magneticPotential}}}
\]
holds pointwise. For $x\in B(x_{0},r_{0})$, let $v_{x}=\exp_{x_{0}}^{-1}x$,
and note that since $\varphi_{\cover{\magneticPotential}}$ is smooth,
\[
\varphi_{\cover{\magneticPotential}}(x)=\varphi_{\cover{\magneticPotential}}(x)-\varphi_{\cover{\magneticPotential}}(x_{0})=\int_{0}^{1}\frac{d}{dt}\left(\varphi_{\cover{\magneticPotential}}\left(\exp_{x_{0}}\left(t\multipliedBy v_{x}\right)\right)\right)dt.
\]
Using the Gauß lemma, we obtain that for any $0<r<r_{0}$
\[
\sup_{x\in B(x_{0},r)}\norm{\varphi_{\cover{\magneticPotential}}(x)}\leq C\multipliedBy r\multipliedBy\sup_{x\in B(x_{0},r)}\norm{\varphi_{\cover{\magneticPotential}}(x)},
\]
and $\varphi_{\cover{\magneticPotential}}$ must vanish on any $B(x_{0},r)$
with $0<r<\min(r_{0},C)$. Hence, $\varphi_{\cover{\magneticPotential}}$
vanishes nowhere, and on any simply-connected coordinate neighborhood
$U\subseteq\cover{\baseManifold}$, we may write $\varphi_{\cover{\magneticPotential}}=R\multipliedBy e^{if}$
for some $R\in\smoothFunctions{U,\mathbb{R}^{+}}$ and $f\in\smoothFunctions{U,\mathbb{R}}$.
Using~(\ref{eq:Differential_of_qoutient}), we obtain 
\[
\cover{\magneticPotential}=i\multipliedBy\varphi_{\cover{\magneticPotential}}^{-1}\multipliedBy\exteriorDifferential\varphi_{\cover{\magneticPotential}}=i\multipliedBy R^{-1}\exteriorDifferential R-df.
\]
Since $\cover{\magneticPotential}$ is real-valued, we have $\cover{\magneticPotential}=-\exteriorDifferential f$
and $\exteriorDifferential R=0$, that is, $\cover{\magneticPotential}$
is closed and $|\varphi_{\cover{\magneticPotential}}|$ is constant
on $\cover{\baseManifold}$. Hence, $\cover{\magneticPotential}=\frac{1}{i}\frac{\exteriorDifferential\varphi}{\varphi}$
for $\varphi=\frac{\conjugate{\varphi_{\cover{\magneticPotential}}}}{|\varphi_{\cover{\magneticPotential}}|}\in\smoothFunctions{\cover{\baseManifold},\mathbb{S}^{1}}$
as claimed.
\end{proof}
Note that we can have $\groundStateEnergy(\cover{\magneticPotential},\cover{\electricPotential})\neq\groundStateEnergy(0,\cover{\electricPotential})$
even if $\exteriorDifferential\cover{\magneticPotential}=0$, which
is known as the Aharonov-Bohm effect.
\begin{prop}
\label{prop:GSE_if_norm_constant}If $\cover{\magneticPotential}$
has constant norm $\norm{\cover{\magneticPotential}}$ on $\cover{\baseManifold}$,
then the function $\mu_{0}\colon\mathbb{R}\to\mathbb{R}$ given by
$\mu_{0}(\magneticFieldStrength)=\groundStateEnergy(\magneticFieldStrength\cover{\magneticPotential},\cover{\electricPotential})-\frac{1}{2}\magneticFieldStrength^{2}\norm{\cover{\magneticPotential}}^{2}$
is concave. In particular, $\mu_{0}$ is continuous, admits monotonically
non-increasing left and right derivatives, and is differentiable at
all but at most countably many points.\end{prop}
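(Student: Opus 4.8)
The plan is to extract concavity directly from the variational characterization of $\groundStateEnergy$. For nonzero $u\in\smoothCompactlySupportedFunctions{\cover{\baseManifold}}$ the quantity $\groundStateEnergy(\magneticFieldStrength\cover{\magneticPotential},\cover{\electricPotential})$ is the infimum over such $u$ of the Rayleigh quotient whose numerator is $\int_{\cover{\baseManifold}}\frac{1}{2}\norm{\magneticDifferential{\magneticFieldStrength\cover{\magneticPotential}}u}^{2}+\cover{\electricPotential}\multipliedBy\norm u^{2}$ and whose denominator is $\int_{\cover{\baseManifold}}\norm u^{2}$. First I would expand the fibrewise norm: since $\magneticDifferential{\magneticFieldStrength\cover{\magneticPotential}}u=\frac{1}{i}\multipliedBy\exteriorDifferential u+\magneticFieldStrength\multipliedBy u\multipliedBy\cover{\magneticPotential}$ and $\cover{\magneticPotential}$ is real-valued, the fibrewise Hermitian inner product gives the pointwise identity $\norm{\magneticDifferential{\magneticFieldStrength\cover{\magneticPotential}}u}^{2}=\norm{\exteriorDifferential u}^{2}+2\multipliedBy\magneticFieldStrength\multipliedBy\re\innerProduct{\frac{1}{i}\multipliedBy\exteriorDifferential u}{u\multipliedBy\cover{\magneticPotential}}+\magneticFieldStrength^{2}\multipliedBy\norm u^{2}\multipliedBy\norm{\cover{\magneticPotential}}^{2}$ on $\cover{\baseManifold}$. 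Integrating against $\volumeForm$ and using the hypothesis that $\norm{\cover{\magneticPotential}}^{2}$ is a constant, the last summand contributes exactly $\frac{1}{2}\multipliedBy\magneticFieldStrength^{2}\multipliedBy\norm{\cover{\magneticPotential}}^{2}\multipliedBy\int_{\cover{\baseManifold}}\norm u^{2}$ to the numerator, which is precisely cancelled upon dividing by $\int_{\cover{\baseManifold}}\norm u^{2}$ and then subtracting $\frac{1}{2}\multipliedBy\magneticFieldStrength^{2}\multipliedBy\norm{\cover{\magneticPotential}}^{2}$. Hence $\mu_{0}(\magneticFieldStrength)=\inf_{u}\,(a(u)+\magneticFieldStrength\multipliedBy b(u))$, where $a(u)$ and $b(u)$ are the $\magneticFieldStrength$-independent ratios obtained from $\int_{\cover{\baseManifold}}\frac{1}{2}\norm{\exteriorDifferential u}^{2}+\cover{\electricPotential}\multipliedBy\norm u^{2}$ and from $\int_{\cover{\baseManifold}}\re\innerProduct{\frac{1}{i}\multipliedBy\exteriorDifferential u}{u\multipliedBy\cover{\magneticPotential}}$, each divided by $\int_{\cover{\baseManifold}}\norm u^{2}$.

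With this in hand the conclusion is immediate: for each fixed $u$ the map $\magneticFieldStrength\mapsto a(u)+\magneticFieldStrength\multipliedBy b(u)$ is affine, hence concave, and a pointwise infimum of concave functions is concave, so $\mu_{0}$ is concave on the set where it is finite. I would then note that $\mu_{0}$ is in fact finite on all of $\mathbb{R}$: it is bounded above by $a(u_{0})+\magneticFieldStrength\multipliedBy b(u_{0})$ for any single fixed $u_{0}\neq0$, and bounded below for each $\magneticFieldStrength$ because $\groundStateEnergy(\magneticFieldStrength\cover{\magneticPotential},\cover{\electricPotential})\geq\min\,\electricPotential>-\infty$. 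The remaining assertions of the proposition are then precisely the classical regularity properties of a finite concave function of one real variable: continuity on the (open) interval of definition, existence of monotonically non-increasing left and right derivatives at every point, and differentiability outside an at most countable set.

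I do not foresee a genuine obstacle. The only delicate point is the exact factoring of the $\magneticFieldStrength^{2}$ term, which relies essentially on the constancy of $\norm{\cover{\magneticPotential}}$ so that $\int_{\cover{\baseManifold}}\norm u^{2}\multipliedBy\norm{\cover{\magneticPotential}}^{2}$ cancels against $\int_{\cover{\baseManifold}}\norm u^{2}$ in the Rayleigh quotient; without that hypothesis $\mu_{0}$ would be an infimum of genuinely quadratic functions of $\magneticFieldStrength$ and there would be no reason for it to be concave. So this is where attention is needed, even though the computation itself is routine.
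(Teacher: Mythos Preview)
Your argument is correct and is essentially the same as the paper's: the paper observes that $\schroedingerOperator_{\magneticFieldStrength\cover{\magneticPotential},\cover{\electricPotential}}-\frac{1}{2}\magneticFieldStrength^{2}\norm{\cover{\magneticPotential}}^{2}=\mathcal{K}+\magneticFieldStrength\,\mathcal{L}$ is affine in $\magneticFieldStrength$ (using the local expression~(\ref{eq:Coordinatefree_Version_of_MSO})) and then takes the infimum of $\langle(\mathcal{K}+\magneticFieldStrength\,\mathcal{L})u,u\rangle$ over normalized $u$, which is exactly your $a(u)+\magneticFieldStrength\,b(u)$ written in operator rather than quadratic-form language. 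The only cosmetic difference is that you expand the quadratic form $\norm{\magneticDifferential{\magneticFieldStrength\cover{\magneticPotential}}u}^{2}$ directly, whereas the paper reads off the affine dependence from the operator expression; both routes rely on the constancy of $\norm{\cover{\magneticPotential}}$ at precisely the same spot.
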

\begin{proof}
The operator $\schroedingerOperator_{\magneticFieldStrength\cover{\magneticPotential},\cover{\electricPotential}}-\frac{1}{2}\magneticFieldStrength^{2}\norm{\cover{\magneticPotential}}^{2}$
with domain $\smoothCompactlySupportedFunctions{\cover{\baseManifold}}$
is of the form $\mathcal{K}+\magneticFieldStrength\multipliedBy\mathcal{L}$,
where
\[
\mathcal{K}u=\frac{1}{2}\Delta u+\cover{\electricPotential}\multipliedBy u\qquad\textnormal{and}\qquad\mathcal{L}u=-i\innerProduct{\exteriorDifferential u}{\cover{\magneticPotential}}+\frac{1}{2}\multipliedBy\exteriorDifferential^{*}\cover{\magneticPotential}\multipliedBy u,
\]
see also~\cite[Proposition 4.11]{AvronHerbstSimon1978}. Since $\mathcal{K}+\magneticFieldStrength\multipliedBy\mathcal{L}$
is affine linear in $\magneticFieldStrength$, its ground state $\mu_{0}$
is concave in $\magneticFieldStrength$~\cite[Theorem 3.5.21]{Thirring2002},
more precisely, for $\magneticFieldStrength_{1},\magneticFieldStrength_{2}\in\mathbb{R}$
and $t\in[0,1]$, we have
\begin{eqnarray*}
\mu_{0}(t\magneticFieldStrength_{1}+(1-t)\magneticFieldStrength_{2}) & = & \inf_{\normComingFromInnerProduct u=1}\innerProduct{(t(\mathcal{K}+\magneticFieldStrength_{1}\mathcal{L})+(1-t)(\mathcal{K}+\magneticFieldStrength_{2}\mathcal{L}))u}u\\
 & \geq & \inf_{\normComingFromInnerProduct u=1}t\innerProduct{(\mathcal{K}+\magneticFieldStrength_{1}\mathcal{L})u}u+\inf_{\normComingFromInnerProduct u=1}(1-t)\innerProduct{(\mathcal{K}+\magneticFieldStrength_{2}\mathcal{L})u}u\\
 & = & t\mu_{0}(\magneticFieldStrength_{1})+(1-t)\mu_{0}(\magneticFieldStrength_{2}).
\end{eqnarray*}
\vspace{-10mm}

\end{proof}
In Section~\ref{sub:HG}, we present a non-compact quotient of the
Heisenberg group equipped with a left-invariant magnetic potential,
such that $\groundStateEnergy$ has countably many local minima, at
which it is not differentiable. Such points are referred to as phase
transitions in~\cite{HiguchiShirai1999}, which contains similar
examples for magnetic Schr\"odinger operators on periodic graphs.

\subsection{Exact case}

\global\long\def\character{\chi}

\global\long\def\characterGroup#1{\hat{#1}}

\global\long\def\trivialRepresentation{\boldsymbol{1}}

\global\long\def\hilbertSpace{P}

\global\long\def\orbit#1#2{[#1,#2]}

For the remainder of this section, we consider lifts of potentials
$\magneticPotential\in\smoothOneForms{\baseManifold,\mathbb{R}}$
and $\electricPotential\in\smoothFunctions{\baseManifold,\mathbb{R}}$.
It is well-known that $\schroedingerOperator_{0,\electricPotential}$
has a real-valued non-vanishing ground state. Using standard pertubation
theory, Shigekawa~\cite[Theorem 5.1 and Proposition 4.4]{Shigekawa1987}
obtained the following generalization.
\begin{prop}
(\cite{Shigekawa1987})\label{prop:GSE_simple_for_small_B} There
exists $\varepsilon>0$ such that for all $\magneticFieldStrength\in(-\varepsilon,\varepsilon)$
the ground state energy $\groundStateEnergy(\magneticFieldStrength\multipliedBy\magneticPotential,\electricPotential)$
is a simple eigenvalue of $\closure{\schroedingerOperator_{\magneticFieldStrength\magneticPotential,\electricPotential}}$
with non-vanishing smooth eigenfunction, and we have
\[
\groundStateEnergy(\magneticFieldStrength\multipliedBy\magneticPotential,\electricPotential)=\inf_{\omega\in2\pi\multipliedBy\homologyGroup^{1}(\baseManifold,\mathbb{Z})}\groundStateEnergy\left(0,\frac{1}{2}\norm{\magneticFieldStrength\multipliedBy\magneticPotential-\omega}^{2}+\electricPotential\right).
\]

\end{prop}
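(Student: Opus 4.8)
The statement is~\cite{Shigekawa1987}; I would prove it as follows, writing $\schroedingerOperator_{\magneticFieldStrength}:=\schroedingerOperator_{\magneticFieldStrength\magneticPotential,\electricPotential}$ and $\groundStateEnergy(\magneticFieldStrength):=\groundStateEnergy(\magneticFieldStrength\magneticPotential,\electricPotential)$. By the coordinate-free expression~(\ref{eq:Coordinatefree_Version_of_MSO}), on $\smoothCompactlySupportedFunctions{\baseManifold}$ one has $\schroedingerOperator_{\magneticFieldStrength}=\schroedingerOperator_{0,\electricPotential}+\magneticFieldStrength\multipliedBy\mathcal{L}+\tfrac{1}{2}\magneticFieldStrength^{2}\multipliedBy\norm{\magneticPotential}^{2}$, where $\mathcal{L}u=-i\innerProduct{\exteriorDifferential u}{\magneticPotential}+\tfrac{1}{2}(\exteriorDifferential^{*}\magneticPotential)u$ is a first-order operator, symmetric because every $\schroedingerOperator_{\magneticFieldStrength}$ is symmetric while the $\magneticFieldStrength^{0}$- and $\magneticFieldStrength^{2}$-parts are, hence bounded relative to $\schroedingerOperator_{0,\electricPotential}$ with relative bound $0$; and $\norm{\magneticPotential}^{2}$ acts by bounded multiplication. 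On the closed manifold $\baseManifold$, $\closure{\schroedingerOperator_{0,\electricPotential}}$ has compact resolvent and its lowest eigenvalue $\groundStateEnergy(0)$ is simple with smooth strictly positive eigenfunction $u_{0}$, as recalled above (alternatively, $e^{-t\closure{\schroedingerOperator_{0,\electricPotential}}}$ is positivity improving by Kato's inequality, Proposition~\ref{prop:Katos_Inequality}). Since the perturbation $\magneticFieldStrength\multipliedBy\mathcal{L}+\tfrac{1}{2}\magneticFieldStrength^{2}\multipliedBy\norm{\magneticPotential}^{2}$ has $\schroedingerOperator_{0,\electricPotential}$-relative bound tending to $0$ as $\magneticFieldStrength\to0$, the operators $\closure{\schroedingerOperator_{\magneticFieldStrength}}$ converge to $\closure{\schroedingerOperator_{0,\electricPotential}}$ in norm-resolvent sense; combined with the sandwich $\groundStateEnergy(0)\leq\groundStateEnergy(\magneticFieldStrength)\leq\innerProduct{\schroedingerOperator_{\magneticFieldStrength}u_{0}}{u_{0}}\to\groundStateEnergy(0)$, whose lower bound is the diamagnetic inequality~(\ref{eq:Diamagnetic_Inequality}), this yields $\varepsilon>0$ so that for $\norm{\magneticFieldStrength}<\varepsilon$ the bottom of $\spectrum{\closure{\schroedingerOperator_{\magneticFieldStrength}}}$ is a simple isolated eigenvalue $\groundStateEnergy(\magneticFieldStrength)$, with normalized eigenfunction $u_{\magneticFieldStrength}$ (the normalized image of $u_{0}$ under the Riesz projection around $\groundStateEnergy(0)$), and $u_{\magneticFieldStrength}\to u_{0}$ in $L^{2}(\baseManifold)$.

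The technical heart is to promote this to nowhere-vanishing of $u_{\magneticFieldStrength}$. From $\schroedingerOperator_{\magneticFieldStrength}u_{\magneticFieldStrength}=\groundStateEnergy(\magneticFieldStrength)u_{\magneticFieldStrength}$, $\normComingFromInnerProduct{u_{\magneticFieldStrength}}=1$, and the ellipticity of $\schroedingerOperator_{\magneticFieldStrength}$ with coefficients bounded uniformly in $\magneticFieldStrength$, an elliptic bootstrap bounds $u_{\magneticFieldStrength}$ in every Sobolev space uniformly in $\magneticFieldStrength$; by Rellich--Kondrachov the family is precompact in each $C^{j}(\baseManifold)$, and its only possible $L^{2}$-limit point being $u_{0}$, one gets $u_{\magneticFieldStrength}\to u_{0}$ in $C^{\infty}$. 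Since $u_{0}\geq c>0$ on the compact $\baseManifold$, after shrinking $\varepsilon$ one has $\re(u_{\magneticFieldStrength})>0$, so $u_{\magneticFieldStrength}$ is smooth and nowhere vanishing; in particular $\groundStateEnergy(\magneticFieldStrength)$ is a genuine eigenvalue with non-vanishing smooth eigenfunction, which is the first assertion. This is the only place where $\norm{\magneticFieldStrength}<\varepsilon$ is essential --- for large $\magneticFieldStrength$ the ground state can have zeros (Aharonov--Bohm).

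The identity splits into two inequalities, each reduced to a gauge transformation plus the elementary pointwise identity $\norm{\magneticDifferential{\gamma}\psi}^{2}=\norm{\exteriorDifferential\psi}^{2}+\psi^{2}\norm{\gamma}^{2}$, valid for real-valued $\psi$ and a real $1$-form $\gamma$ because $\magneticDifferential{\gamma}\psi=\tfrac{1}{i}\exteriorDifferential\psi+\psi\gamma$ has real part $\psi\gamma$ and imaginary part $-\exteriorDifferential\psi$. For ``$\leq$'', valid for all $\magneticFieldStrength$: given a closed $\omega\in\smoothOneForms{\baseManifold,\mathbb{R}}$ with $\int_{\gamma}\omega\in2\pi\mathbb{Z}$ for every loop $\gamma$, Theorem~\ref{thm:Gauge_Invariance} writes $\omega=\tfrac{1}{i}\tfrac{\exteriorDifferential\varphi}{\varphi}$ with $\varphi\in\smoothFunctions{\baseManifold,\mathbb{S}^{1}}$, and the gauge equivalence~(\ref{eq:Gauge_Transformation}), applied with $\cover{\magneticPotential}\mapsto\magneticFieldStrength\magneticPotential-\omega$, gives $\groundStateEnergy(\magneticFieldStrength)=\groundStateEnergy(\magneticFieldStrength\magneticPotential-\omega,\electricPotential)$. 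Writing $W:=\tfrac{1}{2}\norm{\magneticFieldStrength\magneticPotential-\omega}^{2}+\electricPotential$ and feeding the positive ground state $\psi$ of $\closure{\schroedingerOperator_{0,W}}$ into the Rayleigh quotient of $\schroedingerOperator_{\magneticFieldStrength\magneticPotential-\omega,\electricPotential}$, the pointwise identity with $\gamma=\magneticFieldStrength\magneticPotential-\omega$ shows the two Rayleigh quotients coincide, so $\groundStateEnergy(\magneticFieldStrength)\leq\groundStateEnergy(0,W)$; taking the infimum over $\omega$ gives ``$\leq$''.

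For ``$\geq$'', using $\norm{\magneticFieldStrength}<\varepsilon$: since $u_{\magneticFieldStrength}$ is nowhere vanishing, set $\varphi_{\magneticFieldStrength}=u_{\magneticFieldStrength}/\norm{u_{\magneticFieldStrength}}\in\smoothFunctions{\baseManifold,\mathbb{S}^{1}}$ and $\omega_{\magneticFieldStrength}=\tfrac{1}{i}\tfrac{\exteriorDifferential\varphi_{\magneticFieldStrength}}{\varphi_{\magneticFieldStrength}}$, a closed $1$-form whose periods lie in $2\pi\mathbb{Z}$ by Theorem~\ref{thm:Gauge_Invariance} (and which is even exact for $\magneticFieldStrength$ small, as $\varphi_{\magneticFieldStrength}\to1$ while its class lies in the discrete lattice $2\pi\multipliedBy\homologyGroup^{1}(\baseManifold,\mathbb{Z})$). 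By~(\ref{eq:Gauge_Transformation}) applied with $(\cover{\magneticPotential},\omega,\varphi)\mapsto(\magneticFieldStrength\magneticPotential,\omega_{\magneticFieldStrength},\varphi_{\magneticFieldStrength})$, the function $\overline{\varphi_{\magneticFieldStrength}}\multipliedBy u_{\magneticFieldStrength}=\norm{u_{\magneticFieldStrength}}$ is a smooth positive eigenfunction of $\schroedingerOperator_{\magneticFieldStrength\magneticPotential+\omega_{\magneticFieldStrength},\electricPotential}$ with eigenvalue $\groundStateEnergy(\magneticFieldStrength)$. Hence, with $\psi=\norm{u_{\magneticFieldStrength}}$ and $\gamma=\magneticFieldStrength\magneticPotential+\omega_{\magneticFieldStrength}$, the pointwise identity gives
\[
\groundStateEnergy(\magneticFieldStrength)=\frac{\innerProduct{\schroedingerOperator_{\gamma,\electricPotential}\psi}{\psi}}{\normComingFromInnerProduct{\psi}^{2}}=\frac{\innerProduct{\schroedingerOperator_{0,\frac{1}{2}\norm{\gamma}^{2}+\electricPotential}\psi}{\psi}}{\normComingFromInnerProduct{\psi}^{2}}\geq\groundStateEnergy\!\left(0,\tfrac{1}{2}\norm{\magneticFieldStrength\magneticPotential+\omega_{\magneticFieldStrength}}^{2}+\electricPotential\right),
\]
so taking $\omega=-\omega_{\magneticFieldStrength}$ shows the right-hand side of the asserted formula is $\leq\groundStateEnergy(\magneticFieldStrength)$. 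Together with ``$\leq$'' this proves the formula, and shows the infimum is attained, at $\omega=-\omega_{\magneticFieldStrength}$. I expect the sole real obstacle to be the nowhere-vanishing of $u_{\magneticFieldStrength}$; once that is secured, everything else is a gauge change combined with the pointwise identity for $\norm{\magneticDifferential{\gamma}\psi}^{2}$.
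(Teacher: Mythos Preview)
The paper does not prove this proposition; it is attributed wholesale to Shigekawa~\cite{Shigekawa1987}, and the paragraph following it is a related remark rather than a proof. Your argument is correct and essentially reconstructs Shigekawa's: analytic (or here, norm-resolvent) perturbation theory to secure simplicity of the ground state for small $\magneticFieldStrength$, elliptic estimates to upgrade $L^2$-convergence of $u_{\magneticFieldStrength}$ to $C^\infty$-convergence and hence nowhere-vanishing, and then gauge away the phase $\varphi_{\magneticFieldStrength}=u_{\magneticFieldStrength}/\norm{u_{\magneticFieldStrength}}$ to reduce to a real ground state, where the pointwise identity $\norm{\magneticDifferential{\gamma}\psi}^{2}=\norm{\exteriorDifferential\psi}^{2}+\psi^{2}\norm{\gamma}^{2}$ for real $\psi$ matches the magnetic Rayleigh quotient with the non-magnetic one for the modified potential. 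The paper's follow-up remark (that a real-valued ground state of $\schroedingerOperator_{\magneticPotential,\electricPotential}$ is also a ground state of $\schroedingerOperator_{0,\electricPotential+\frac12\norm{\magneticPotential}^2}$) is exactly your pointwise identity specialized to $\omega=0$, so your argument and the paper's discussion are in the same spirit.

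Two minor remarks. First, your elliptic-bootstrap route to $u_{\magneticFieldStrength}\to u_0$ in $C^\infty$ is fine, but since $\{\schroedingerOperator_{\magneticFieldStrength}\}$ is a holomorphic family of type (A) and $\groundStateEnergy(0)$ is simple, Kato--Rellich analytic perturbation theory already gives $u_{\magneticFieldStrength}$ depending analytically on $\magneticFieldStrength$ in the graph norm, hence (by elliptic regularity) in every $C^k$; this is the more standard packaging and is what Shigekawa uses. Second, your parenthetical that $\omega_{\magneticFieldStrength}$ is exact for small $\magneticFieldStrength$ is correct but unnecessary for the proof, since membership in $2\pi\multipliedBy\homologyGroup^{1}(\baseManifold,\mathbb{Z})$ suffices for the infimum.
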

Following the proof of~\cite[Theorem 4.3]{Shigekawa1987}, we see
that any real-valued ground state $u_{\magneticPotential}$ of $\closure{\schroedingerOperator_{\magneticPotential,\electricPotential}}$
is also a ground state of $\closure{\schroedingerOperator_{0,\electricPotential+\frac{1}{2}\norm{\magneticPotential}^{2}}}$
since
\begin{eqnarray*}
\groundStateEnergy(\magneticPotential,\electricPotential) & = & \inf_{u\in\smoothFunctions{\baseManifold,\mathbb{R}}\colon\normComingFromInnerProduct u=1}\int_{\baseManifold}\frac{1}{2}\norm{-i\multipliedBy\exteriorDifferential u+u\multipliedBy\magneticPotential}^{2}+\electricPotential\multipliedBy\norm u^{2}\\
 & = & \inf_{u\in\smoothFunctions{\baseManifold,\mathbb{R}}\colon\normComingFromInnerProduct u=1}\int_{\baseManifold}\frac{1}{2}\norm{\exteriorDifferential u}^{2}+\left(\frac{1}{2}\norm{\magneticPotential}^{2}+\electricPotential\right)\multipliedBy\norm u^{2}=\groundStateEnergy(0,\electricPotential+\frac{1}{2}\norm{\magneticPotential}^{2}).
\end{eqnarray*}
In particular, $u_{\magneticPotential}$ has no zeros and the space
of real-valued ground states is at most one-dimensional. In the following,
we let $\electricPotential=0$ and consider the function $\eigenvalue_{0}^{\baseManifold}\colon\mathbb{R}\to\mathbb{R}$
given by $\eigenvalue_{0}^{\baseManifold}(\magneticFieldStrength)=\groundStateEnergy(\magneticFieldStrength\magneticPotential)$.
Using Hodge decomposition of $\smoothOneForms{\baseManifold,\mathbb{R}}$,
we can write $\magneticPotential=\magneticPotential_{\mathrm{cc}}+df_{\magneticPotential}$,
where $\magneticPotential_{\mathrm{cc}}\in\smoothOneForms{\baseManifold,\mathbb{R}}$
is coclosed and $f_{\magneticPotential}$ minimizes $\normComingFromInnerProduct{\magneticPotential-df}$
on $\smoothFunctions{\baseManifold,\mathbb{R}}$.
\begin{lem}
\label{lem:Trivial_bound_for_GSE}
\[
\eigenvalue_{0}^{\baseManifold}(\magneticFieldStrength)\leq\frac{1}{2}\magneticFieldStrength^{2}\min_{f\in\smoothFunctions{\baseManifold,\mathbb{R}}}\frac{\normComingFromInnerProduct{\magneticPotential-df}^{2}}{\volume{\baseManifold}}=\frac{1}{2}\magneticFieldStrength^{2}\frac{\normComingFromInnerProduct{\magneticPotential_{\mathrm{cc}}}^{2}}{\volume{\baseManifold}}.
\]
\end{lem}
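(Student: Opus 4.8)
The plan is to bound $\eigenvalue_{0}^{\baseManifold}(\magneticFieldStrength)=\groundStateEnergy(\magneticFieldStrength\magneticPotential,0)$ from above by inserting explicit test functions into the Rayleigh-quotient characterization of the ground state energy recalled in Section~\ref{sub:GSE}. Since $\baseManifold$ is closed we have $\smoothCompactlySupportedFunctions{\baseManifold}=\smoothFunctions{\baseManifold}$, so for any $f\in\smoothFunctions{\baseManifold,\mathbb{R}}$ the function $u=e^{i\magneticFieldStrength f}$ is admissible. A direct computation gives $\magneticDifferential{\magneticFieldStrength\magneticPotential}(e^{i\magneticFieldStrength f})=\tfrac{1}{i}\exteriorDifferential(e^{i\magneticFieldStrength f})+e^{i\magneticFieldStrength f}\multipliedBy\magneticFieldStrength\magneticPotential=\magneticFieldStrength\multipliedBy e^{i\magneticFieldStrength f}\multipliedBy(\magneticPotential+\exteriorDifferential f)$, whence $\norm{\magneticDifferential{\magneticFieldStrength\magneticPotential}(e^{i\magneticFieldStrength f})}^{2}=\magneticFieldStrength^{2}\norm{\magneticPotential+\exteriorDifferential f}^{2}$ pointwise, while $\norm{e^{i\magneticFieldStrength f}}^{2}\equiv1$. (This is nothing but the gauge transformation $\mathcal{U}_{e^{i\magneticFieldStrength f}}$ of Theorem~\ref{thm:Gauge_Invariance}; alternatively, one may quote (\ref{eq:GSE_of_Mag_Pot_plus_df}) directly to pass from $\magneticFieldStrength\magneticPotential$ to $\magneticFieldStrength(\magneticPotential-\exteriorDifferential f)$ and then use the constant test function $u=1$, for which $\magneticDifferential{\magneticFieldStrength(\magneticPotential-\exteriorDifferential f)}1=\magneticFieldStrength(\magneticPotential-\exteriorDifferential f)$.)

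Feeding $u=e^{i\magneticFieldStrength f}$ into the Rayleigh quotient then yields
\[
\eigenvalue_{0}^{\baseManifold}(\magneticFieldStrength)\;\leq\;\frac{\int_{\baseManifold}\tfrac{1}{2}\norm{\magneticDifferential{\magneticFieldStrength\magneticPotential}(e^{i\magneticFieldStrength f})}^{2}}{\int_{\baseManifold}\norm{e^{i\magneticFieldStrength f}}^{2}}\;=\;\frac{1}{2}\magneticFieldStrength^{2}\,\frac{\normComingFromInnerProduct{\magneticPotential+\exteriorDifferential f}^{2}}{\volume{\baseManifold}},
\]
valid for every $f\in\smoothFunctions{\baseManifold,\mathbb{R}}$. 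Since $f\mapsto-f$ is a bijection of $\smoothFunctions{\baseManifold,\mathbb{R}}$, taking the infimum gives $\eigenvalue_{0}^{\baseManifold}(\magneticFieldStrength)\leq\tfrac{1}{2}\magneticFieldStrength^{2}\inf_{f}\normComingFromInnerProduct{\magneticPotential-\exteriorDifferential f}^{2}/\volume{\baseManifold}$, which is the first inequality of the lemma once the infimum is shown to be attained.

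For the remaining equality I would invoke the Hodge decomposition on the closed manifold $\baseManifold$, writing $\magneticPotential=\magneticPotential_{\mathrm{cc}}+\exteriorDifferential f_{\magneticPotential}$ with $\magneticPotential_{\mathrm{cc}}$ coclosed; the two summands are $L^{2}$-orthogonal because $\innerProduct{\exteriorDifferential h}{\magneticPotential_{\mathrm{cc}}}=\innerProduct h{\adjoint{\exteriorDifferential}\magneticPotential_{\mathrm{cc}}}=0$ for every $h\in\smoothFunctions{\baseManifold,\mathbb{R}}$. Hence for all $f$,
\[
\normComingFromInnerProduct{\magneticPotential-\exteriorDifferential f}^{2}=\normComingFromInnerProduct{\magneticPotential_{\mathrm{cc}}}^{2}+\normComingFromInnerProduct{\exteriorDifferential(f_{\magneticPotential}-f)}^{2}\;\geq\;\normComingFromInnerProduct{\magneticPotential_{\mathrm{cc}}}^{2},
\]
with equality for $f=f_{\magneticPotential}$, so the infimum is attained and equals $\normComingFromInnerProduct{\magneticPotential_{\mathrm{cc}}}^{2}$. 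There is no serious obstacle in this argument: the only points requiring a moment's care are that the test function is admissible because $\baseManifold$ is compact without boundary, and the orthogonality of the two Hodge components, which reduces the minimization to the displayed Pythagorean identity; everything else is the variational principle.
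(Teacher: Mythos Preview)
Your proof is correct and essentially identical to the paper's: the paper first invokes the gauge invariance (\ref{eq:GSE_of_Mag_Pot_plus_df}) to pass to $\magneticFieldStrength(\magneticPotential-df)$ and then evaluates the Rayleigh quotient at the constant function $1_{\baseManifold}$, which is precisely the alternative you spell out in your parenthetical remark (and unitarily equivalent to your choice $u=e^{i\magneticFieldStrength f}$). Your additional justification of the equality via Hodge orthogonality is fine and merely makes explicit what the paper records just before the lemma in the definition of $\magneticPotential_{\mathrm{cc}}$ and $f_{\magneticPotential}$.
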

\begin{proof}
For $f\in\smoothFunctions{\baseManifold,\mathbb{R}}$, we get $\eigenvalue_{0}^{\baseManifold}(\magneticFieldStrength)=\groundStateEnergy(\magneticFieldStrength(\magneticPotential-df))$
from~(\ref{eq:GSE_of_Mag_Pot_plus_df}). Thus, we can compare with
the Rayleigh quotient of $\schroedingerOperator_{\magneticFieldStrength(\magneticPotential-df),0}$
at the function $1_{\baseManifold}$ which is identically equal to
$1$ on $\baseManifold$ 
\[
\eigenvalue_{0}^{\baseManifold}(\magneticFieldStrength)\leq\frac{\innerProduct{\schroedingerOperator_{\magneticFieldStrength(\magneticPotential-df),0}1_{\baseManifold}}{1_{\baseManifold}}}{\innerProduct{1_{\baseManifold}}{1_{\baseManifold}}}=\frac{1}{2}\frac{\normComingFromInnerProduct{\magneticDifferential{\magneticFieldStrength(\magneticPotential-df)}1_{\baseManifold}}^{2}}{\normComingFromInnerProduct{1_{\baseManifold}}^{2}}=\frac{1}{2}\magneticFieldStrength^{2}\frac{\int_{\baseManifold}\norm{\magneticPotential-df}^{2}}{\volume{\baseManifold}}.
\]
\vspace{-10mm}

\end{proof}
The following proposition is the analogue of~\cite[Proposition C]{HiguchiShirai2001}
for manifolds. A proof can be obtained from the computations in~\cite[Section 4.4]{Paternain2001}.
\begin{prop}
\label{prop:Derivative_of_GSE_on_quotient}The function $\eigenvalue_{0}^{\baseManifold}$
is real analytic near $\magneticFieldStrength=0$ and satisfies
\[
\eigenvalue_{0}^{\baseManifold}\hspace{0.5mm}'(0)=0\qquad\textnormal{and}\qquad\eigenvalue_{0}^{\baseManifold}\hspace{0.5mm}''(0)=\min_{f\in\smoothFunctions{\baseManifold,\mathbb{R}}}\frac{\normComingFromInnerProduct{\magneticPotential-df}^{2}}{\volume{\baseManifold}}=\frac{\normComingFromInnerProduct{\magneticPotential_{\mathrm{cc}}}^{2}}{\volume{\baseManifold}}.
\]

\end{prop}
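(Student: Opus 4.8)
The plan is to treat $\eigenvalue_{0}^{\baseManifold}(\magneticFieldStrength)=\groundStateEnergy(\magneticFieldStrength\magneticPotential)$ by analytic perturbation theory around $\magneticFieldStrength=0$ and to extract the first two Taylor coefficients by the Rayleigh--Schr\"odinger procedure. Since $\baseManifold$ is closed, $\closure{\schroedingerOperator_{0,0}}=\frac{1}{2}\closure{\Delta}$ has purely discrete spectrum whose bottom is the simple eigenvalue $0$ with eigenspace the constants. As in the proof of Proposition~\ref{prop:GSE_if_norm_constant}, one has the polynomial expansion $\schroedingerOperator_{\magneticFieldStrength\magneticPotential,0}=\schroedingerOperator_{0,0}+\magneticFieldStrength\multipliedBy\mathcal{L}+\magneticFieldStrength^{2}\multipliedBy\mathcal{Q}$ on $\smoothFunctions{\baseManifold}$, where $\mathcal{L}u=-i\innerProduct{\exteriorDifferential u}{\magneticPotential}+\frac{1}{2}\exteriorDifferential^{*}\magneticPotential\multipliedBy u$ is formally self-adjoint and $\mathcal{Q}u=\frac{1}{2}\norm{\magneticPotential}^{2}\multipliedBy u$ is bounded. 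As a first-order operator, $\mathcal{L}$ is $\closure{\schroedingerOperator_{0,0}}$-bounded with relative bound $0$, so $\{\schroedingerOperator_{\magneticFieldStrength\magneticPotential,0}\}_{\magneticFieldStrength\in\mathbb{C}}$ is a holomorphic family of type~(A) on the fixed domain $\domain{\closure{\schroedingerOperator_{0,0}}}$. Rellich's theorem at the simple eigenvalue $0$ then furnishes $\varepsilon>0$ and real-analytic maps $\magneticFieldStrength\mapsto\eigenvalue_{0}^{\baseManifold}(\magneticFieldStrength)$ and $\magneticFieldStrength\mapsto u_{\magneticFieldStrength}$ on $(-\varepsilon,\varepsilon)$ with $\schroedingerOperator_{\magneticFieldStrength\magneticPotential,0}u_{\magneticFieldStrength}=\eigenvalue_{0}^{\baseManifold}(\magneticFieldStrength)\multipliedBy u_{\magneticFieldStrength}$, $\normComingFromInnerProduct{u_{\magneticFieldStrength}}=1$, $\innerProduct{u_{\magneticFieldStrength}}{u_{0}}>0$, and $u_{0}=\volume{\baseManifold}^{-1/2}$; that this branch really is the ground state energy for small $\magneticFieldStrength$ is Proposition~\ref{prop:GSE_simple_for_small_B}. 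Write $u_{\magneticFieldStrength}=u_{0}+\magneticFieldStrength\multipliedBy u_{1}+O(\magneticFieldStrength^{2})$ and $\eigenvalue_{0}^{\baseManifold}(\magneticFieldStrength)=\magneticFieldStrength\multipliedBy a+\magneticFieldStrength^{2}\multipliedBy b+O(\magneticFieldStrength^{3})$, so that $a=\eigenvalue_{0}^{\baseManifold}\hspace{0.5mm}'(0)$ and $2b=\eigenvalue_{0}^{\baseManifold}\hspace{0.5mm}''(0)$; the normalization forces $\innerProduct{u_{1}}{u_{0}}=0$.

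Comparing coefficients of $\magneticFieldStrength$ in $\schroedingerOperator_{\magneticFieldStrength\magneticPotential,0}u_{\magneticFieldStrength}=\eigenvalue_{0}^{\baseManifold}(\magneticFieldStrength)\multipliedBy u_{\magneticFieldStrength}$ gives $\schroedingerOperator_{0,0}u_{1}+\mathcal{L}u_{0}=a\multipliedBy u_{0}$. Pairing with $u_{0}$ and using $\innerProduct{\schroedingerOperator_{0,0}u_{1}}{u_{0}}=\innerProduct{u_{1}}{\schroedingerOperator_{0,0}u_{0}}=0$ yields $a=\innerProduct{\mathcal{L}u_{0}}{u_{0}}$. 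Since $u_{0}$ is constant, $\mathcal{L}u_{0}=\frac{1}{2}u_{0}\multipliedBy\exteriorDifferential^{*}\magneticPotential$, and $\int_{\baseManifold}\exteriorDifferential^{*}\magneticPotential=\innerProduct{\exteriorDifferential^{*}\magneticPotential}{1}=\innerProduct{\magneticPotential}{\exteriorDifferential 1}=0$, hence $\eigenvalue_{0}^{\baseManifold}\hspace{0.5mm}'(0)=a=0$. With $a=0$ the same identity becomes $\frac{1}{2}\Delta u_{1}=-\frac{1}{2}u_{0}\multipliedBy\exteriorDifferential^{*}\magneticPotential$. Writing the Hodge decomposition $\magneticPotential=\magneticPotential_{\mathrm{cc}}+\exteriorDifferential f_{\magneticPotential}$ with $\magneticPotential_{\mathrm{cc}}$ coclosed, we get $\exteriorDifferential^{*}\magneticPotential=\exteriorDifferential^{*}\exteriorDifferential f_{\magneticPotential}=\Delta f_{\magneticPotential}$, so $\Delta u_{1}=\Delta(-u_{0}\multipliedBy f_{\magneticPotential})$ and therefore $u_{1}=-u_{0}\multipliedBy(f_{\magneticPotential}-\overline{f_{\magneticPotential}})$, where $\overline{f_{\magneticPotential}}$ denotes the mean of $f_{\magneticPotential}$ on $\baseManifold$ (the right-hand side has zero mean, so the equation is solvable, and the harmonic ambiguity is pinned down by $\innerProduct{u_{1}}{u_{0}}=0$).

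Comparing coefficients of $\magneticFieldStrength^{2}$ gives $\schroedingerOperator_{0,0}u_{2}+\mathcal{L}u_{1}+\mathcal{Q}u_{0}=a\multipliedBy u_{1}+b\multipliedBy u_{0}$; pairing with $u_{0}$ and using $a=0$, $\innerProduct{u_{1}}{u_{0}}=0$ and $\innerProduct{\schroedingerOperator_{0,0}u_{2}}{u_{0}}=0$ yields $b=\innerProduct{\mathcal{Q}u_{0}}{u_{0}}+\innerProduct{\mathcal{L}u_{1}}{u_{0}}$. Now $\innerProduct{\mathcal{Q}u_{0}}{u_{0}}=\frac{1}{2}u_{0}^{2}\int_{\baseManifold}\norm{\magneticPotential}^{2}=\frac{1}{2\volume{\baseManifold}}\normComingFromInnerProduct{\magneticPotential}^{2}$, while, using $\exteriorDifferential^{*}\magneticPotential=\Delta f_{\magneticPotential}$, self-adjointness of $\mathcal{L}$ and of $\Delta$, and $\Delta u_{1}=-u_{0}\multipliedBy\Delta f_{\magneticPotential}$,
\[
\innerProduct{\mathcal{L}u_{1}}{u_{0}}=\innerProductThatAdapts{u_{1}}{\frac{1}{2}u_{0}\multipliedBy\exteriorDifferential^{*}\magneticPotential}=\frac{u_{0}}{2}\innerProduct{\Delta u_{1}}{f_{\magneticPotential}}=-\frac{u_{0}^{2}}{2}\innerProduct{\Delta f_{\magneticPotential}}{f_{\magneticPotential}}=-\frac{u_{0}^{2}}{2}\normComingFromInnerProduct{\exteriorDifferential f_{\magneticPotential}}^{2}.
\]
Since $\magneticPotential_{\mathrm{cc}}\perp\exteriorDifferential f_{\magneticPotential}$ in $L^{2}(\smoothOneForms{\baseManifold})$ we have $\normComingFromInnerProduct{\magneticPotential}^{2}=\normComingFromInnerProduct{\magneticPotential_{\mathrm{cc}}}^{2}+\normComingFromInnerProduct{\exteriorDifferential f_{\magneticPotential}}^{2}$, so $b=\frac{u_{0}^{2}}{2}\normComingFromInnerProduct{\magneticPotential_{\mathrm{cc}}}^{2}=\frac{1}{2\volume{\baseManifold}}\normComingFromInnerProduct{\magneticPotential_{\mathrm{cc}}}^{2}$, whence $\eigenvalue_{0}^{\baseManifold}\hspace{0.5mm}''(0)=2b=\normComingFromInnerProduct{\magneticPotential_{\mathrm{cc}}}^{2}/\volume{\baseManifold}$; that this equals $\min_{f\in\smoothFunctions{\baseManifold,\mathbb{R}}}\normComingFromInnerProduct{\magneticPotential-\exteriorDifferential f}^{2}/\volume{\baseManifold}$ is exactly the Hodge-decomposition identity recorded just before Lemma~\ref{lem:Trivial_bound_for_GSE}. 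I expect the only genuinely delicate point to be the first paragraph, namely verifying that $\{\schroedingerOperator_{\magneticFieldStrength\magneticPotential,0}\}$ is a type-(A) holomorphic family so that Rellich's theorem yields the real-analytic eigenvalue branch together with a real-analytic eigenfunction admitting the above expansion; one may instead simply invoke \cite{Shigekawa1987} (via Proposition~\ref{prop:GSE_simple_for_small_B}) for the analyticity and the explicit computation in~\cite[Section~4.4]{Paternain2001}. The remaining steps are routine.
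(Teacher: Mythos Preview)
Your proof is correct and is precisely the standard Rayleigh--Schr\"odinger computation that the paper has in mind: the paper does not give its own argument but simply refers to \cite[Section~4.4]{Paternain2001} and \cite[Proposition~C]{HiguchiShirai2001}, and your write-up reproduces that computation (and you even cite the same source at the end). One small remark: the formula you copied for $\mathcal{L}$ from~(\ref{eq:Coordinatefree_Version_of_MSO}) should carry a factor $\tfrac{i}{2}\exteriorDifferential^{*}\magneticPotential$ rather than $\tfrac{1}{2}\exteriorDifferential^{*}\magneticPotential$ for $\mathcal{L}$ to actually be symmetric, but this does not affect your computation since $\int_{\baseManifold}\exteriorDifferential^{*}\magneticPotential=0$ and the contributions to $b$ come out the same either way.
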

Since $\closure{\schroedingerOperator_{\magneticPotential-df,\electricPotential}}$
and $\closure{\schroedingerOperator_{\magneticPotential,\electricPotential}}$
are gauge equivalent and therefore have the same spectrum by virtue
of Theorem~\ref{thm:Gauge_Invariance}, some authors prefer to always
work in the so-called Coulomb gauge given by $\adjoint d\magneticPotential=0$,
or equivalently $\magneticPotential=\magneticPotential_{\mathrm{cc}}$,
see also~\cite{RozenblumMelgaard2005}.

\subsubsection{Twisted operators}

Let $\cover{\pi}\colon\cover{\baseManifold}\to\baseManifold$ be
a regular cover with covering transformation group $\coveringTransformationGroup$,
and let $\cover{\magneticPotential}=\cover{\pi}^{*}\magneticPotential$
and $\cover{\electricPotential}=\cover{\pi}^{*}\electricPotential$
be the lifted potentials. According to Theorem~\ref{thm:Eigenspaces_are_infinite_dimensional},
$\spectrum{\closure{\schroedingerOperator_{\cover{\magneticPotential},\cover{\electricPotential}}}}$
is purely discrete or purely essential depending on whether $\volume{\cover{\baseManifold}}<\infty$
or $\volume{\cover{\baseManifold}}=\infty$. Let $\unitaryRepresentation\colon\coveringTransformationGroup\to U(\hilbertSpace)$
be a unitary representation on some separable Hilbert space $\hilbertSpace$.
We let $E_{\unitaryRepresentation}$ denote the associated flat vector
bundle~\cite{Sunada1989}, that is, $E_{\unitaryRepresentation}$
is the quotient space of $\cover{\baseManifold}\times\hilbertSpace$
by the action of $\coveringTransformationGroup$ given by $\groupElement(x,v)=(\groupElement x,\unitaryRepresentation(\groupElement)v)$.
If $\orbit xv$ denotes the $\coveringTransformationGroup$-orbit
of $(x,v)$, then the mapping $\orbit xv\mapsto\cover{\pi}(x)$ yields
a vector bundle projection $E_{\unitaryRepresentation}\to\baseManifold$,
and $E_{\unitaryRepresentation}$ inherits an inner product given
by
\[
\innerProduct{\orbit xv}{\orbit xw}_{\unitaryRepresentation}=\innerProduct vw_{\hilbertSpace}.
\]
Any section $s$ of $E_{\unitaryRepresentation}$ can be identified
with a $\hilbertSpace$-valued function $u_{s}$ on $\cover{\baseManifold}$
defined via
\[
s\left(\cover{\pi}\left(x\right)\right)=\orbit x{u_{s}(x)}.
\]

In other words, the space $\sectionOfBundle{E_{\unitaryRepresentation}}$
of smooth sections of $E_{\unitaryRepresentation}$ can be identified
with 
\begin{equation}
\left\{ u\colon\cover{\baseManifold}\to\hilbertSpace\textrm{ smooth}\,\big|\, u(\groupElement x)=\unitaryRepresentation(\groupElement)\multipliedBy u(x)\textrm{ for any }\groupElement\in\coveringTransformationGroup,x\in\cover{\baseManifold}\right\} ,\label{eq:Invariant_smooth_functions_on_cover}
\end{equation}
where differentability has to be understood with respect to the norm
topology. Let $L^{2}(E_{\unitaryRepresentation})$ denote the completion
of $\sectionOfBundle{E_{\unitaryRepresentation}}$ with respect to
the norm coming from the inner product 
\[
\innerProduct st=\int_{\baseManifold}\innerProduct st_{\unitaryRepresentation},
\]
and similarly for $L^{2}(E_{\unitaryRepresentation}\otimes T^{*}\baseManifold)$.
The magnetic differential $\magneticDifferential{\magneticPotential}\colon\smoothFunctions{\baseManifold}\to\smoothOneForms{\baseManifold}$
can be extended to
\[
\magneticDifferential{\magneticPotential,\unitaryRepresentation}\colon\sectionOfBundle{E_{\unitaryRepresentation}}\to\sectionOfBundle{E_{\unitaryRepresentation}\otimes T^{*}\baseManifold}.
\]
We let $\adjoint{\magneticDifferential{\magneticPotential,\unitaryRepresentation}}$
denote its formal adjoint and define the twisted magnetic Schr\"odinger
operator as
\[
\schroedingerOperator_{\magneticPotential,\electricPotential,\unitaryRepresentation}=\frac{1}{2}\multipliedBy\adjoint{\magneticDifferential{\magneticPotential,\unitaryRepresentation}}\magneticDifferential{\magneticPotential,\unitaryRepresentation}+\electricPotential\colon\sectionOfBundle{E_{\unitaryRepresentation}}\to\sectionOfBundle{E_{\unitaryRepresentation}}.
\]

The operator $\schroedingerOperator_{\magneticPotential,\electricPotential,\unitaryRepresentation}$
is sometimes called Bochner Laplacian and is known to have a unique
self-adjoint extension to $L^{2}(E_{\unitaryRepresentation})$~\cite{HessSchraderUhlenbrock1980,BravermanMilatovichShubin2002}.
Its ground state energy satisfies
\[
\groundStateEnergy(\magneticPotential,\electricPotential,\unitaryRepresentation)=\inf\multipliedBy\spectrum{\closure{\schroedingerOperator_{\magneticPotential,\electricPotential,\unitaryRepresentation}}}=\inf_{s\in\sectionOfBundle{E_{\unitaryRepresentation}}\backslash\{0\}}\frac{\int_{\baseManifold}\frac{1}{2}\normComingFromInnerProduct{\magneticDifferential{\magneticPotential,\unitaryRepresentation}s}_{\unitaryRepresentation}^{2}+\electricPotential\multipliedBy\normComingFromInnerProduct s_{\unitaryRepresentation}^{2}}{\int_{\baseManifold}\normComingFromInnerProduct s_{\unitaryRepresentation}^{2}}.
\]
If $\unitaryRepresentation$ is one-dimensional, then $\schroedingerOperator_{\magneticPotential,\electricPotential,\unitaryRepresentation}$
can be described as follows. Lemma~\ref{lem:Magnetic_translations_commute_with_MSO}
implies that $\schroedingerOperator_{\cover{\magneticPotential},\cover{\electricPotential}}$
commutes with the action of $\coveringTransformationGroup$. Hence,
$\schroedingerOperator_{\cover{\magneticPotential},\cover{\electricPotential}}$
maps the space~(\ref{eq:Invariant_smooth_functions_on_cover}) to
itself, and $\schroedingerOperator_{\magneticPotential,\electricPotential,\unitaryRepresentation}$
corresponds to the restriction of $\schroedingerOperator_{\cover{\magneticPotential},\cover{\electricPotential}}$
to this space. The following lemma is proven exactly as in~\cite{Sunada1989}.
\begin{lem}
If $\unitaryRepresentation$ is the trivial representation of $\coveringTransformationGroup$,
then $(\closure{\schroedingerOperator_{\magneticPotential,\electricPotential,\unitaryRepresentation}},L^{2}(E_{\unitaryRepresentation}))$
and $(\closure{\schroedingerOperator_{\magneticPotential,\electricPotential}},L^{2}(\baseManifold))$
are unitarily equivalent. Similarly, if $\unitaryRepresentation$
is the right regular representation on $L^{2}(\coveringTransformationGroup),$
then $(\closure{\schroedingerOperator_{\magneticPotential,\electricPotential,\unitaryRepresentation}},L^{2}(E_{\unitaryRepresentation}))$
and $(\closure{\schroedingerOperator_{\cover{\magneticPotential},\cover{\electricPotential}}},L^{2}(\cover{\baseManifold}))$
are unitarily equivalent.\label{lem:Unitary_equivalences_between_twisted_operators_and_usual_ones}
\end{lem}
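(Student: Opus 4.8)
The plan is to handle the two cases by exhibiting an explicit unitary between the relevant $L^{2}$-spaces, checking that it intertwines the Schr\"odinger operators on smooth sections, and then using essential self-adjointness to transport the intertwining to the closures.

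For the trivial representation $\unitaryRepresentation=\trivialRepresentation$ the argument is essentially tautological. The $\coveringTransformationGroup$-action on $\cover{\baseManifold}\times\hilbertSpace$ is trivial in the second factor, so $E_{\trivialRepresentation}$ is the product line bundle $\baseManifold\times\mathbb{C}$, and by~(\ref{eq:Invariant_smooth_functions_on_cover}) its smooth sections are exactly the $\coveringTransformationGroup$-invariant functions on $\cover{\baseManifold}$, i.e.\ the pullbacks $\cover{\pi}^{*}f$ with $f\in\smoothFunctions{\baseManifold}$; under this identification the fibre inner product integrated over $\baseManifold$ is literally the $L^{2}(\baseManifold)$ inner product. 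Because $\cover{\magneticPotential}=\cover{\pi}^{*}\magneticPotential$ and $\cover{\electricPotential}=\cover{\pi}^{*}\electricPotential$, the twisted magnetic differential $\magneticDifferential{\magneticPotential,\trivialRepresentation}$ corresponds to $\magneticDifferential{\magneticPotential}$ on $\baseManifold$ and its formal adjoint to $\adjoint{\magneticDifferential{\magneticPotential}}$, so $\schroedingerOperator_{\magneticPotential,\electricPotential,\trivialRepresentation}$ corresponds to $\schroedingerOperator_{\magneticPotential,\electricPotential}$ on $\smoothFunctions{\baseManifold}=\smoothCompactlySupportedFunctions{\baseManifold}$; this is the ``restriction to invariants'' observation already recorded above for one-dimensional $\unitaryRepresentation$. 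Passing to closures yields the asserted unitary equivalence.

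For the right regular representation, write $\hilbertSpace=L^{2}(\coveringTransformationGroup)=\ell^{2}(\coveringTransformationGroup)$ with $(\unitaryRepresentation(\groupElement)f)(\groupElement')=f(\groupElement'\groupElement)$, and identify a section $s$ of $E_{\unitaryRepresentation}$ via~(\ref{eq:Invariant_smooth_functions_on_cover}) with the smooth equivariant map $u\colon\cover{\baseManifold}\to\ell^{2}(\coveringTransformationGroup)$, $u(\groupElement x)=\unitaryRepresentation(\groupElement)u(x)$. I would define the candidate unitary by $\Psi(s)(x)=\bigl(u(x)\bigr)(e)$, evaluation of the $\ell^{2}(\coveringTransformationGroup)$-component at the identity $e\in\coveringTransformationGroup$. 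Equivariance then gives $\Psi(s)(\groupElement x)=\bigl(u(x)\bigr)(\groupElement)$, so fixing a relatively compact fundamental domain $F\subset\cover{\baseManifold}$ and using that $\cover{\baseManifold}=\bigsqcup_{\groupElement\in\coveringTransformationGroup}\groupElement F$ with the metric on $\cover{\baseManifold}$ pulled back, one computes
\[
\normComingFromInnerProduct{s}_{L^{2}(E_{\unitaryRepresentation})}^{2}=\int_{F}\normComingFromInnerProduct{u(x)}_{\ell^{2}(\coveringTransformationGroup)}^{2}\,\volumeForm=\sum_{\groupElement\in\coveringTransformationGroup}\int_{F}\norm{\Psi(s)(\groupElement x)}^{2}\,\volumeForm=\int_{\cover{\baseManifold}}\norm{\Psi(s)}^{2}\,\volumeForm .
\]
Conversely a compactly supported $F_{0}\in\smoothCompactlySupportedFunctions{\cover{\baseManifold}}$ is the image of the section $x\mapsto\sum_{\groupElement}F_{0}(\groupElement x)\,\delta_{\groupElement}$, which is a locally finite sum and hence a genuine smooth section of $E_{\unitaryRepresentation}$; so $\Psi$ extends to a unitary $L^{2}(E_{\unitaryRepresentation})\to L^{2}(\cover{\baseManifold})$ whose image contains $\smoothCompactlySupportedFunctions{\cover{\baseManifold}}$.

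Finally, since $\magneticDifferential{\cover{\magneticPotential}}$ and $\adjoint{\magneticDifferential{\cover{\magneticPotential}}}$ are differential operators pulled back from $\baseManifold$, they act componentwise in the $\ell^{2}(\coveringTransformationGroup)$-factor, so $\Psi$ intertwines $\schroedingerOperator_{\magneticPotential,\electricPotential,\unitaryRepresentation}$ on $\sectionOfBundle{E_{\unitaryRepresentation}}$ with $\schroedingerOperator_{\cover{\magneticPotential},\cover{\electricPotential}}$ on $\Psi(\sectionOfBundle{E_{\unitaryRepresentation}})$. As $\schroedingerOperator_{\magneticPotential,\electricPotential,\unitaryRepresentation}$ has a unique self-adjoint extension (the cited self-adjointness of the Bochner Laplacian), its $\Psi$-conjugate, namely $\schroedingerOperator_{\cover{\magneticPotential},\cover{\electricPotential}}$ restricted to $\Psi(\sectionOfBundle{E_{\unitaryRepresentation}})$, is a symmetric extension of $\schroedingerOperator_{\cover{\magneticPotential},\cover{\electricPotential}}$ on $\smoothCompactlySupportedFunctions{\cover{\baseManifold}}$, which is essentially self-adjoint by Theorem~\ref{thm:MSO_are_Essentially_Selfadjoint}; hence the two symmetric operators share the closure $\closure{\schroedingerOperator_{\cover{\magneticPotential},\cover{\electricPotential}}}$, and $\Psi$ conjugates $\closure{\schroedingerOperator_{\magneticPotential,\electricPotential,\unitaryRepresentation}}$ to $\closure{\schroedingerOperator_{\cover{\magneticPotential},\cover{\electricPotential}}}$. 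I expect the only genuinely fiddly part --- the reason this is ``proven exactly as in~\cite{Sunada1989}'' --- to be lining up the convention for $\unitaryRepresentation$ with the evaluation map so that equivariance turns cleanly into the fundamental-domain identity, together with checking that $\Psi(\sectionOfBundle{E_{\unitaryRepresentation}})$ really is a core, which is precisely what the two essential-self-adjointness inputs deliver.
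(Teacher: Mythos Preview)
Your proposal is correct and follows exactly the standard argument that the paper defers to~\cite{Sunada1989}: the paper itself gives no proof beyond that citation, and the explicit unitary you write down (pullback for the trivial representation, evaluation at the identity $e\in\coveringTransformationGroup$ for the right regular representation) together with the fundamental-domain norm computation and the essential-self-adjointness bootstrap is precisely Sunada's approach. Your handling of the core issue---showing $\smoothCompactlySupportedFunctions{\cover{\baseManifold}}\subset\Psi(\sectionOfBundle{E_{\unitaryRepresentation}})$ via the locally finite sum $x\mapsto\sum_{\groupElement}F_{0}(\groupElement x)\,\delta_{\groupElement}$ and then invoking Theorem~\ref{thm:MSO_are_Essentially_Selfadjoint}---is the right way to close the argument.
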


\subsubsection{Amenable covers}

In the absence of magnetic potentials, one can use twisted Schr\"odinger
operators to show that for any electric potential $\electricPotential\in\smoothFunctions{\baseManifold,\mathbb{R}}$
with lift $\cover{\electricPotential}$ to the regular cover $\cover{\baseManifold}$,
we have 
\[
\groundStateEnergy(0,\electricPotential)\leq\groundStateEnergy(0,\cover{\electricPotential}),
\]

and equality holds precisely if the covering transformation group
of $\cover{\baseManifold}$ is amenable~\cite[Proposition 1]{KobayashiOnoSunada1989}.
The special case $\electricPotential=0$ and $\cover{\baseManifold}=\universalCover{\baseManifold}{}$
is known as Brooks' theorem~\cite{Brooks1981}, which states that
$0\in\spectrum{\closure{\universalCoverWithoutHat{\Delta}{}}}$ if
and only if $\pi_{1}(\baseManifold)$ is amenable. We continue these
developments and prove an analogue of~\cite[Theorem 2.1]{HiguchiShirai1999}
for manifolds.
\begin{thm}
\label{thm:Spectral_Inclusion_for_Amenable_Covers}If $\cover{\baseManifold}$
is a regular cover with amenable covering transformation group $\coveringTransformationGroup$,
then 
\[
\spectrum{\closure{\schroedingerOperator_{\magneticPotential,\electricPotential}}}\subseteq\spectrum{\closure{\schroedingerOperator_{\cover{\magneticPotential},\cover{\electricPotential}}}}.
\]

\end{thm}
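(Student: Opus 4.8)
The plan is to show that every $\eigenvalue\in\spectrum{\closure{\schroedingerOperator_{\magneticPotential,\electricPotential}}}$ also lies in $\spectrum{\closure{\schroedingerOperator_{\cover{\magneticPotential},\cover{\electricPotential}}}}$ by constructing an explicit approximate eigenfunction sequence in the core $\smoothCompactlySupportedFunctions{\cover{\baseManifold}}$. Since $\baseManifold$ is closed, $\closure{\schroedingerOperator_{\magneticPotential,\electricPotential}}$ has discrete spectrum, so $\eigenvalue$ is a genuine eigenvalue with a smooth eigenfunction $\phi\in\smoothFunctions{\baseManifold}$ normalized by $\normComingFromInnerProduct{\phi}_{L^{2}(\baseManifold)}=1$. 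Because $\cover{\magneticPotential}=\cover{\pi}^{*}\magneticPotential$ and $\cover{\electricPotential}=\cover{\pi}^{*}\electricPotential$, the operator $\schroedingerOperator_{\cover{\magneticPotential},\cover{\electricPotential}}$ is the lift of $\schroedingerOperator_{\magneticPotential,\electricPotential}$ as a differential operator (compare the local formula~(\ref{eq:Coordinate_Version_of_MSO})), so its lift $\cover{\phi}=\cover{\pi}^{*}\phi$ obeys $\schroedingerOperator_{\cover{\magneticPotential},\cover{\electricPotential}}\cover{\phi}=\eigenvalue\cover{\phi}$ on all of $\cover{\baseManifold}$. By compactness of $\baseManifold$, the function $\cover{\phi}$, its differential $\exteriorDifferential\cover{\phi}$, and the $1$-form $\cover{\magneticPotential}$ all have bounded pointwise norms; the only defect of $\cover{\phi}$ is that it fails to be $L^{2}$ once the cover is infinite, and the remedy is to cut it off along a F{\o}lner exhaustion of $\coveringTransformationGroup$.

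Concretely, I would fix a relatively compact Borel fundamental domain $\mathcal{F}\subset\cover{\baseManifold}$ for the $\coveringTransformationGroup$-action and a nonnegative $\psi\in\smoothCompactlySupportedFunctions{\cover{\baseManifold},\mathbb{R}}$ with $0\leq\psi\leq1$ and $\sum_{\groupElement\in\coveringTransformationGroup}\groupElement^{*}\psi\equiv1$, arranged so that $\mathrm{supp}\,\psi\subseteq\bigcup_{s\in S}s\mathcal{F}$ for a finite symmetric set $S\subseteq\coveringTransformationGroup$ containing the identity. For a finite subset $F\subseteq\coveringTransformationGroup$ set $\chi_{F}=\sum_{\groupElement\in F}\groupElement^{*}\psi$, which is smooth, compactly supported, and satisfies $0\leq\chi_{F}\leq1$, and put $\Phi_{F}=\chi_{F}\multipliedBy\cover{\phi}\in\smoothCompactlySupportedFunctions{\cover{\baseManifold}}$. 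One then checks that $\chi_{F}\equiv1$ on $\bigcup_{\groupElement S\subseteq F}\groupElement\mathcal{F}$, so that $\normComingFromInnerProduct{\Phi_{F}}^{2}\geq\#\{\groupElement\colon\groupElement S\subseteq F\}$ (each $\groupElement\mathcal{F}$ contributes $\normComingFromInnerProduct{\phi}_{L^{2}(\baseManifold)}^{2}=1$), while $\mathrm{supp}\,\exteriorDifferential\chi_{F}\cup\mathrm{supp}\,\Delta\chi_{F}\subseteq\bigcup_{\groupElement\in\partial_{S}F}\groupElement\mathcal{F}$ for the combinatorial boundary $\partial_{S}F=(FS)\setminus\{\groupElement\colon\groupElement S\subseteq F\}$, with $\norm{\exteriorDifferential\chi_{F}}$ and $\norm{\Delta\chi_{F}}$ bounded by a constant depending only on $\psi$ and $S$. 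Using~(\ref{eq:Coordinatefree_Version_of_MSO}) and $\schroedingerOperator_{\cover{\magneticPotential},\cover{\electricPotential}}\cover{\phi}=\eigenvalue\cover{\phi}$, the Leibniz rules for $\Delta$ and $\exteriorDifferential$ give
\[
(\schroedingerOperator_{\cover{\magneticPotential},\cover{\electricPotential}}-\eigenvalue)\Phi_{F}=\frac{1}{2}\cover{\phi}\multipliedBy\Delta\chi_{F}-\innerProduct{\exteriorDifferential\chi_{F}}{\exteriorDifferential\cover{\phi}}-i\multipliedBy\cover{\phi}\multipliedBy\innerProduct{\exteriorDifferential\chi_{F}}{\cover{\magneticPotential}},
\]
so the left-hand side is pointwise bounded by a constant depending on $\psi,S,\phi,\magneticPotential$ and is supported on $\bigcup_{\groupElement\in\partial_{S}F}\groupElement\mathcal{F}$; hence $\normComingFromInnerProduct{(\schroedingerOperator_{\cover{\magneticPotential},\cover{\electricPotential}}-\eigenvalue)\Phi_{F}}^{2}\leq C\multipliedBy\#(\partial_{S}F)\multipliedBy\volume{\baseManifold}$.

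Finally, amenability of $\coveringTransformationGroup$ supplies a F{\o}lner sequence $(F_{k})$ with $\#(sF_{k}\triangle F_{k})=o(\#F_{k})$ for every $s\in S$, whence $\#(\partial_{S}F_{k})=o(\#F_{k})$ and $\#\{\groupElement\colon\groupElement S\subseteq F_{k}\}=\#F_{k}-o(\#F_{k})$, so that $\normComingFromInnerProduct{(\schroedingerOperator_{\cover{\magneticPotential},\cover{\electricPotential}}-\eigenvalue)\Phi_{F_{k}}}/\normComingFromInnerProduct{\Phi_{F_{k}}}\to0$. Since $\eigenvalue$ is real, $\closure{\schroedingerOperator_{\cover{\magneticPotential},\cover{\electricPotential}}}$ is self-adjoint by Theorem~\ref{thm:MSO_are_Essentially_Selfadjoint}, and $\smoothCompactlySupportedFunctions{\cover{\baseManifold}}$ is a core for it, the usual characterization of the spectrum of a self-adjoint operator via approximate eigenvectors drawn from a core then yields $\eigenvalue\in\spectrum{\closure{\schroedingerOperator_{\cover{\magneticPotential},\cover{\electricPotential}}}}$, which is the asserted inclusion. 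The only input of amenability is the existence of F{\o}lner sets whose $S$-boundary is negligible relative to their size; I expect the main (though essentially routine) obstacle to be the bookkeeping that identifies $\mathrm{supp}\,\exteriorDifferential\chi_{F}$ with the combinatorial boundary $\partial_{S}F$ and keeps all the geometric constants ($\sup\norm{\cover{\phi}}$, $\sup\norm{\exteriorDifferential\cover{\phi}}$, $\sup\norm{\cover{\magneticPotential}}$, and the overlap multiplicity of the translates of $\psi$) uniform in $F$.
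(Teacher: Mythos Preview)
Your proposal is correct and follows essentially the same approach as the paper: lift a smooth eigenfunction $\phi$ of $\schroedingerOperator_{\magneticPotential,\electricPotential}$ to $\cover{\phi}$, multiply by a cutoff coming from amenability, and use the Leibniz identity
\[
(\schroedingerOperator_{\cover{\magneticPotential},\cover{\electricPotential}}-\eigenvalue)(\chi\cover{\phi})=\tfrac{1}{2}\cover{\phi}\,\Delta\chi-\innerProduct{\exteriorDifferential\chi}{\exteriorDifferential\cover{\phi}}-i\,\cover{\phi}\,\innerProduct{\exteriorDifferential\chi}{\cover{\magneticPotential}}
\]
to see that only the ``boundary'' of the cutoff contributes. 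The only difference is packaging: the paper quotes Brooks' geometric F{\o}lner construction (Proposition~\ref{prop:Foelner_Sequence_for_amenable_covers}) to produce $\chi_{u,\varepsilon}$ with small $\normComingFromInnerProduct{\exteriorDifferential\chi_{u,\varepsilon}}$ and $\normComingFromInnerProduct{\Delta\chi_{u,\varepsilon}}$ directly, whereas you build $\chi_{F}=\sum_{\groupElement\in F}\groupElement^{*}\psi$ from a partition of unity and a F{\o}lner sequence in $\coveringTransformationGroup$; both constructions encode the same isoperimetric smallness and lead to the identical final estimate.
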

Instead of working along the lines of~\cite{Sunada1988,Sunada1989},
our proof follows Brooks' original approach. In essence, \cite[Proposition 2]{Brooks1981}~says
that a regular cover $\cover{\baseManifold}$ has an amenable covering
transformation group $\coveringTransformationGroup$ if and only if
for any $\varepsilon>0$ and any fundamental domain $F$ of the action
of $\coveringTransformationGroup$ on $\cover{\baseManifold}$ arising
from a smooth triangulation of $\baseManifold$, there exist $\groupElement_{1},\groupElement_{2},\ldots,\groupElement_{n}\in\coveringTransformationGroup$
such that the compact subdomain $D=\groupElement_{1}F\cup\groupElement_{2}F\cup\ldots\groupElement_{n}F$
satisfies
\[
\frac{\volume{\partial D}}{\volume D}<\varepsilon.
\]
Following~\cite[Section 2]{Brooks1981}, one considers smooth functions
that are supported inside $D$, and that are non-constant only in
a small neighborhood of $\partial D$ to obtain the following.
\begin{prop}
\label{prop:Foelner_Sequence_for_amenable_covers}If $\cover{\baseManifold}$
is a regular cover of the closed manifold $\baseManifold$ with amenable
covering transformation group $\coveringTransformationGroup$, then
for any $u\in\smoothFunctions{\baseManifold}$ and $\varepsilon>0$
there exists $\chi_{u,\varepsilon}\in\smoothCompactlySupportedFunctions{\cover{\baseManifold},\mathbb{R}}$
with 
\begin{equation}
\normComingFromInnerProduct{\chi_{u,\varepsilon}}=\sqrt{\volume{\baseManifold}}\qquad\text{and}\qquad\max\left(\normComingFromInnerProduct{\Delta\chi_{u,\varepsilon}},\normComingFromInnerProduct{d\chi_{u,\varepsilon}}\right)<\varepsilon\label{eq:Bound1_for_almost_constant_L2_function}
\end{equation}
such that the lift $\cover u$ of $u$ to $\cover{\baseManifold}$
satisfies
\begin{equation}
\normComingFromInnerProduct{\chi_{u,\varepsilon}\cover u}\geq(1-\varepsilon)\normComingFromInnerProduct u.\label{eq:Bound2_for_almost_constant_L2_function}
\end{equation}

\end{prop}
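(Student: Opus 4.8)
The plan is to realize $\chi_{u,\varepsilon}$ as a normalized smooth cut-off supported in a large Følner domain, following the construction of Brooks~\cite[Section~2]{Brooks1981}. We may assume $u\not\equiv0$, the statement being vacuous otherwise, and we write $\|u\|_\infty=\max_{\baseManifold}\norm u$, which is finite by compactness of $\baseManifold$.

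First I would fix once and for all a number $\delta>0$, depending only on $(\baseManifold,\metric)$ and the chosen smooth triangulation of $\baseManifold$, small enough that the following holds with a constant $C\geq1$ independent of the domain $D$ below: for every $D=\gamma_{1}F\cup\dots\cup\gamma_{n}F$ with distinct $\gamma_{i}\in\coveringTransformationGroup$, writing $N_\delta(\partial D)$ for the $\delta$-neighborhood of $\partial D$, one has $\volume{N_\delta(\partial D)\cap D}\leq C\,\volume{\partial D}$, and there exists $\psi=\psi_{D}\in\smoothCompactlySupportedFunctions{\cover{\baseManifold},\mathbb{R}}$ with $0\leq\psi\leq1$, $\mathrm{supp}\,\psi\subseteq D$, $\psi\equiv1$ on $D\setminus N_\delta(\partial D)$, and $\norm{d\psi}\leq C$, $\norm{\Delta\psi}\leq C$ pointwise on $\cover{\baseManifold}$. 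Such uniform estimates are available precisely because $\partial D$ always runs over subcomplexes of the lift to $\cover{\baseManifold}$ of the single fixed triangulation of the compact manifold $\baseManifold$, hence has uniformly bounded geometry as a stratified hypersurface; this is the analytic heart of \cite[Section~2]{Brooks1981}, and the only place where amenability enters quantitatively — namely through the existence of the $\gamma_{i}$. Note also $\volume D=n\,\volume{\baseManifold}$, since the $\gamma_{i}F$ are isometric copies of $F$ meeting only in null sets.

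Given $\varepsilon$, I would then choose $\varepsilon'>0$ (to be constrained below, with $\delta$ already fixed) and apply \cite[Proposition~2]{Brooks1981} to produce such a $D$ with $\volume{\partial D}<\varepsilon'\volume D$, together with $\psi=\psi_{D}$. Then $(1-C\varepsilon')\volume D\leq\normComingFromInnerProduct{\psi}^{2}\leq\volume D$, while $d\psi$ and $\Delta\psi$ vanish off $N_\delta(\partial D)\cap D$, so $\normComingFromInnerProduct{d\psi}^{2}$ and $\normComingFromInnerProduct{\Delta\psi}^{2}$ are each at most $C^{2}\,\volume{N_\delta(\partial D)\cap D}\leq C^{3}\,\varepsilon'\,\volume D$. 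Setting
\[
\chi_{u,\varepsilon}=\sqrt{\volume{\baseManifold}}\;\frac{\psi}{\normComingFromInnerProduct{\psi}}
\]
makes $\normComingFromInnerProduct{\chi_{u,\varepsilon}}=\sqrt{\volume{\baseManifold}}$ exactly, and
\[
\max\bigl(\normComingFromInnerProduct{\Delta\chi_{u,\varepsilon}}^{2},\normComingFromInnerProduct{d\chi_{u,\varepsilon}}^{2}\bigr)=\frac{\volume{\baseManifold}}{\normComingFromInnerProduct{\psi}^{2}}\,\max\bigl(\normComingFromInnerProduct{\Delta\psi}^{2},\normComingFromInnerProduct{d\psi}^{2}\bigr)\leq\frac{C^{3}\,\volume{\baseManifold}}{1-C\varepsilon'}\,\varepsilon',
\]
which tends to $0$ as $\varepsilon'\to0$ since $\delta$ is fixed; hence (\ref{eq:Bound1_for_almost_constant_L2_function}) holds once $\varepsilon'$ is small.

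It remains to check (\ref{eq:Bound2_for_almost_constant_L2_function}). Since $\cover u=u\circ\cover{\pi}$ restricts over the interior of each $\gamma_{i}F$ to the pullback of $u$ under an isometry onto a full-measure subset of $\baseManifold$, we have $\int_{\gamma_{i}F}\norm{\cover u}^{2}=\normComingFromInnerProduct{u}^{2}$, so $\int_{D}\norm{\cover u}^{2}=n\,\normComingFromInnerProduct{u}^{2}$, whereas $\int_{N_\delta(\partial D)\cap D}\norm{\cover u}^{2}\leq\|u\|_\infty^{2}\,\volume{N_\delta(\partial D)\cap D}\leq\|u\|_\infty^{2}\,C\,\varepsilon'\,\volume D$. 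Using $0\leq\psi\leq1$, $\psi\equiv1$ on $D\setminus N_\delta(\partial D)$, $\mathrm{supp}\,\psi\subseteq D$, and $\normComingFromInnerProduct{\psi}^{2}\leq\volume D=n\,\volume{\baseManifold}$,
\[
\normComingFromInnerProduct{\chi_{u,\varepsilon}\cover u}^{2}=\frac{\volume{\baseManifold}}{\normComingFromInnerProduct{\psi}^{2}}\int_{\cover{\baseManifold}}\psi^{2}\norm{\cover u}^{2}\geq\frac{\volume{\baseManifold}}{\volume D}\Bigl(\int_{D}\norm{\cover u}^{2}-\int_{N_\delta(\partial D)\cap D}\norm{\cover u}^{2}\Bigr)\geq\normComingFromInnerProduct{u}^{2}-\|u\|_\infty^{2}\,C\,\varepsilon'\,\volume{\baseManifold}.
\]
Since $u\not\equiv0$, the right-hand side exceeds $(1-\varepsilon)^{2}\normComingFromInnerProduct{u}^{2}$ once $\varepsilon'$ is small, so it suffices to pick $\varepsilon'>0$ small enough to meet this requirement and the one from the previous paragraph. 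I expect the only genuine obstacle to be the uniform bounded-geometry input of the second paragraph — constructing $\psi_{D}$ and bounding $\volume{N_\delta(\partial D)\cap D}$ with constants independent of $D$ — which is routine given that $\partial D$ ranges over subcomplexes of one fixed lifted triangulation, and is exactly what Brooks' argument supplies.
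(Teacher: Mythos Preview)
Your argument is correct and follows exactly the approach the paper indicates: the paper does not give a detailed proof of this proposition but merely refers to \cite[Section~2]{Brooks1981}, noting that one takes smooth cut-offs supported in a F{\o}lner domain $D=\gamma_{1}F\cup\dots\cup\gamma_{n}F$ that are non-constant only near $\partial D$. You have filled in precisely those details, including the normalization and the lower bound on $\normComingFromInnerProduct{\chi_{u,\varepsilon}\cover u}$, in the intended way.
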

Note that~(\ref{eq:Bound1_for_almost_constant_L2_function}) could
be weakened since
\[
\normComingFromInnerProduct{d\chi_{u,\varepsilon}}^{2}=\innerProduct{\chi_{u,\varepsilon}}{\Delta\chi_{u,\varepsilon}}\leq\normComingFromInnerProduct{\chi_{u,\varepsilon}}\normComingFromInnerProduct{\Delta\chi_{u,\varepsilon}}.
\]

\begin{proof}
[Proof of Theorem  \ref{thm:Spectral_Inclusion_for_Amenable_Covers}]It
suffices to show that for any $\lambda\in\spectrum{\closure{\schroedingerOperator_{\magneticPotential,\electricPotential}}}$,
we have 
\[
\inf_{v\in\smoothCompactlySupportedFunctions{\cover{\baseManifold}}\backslash\{0\}}\frac{\normComingFromInnerProduct{(\schroedingerOperator_{\cover{\magneticPotential},\cover{\electricPotential}}-\lambda)v}}{\normComingFromInnerProduct v}=0.
\]
As $\baseManifold$ is compact and $\closure{\schroedingerOperator_{\magneticPotential,\electricPotential}}$
is elliptic, any $\lambda\in\spectrum{\closure{\schroedingerOperator_{\magneticPotential,\electricPotential}}}$
is an eigenvalue of $\schroedingerOperator_{\magneticPotential,\electricPotential}$
with smooth eigenfunction $u$ satisfying $\schroedingerOperator_{\magneticPotential,\electricPotential}u=\lambda\multipliedBy u$.
For arbitrary $\varepsilon>0$, let $\chi_{u,\varepsilon}\in\smoothCompactlySupportedFunctions{\cover{\baseManifold},\mathbb{R}}$
be as in Proposition~\ref{prop:Foelner_Sequence_for_amenable_covers},
and define $v_{\varepsilon}=\chi_{u,\varepsilon}\multipliedBy\cover u\in\smoothCompactlySupportedFunctions{\cover{\baseManifold}}$,
where $\cover u$ denotes the lift of $u$ to $\cover{\baseManifold}$.
Note that $\schroedingerOperator_{\cover{\magneticPotential},\cover{\electricPotential}}\cover u=\lambda\multipliedBy\cover u$.
Since $\Delta v_{\varepsilon}=\chi_{u,\varepsilon}\Delta\cover u-2\innerProduct{d\chi_{u,\varepsilon}}{d\cover u}+\cover u\multipliedBy\Delta\chi_{u,\varepsilon}$,
the local expression~(\ref{eq:Coordinatefree_Version_of_MSO}) for
$\schroedingerOperator_{\cover{\magneticPotential},\cover{\electricPotential}}$
leads to
\[
(\schroedingerOperator_{\cover{\magneticPotential},\cover{\electricPotential}}-\lambda)v_{\varepsilon}=\frac{1}{2}\multipliedBy\cover u\multipliedBy\Delta\chi_{u,\varepsilon}-\innerProduct{d\chi_{u,\varepsilon}}{d\cover u}-i\innerProduct{\cover u\multipliedBy\exteriorDifferential\chi_{u,\varepsilon}}{\cover{\magneticPotential}}.
\]
A similar computation appears in the proof of~\cite[Proposition 3.2]{Shigekawa1987}.
Since $\cover u$ and $\cover{\magneticPotential}$ arise from $u$
and $\magneticPotential$, the claim follows from (\ref{eq:Bound1_for_almost_constant_L2_function})
and (\ref{eq:Bound2_for_almost_constant_L2_function}) via the estimate
\[
\frac{\normComingFromInnerProduct{(\schroedingerOperator_{\cover{\magneticPotential},\cover{\electricPotential}}-\lambda)v_{\varepsilon}}}{\normComingFromInnerProduct{v_{\varepsilon}}}\leq\frac{\varepsilon}{(1-\varepsilon)}\multipliedBy\frac{\frac{1}{2}\normComingFromInnerProduct u_{\infty}+\normComingFromInnerProduct{du}_{\infty}+\normComingFromInnerProduct u_{\infty}\normComingFromInnerProduct{\magneticPotential}_{\infty}}{\normComingFromInnerProduct u}.
\]

\end{proof}

\subsubsection{Abelian covers}

We discuss Bloch-Floquet theory for lifted magnetic Schr\"o\-dinger
operators on abelian covers and extend results that are known for
graphs~\cite{HiguchiShirai1999} to manifolds. Our approach bases
upon the study of lifted Laplacians on abelian covers in~\cite{KotaniSunada2000}
and~\cite[Section 3]{Post2000}, the latter of which gives an extensive
account of the underlying functional analysis.

Let $\cover{\pi}\colon\cover{\baseManifold}\to\baseManifold$ be a
regular cover with abelian covering transformation group $\coveringTransformationGroup$.
Note that $\coveringTransformationGroup\simeq\mathbb{Z}^{r_{0}}\times\mathbb{Z}_{p_{1}}^{r_{1}}\times\ldots\times\mathbb{Z}_{p_{k}}^{r_{k}}$
for some $r_{0},r_{1},\ldots,r_{k}\in\mathbb{N}_{0}$, where $\mathbb{Z}_{p}$
denotes the cyclic group of order $p$. The irreducible unitary representations
of $\coveringTransformationGroup$ are one-dimensional and constitute
the so-called character group $\characterGroup{\coveringTransformationGroup}=\homomorphismGroup(\coveringTransformationGroup,\mathbb{S}^{1})\simeq\mathbb{T}^{r_{0}}\times\mathbb{Z}_{p_{1}}^{r_{1}}\times\ldots\times\mathbb{Z}_{p_{k}}^{r_{k}}$,
which is compact with respect to its canonical topology of pointwise
convergence. Recall that $\cover{\baseManifold}$ is covered by $\abelianCover{\baseManifold}{}$,
which entails a surjective homomorphism $\Phi\colon\homologyGroup_{1}(\baseManifold,\mathbb{Z})\twoheadrightarrow\coveringTransformationGroup$.
Dualizing yields the following injection of compact character groups
\begin{equation}
\characterGroup{\Phi}\colon\characterGroup{\coveringTransformationGroup}\hookrightarrow\characterGroup{\homologyGroup}_{1}(\baseManifold,\mathbb{Z})=\homomorphismGroup(\homologyGroup_{1}(\baseManifold,\mathbb{Z}),\mathbb{S}^{1}).\label{eq:Injection_of_Character_Groups}
\end{equation}
The latter can be identified with the so-called Jacobian torus $\homologyGroup^{1}(\baseManifold,\mathbb{R})/2\pi\multipliedBy\homologyGroup^{1}(\baseManifold,\mathbb{Z})$
via the mapping~\cite[Section 1]{KatsudaSunada1988} 
\begin{equation}
\homologyGroup^{1}(\baseManifold,\mathbb{R})\ni[\omega]\mapsto\character_{[\omega]}\in\characterGroup{\homologyGroup}_{1}(\baseManifold,\mathbb{Z})\qquad\textnormal{given by}\qquad\character_{[\omega]}([\groupElement])=e^{i\int_{\groupElement}\omega},\label{eq:Character_Group_as_Jacobian_Torus}
\end{equation}
where $[\groupElement]$ denotes the homology class of the closed
curve $\groupElement$. Note that if $\groupElement'$ is another
closed curve such that $[\groupElement']=[\groupElement]$, then $\gamma'\circ\gamma^{-1}$
has homotopy class in $[\pi_{1}(\baseManifold),\pi_{1}(\baseManifold)]$
leading to $\int_{\gamma'\circ\gamma^{-1}}\omega=0$. Moreover, $[\omega]\in\homologyGroup^{1}(\baseManifold,\mathbb{R})$
satisfies $\character_{[\omega]}\in\characterGroup{\Phi}(\characterGroup{\coveringTransformationGroup})$
if and only if $\int_{\gamma}\omega\in2\pi\multipliedBy\mathbb{Z}$
for any closed curve $\gamma$ in $\baseManifold$ with $[\gamma]\in\ker\Phi\subseteq\homologyGroup_{1}(\baseManifold,\mathbb{Z})$,
that is, if and only if $[\omega]\in2\pi\multipliedBy\homologyGroup^{1}(\baseManifold,\cover{\baseManifold},\mathbb{Z})$,
where we defined
\begin{equation}
\homologyGroup^{1}(\baseManifold,\cover{\baseManifold},\mathbb{Z})=\left\{ [\omega]\in\homologyGroup^{1}(\baseManifold,\mathbb{R})\,\Big|\,\int_{\gamma}\cover{\omega}\in\mathbb{Z}\textrm{ for any closed curve }\gamma\textrm{ in }\cover{\baseManifold}\right\} .\label{eq:Hom_CTG_to_S_as_One_forms}
\end{equation}

The mapping~(\ref{eq:Character_Group_as_Jacobian_Torus}) also identifies
the tangent space $T_{\boldsymbol{1}}\characterGroup{\coveringTransformationGroup}$
at the trivial representation $\boldsymbol{1}\in\characterGroup{\coveringTransformationGroup}$
with $\homomorphismGroup(\coveringTransformationGroup,\mathbb{R})=\{[\omega]\in\homologyGroup^{1}(\baseManifold,\mathbb{R})\,|\,\cover{\omega}\textrm{ is exact}\}$
as given in~(\ref{eq:Hom_CTG_to_R_as_One_Forms}), see~\cite{Sunada1992a}
and~\cite[Section 2]{KotaniSunada2000}. Essentially the same ideas
work in the case of graphs~\cite[Section 3]{HiguchiShirai1999}.
In the following, we use direct integral decompositions as in~\cite[Section XIII.16]{ReedSimon1978}
and extend~\cite[Proposition 2]{KobayashiOnoSunada1989}.
\begin{thm}
\label{thm:MSO_on_abelian_cover_as_direct_integral}If $\cover{\magneticPotential}$
and $\cover{\electricPotential}$ are lifts of potentials $\magneticPotential\in\smoothOneForms{\baseManifold,\mathbb{R}}$
and $\electricPotential\in\smoothFunctions{\baseManifold,\mathbb{R}}$
to $\cover{\baseManifold}$, then $(\closure{\schroedingerOperator_{\cover{\magneticPotential},\cover{\electricPotential}}},L^{2}(\cover{\baseManifold}))$
allows for a direct integral decomposition, namely, 
\begin{equation}
L^{2}(\cover{\baseManifold})=\int_{\characterGroup{\coveringTransformationGroup}}^{\oplus}L^{2}(E_{\character})\multipliedBy\exteriorDifferential\character\qquad\textrm{and}\qquad\closure{\schroedingerOperator_{\cover{\magneticPotential},\cover{\electricPotential}}}=\int_{\characterGroup{\coveringTransformationGroup}}^{\oplus}\closure{\schroedingerOperator_{\magneticPotential,\electricPotential,\character}}\multipliedBy\exteriorDifferential\character,\label{eq:MSO_on_abelian_cover_as_direct_integral}
\end{equation}
where $E_{\character}$ denotes the flat vector bundle associated
with $\character\in\characterGroup{\coveringTransformationGroup}$,
and $\exteriorDifferential\character$ denotes the normalized Haar
measure on $\characterGroup{\coveringTransformationGroup}$. Moreover,
if $\character\in\characterGroup{\coveringTransformationGroup}$ is
represented by $\omega_{\character}\in\smoothOneForms{\baseManifold,\mathbb{R}}$
via (\ref{eq:Injection_of_Character_Groups}) and (\ref{eq:Character_Group_as_Jacobian_Torus}),
then $(\closure{\schroedingerOperator_{\magneticPotential,\electricPotential,\character}},L^{2}(E_{\character}))$
is unitarily equivalent to $(\closure{\schroedingerOperator_{\magneticPotential+\omega_{\character},\electricPotential}},L^{2}(\baseManifold))$.\end{thm}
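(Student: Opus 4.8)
The plan is to diagonalize the unitary action of $\coveringTransformationGroup$ on $L^{2}(\cover{\baseManifold})$ by Fourier analysis on the discrete abelian group $\coveringTransformationGroup$, exactly as in the treatment of periodic Schr\"odinger operators in~\cite[Section XIII.16]{ReedSimon1978}; for $\magneticPotential=0$ the result is~\cite[Proposition 2]{KobayashiOnoSunada1989}, and the functional-analytic framework is laid out in~\cite[Section 3]{Post2000} and~\cite{KotaniSunada2000}, so the task is to carry the magnetic potential along. Since $\cover{\magneticPotential}=\cover{\pi}^{*}\magneticPotential$ is a lift we have $\groupElement^{*}\cover{\magneticPotential}=\cover{\magneticPotential}$ for every $\groupElement\in\coveringTransformationGroup$, hence $\coveringTransformationGroup^{\cover{\magneticPotential}}=\coveringTransformationGroup$ and the magnetic translations of Lemma~\ref{lem:Magnetic_translations_commute_with_MSO} are the plain unitaries $T_{\groupElement}u=\groupElement^{*}u$; by that lemma they preserve $\domain{\closure{\schroedingerOperator_{\cover{\magneticPotential},\cover{\electricPotential}}}}$ and commute with $\closure{\schroedingerOperator_{\cover{\magneticPotential},\cover{\electricPotential}}}$.

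First I would fix a fundamental domain $F$ for the $\coveringTransformationGroup$-action, identify $L^{2}(\cover{\baseManifold})\simeq L^{2}(\coveringTransformationGroup)\otimes L^{2}(\baseManifold)$ so that $T_{\groupElement}$ becomes left translation on the first factor, and apply the Plancherel isomorphism $L^{2}(\coveringTransformationGroup)\simeq L^{2}(\characterGroup{\coveringTransformationGroup},\exteriorDifferential\character)$ for the compact dual $\characterGroup{\coveringTransformationGroup}$ with normalized Haar measure, which turns left translation into multiplication by the character evaluation $\character\mapsto\character(\groupElement)$. This produces a unitary $L^{2}(\cover{\baseManifold})\simeq\int_{\characterGroup{\coveringTransformationGroup}}^{\oplus}L^{2}(E_{\character})\multipliedBy\exteriorDifferential\character$, the fiber over $\character$ being the completion of the space~(\ref{eq:Invariant_smooth_functions_on_cover}) of $\character$-equivariant functions, i.e.\ $L^{2}(E_{\character})$. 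As $\closure{\schroedingerOperator_{\cover{\magneticPotential},\cover{\electricPotential}}}$ commutes with all $T_{\groupElement}$, it commutes with the diagonalizable operators and is therefore decomposable along this direct integral; since its action on the $\character$-equivariant smooth functions is by construction the twisted operator $\schroedingerOperator_{\magneticPotential,\electricPotential,\character}$, and the latter is essentially self-adjoint on $\sectionOfBundle{E_{\character}}$~\cite{BravermanMilatovichShubin2002}, the fiber operator is $\closure{\schroedingerOperator_{\magneticPotential,\electricPotential,\character}}$. Measurability of the field $\character\mapsto\closure{\schroedingerOperator_{\magneticPotential,\electricPotential,\character}}$ is clear because in a local trivialization these operators share a principal symbol and have $\character$-affine lower-order coefficients. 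This yields~(\ref{eq:MSO_on_abelian_cover_as_direct_integral}).

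For the last assertion I would construct an explicit gauge. Given $\character\in\characterGroup{\coveringTransformationGroup}$ represented by $\omega_{\character}\in\smoothOneForms{\baseManifold,\mathbb{R}}$ via~(\ref{eq:Injection_of_Character_Groups}) and~(\ref{eq:Character_Group_as_Jacobian_Torus}), put $\cover{\omega_{\character}}=\cover{\pi}^{*}\omega_{\character}$, fix $x_{0}\in\cover{\baseManifold}$, and set $\psi(x)=\exp\!\bigl(i\int_{x_{0}}^{x}\cover{\omega_{\character}}\bigr)$. This defines a smooth $\psi\in\smoothFunctions{\cover{\baseManifold},\mathbb{S}^{1}}$: the integral depends on the path only through a multiple of $2\pi$ because the periods of $\cover{\omega_{\character}}$ over loops in $\cover{\baseManifold}$ lie in $2\pi\multipliedBy\mathbb{Z}$, which is exactly the condition $\character_{[\omega_{\character}]}\in\characterGroup{\Phi}(\characterGroup{\coveringTransformationGroup})$. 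The relation $\characterGroup{\Phi}(\character)=\character_{[\omega_{\character}]}$, read as $e^{i\int_{\gamma}\omega_{\character}}=\character(\Phi([\gamma]))$ for closed curves $\gamma$ in $\baseManifold$, gives $\psi(\groupElement x)=\character(\groupElement)\multipliedBy\psi(x)$, so multiplication by $\overline{\psi}$ is a unitary from $L^{2}(E_{\character})$ onto the $\coveringTransformationGroup$-invariant functions, i.e.\ onto $L^{2}(E_{\trivialRepresentation})\simeq L^{2}(\baseManifold)$ by Lemma~\ref{lem:Unitary_equivalences_between_twisted_operators_and_usual_ones}. Since $\tfrac{1}{i}\tfrac{\exteriorDifferential\psi}{\psi}=\cover{\omega_{\character}}$, the gauge identity~(\ref{eq:Gauge_Transformation}) of Theorem~\ref{thm:Gauge_Invariance} applied with $\varphi=\psi$ conjugates the restriction of $\schroedingerOperator_{\cover{\magneticPotential},\cover{\electricPotential}}$ to $\character$-equivariant functions into the restriction of $\schroedingerOperator_{\cover{\magneticPotential}+\cover{\omega_{\character}},\cover{\electricPotential}}$ to invariant functions; a second application of Lemma~\ref{lem:Unitary_equivalences_between_twisted_operators_and_usual_ones} identifies the closure of the latter with $\closure{\schroedingerOperator_{\magneticPotential+\omega_{\character},\electricPotential}}$ on $L^{2}(\baseManifold)$.

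The part I expect to require genuine care is the direct-integral bookkeeping in the second paragraph: one must verify that $\{\closure{\schroedingerOperator_{\magneticPotential,\electricPotential,\character}}\}_{\character}$ is a measurable field, that $\smoothCompactlySupportedFunctions{\cover{\baseManifold}}$ decomposes into a measurable field of cores, and --- most importantly --- that the direct integral of the fiberwise closures equals $\closure{\schroedingerOperator_{\cover{\magneticPotential},\cover{\electricPotential}}}$ rather than merely a self-adjoint extension of it. This is precisely where the essential self-adjointness inputs (Theorem~\ref{thm:MSO_are_Essentially_Selfadjoint} on $\cover{\baseManifold}$ and its twisted counterpart on $\sectionOfBundle{E_{\character}}$ from~\cite{BravermanMilatovichShubin2002}) enter, and where I would follow~\cite[Section 3]{Post2000} and~\cite[Proposition 2]{KobayashiOnoSunada1989} line by line; the gauge construction in the third paragraph is routine once the dictionary~(\ref{eq:Injection_of_Character_Groups})--(\ref{eq:Character_Group_as_Jacobian_Torus}) is in hand.
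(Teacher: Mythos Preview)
Your proposal is correct and follows essentially the same approach as the paper: the paper writes down the explicit Gelfand transform $(\mathcal{I}u)_{\character}(x)=\sum_{\groupElement\in\coveringTransformationGroup}\overline{\character(\groupElement)}\,u(\groupElement x)$ and checks by hand, via orthogonality of characters, that it is an isometry intertwining $\schroedingerOperator_{\cover{\magneticPotential},\cover{\electricPotential}}$ with the field $(\schroedingerOperator_{\magneticPotential,\electricPotential,\character})_{\character}$, which is exactly the Plancherel/decomposability argument you describe in abstract terms. For the unitary equivalence the paper constructs the same function $\cover u_{\character}(x)=e^{i\int_{x_0}^{x}\cover{\omega_{\character}}}$ you call $\psi$, defines $\mathcal{U}_{\character}u=\cover u_{\character}\cover u$ and verifies $\magneticDifferential{\magneticPotential,\character}\circ\mathcal{U}_{\character}=\mathcal{W}_{\character}\circ\magneticDifferential{\magneticPotential+\omega_{\character}}$ directly on sections rather than quoting~(\ref{eq:Gauge_Transformation}), but the computation is identical to the one underlying that gauge identity.
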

\begin{proof}
For each $\character\in\characterGroup{\coveringTransformationGroup}$,
we use~(\ref{eq:Invariant_smooth_functions_on_cover}) to identify
$\sectionOfBundle{E_{\character}}$ with the space of smooth $\character$-periodic
functions on $\cover{\baseManifold}$, on which $\closure{\schroedingerOperator_{\magneticPotential,\electricPotential,\character}}$
acts as $\closure{\schroedingerOperator_{\cover{\magneticPotential},\cover{\electricPotential}}}$.
With respect to this identification, we have $\normComingFromInnerProduct u^{2}=\int_{F}\norm u^{2}\volumeForm$
for $u\in\sectionOfBundle{E_{\character}}\subset\smoothFunctions{\cover{\baseManifold}}$,
where $F$ is a fundamental domain for the action of $\coveringTransformationGroup$
on $\cover{\baseManifold}$. Following~\cite[Theorem 3.3]{Donnelly1981},
\cite[Proposition 2]{KobayashiOnoSunada1989} and~\cite[Section 3.3]{Post2000},
we verify that the Gelfand transform
\[
\mathcal{I}\colon\smoothCompactlySupportedFunctions{\cover{\baseManifold}}\to\int_{\characterGroup{\coveringTransformationGroup}}^{\oplus}L^{2}(E_{\character})\multipliedBy\exteriorDifferential\character\qquad\textnormal{given by}\qquad(\mathcal{\mathcal{I}}u)_{\character}(x)=\sum_{\groupElement\in\coveringTransformationGroup}\conjugate{\character(\groupElement)}\multipliedBy u(\groupElement\multipliedBy x)
\]
extends to the desired isometry. For $\groupElement\in\coveringTransformationGroup$,
we have 
\[
(\mathcal{\mathcal{I}}u)_{\character}(\groupElement\multipliedBy x)=\sum_{\groupElement'\in\coveringTransformationGroup}\conjugate{\character(\groupElement')}\multipliedBy u(\groupElement'\groupElement\multipliedBy x)=\sum_{\groupElement''\in\coveringTransformationGroup}\conjugate{\character(\groupElement'')}\multipliedBy\conjugate{\character(\groupElement^{-1})}\multipliedBy u(\groupElement''x)=\character(\groupElement)\multipliedBy(\mathcal{\mathcal{I}}u)_{\character}(x).
\]
In other words, $(\mathcal{\mathcal{I}}u)_{\character}\in\sectionOfBundle{E_{\character}}$.
Using the orthogonality of characters, we obtain 
\begin{eqnarray*}
\normComingFromInnerProduct{\mathcal{\mathcal{I}}u}^{2}=\int_{\characterGroup{\coveringTransformationGroup}}\normComingFromInnerProduct{(\mathcal{\mathcal{I}}u)_{\character}}^{2}\exteriorDifferential\character & = & \int_{\characterGroup{\coveringTransformationGroup}}\int_{F}\left(\sum_{\groupElement\in\coveringTransformationGroup}\conjugate{\character(\groupElement)}\multipliedBy\groupElement^{*}u\right)\left(\sum_{\groupElement'\in\coveringTransformationGroup}\character(\groupElement')\multipliedBy\conjugate{\groupElement'^{*}u}\right)\volumeForm\multipliedBy\exteriorDifferential\character\\
 & = & \int_{F}\sum_{\groupElement,\groupElement'\in\coveringTransformationGroup}\groupElement^{*}u\multipliedBy\conjugate{\groupElement'^{*}u}\multipliedBy\int_{\characterGroup{\coveringTransformationGroup}}\conjugate{\character(\groupElement)}\multipliedBy\character(\groupElement')\multipliedBy\exteriorDifferential\character\multipliedBy\volumeForm\\
 & = & \int_{F}\sum_{\groupElement\in\coveringTransformationGroup}\norm{\groupElement^{*}u}^{2}\volumeForm=\int_{\cover{\baseManifold}}\norm u^{2}\volumeForm=\normComingFromInnerProduct u_{L^{2}(\cover{\baseManifold})}^{2}.
\end{eqnarray*}
The inverse of $\mathcal{I}$ reads
\[
(\mathcal{I}^{-1}(u_{\character})_{\character\in\characterGroup{\coveringTransformationGroup}})(x)=\int_{\characterGroup{\coveringTransformationGroup}}u_{\character}(x)\multipliedBy\exteriorDifferential\character,
\]
and we have $\mathcal{I}\multipliedBy\schroedingerOperator_{\cover{\magneticPotential},\cover{\electricPotential}}\multipliedBy\mathcal{I}^{-1}(u_{\character})_{\character\in\characterGroup{\coveringTransformationGroup}}=(\schroedingerOperator_{\magneticPotential,\electricPotential,\character}u_{\character})_{\character\in\characterGroup{\coveringTransformationGroup}}$
as claimed. In the following, let $\omega_{\character}\in\smoothOneForms{\baseManifold,\mathbb{R}}$
represent $\character\in\characterGroup{\coveringTransformationGroup}$,
in particular, its lift $\cover{\omega_{\character}}$ satisfies $\int_{\gamma}\cover{\omega_{\character}}\in2\pi\multipliedBy\mathbb{Z}$
for any closed curve $\gamma$ in $\cover{\baseManifold}$. Following~\cite[Proposition 4]{Sunada1985},
we choose some fixed reference point $x_{0}\in\cover{\baseManifold}$
and define $\cover u_{\character}\colon\cover{\baseManifold}\to\mathbb{S}^{1}$
as 
\[
\cover u_{\character}(x)=e^{i\int_{x_{0}}^{x}\cover{\omega_{\character}}}.
\]
Note that $\cover u_{\character}$ is well-defined, satisfies $\exteriorDifferential\cover u_{\character}=i\multipliedBy\cover u_{\character}\cover{\omega_{\character}}$,
and is $\character$-periodic since for $\groupElement\in\coveringTransformationGroup$,
we have 
\[
\cover u_{\character}(\groupElement\multipliedBy x)=e^{i\int_{x}^{\groupElement x}\cover{\omega_{\character}}}\multipliedBy e^{i\int_{x_{0}}^{x}\cover{\omega_{\character}}}=\character(\groupElement)\multipliedBy\cover u_{\character}(x).
\]
Using that $\cover u_{\character}\in\sectionOfBundle{E_{\character}}$,
we define unitary maps $\mathcal{U}_{\character}\colon\smoothFunctions{\baseManifold}\to\sectionOfBundle{E_{\character}}$
and $\mathcal{W}_{\character}\colon\smoothOneForms{\baseManifold}\to\sectionOfBundle{E_{\character}\otimes T^{*}\baseManifold}$
as $\mathcal{U}_{\character}u=\cover u_{\character}\cover u$ and
$\mathcal{W}_{\character}\omega=\cover u_{\character}\cover{\omega}$,
where $\cover u$ and $\cover{\omega}$ denote the lifts of $u$ and
$\omega$ to $\cover{\baseManifold}$, respectively. For $u\in\smoothFunctions{\baseManifold}$,
we have
\begin{eqnarray*}
\magneticDifferential{\magneticPotential,\character}\multipliedBy\mathcal{U}_{\character}u=\magneticDifferential{\cover{\magneticPotential}}(\cover u_{\character}\cover u) & = & -i\multipliedBy\cover u_{\character}\exteriorDifferential\cover u-i\multipliedBy\cover u\multipliedBy\exteriorDifferential\cover u_{\character}+\cover u_{\character}\cover u\multipliedBy\cover{\magneticPotential}\\
 & = & -i\multipliedBy\cover u_{\character}\exteriorDifferential\cover u+\cover u_{\character}\cover u\multipliedBy(\cover{\omega_{\character}}+\cover{\magneticPotential})=\cover u_{\character}\magneticDifferential{\cover{\magneticPotential}+\cover{\omega_{\character}}}\cover u=\mathcal{W}_{\character}\multipliedBy\magneticDifferential{\magneticPotential+\omega_{\character}}u.
\end{eqnarray*}
We take formal adjoints to obtain 
\[
\mathcal{U}_{\character}^{-1}\multipliedBy\adjoint{\magneticDifferential{\magneticPotential,\character}}=\adjoint{\mathcal{U}_{\character}}\multipliedBy\adjoint{\magneticDifferential{\magneticPotential,\character}}=\adjoint{\magneticDifferential{\magneticPotential+\omega_{\character}}}\multipliedBy\adjoint{\mathcal{W}_{\character}}=\adjoint{\magneticDifferential{\magneticPotential+\omega_{\character}}}\multipliedBy\mathcal{W}_{\character}^{-1}.
\]
Since $\mathcal{U}_{\character}^{-1}\multipliedBy\cover{\electricPotential}\multipliedBy\mathcal{U}_{\character}=\electricPotential$
trivially holds on $\smoothFunctions{\baseManifold}$, the analogue
of~\cite[Lemma 3.1]{HiguchiShirai1999} for manifolds follows, namely,
\[
\mathcal{U}_{\character}^{-1}\multipliedBy\closure{\schroedingerOperator_{\magneticPotential,\electricPotential,\character}}\multipliedBy\mathcal{U}_{\character}=\closure{\schroedingerOperator_{\magneticPotential+\omega_{\character},\electricPotential}}.
\]
\end{proof}
\begin{thm}
\label{thm:GSE_of_abelian_covers}The spectrum of $\closure{\schroedingerOperator_{\cover{\magneticPotential},\cover{\electricPotential}}}$
as in Theorem~\ref{thm:MSO_on_abelian_cover_as_direct_integral}
has band structure
\begin{equation}
\spectrum{\closure{\schroedingerOperator_{\cover{\magneticPotential},\cover{\electricPotential}}}}=\bigcup_{[\omega]\in2\pi\multipliedBy\homologyGroup^{1}(\baseManifold,\cover{\baseManifold},\mathbb{Z})}\spectrum{\closure{\schroedingerOperator_{\magneticPotential-\omega,\electricPotential}}}=\bigcup_{k\in I}[a_{k},b_{k}],\label{eq:Band_structure_of_abelian_covers}
\end{equation}
where $\homologyGroup^{1}(\baseManifold,\cover{\baseManifold},\mathbb{Z})$
is given in~(\ref{eq:Hom_CTG_to_S_as_One_forms})\textup{\emph{,
and we either have $I=\{1,2,\ldots,N\}$ and $b_{k}<a_{k+1}$ for
$k\in I\backslash\{N\}$ as well as $b_{N}=\infty$, or $I=\mathbb{N}$
and $b_{k}<a_{k+1}$ for all $k\in I$ as well as $a_{k}\nearrow\infty$.
Moreover,}} 
\[
\groundStateEnergy(\cover{\magneticPotential},\cover{\electricPotential})=\min_{[\omega]\in2\pi\multipliedBy\homologyGroup^{1}(\baseManifold,\cover{\baseManifold},\mathbb{Z})}\groundStateEnergy(\magneticPotential-\omega,\electricPotential).
\]
\textup{\emph{In particular, }}$\cover{\magneticPotential}$ has no
diamagnetic effect, that is, $\groundStateEnergy(\cover{\magneticPotential},\cover{\electricPotential})=\groundStateEnergy(0,\cover{\electricPotential})$,
if and only if $\magneticPotential$ is closed and $[\magneticPotential]\in2\pi\multipliedBy\homologyGroup^{1}(\baseManifold,\cover{\baseManifold},\mathbb{Z})$.\end{thm}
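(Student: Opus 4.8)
The plan is to read the entire statement off the direct integral decomposition~(\ref{eq:MSO_on_abelian_cover_as_direct_integral}) of Theorem~\ref{thm:MSO_on_abelian_cover_as_direct_integral}, together with the spectral theory of direct integrals~\cite[Section~XIII.16]{ReedSimon1978}. Since $\baseManifold$ is compact, each fibre $\closure{\schroedingerOperator_{\magneticPotential,\electricPotential,\character}}$ has purely discrete spectrum --- being unitarily equivalent, by Theorem~\ref{thm:MSO_on_abelian_cover_as_direct_integral}, to $\closure{\schroedingerOperator_{\magneticPotential+\omega_{\character},\electricPotential}}$ on $L^{2}(\baseManifold)$ for any representing $1$-form $\omega_{\character}$ --- and listing its eigenvalues with multiplicity as $\eigenvalue_{1}(\character)\leq\eigenvalue_{2}(\character)\leq\cdots$ defines band functions $\eigenvalue_{k}\colon\characterGroup{\coveringTransformationGroup}\to\mathbb{R}$. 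The first task is to show that each $\eigenvalue_{k}$ is continuous on $\characterGroup{\coveringTransformationGroup}$: on a neighbourhood in each connected component of the compact group $\characterGroup{\coveringTransformationGroup}$ (of which there are finitely many) one can choose $\omega_{\character}$ depending smoothly, indeed real-analytically, on $\character$, so that $\schroedingerOperator_{\magneticPotential+\omega_{\character},\electricPotential}$ has smoothly varying coefficients and the min--max values $\eigenvalue_{k}(\character)$ vary continuously by analytic perturbation theory; because the gauge invariance~(\ref{eq:GSE_of_Mag_Pot_plus_df}), applied to every eigenvalue and not only the ground state, shows that $\eigenvalue_{k}(\character)$ is independent of the chosen representative, these local definitions glue to a function continuous on all of $\characterGroup{\coveringTransformationGroup}$. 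This gluing, together with the attendant \emph{uniform}-in-$\character$ divergence $\min_{\character}\eigenvalue_{k}(\character)\to\infty$ as $k\to\infty$ --- which holds because the $\eigenvalue_{k}$ are continuous, non-decreasing in $k$, and tend to $+\infty$ pointwise on a compact space (a Dini-type argument using the monotonicity) --- are the only genuinely delicate points; everything else is bookkeeping with the character/flat-bundle/cohomology dictionary set up before the statement.

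Granting this, the band picture follows formally. By~\cite[Section~XIII.16]{ReedSimon1978} and the continuity of the $\eigenvalue_{k}$, one has $\spectrum{\closure{\schroedingerOperator_{\cover{\magneticPotential},\cover{\electricPotential}}}}=\bigcup_{\character\in\characterGroup{\coveringTransformationGroup}}\spectrum{\closure{\schroedingerOperator_{\magneticPotential,\electricPotential,\character}}}=\bigcup_{k\geq1}\eigenvalue_{k}(\characterGroup{\coveringTransformationGroup})$. Each $\eigenvalue_{k}(\characterGroup{\coveringTransformationGroup})$ is compact and is a finite union of closed intervals, since $\characterGroup{\coveringTransformationGroup}$ has finitely many connected components; by the uniform divergence, only finitely many of the sets $\eigenvalue_{k}(\characterGroup{\coveringTransformationGroup})$ meet any half-line $(-\infty,T]$. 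Hence this union is a closed, bounded-below (by $\min\electricPotential$), unbounded-above subset of $\mathbb{R}$ that is locally a finite union of closed intervals; enumerating its maximal connected components in increasing order produces exactly the normal form $\bigcup_{k\in I}[a_{k},b_{k}]$, with $b_{k}<a_{k+1}$ throughout, $b_{N}=\infty$ in the finite case, and $a_{k}\nearrow\infty$ in the infinite case. Finally, as $\character$ runs through $\characterGroup{\coveringTransformationGroup}$ the classes $[\omega_{\character}]$ run, by~(\ref{eq:Injection_of_Character_Groups}), (\ref{eq:Character_Group_as_Jacobian_Torus}) and the characterisation of~(\ref{eq:Hom_CTG_to_S_as_One_forms}), through the entire group $2\pi\multipliedBy\homologyGroup^{1}(\baseManifold,\cover{\baseManifold},\mathbb{Z})$; the sign is immaterial because this is a group, and gauge invariance turns $\bigcup_{\character}\spectrum{\closure{\schroedingerOperator_{\magneticPotential,\electricPotential,\character}}}$ into $\bigcup_{[\omega]\in2\pi\multipliedBy\homologyGroup^{1}(\baseManifold,\cover{\baseManifold},\mathbb{Z})}\spectrum{\closure{\schroedingerOperator_{\magneticPotential-\omega,\electricPotential}}}$, which proves~(\ref{eq:Band_structure_of_abelian_covers}).

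Taking the infimum of the band formula gives $\groundStateEnergy(\cover{\magneticPotential},\cover{\electricPotential})=\inf_{\character}\eigenvalue_{1}(\character)=\min_{[\omega]\in2\pi\multipliedBy\homologyGroup^{1}(\baseManifold,\cover{\baseManifold},\mathbb{Z})}\groundStateEnergy(\magneticPotential-\omega,\electricPotential)$, the minimum being attained because $\eigenvalue_{1}$ is continuous on the compact $\characterGroup{\coveringTransformationGroup}$. Applying this with $\magneticPotential=0$, and using that $0\in2\pi\multipliedBy\homologyGroup^{1}(\baseManifold,\cover{\baseManifold},\mathbb{Z})$ while the diamagnetic inequality~(\ref{eq:Diamagnetic_Inequality}) on $\baseManifold$ gives $\groundStateEnergy(-\omega,\electricPotential)\geq\groundStateEnergy(0,\electricPotential)$ for every $\omega$, we obtain $\groundStateEnergy(0,\cover{\electricPotential})=\groundStateEnergy(0,\electricPotential)$. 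For the concluding equivalence, suppose first that $\magneticPotential$ is closed with $[\magneticPotential]\in2\pi\multipliedBy\homologyGroup^{1}(\baseManifold,\cover{\baseManifold},\mathbb{Z})$; then $\int_{\gamma}\cover{\magneticPotential}\in2\pi\multipliedBy\mathbb{Z}$ for every loop $\gamma$ in $\cover{\baseManifold}$, so for a fixed basepoint $x_{0}\in\cover{\baseManifold}$ the function $\varphi(x)=e^{-i\int_{x_{0}}^{x}\cover{\magneticPotential}}$ is a well-defined element of $\smoothFunctions{\cover{\baseManifold},\mathbb{S}^{1}}$ with $\frac{1}{i}\frac{\exteriorDifferential\varphi}{\varphi}=-\cover{\magneticPotential}$, and the gauge transformation~(\ref{eq:Gauge_Transformation}) of Theorem~\ref{thm:Gauge_Invariance} --- whose proof uses no periodicity or compactness --- identifies $\closure{\schroedingerOperator_{\cover{\magneticPotential},\cover{\electricPotential}}}$ with $\closure{\schroedingerOperator_{0,\cover{\electricPotential}}}$, whence $\groundStateEnergy(\cover{\magneticPotential},\cover{\electricPotential})=\groundStateEnergy(0,\cover{\electricPotential})$.

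Conversely, if $\groundStateEnergy(\cover{\magneticPotential},\cover{\electricPotential})=\groundStateEnergy(0,\cover{\electricPotential})=\groundStateEnergy(0,\electricPotential)$, then the ground state formula provides a minimising class $[\omega_{0}]\in2\pi\multipliedBy\homologyGroup^{1}(\baseManifold,\cover{\baseManifold},\mathbb{Z})$ with $\groundStateEnergy(\magneticPotential-\omega_{0},\electricPotential)=\groundStateEnergy(0,\electricPotential)$, and the equivalence of conditions~\ref{enu:Equivalent_GSE} and~\ref{enu:Potential_in_integral_Homology} in Theorem~\ref{thm:Gauge_Invariance}, applied on the compact manifold $\baseManifold$, forces $\magneticPotential-\omega_{0}$ to be closed with $[\magneticPotential-\omega_{0}]\in2\pi\multipliedBy\homologyGroup^{1}(\baseManifold,\mathbb{Z})$. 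Since a closed representative $\omega_{0}$ of the minimising class satisfies $[\omega_{0}]\in2\pi\multipliedBy\homologyGroup^{1}(\baseManifold,\cover{\baseManifold},\mathbb{Z})$ and $\homologyGroup^{1}(\baseManifold,\mathbb{Z})\subseteq\homologyGroup^{1}(\baseManifold,\cover{\baseManifold},\mathbb{Z})$ (a loop in $\cover{\baseManifold}$ projects to a loop in $\baseManifold$), we conclude that $\magneticPotential=(\magneticPotential-\omega_{0})+\omega_{0}$ is closed with $[\magneticPotential]\in2\pi\multipliedBy\homologyGroup^{1}(\baseManifold,\cover{\baseManifold},\mathbb{Z})$, as claimed. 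The main obstacle throughout is the continuity of the band functions across the whole character group, obtained by patching the local analytic-perturbation arguments via gauge invariance, and the resulting uniform-in-$\character$ growth of the $\eigenvalue_{k}$, which is precisely what makes the countable union of bands locally finite and yields the strict separations $b_{k}<a_{k+1}$ after relabelling.
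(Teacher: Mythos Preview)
Your proof is correct and follows essentially the same route as the paper: direct integral decomposition from Theorem~\ref{thm:MSO_on_abelian_cover_as_direct_integral}, discreteness of the fibre spectra on the compact base, continuity of the eigenvalue branches $\eigenvalue_{k}(\character)$ via perturbation theory, compactness of $\characterGroup{\coveringTransformationGroup}$ to get the band picture, and Theorem~\ref{thm:Gauge_Invariance} for the final equivalence. The paper's own proof is a brief sketch that cites \cite{KodairaSpencer1960} for the continuity and simply declares the ``remaining statements'' to be consequences of Theorem~\ref{thm:Gauge_Invariance}; you have filled in those consequences explicitly (the Dini-type uniform growth argument, the computation $\groundStateEnergy(0,\cover{\electricPotential})=\groundStateEnergy(0,\electricPotential)$, and the two directions of the diamagnetic-effect equivalence), which is helpful but not a different method.
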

\begin{proof}
For arbitrary $\omega\in\smoothOneForms{\baseManifold,\mathbb{R}}$,
the spectrum of $\closure{\schroedingerOperator_{\magneticPotential+\omega,\electricPotential}}$
is discrete with smooth eigenfunctions~\cite[Theorem 2.1]{Shigekawa1987}.
Thus, the spectra of the twisted operators $\closure{\schroedingerOperator_{\magneticPotential,\electricPotential,\character}}$
in the direct integral decomposition~(\ref{eq:MSO_on_abelian_cover_as_direct_integral})
are given as unbounded sequences 
\[
\groundStateEnergy(\character)\leq\eigenvalue_{1}(\character)\leq\eigenvalue_{2}(\character)\leq\ldots.
\]
Pertubation theory as in~\cite[Theorem 2]{KodairaSpencer1960} shows
that these eigenvalue functions $\eigenvalue_{k}\colon\characterGroup{\coveringTransformationGroup}\to\mathbb{R}$
are continuous, see also~\cite{KatsudaSunada1988,KobayashiOnoSunada1989,KotaniSunada2000}.
The claim~(\ref{eq:Band_structure_of_abelian_covers}) now follows
from the theory of direct integrals and compactness of $\characterGroup{\coveringTransformationGroup}$,
namely,
\[
\spectrum{\closure{\schroedingerOperator_{\cover{\magneticPotential},\cover{\electricPotential}}}}=\bigcup_{k\in\mathbb{N}_{0}}\bigcup_{\character\in\characterGroup{\coveringTransformationGroup}}\eigenvalue_{k}(\character)=\bigcup_{k\in\mathbb{N}_{0}}\eigenvalue_{k}(\characterGroup{\coveringTransformationGroup}).
\]
The remaining statements are consequences of Theorem~\ref{thm:Gauge_Invariance}
and the fact that each $\character\in\characterGroup{\coveringTransformationGroup}$
is represented by some $[\omega]\in2\pi\multipliedBy\homologyGroup^{1}(\baseManifold,\cover{\baseManifold},\mathbb{Z})$.
\end{proof}
As an example, take the $n$-fold covering of $\baseManifold=\mathbb{S}^{1}=\{z\in\mathbb{C}\,|\,\norm z=1\}$
by $\cover{\baseManifold}=\mathbb{S}^{1}$ with projection $\cover{\pi}(z)=z^{n}$.
If $n>1$ and $\varphi\in(0,2\pi)$ denotes the angular coordinate
of $z=e^{i\varphi}\in\baseManifold\backslash\{1\}$, then $\frac{1}{n}\multipliedBy d\varphi$
extends to a closed $1$-form $\magneticPotential$ on $\baseManifold$
satisfying $[\magneticPotential]\in2\pi\multipliedBy\homologyGroup^{1}(\baseManifold,\cover{\baseManifold},\mathbb{Z})\backslash2\pi\multipliedBy\homologyGroup^{1}(\cover{\baseManifold},\mathbb{Z})$.
For arbitrary $\electricPotential\in\smoothFunctions{\baseManifold,\mathbb{R}}$,
Theorem~\ref{thm:Gauge_Invariance} and Theorem~\ref{thm:GSE_of_abelian_covers}
thus yield 
\[
\groundStateEnergy(\cover{\magneticPotential},\cover{\electricPotential})=\groundStateEnergy(0,\electricPotential)<\groundStateEnergy(\magneticPotential,\electricPotential),
\]
which contrasts the non-magnetic case in which $\groundStateEnergy(0,\cover{\electricPotential})\geq\groundStateEnergy(0,\electricPotential)$
for any cover $\cover{\baseManifold}$~\cite[Corollary of Proposition 1]{KobayashiOnoSunada1989}.
We present another example of this type in Section~\ref{sub:HG}.
The following corollary of Theorem~\ref{thm:Spectral_Inclusion_for_Amenable_Covers}
and Theorem~\ref{thm:GSE_of_abelian_covers} should be compared with
Theorem~\ref{thm:MCV_of_Amenable_Covers_and_Abelian_Subcovers_coincide}.
\begin{cor}
\label{cor:GSE_of_amenable_covers}If $\coverTwo{\baseManifold}$
is a regular amenable cover of $\baseManifold$ with abelian subcover
$\cover{\baseManifold}$, and if $(\coverTwo{\magneticPotential},\coverTwo{\electricPotential})$
and $(\cover{\magneticPotential},\cover{\electricPotential})$ denote
the lifts of $\magneticPotential\in\smoothOneForms{\baseManifold,\mathbb{R}}$
and $\electricPotential\in\smoothFunctions{\baseManifold,\mathbb{R}}$
to $\coverTwo{\baseManifold}$ and $\cover{\baseManifold}$, respectively,
then 
\[
\spectrum{\closure{\schroedingerOperator_{\magneticPotential,\electricPotential}}}\subseteq\spectrum{\closure{\schroedingerOperator_{\cover{\magneticPotential},\cover{\electricPotential}}}}=\bigcup_{[\omega]\in2\pi\multipliedBy\homologyGroup^{1}(\baseManifold,\cover{\baseManifold},\mathbb{Z})}\spectrum{\closure{\schroedingerOperator_{\magneticPotential-\omega,\electricPotential}}}\subseteq\spectrum{\closure{\schroedingerOperator_{\coverTwo{\magneticPotential},\coverTwo{\electricPotential}}}}.
\]
In particular, if $\pi_{1}(\baseManifold)$ is amenable, then 
\[
\groundStateEnergy(\magneticPotential,\electricPotential)\geq\groundStateEnergy(\abelianCover{\magneticPotential}{},\abelianCover{\electricPotential}{})=\min_{[\omega]\in\homologyGroup^{1}(\baseManifold,\mathbb{R})}\groundStateEnergy(\magneticPotential-\omega,\electricPotential)\geq\groundStateEnergy(\universalCover{\magneticPotential}{},\universalCover{\electricPotential}{}).
\]

\end{cor}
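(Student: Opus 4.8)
The plan is to assemble the three assertions from results already in hand, with only one step requiring an argument of its own. The central equality $\spectrum{\closure{\schroedingerOperator_{\cover{\magneticPotential},\cover{\electricPotential}}}}=\bigcup_{[\omega]\in2\pi\multipliedBy\homologyGroup^{1}(\baseManifold,\cover{\baseManifold},\mathbb{Z})}\spectrum{\closure{\schroedingerOperator_{\magneticPotential-\omega,\electricPotential}}}$ is exactly the band-structure formula~(\ref{eq:Band_structure_of_abelian_covers}) of Theorem~\ref{thm:GSE_of_abelian_covers} applied to the cover $\cover{\baseManifold}\to\baseManifold$, whose covering transformation group is abelian, hence of the type admissible in Theorem~\ref{thm:MSO_on_abelian_cover_as_direct_integral}. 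The first inclusion is Theorem~\ref{thm:Spectral_Inclusion_for_Amenable_Covers} for the same cover, which applies because abelian groups are amenable. Hence the only real task is the last inclusion.

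For it, I would fix a class $[\omega]\in2\pi\multipliedBy\homologyGroup^{1}(\baseManifold,\cover{\baseManifold},\mathbb{Z})$ and choose a closed representative $\omega\in\smoothOneForms{\baseManifold,\mathbb{R}}$, say the harmonic one; by~(\ref{eq:GSE_of_Mag_Pot_plus_df}) and~(\ref{eq:Gauge_Transformation}) with a real-valued gauge function, $\spectrum{\closure{\schroedingerOperator_{\magneticPotential-\omega,\electricPotential}}}$ does not depend on this choice. The key observation is that the lift $\coverTwo{\omega}$ of $\omega$ to $\coverTwo{\baseManifold}$ is gauge-trivial there: since $\cover{\baseManifold}$ is a subcover of $\coverTwo{\baseManifold}$, every closed curve $\gamma$ in $\coverTwo{\baseManifold}$ projects to a closed curve in $\cover{\baseManifold}$, so the defining property~(\ref{eq:Hom_CTG_to_S_as_One_forms}) of $\homologyGroup^{1}(\baseManifold,\cover{\baseManifold},\mathbb{Z})$ gives $\int_{\gamma}\coverTwo{\omega}\in2\pi\multipliedBy\mathbb{Z}$. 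Fixing a reference point $x_{0}$, the function $\varphi(x)=e^{i\int_{x_{0}}^{x}\coverTwo{\omega}}$ is therefore a well-defined element of $\smoothFunctions{\coverTwo{\baseManifold},\mathbb{S}^{1}}$ satisfying $\coverTwo{\omega}=\frac{1}{i}\frac{\exteriorDifferential\varphi}{\varphi}$, and the gauge transformation~(\ref{eq:Gauge_Transformation}) of Theorem~\ref{thm:Gauge_Invariance} --- which is stated there for an arbitrary regular cover --- yields $\spectrum{\closure{\schroedingerOperator_{\coverTwo{\magneticPotential}-\coverTwo{\omega},\coverTwo{\electricPotential}}}}=\spectrum{\closure{\schroedingerOperator_{\coverTwo{\magneticPotential},\coverTwo{\electricPotential}}}}$. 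Now $\coverTwo{\magneticPotential}-\coverTwo{\omega}$ and $\coverTwo{\electricPotential}$ are the lifts of $\magneticPotential-\omega\in\smoothOneForms{\baseManifold,\mathbb{R}}$ and $\electricPotential\in\smoothFunctions{\baseManifold,\mathbb{R}}$ to the amenable cover $\coverTwo{\baseManifold}\to\baseManifold$, so Theorem~\ref{thm:Spectral_Inclusion_for_Amenable_Covers} gives $\spectrum{\closure{\schroedingerOperator_{\magneticPotential-\omega,\electricPotential}}}\subseteq\spectrum{\closure{\schroedingerOperator_{\coverTwo{\magneticPotential}-\coverTwo{\omega},\coverTwo{\electricPotential}}}}=\spectrum{\closure{\schroedingerOperator_{\coverTwo{\magneticPotential},\coverTwo{\electricPotential}}}}$. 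Taking the union over $[\omega]$ completes the chain of inclusions.

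For the ``in particular'' statement I would specialize to $\coverTwo{\baseManifold}=\universalCover{\baseManifold}{}$ and $\cover{\baseManifold}=\abelianCover{\baseManifold}{}$, which is legitimate since $\pi_{1}(\baseManifold)$ is amenable. Every closed curve in $\abelianCover{\baseManifold}{}$ is null-homologous in $\baseManifold$, so $\int_{\gamma}\cover{\omega}=0$ for every closed $\omega\in\smoothOneForms{\baseManifold,\mathbb{R}}$; hence the integrality condition in~(\ref{eq:Hom_CTG_to_S_as_One_forms}) is vacuous and $\homologyGroup^{1}(\baseManifold,\abelianCover{\baseManifold}{},\mathbb{Z})=\homologyGroup^{1}(\baseManifold,\mathbb{R})$, so $2\pi\multipliedBy\homologyGroup^{1}(\baseManifold,\abelianCover{\baseManifold}{},\mathbb{Z})=\homologyGroup^{1}(\baseManifold,\mathbb{R})$. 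The minimum formula of Theorem~\ref{thm:GSE_of_abelian_covers} then reads $\groundStateEnergy(\abelianCover{\magneticPotential}{},\abelianCover{\electricPotential}{})=\min_{[\omega]\in\homologyGroup^{1}(\baseManifold,\mathbb{R})}\groundStateEnergy(\magneticPotential-\omega,\electricPotential)$, and the two displayed inequalities follow by taking infima in the spectral inclusions already proven, using that a subset of $\mathbb{R}$ has infimum no smaller than that of an ambient set.

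I expect the only genuine obstacle to be making the winding-number/integrality argument of the second paragraph precise --- identifying $\coverTwo{\omega}$ as a logarithmic derivative $\frac{1}{i}\frac{\exteriorDifferential\varphi}{\varphi}$ from the fact that all of its periods along loops of $\coverTwo{\baseManifold}$ lie in $2\pi\multipliedBy\mathbb{Z}$ --- together with the bookkeeping check that the gauge equivalence~(\ref{eq:Gauge_Transformation}) is indeed available on the non-compact cover $\coverTwo{\baseManifold}$ (it is, since Theorem~\ref{thm:Gauge_Invariance} states it for any regular cover, reserving compactness only for the statements about ground states). Everything else is a direct appeal to Theorems~\ref{thm:GSE_of_abelian_covers} and~\ref{thm:Spectral_Inclusion_for_Amenable_Covers}.
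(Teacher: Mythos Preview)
Your proposal is correct and is precisely the argument the paper has in mind: it presents the result as an immediate corollary of Theorem~\ref{thm:Spectral_Inclusion_for_Amenable_Covers} and Theorem~\ref{thm:GSE_of_abelian_covers} without spelling out a proof, and your write-up supplies exactly the missing details. In particular, your observation that $\coverTwo{\omega}$ has all periods in $2\pi\mathbb{Z}$ on $\coverTwo{\baseManifold}$ (because loops there project to loops in $\cover{\baseManifold}$) and is therefore gauge-trivial via~(\ref{eq:Gauge_Transformation}) is the right way to obtain the last inclusion, since Theorem~\ref{thm:Spectral_Inclusion_for_Amenable_Covers} cannot be applied directly to $\coverTwo{\baseManifold}\to\cover{\baseManifold}$ when $\cover{\baseManifold}$ is non-compact.
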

Henceforth, let $\electricPotential=0$. Using Hodge theory, we identify
$\homologyGroup^{1}(\baseManifold,\mathbb{R})$ with the space of
harmonic $1$-forms on $\baseManifold$. In particular, let $\omega_{1},\omega_{2},\ldots,\omega_{r_{0}}$
be harmonic $1$-forms that represent an orthonormal basis of $T_{\boldsymbol{1}}\characterGroup{\coveringTransformationGroup}\simeq\homomorphismGroup(\coveringTransformationGroup,\mathbb{R})\subseteq\homologyGroup^{1}(\baseManifold,\mathbb{R})$.
Let $\magneticPotential\in\smoothOneForms{\baseManifold,\mathbb{R}}$
have Hodge decomposition 
\[
\magneticPotential=\magneticPotential_{\mathrm{h}}^{\wedge}+\magneticPotential_{\mathrm{h}}^{\perp}+\adjoint d\beta_{\magneticPotential}+df_{\magneticPotential},
\]
where $\beta_{\magneticPotential}\in\smoothTwoForms{\baseManifold,\mathbb{R}}$,
$f_{\magneticPotential}\in\smoothFunctions{\baseManifold,\mathbb{R}}$,
and $\magneticPotential_{\mathrm{h}}^{\wedge}$ as well as $\magneticPotential_{\mathrm{h}}^{\perp}$
are harmonic such that $\magneticPotential_{\mathrm{h}}^{\wedge}\in T_{\boldsymbol{1}}\characterGroup{\coveringTransformationGroup}$
and $\magneticPotential_{\mathrm{h}}^{\perp}\in T_{\boldsymbol{1}}\characterGroup{\coveringTransformationGroup}^{\perp}$.
If $\magneticPotential_{\mathrm{h}}^{\perp}+\adjoint d\beta_{\magneticPotential}=0$,
then $[\magneticFieldStrength\multipliedBy\magneticPotential]=[\magneticFieldStrength\multipliedBy\magneticPotential_{\mathrm{h}}^{\wedge}]\in2\pi\multipliedBy\homologyGroup^{1}(\baseManifold,\cover{\baseManifold},\mathbb{Z})$
for any $\magneticFieldStrength\in\mathbb{R}$, in particular, $\groundStateEnergy(\magneticFieldStrength\multipliedBy\cover{\magneticPotential})=\groundStateEnergy(0)$.
Thus, we assume that $\magneticPotential_{\mathrm{h}}^{\perp}+\adjoint d\beta_{\magneticPotential}\neq0$
and consider the finite-dimensional vector space 
\[
X_{\magneticPotential}=\mathbb{R}(\magneticPotential_{\mathrm{h}}^{\perp}+\adjoint d\beta_{\magneticPotential})\oplus\bigoplus_{j=1}^{r_{0}}\mathbb{R}\multipliedBy\omega_{j}\subseteq\smoothOneForms{\baseManifold,\mathbb{R}}.
\]
The following generalization of Proposition~\ref{prop:Derivative_of_GSE_on_quotient}
to abelian covers is an extended version of~\cite[Theorem 1.2]{HiguchiShirai1999}
for manifolds.
\begin{thm}
\label{thm:Derivative_of_GSE_of_abelian_covers}Let $\eigenvalue_{0,X_{\magneticPotential}}\colon X_{\magneticPotential}\to\mathbb{R}$
be given by $\eigenvalue_{0,X_{\magneticPotential}}(\omega)=\groundStateEnergy(\omega)$.
In a neighborhood of $0\in X_{\magneticPotential}$, the function
\textup{$\eigenvalue_{0,X_{\magneticPotential}}$} is real analytic
and has positive definite Hessian. In particular, $\eigenvalue_{0}^{\cover{\baseManifold}}\colon\mathbb{R}\to\mathbb{R}$
given by $\eigenvalue_{0}^{\cover{\baseManifold}}(\magneticFieldStrength)=\groundStateEnergy(\magneticFieldStrength\multipliedBy\cover{\magneticPotential})$
is real analytic near $\magneticFieldStrength=0$ and satisfies
\[
\eigenvalue_{0}^{\cover{\baseManifold}}\hspace{0.5mm}''(0)=\min_{[\omega]\in\homomorphismGroup(\coveringTransformationGroup,\mathbb{R})}\frac{\normComingFromInnerProduct{\magneticPotential-\omega}^{2}}{\volume{\baseManifold}}=\frac{\normComingFromInnerProduct{\magneticPotential_{\mathrm{h}}^{\perp}}^{2}+\normComingFromInnerProduct{\adjoint d\beta_{\magneticPotential}}^{2}}{\volume{\baseManifold}},
\]
where $\homomorphismGroup(\coveringTransformationGroup,\mathbb{R})$
denotes the vector space\textup{~(\ref{eq:Hom_CTG_to_R_as_One_Forms})}.\end{thm}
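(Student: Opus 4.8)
The plan is to read $\eigenvalue_{0,X_{\magneticPotential}}(\omega)=\groundStateEnergy(\omega)$ as the ground state energy of $\schroedingerOperator_{\omega,0}$ on the base $\baseManifold$ and to deduce the statement about $\eigenvalue_{0}^{\cover{\baseManifold}}$ from it together with Theorem~\ref{thm:MSO_on_abelian_cover_as_direct_integral}, Theorem~\ref{thm:GSE_of_abelian_covers} and gauge invariance. For the first part I would argue by analytic perturbation theory: by~(\ref{eq:Coordinatefree_Version_of_MSO}) the assignment $\omega\mapsto\schroedingerOperator_{\omega,0}$ is a quadratic polynomial family of elliptic operators on the closed manifold $\baseManifold$, and $\schroedingerOperator_{0,0}=\frac{1}{2}\Delta$ has simple isolated bottom eigenvalue $0$ with constant eigenfunction, so exactly as in the proof of Proposition~\ref{prop:Derivative_of_GSE_on_quotient} (see~\cite[Section~4.4]{Paternain2001} and~\cite{Shigekawa1987}) the function $\eigenvalue_{0,X_{\magneticPotential}}$ is real analytic on a neighborhood of $0$. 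Its differential at $0$ vanishes since $\schroedingerOperator_{\omega,0}$ and $\schroedingerOperator_{-\omega,0}$ are conjugate via complex conjugation; restricting to a ray $s\mapsto s\omega$ and quoting Proposition~\ref{prop:Derivative_of_GSE_on_quotient} with $\magneticPotential$ replaced by $\omega$ identifies the Hessian at $0$ with the quadratic form $\omega\mapsto\min_{f\in\smoothFunctions{\baseManifold,\mathbb{R}}}\normComingFromInnerProduct{\omega-\exteriorDifferential f}^{2}/\volume{\baseManifold}=\normComingFromInnerProduct{\omega_{\mathrm{cc}}}^{2}/\volume{\baseManifold}$. Since $\magneticPotential_{\mathrm{h}}^{\perp}$ and the $\omega_{j}$ are harmonic and $\adjoint d\beta_{\magneticPotential}$ is coexact, every element of $X_{\magneticPotential}$ is coclosed, so this form restricts to $\normComingFromInnerProduct{\omega}^{2}/\volume{\baseManifold}$ on $X_{\magneticPotential}$, which is positive definite, and positive definiteness persists on a smaller neighborhood by continuity.

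For the ``in particular'' assertion I would first use gauge invariance (Theorem~\ref{thm:Gauge_Invariance}, (\ref{eq:GSE_of_Mag_Pot_plus_df})) on $\cover{\baseManifold}$ to write $\eigenvalue_{0}^{\cover{\baseManifold}}(\magneticFieldStrength)=\groundStateEnergy(\magneticFieldStrength\multipliedBy\cover{\magneticPotential})=\groundStateEnergy(\magneticFieldStrength\multipliedBy\cover{(\magneticPotential_{\mathrm{h}}^{\wedge}+\kappa)})$, where $\kappa=\magneticPotential_{\mathrm{h}}^{\perp}+\adjoint d\beta_{\magneticPotential}$, because $\cover{\exteriorDifferential f_{\magneticPotential}}$ is exact. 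By Theorem~\ref{thm:MSO_on_abelian_cover_as_direct_integral} and the continuity of the band functions shown in the proof of Theorem~\ref{thm:GSE_of_abelian_covers}, $\eigenvalue_{0}^{\cover{\baseManifold}}(\magneticFieldStrength)=\inf_{\character\in\characterGroup{\coveringTransformationGroup}}\groundStateEnergy(\magneticFieldStrength\multipliedBy(\magneticPotential_{\mathrm{h}}^{\wedge}+\kappa)+\omega_{\character},0)$, where $\omega_{\character}$ is a harmonic representative of $\character$. At $\magneticFieldStrength=0$ the integrand equals $\groundStateEnergy(\omega_{\character},0)$, which by Theorem~\ref{thm:Gauge_Invariance} is positive for $\character\neq\trivialRepresentation$; by compactness of $\characterGroup{\coveringTransformationGroup}$ and joint continuity, for $|\magneticFieldStrength|$ small the infimum is therefore attained in an arbitrarily small neighborhood of $\trivialRepresentation$, on which the harmonic representatives $\omega_{\character}$ range over a small neighborhood of $0$ in $\homomorphismGroup(\coveringTransformationGroup,\mathbb{R})\subseteq X_{\magneticPotential}$ --- indeed $[\omega_{\character}]$ then pairs to $0$ with $\ker\Phi$, hence lies in the space~(\ref{eq:Hom_CTG_to_R_as_One_Forms}). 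Setting $\sigma=\magneticFieldStrength\multipliedBy\magneticPotential_{\mathrm{h}}^{\wedge}+\omega_{\character}$ this reads $\eigenvalue_{0}^{\cover{\baseManifold}}(\magneticFieldStrength)=\min_{\sigma}\eigenvalue_{0,X_{\magneticPotential}}(\magneticFieldStrength\multipliedBy\kappa+\sigma)$, the minimum taken over a small neighborhood of $0$ in $\homomorphismGroup(\coveringTransformationGroup,\mathbb{R})$.

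Partial minimization then finishes the proof. Since $\eigenvalue_{0,X_{\magneticPotential}}$ is real analytic with positive definite Hessian near $0$ and $\homomorphismGroup(\coveringTransformationGroup,\mathbb{R})$ is a linear subspace of $X_{\magneticPotential}$, the implicit function theorem applied to the $\sigma$-gradient of $\eigenvalue_{0,X_{\magneticPotential}}(\magneticFieldStrength\multipliedBy\kappa+\sigma)$ yields a unique analytic minimizer $\sigma=\sigma(\magneticFieldStrength)$ with $\sigma(0)=0$ and $\sigma(\magneticFieldStrength)=O(\magneticFieldStrength)$, so $\eigenvalue_{0}^{\cover{\baseManifold}}(\magneticFieldStrength)=\eigenvalue_{0,X_{\magneticPotential}}(\magneticFieldStrength\multipliedBy\kappa+\sigma(\magneticFieldStrength))$ is real analytic near $0$. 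Inserting the expansion $\eigenvalue_{0,X_{\magneticPotential}}(\xi)=\normComingFromInnerProduct{\xi}^{2}/(2\volume{\baseManifold})+O(\normComingFromInnerProduct{\xi}^{3})$ and using $\sigma(\magneticFieldStrength)=O(\magneticFieldStrength)$ gives $\eigenvalue_{0}^{\cover{\baseManifold}}(\magneticFieldStrength)=\frac{\magneticFieldStrength^{2}}{2\volume{\baseManifold}}\min_{\sigma\in\homomorphismGroup(\coveringTransformationGroup,\mathbb{R})}\normComingFromInnerProduct{\kappa+\sigma}^{2}+O(\magneticFieldStrength^{3})$, and since $\kappa=\magneticPotential_{\mathrm{h}}^{\perp}+\adjoint d\beta_{\magneticPotential}$ is $L^{2}$-orthogonal to $\homomorphismGroup(\coveringTransformationGroup,\mathbb{R})$ (the harmonic summand by the choice of $\magneticPotential_{\mathrm{h}}^{\perp}$, the coexact summand because it is orthogonal to all harmonic forms), the minimum is $\normComingFromInnerProduct{\kappa}^{2}=\normComingFromInnerProduct{\magneticPotential_{\mathrm{h}}^{\perp}}^{2}+\normComingFromInnerProduct{\adjoint d\beta_{\magneticPotential}}^{2}$. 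Hence $\eigenvalue_{0}^{\cover{\baseManifold}}\,''(0)=(\normComingFromInnerProduct{\magneticPotential_{\mathrm{h}}^{\perp}}^{2}+\normComingFromInnerProduct{\adjoint d\beta_{\magneticPotential}}^{2})/\volume{\baseManifold}$, and the same Hodge orthogonality --- minimizing $\normComingFromInnerProduct{\magneticPotential-\omega}^{2}$ by the choice $\omega=\magneticPotential_{\mathrm{h}}^{\wedge}+\exteriorDifferential f_{\magneticPotential}$, whose lift is exact --- shows this equals $\min_{[\omega]\in\homomorphismGroup(\coveringTransformationGroup,\mathbb{R})}\normComingFromInnerProduct{\magneticPotential-\omega}^{2}/\volume{\baseManifold}$.

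The step I expect to be most delicate is the localization: checking that for small $\magneticFieldStrength$ the band-structure infimum over $\characterGroup{\coveringTransformationGroup}$ is realized near the trivial character and can thereby be rewritten as a minimization over the fixed finite-dimensional space $X_{\magneticPotential}$. This uses compactness of $\characterGroup{\coveringTransformationGroup}$, continuity of the band functions from Theorem~\ref{thm:GSE_of_abelian_covers}, the characterization of vanishing ground state energy on $\baseManifold$ from Theorem~\ref{thm:Gauge_Invariance}, and gauge invariance to descend everything to $X_{\magneticPotential}$; once this is secured, the analytic perturbation theory of the first part and the implicit function theorem for the partial minimization are routine.
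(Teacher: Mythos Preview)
Your proposal is correct and follows essentially the same route as the paper: analytic perturbation theory plus Proposition~\ref{prop:Derivative_of_GSE_on_quotient} for $\eigenvalue_{0,X_{\magneticPotential}}$, then the compactness/contradiction argument on $\characterGroup{\coveringTransformationGroup}$ to localize the band infimum near $\trivialRepresentation$, then partial minimization over $\homomorphismGroup(\coveringTransformationGroup,\mathbb{R})$. The only cosmetic difference is that the paper packages the last step by quoting \cite[Lemma~4.2 and Theorem~4.3]{HiguchiShirai1999} and evaluates a Gram determinant, whereas you invoke the implicit function theorem and read off the second derivative via Hodge orthogonality of $\magneticPotential_{\mathrm{h}}^{\perp}+\adjoint d\beta_{\magneticPotential}$ to $\homomorphismGroup(\coveringTransformationGroup,\mathbb{R})$; both compute the same quantity.
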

\begin{proof}
The claimed real analyticity of $\eigenvalue_{0,X_{\magneticPotential}}$
follows from pertubation theory as $\eigenvalue_{0,X_{\magneticPotential}}(0)=0$
is a simple eigenvalue of the Laplacian $2\multipliedBy\closure{\schroedingerOperator_{0,0}}$.
Moreover, Proposition~\ref{prop:Derivative_of_GSE_on_quotient} says
that 
\[
\mathrm{Hess}\multipliedBy\eigenvalue_{0,X_{\magneticPotential}}(0)[\omega,\omega]=\frac{\normComingFromInnerProduct{\omega}^{2}}{\volume{\baseManifold}},
\]
which implies positive definiteness near $0\in X_{\magneticPotential}$.
The statements concerning $\eigenvalue_{0}^{\cover{\baseManifold}}$
follow from~\cite[Lemma 4.2]{HiguchiShirai1999} as in the proof
of~\cite[Theorem 4.3]{HiguchiShirai1999}. More precisely, we can
assume that $f_{\magneticPotential}=0$ using~(\ref{eq:GSE_of_Mag_Pot_plus_df}).
The proof of Theorem~\ref{thm:GSE_of_abelian_covers} revealed that
\[
\eigenvalue_{0}^{\cover{\baseManifold}}(\magneticFieldStrength)=\min_{\character\in\characterGroup{\coveringTransformationGroup}}\groundStateEnergy(\magneticFieldStrength\multipliedBy\magneticPotential+\omega_{\character}),
\]
where the closed form $\omega_{\character}$ gives rise to $\character\in\characterGroup{\coveringTransformationGroup}$
via~(\ref{eq:Injection_of_Character_Groups}) and (\ref{eq:Character_Group_as_Jacobian_Torus}).
We verify that for each open neighborhood $U$ of $\boldsymbol{1}\in\characterGroup{\coveringTransformationGroup}$,
there exists $\varepsilon_{U}>0$ such that $\norm{\magneticFieldStrength}<\varepsilon_{U}$
implies
\[
\eigenvalue_{0}^{\cover{\baseManifold}}(\magneticFieldStrength)=\min_{\character\in U}\groundStateEnergy(\magneticFieldStrength\multipliedBy\magneticPotential+\omega_{\character}).
\]
Otherwise, we can find sequences $(\magneticFieldStrength_{j})_{j\in\mathbb{N}}\searrow0$
and $(\character_{j})_{j\in\mathbb{N}}\subset\characterGroup{\coveringTransformationGroup}\backslash U$
with 
\[
\eigenvalue_{0}^{\cover{\baseManifold}}(\magneticFieldStrength_{j})=\groundStateEnergy(\magneticFieldStrength_{j}\magneticPotential+\omega_{\character_{j}})\leq\groundStateEnergy(\magneticFieldStrength_{j}\multipliedBy\magneticPotential).
\]
Note that $\groundStateEnergy(\magneticFieldStrength_{j}\multipliedBy\magneticPotential)\to0$
by virtue of Proposition~\ref{prop:Derivative_of_GSE_on_quotient}.
Since $\characterGroup{\coveringTransformationGroup}\backslash U$
is compact, a subsequence of $(\character_{j})_{j\in\mathbb{N}}$
converges to some $\character_{0}\in\characterGroup{\coveringTransformationGroup}\backslash U$
with $\groundStateEnergy(\omega_{\character_{0}})=0$. However, Theorem~\ref{thm:Gauge_Invariance}
thus implies that $[\omega_{\character_{0}}]\in2\pi\multipliedBy\homologyGroup^{1}(\baseManifold,\mathbb{Z})$
which contradicts $\character_{0}\neq\boldsymbol{1}$. Since $\characterGroup{\coveringTransformationGroup}\simeq\mathbb{T}^{r_{0}}\times\mathbb{Z}_{p_{1}}^{r_{1}}\times\ldots\times\mathbb{Z}_{p_{k}}^{r_{k}}$,
we may regard any open neighborhood $W$ of $0\in\bigoplus_{j=1}^{r_{0}}\mathbb{R}\multipliedBy\omega_{j}\simeq T_{\boldsymbol{1}}\characterGroup{\coveringTransformationGroup}$
as an open neighborhood of $\boldsymbol{1}\in\characterGroup{\coveringTransformationGroup}$,
and find $\varepsilon_{W}>0$ such that $\norm{\magneticFieldStrength}<\varepsilon_{W}$
implies 
\[
\eigenvalue_{0}^{\cover{\baseManifold}}(\magneticFieldStrength)=\min_{\omega\in W}\eigenvalue_{0,X_{\magneticPotential}}(\magneticFieldStrength\multipliedBy\magneticPotential+\omega).
\]
We imitate the proof of~\cite[Theorem 4.3]{HiguchiShirai1999} and
apply~\cite[Lemma 4.2]{HiguchiShirai1999} with $e_{0}=\magneticPotential$,
$e_{j}=\omega_{j}$ for $j=1,2,\ldots,r_{0}$, and $f=\eigenvalue_{0,X_{\magneticPotential}}$
for sufficiently small $\norm{\magneticFieldStrength}$ to obtain
real analyticity of $\eigenvalue_{0}^{\cover{\baseManifold}}$ near
$\magneticFieldStrength=0$ as well as
\begin{eqnarray*}
\eigenvalue_{0}^{\cover{\baseManifold}}\hspace{0.5mm}''(0) & = & \frac{1}{\volume{\baseManifold}}\multipliedBy\mathrm{Det}\left(\begin{array}{ccccc}
\innerProduct{\magneticPotential}{\magneticPotential} & \innerProduct{\magneticPotential}{\omega_{1}} & \innerProduct{\magneticPotential}{\omega_{2}} & \ldots & \innerProduct{\magneticPotential}{\omega_{r_{0}}}\\
\innerProduct{\magneticPotential}{\omega_{1}} & 1 & 0 &  & 0\\
\innerProduct{\magneticPotential}{\omega_{2}} & 0 & 1 &  & 0\\
\vdots &  &  & \ddots & \vdots\\
\innerProduct{\magneticPotential}{\omega_{n}} & 0 & 0 & \ldots & 1
\end{array}\right)\\
 & = & \frac{1}{\volume{\baseManifold}}\multipliedBy\left(\normComingFromInnerProduct{\magneticPotential}^{2}-\sum_{j=1}^{r_{0}}\innerProduct{\magneticPotential}{\omega_{j}}^{2}\right)=\min_{[\omega]\in\homomorphismGroup(\coveringTransformationGroup,\mathbb{R})}\frac{\normComingFromInnerProduct{\magneticPotential-\omega}^{2}}{\volume{\baseManifold}}.
\end{eqnarray*}

\end{proof}
\newpage{}

\section{Ma\~n\'e's critical value and the ground state energy\label{sec:MCV_and_the_GSE}}

Let $\baseManifold$ be a connected closed manifold with potentials
$\magneticPotential\in\smoothOneForms{\baseManifold,\mathbb{R}}$
and $\electricPotential\in\smoothFunctions{\baseManifold,\mathbb{R}}$.
Recall that Ma\~n\'e's critical value of the corresponding lifted
Lagrangian $L_{\cover{\magneticPotential},\cover{\electricPotential}}$
on a regular cover $\cover{\baseManifold}$ of $\baseManifold$ is
given by~\cite[Theorem A]{ContrerasIturriagaPaternainPaternain1998}
\begin{eqnarray}
\manesCriticalValue(L_{\cover{\magneticPotential},\cover{\electricPotential}}) & = & \inf_{f\in\smoothFunctions{\cover{\baseManifold},\mathbb{R}}}\,\sup_{x\in\cover{\baseManifold}}\frac{1}{2}\left|\cover{\magneticPotential}+df\right|_{x}^{2}+\cover{\electricPotential}(x).\label{eq:MCV_in_Ham_Terms_Repeated}
\end{eqnarray}
For the trivial cover $\cover{\baseManifold}=\baseManifold$, the
following is known.
\begin{thm}
\label{thm:GSE_smaller_MCV_compact_case}(\cite[Theorem B]{Paternain2001})
We have $\groundStateEnergy(\magneticPotential,\electricPotential)\leq\manesCriticalValue(L_{\magneticPotential,\electricPotential})$.
\end{thm}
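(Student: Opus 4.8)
The plan is to combine the gauge invariance of the ground state energy (part of Theorem~\ref{thm:Gauge_Invariance}) with the variational characterization of $\groundStateEnergy$, testing the Schr\"odinger operator against the constant function. If $\manesCriticalValue(L_{\magneticPotential,\electricPotential})=\infty$ the inequality is vacuous, so I may assume it is finite.

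First I would fix $\varepsilon>0$ and use the Hamiltonian description~(\ref{eq:MCV_in_Ham_Terms_Repeated}) of Ma\~n\'e's critical value to produce $f\in\smoothFunctions{\baseManifold,\mathbb{R}}$ with
\[
\sup_{x\in\baseManifold}\left(\frac{1}{2}\norm{\magneticPotential+\exteriorDifferential f}_{x}^{2}+\electricPotential(x)\right)<\manesCriticalValue(L_{\magneticPotential,\electricPotential})+\varepsilon .
\]
By the gauge invariance~(\ref{eq:GSE_of_Mag_Pot_plus_df}) we have $\groundStateEnergy(\magneticPotential,\electricPotential)=\groundStateEnergy(\magneticPotential+\exteriorDifferential f,\electricPotential)$, so it suffices to bound the latter. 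For this I would plug the constant function $1_{\baseManifold}$ (which lies in $\smoothCompactlySupportedFunctions{\baseManifold}$ since $\baseManifold$ is compact) into the Rayleigh quotient for $\schroedingerOperator_{\magneticPotential+\exteriorDifferential f,\electricPotential}$. Because $\magneticDifferential{\magneticPotential+\exteriorDifferential f}1_{\baseManifold}=\magneticPotential+\exteriorDifferential f$, this quotient equals the average $\frac{1}{\volume{\baseManifold}}\int_{\baseManifold}\bigl(\frac{1}{2}\norm{\magneticPotential+\exteriorDifferential f}_{x}^{2}+\electricPotential(x)\bigr)\,\volumeForm$, which is at most the pointwise supremum displayed above, hence $<\manesCriticalValue(L_{\magneticPotential,\electricPotential})+\varepsilon$. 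Letting $\varepsilon\searrow 0$ gives the claim.

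There is no genuine obstacle here: the statement is essentially the gauge-optimized version of the trivial bound in Lemma~\ref{lem:Trivial_bound_for_GSE}, extended to nonzero $\electricPotential$. The only mild points of care are disposing of the case $\manesCriticalValue(L_{\magneticPotential,\electricPotential})=\infty$ separately, and using compactness of $\baseManifold$ to ensure $\volume{\baseManifold}<\infty$ so that the averaging inequality ``mean $\le$ supremum'' is legitimate; the equality $\magneticDifferential{\magneticPotential+\exteriorDifferential f}1_{\baseManifold}=\magneticPotential+\exteriorDifferential f$ is immediate from the definition of the magnetic differential.
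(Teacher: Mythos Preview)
Your proof is correct and matches the paper's argument essentially line for line: choose $f$ via~(\ref{eq:MCV_in_Ham_Terms_Repeated}), use the gauge invariance~(\ref{eq:GSE_of_Mag_Pot_plus_df}), and evaluate the Rayleigh quotient at $1_{\baseManifold}$. The only extra remark you make is the disposal of the case $\manesCriticalValue(L_{\magneticPotential,\electricPotential})=\infty$, which is in fact automatic here since $\baseManifold$ is compact (take $f=0$ in~(\ref{eq:MCV_in_Ham_Terms_Repeated})), but it does no harm.
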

The preceding theorem is proven by evaluating the Rayleigh quotient
of $\schroedingerOperator_{\magneticPotential,\electricPotential}$
at a gauged version of the constant function $1_{\baseManifold}$
given by $1_{\baseManifold}(x)=1$ for $x\in\baseManifold$. For arbitrary
$\varepsilon>0$, let $f_{\varepsilon}\in\smoothFunctions{\baseManifold,\mathbb{R}}$
be such that 
\[
\sup_{x\in\baseManifold}\frac{1}{2}\left|\magneticPotential+df_{\varepsilon}\right|_{x}^{2}+\electricPotential(x)<\manesCriticalValue(L_{\magneticPotential,\electricPotential})+\varepsilon.
\]
Considering $\magneticPotential$ in the gauge $\magneticPotential+df_{\varepsilon}$,
we obtain from~(\ref{eq:GSE_of_Mag_Pot_plus_df})
\[
\groundStateEnergy(\magneticPotential,\electricPotential)=\groundStateEnergy(\magneticPotential+df_{\varepsilon},\electricPotential)\leq\frac{\innerProduct{\schroedingerOperator_{\magneticPotential+df_{\varepsilon},\electricPotential}1_{\baseManifold}}{1_{\baseManifold}}}{\innerProduct{1_{\baseManifold}}{1_{\baseManifold}}}=\frac{\int_{M}\frac{1}{2}\left|\magneticPotential+df_{\varepsilon}\right|^{2}+\electricPotential}{\volume M}<\manesCriticalValue(L_{\magneticPotential,\electricPotential})+\varepsilon.
\]
Note that the proof relies heavily on $1_{\baseManifold}\in L^{2}(\baseManifold)$.
Brooks~\cite{Brooks1981} was the first to discover that for amenable
covers $\cover{\baseManifold}$, there exist replacements $\chi_{\varepsilon}\in\smoothCompactlySupportedFunctions{\cover{\baseManifold},\mathbb{R}}$
for $1{}_{\cover{\baseManifold}}$ as described by Proposition~\ref{prop:Foelner_Sequence_for_amenable_covers}.
In combination with Paternain's approach, we obtain the following
generalization of Theorem~\ref{thm:GSE_smaller_MCV_compact_case}
to amenable covers, including the monopole case.
\begin{thm}
\label{thm:GSE_smaller_MCV_for_amenable_CTG}Let $\cover{\baseManifold}$
be a regular cover of $\baseManifold$ with amenable covering transformation
group. If $\cover{\magneticPotential}\in\smoothOneForms{\cover{\baseManifold},\mathbb{R}}$
and $\cover{\electricPotential}\in\smoothFunctions{\cover{\baseManifold},\mathbb{R}}$
are potentials with $\inf\,\cover{\electricPotential}>-\infty$, then
the associated Lagrangian $L_{\cover{\magneticPotential},\cover{\electricPotential}}$
and the associated magnetic Schr\"odinger operator $\schroedingerOperator_{\cover{\magneticPotential},\cover{\electricPotential}}$
satisfy
\begin{equation}
\groundStateEnergy(\closure{\schroedingerOperator_{\cover{\magneticPotential},\cover{\electricPotential}}})=\groundStateEnergy(\cover{\magneticPotential},\cover{\electricPotential})\leq\manesCriticalValue(L_{\cover{\magneticPotential},\cover{\electricPotential}}).\label{eq:GSE_smaller_MCV_for_amenable_CTG}
\end{equation}

\end{thm}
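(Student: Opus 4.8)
The plan is to run the compact-case argument recalled just after Theorem~\ref{thm:GSE_smaller_MCV_compact_case}, with the constant function $1_{\cover{\baseManifold}}$ --- which belongs to $L^{2}(\cover{\baseManifold})$ only when $\cover{\baseManifold}$ is compact --- replaced by the compactly supported ``almost constant'' functions on amenable covers supplied by Proposition~\ref{prop:Foelner_Sequence_for_amenable_covers}. We may assume $\manesCriticalValue(L_{\cover{\magneticPotential},\cover{\electricPotential}})<\infty$, since otherwise the inequality~\eqref{eq:GSE_smaller_MCV_for_amenable_CTG} is trivial.

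Fix $\varepsilon>0$. Using the Hamiltonian description~\eqref{eq:MCV_in_Ham_Terms_Repeated} of Ma\~n\'e's critical value, choose $f_{\varepsilon}\in\smoothFunctions{\cover{\baseManifold},\mathbb{R}}$ with
\[
\sup_{x\in\cover{\baseManifold}}\left(\frac{1}{2}\normThatAdapts{\cover{\magneticPotential}+df_{\varepsilon}}_{x}^{2}+\cover{\electricPotential}(x)\right)<\manesCriticalValue(L_{\cover{\magneticPotential},\cover{\electricPotential}})+\varepsilon,
\]
and set $\cover{\magneticPotential}'=\cover{\magneticPotential}+df_{\varepsilon}$. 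By the gauge invariance~\eqref{eq:GSE_of_Mag_Pot_plus_df} we have $\groundStateEnergy(\cover{\magneticPotential},\cover{\electricPotential})=\groundStateEnergy(\cover{\magneticPotential}',\cover{\electricPotential})$; since $\inf\cover{\electricPotential}>-\infty$, Theorem~\ref{thm:MSO_are_Essentially_Selfadjoint} makes $\schroedingerOperator_{\cover{\magneticPotential}',\cover{\electricPotential}}$ essentially self-adjoint, so $\groundStateEnergy(\cover{\magneticPotential}',\cover{\electricPotential})$ is the infimum of the Rayleigh quotients of $\schroedingerOperator_{\cover{\magneticPotential}',\cover{\electricPotential}}$ over $\smoothCompactlySupportedFunctions{\cover{\baseManifold}}\backslash\{0\}$. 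Now apply Proposition~\ref{prop:Foelner_Sequence_for_amenable_covers} with $u=1_{\baseManifold}$ to obtain $\chi_{\varepsilon}:=\chi_{1_{\baseManifold},\varepsilon}\in\smoothCompactlySupportedFunctions{\cover{\baseManifold},\mathbb{R}}$ with $\normComingFromInnerProduct{\chi_{\varepsilon}}^{2}=\volume{\baseManifold}$ and $\normComingFromInnerProduct{d\chi_{\varepsilon}}<\varepsilon$ (inequality~\eqref{eq:Bound2_for_almost_constant_L2_function} being automatic for this choice of $u$).

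It remains to evaluate the Rayleigh quotient of $\schroedingerOperator_{\cover{\magneticPotential}',\cover{\electricPotential}}$ at $\chi_{\varepsilon}$. Since $\chi_{\varepsilon}$ and $\cover{\magneticPotential}'$ are real valued, the cross terms cancel pointwise and $\norm{\magneticDifferential{\cover{\magneticPotential}'}\chi_{\varepsilon}}^{2}=\norm{d\chi_{\varepsilon}}^{2}+\chi_{\varepsilon}^{2}\norm{\cover{\magneticPotential}'}^{2}$, whence
\[
\groundStateEnergy(\cover{\magneticPotential}',\cover{\electricPotential})\leq\frac{\int_{\cover{\baseManifold}}\frac{1}{2}\norm{d\chi_{\varepsilon}}^{2}+\big(\frac{1}{2}\norm{\cover{\magneticPotential}'}^{2}+\cover{\electricPotential}\big)\chi_{\varepsilon}^{2}}{\int_{\cover{\baseManifold}}\chi_{\varepsilon}^{2}}\leq\frac{\frac{1}{2}\varepsilon^{2}}{\volume{\baseManifold}}+\manesCriticalValue(L_{\cover{\magneticPotential},\cover{\electricPotential}})+\varepsilon,
\]
where the second inequality uses $\normComingFromInnerProduct{d\chi_{\varepsilon}}^{2}<\varepsilon^{2}$, the pointwise supremum bound above together with $\chi_{\varepsilon}^{2}\geq0$, and $\int_{\cover{\baseManifold}}\chi_{\varepsilon}^{2}=\volume{\baseManifold}$. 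Letting $\varepsilon\searrow0$ gives $\groundStateEnergy(\cover{\magneticPotential},\cover{\electricPotential})\leq\manesCriticalValue(L_{\cover{\magneticPotential},\cover{\electricPotential}})$, which together with the definition of $\groundStateEnergy$ is~\eqref{eq:GSE_smaller_MCV_for_amenable_CTG}.

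I do not expect a serious obstacle: the single nontrivial input is Proposition~\ref{prop:Foelner_Sequence_for_amenable_covers} --- Brooks' F\o lner-type construction on amenable covers --- which is already available, and everything else is a short computation. The points needing a little care are (i) carrying out the gauge change \emph{before} introducing $\chi_{\varepsilon}$, so that the uniform bound coming from Ma\~n\'e's critical value survives the passage to $\cover{\magneticPotential}'$; (ii) the cancellation of the cross term, which is precisely where real-valuedness of $\chi_{\varepsilon}$ is used; and (iii) disposing of the case $\manesCriticalValue(L_{\cover{\magneticPotential},\cover{\electricPotential}})=\infty$ separately.
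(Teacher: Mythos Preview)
Your proof is correct and follows essentially the same approach as the paper's: gauge to $\cover{\magneticPotential}'=\cover{\magneticPotential}+df_{\varepsilon}$, invoke Proposition~\ref{prop:Foelner_Sequence_for_amenable_covers} to produce an almost-constant test function $\chi_{\varepsilon}\in\smoothCompactlySupportedFunctions{\cover{\baseManifold},\mathbb{R}}$, and bound the Rayleigh quotient using the cancellation of the cross term for real $\chi_{\varepsilon}$. The only cosmetic difference is that the paper writes $\chi_{f_{\varepsilon},\varepsilon}$ where you write $\chi_{1_{\baseManifold},\varepsilon}$, but since only the bounds~\eqref{eq:Bound1_for_almost_constant_L2_function} are used (not~\eqref{eq:Bound2_for_almost_constant_L2_function}), the choice of $u$ in Proposition~\ref{prop:Foelner_Sequence_for_amenable_covers} is immaterial.
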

A proof for the exact case in which $\cover{\magneticPotential}$
and $\cover{\electricPotential}$ are lifts of potentials $\magneticPotential$
and $\electricPotential$ on $\baseManifold$ reads
\[
\groundStateEnergy(\cover{\magneticPotential},\cover{\electricPotential})=\min_{\begin{subarray}{c}
\omega\in\smoothOneForms{\baseManifold,\mathbb{R}}\colon\\
\cover{\omega}\textrm{ is exact}
\end{subarray}}\hspace{-3.5mm}\groundStateEnergy(\cover{\magneticPotential}-\cover{\omega},\cover{\electricPotential})\leq\min_{\begin{subarray}{c}
\omega\in\smoothOneForms{\baseManifold,\mathbb{R}}\colon\\
\cover{\omega}\textrm{ is exact}
\end{subarray}}\hspace{-3.5mm}\groundStateEnergy(\magneticPotential-\omega,\electricPotential)\leq\min_{\begin{subarray}{c}
\omega\in\smoothOneForms{\baseManifold,\mathbb{R}}\colon\\
\cover{\omega}\textrm{ is exact}
\end{subarray}}\hspace{-3.5mm}\manesCriticalValue(L_{\magneticPotential-\omega,\electricPotential})=\manesCriticalValue(L_{\cover{\magneticPotential},\cover{\electricPotential}}),
\]
where we used Theorem~\ref{thm:Gauge_Invariance}, Theorem~\ref{thm:Spectral_Inclusion_for_Amenable_Covers},
Theorem~\ref{thm:GSE_smaller_MCV_compact_case} and Theorem~\ref{thm:MCV_of_Amenable_Covers_and_Abelian_Subcovers_coincide},
respectively.
\begin{proof}
[Proof of Theorem \ref{thm:GSE_smaller_MCV_for_amenable_CTG}]Since
(\ref{eq:GSE_smaller_MCV_for_amenable_CTG}) trivially holds for $\manesCriticalValue(L_{\cover{\magneticPotential},\cover{\electricPotential}})=\infty$,
we assume $\manesCriticalValue(L_{\cover{\magneticPotential},\cover{\electricPotential}})<\infty$.
For arbitrary $\varepsilon>0$, we can find $f_{\varepsilon}\in\smoothFunctions{\cover{\baseManifold},\mathbb{R}}$
such that 
\[
\sup_{x\in\cover{\baseManifold}}\frac{1}{2}\left|\cover{\magneticPotential}+df_{\varepsilon}\right|_{x}^{2}+\cover{\electricPotential}(x)<\manesCriticalValue(L_{\cover{\magneticPotential},\cover{\electricPotential}})+\varepsilon.
\]
Let $\chi_{f_{\varepsilon},\varepsilon}\in\smoothCompactlySupportedFunctions{\cover{\baseManifold},\mathbb{R}}$
be as in Proposition~\ref{prop:Foelner_Sequence_for_amenable_covers}.
As before, we use~(\ref{eq:GSE_of_Mag_Pot_plus_df}) to deduce
\begin{eqnarray*}
\groundStateEnergy(\cover{\magneticPotential},\cover{\electricPotential}) & = & \groundStateEnergy(\cover{\magneticPotential}+df_{\varepsilon},\cover{\electricPotential})\leq\frac{\innerProduct{\schroedingerOperator_{\cover{\magneticPotential}+df_{\varepsilon},\cover{\electricPotential}}\chi_{f_{\varepsilon},\varepsilon}}{\chi_{f_{\varepsilon},\varepsilon}}}{\innerProduct{\chi_{f_{\varepsilon},\varepsilon}}{\chi_{f_{\varepsilon},\varepsilon}}}\\
 & = & \frac{1}{\volume{\baseManifold}}\left(\int_{M}\frac{1}{2}\left|-id\chi_{f_{\varepsilon},\varepsilon}+\chi_{f_{\varepsilon},\varepsilon}(\cover{\magneticPotential}+df_{\varepsilon})\right|^{2}+\chi_{f_{\varepsilon},\varepsilon}^{2}\cover{\electricPotential}\right)\\
 & = & \frac{1}{\volume{\baseManifold}}\left(\frac{1}{2}\normComingFromInnerProduct{d\chi_{f_{\varepsilon},\varepsilon}}^{2}+\int_{M}\multipliedBy\chi_{f_{\varepsilon},\varepsilon}^{2}\left(\frac{1}{2}\left|\cover{\magneticPotential}+df_{\varepsilon}\right|^{2}+\cover{\electricPotential}\right)\right)\\
 & < & \frac{1}{2}\frac{\varepsilon^{2}}{\volume{\baseManifold}}+\manesCriticalValue(L_{\cover{\magneticPotential},\cover{\electricPotential}})+\varepsilon.
\end{eqnarray*}
\vspace{-10mm}

\end{proof}
In order to illustrate the relationship between $\groundStateEnergy$
and $\manesCriticalValue$, let $\cover{\baseManifold}$ be a regular
cover of the closed manifold $\baseManifold$ with magnetic potential
$\cover{\magneticPotential}\in\smoothOneForms{\cover{\baseManifold},\mathbb{R}}$
having constant norm $\norm{\cover{\magneticPotential}}$. In particular,
$\eigenvalue_{0}^{\cover{\baseManifold}}\colon\mathbb{R}\to\mathbb{R}$
given by $\eigenvalue_{0}^{\cover{\baseManifold}}(\magneticFieldStrength)=\groundStateEnergy(\magneticFieldStrength\cover{\magneticPotential})$
is continuous by virtue of Proposition~\ref{prop:GSE_if_norm_constant}.
For the Lagrangian $L_{\magneticFieldStrength\cover{\magneticPotential}}=L_{\magneticFieldStrength\cover{\magneticPotential},0}$,
one easily obtains $\manesCriticalValue(L_{\magneticFieldStrength\cover{\magneticPotential}})=\magneticFieldStrength^{2}\manesCriticalValue(L_{\cover{\magneticPotential}})$
from~(\ref{eq:MCV_in_Ham_Terms_Repeated}). If $\cover{\baseManifold}$
is a non-amenable cover, then the extended theorem of Brooks in~\cite{KobayashiOnoSunada1989}
states that $\eigenvalue_{0}^{\cover{\baseManifold}}(0)>0$, which
implies $\eigenvalue_{0}^{\cover{\baseManifold}}(\magneticFieldStrength)=\groundStateEnergy(\magneticFieldStrength\cover{\magneticPotential})>\manesCriticalValue(L_{\magneticFieldStrength\cover{\magneticPotential}})$
near $\magneticFieldStrength=0$. In Section~\ref{sub:Higher_Genus_Surfaces},
we give an explicit example of this type, where $\cover{\magneticPotential}$
is the lift of some $\magneticPotential\in\smoothOneForms{\baseManifold,\mathbb{R}}$.
In Section~\ref{sub:HG}, we present a similar example with nilpotent
and therefore amenable $\pi_{1}(\baseManifold)$, such that $\eigenvalue_{0}^{\universalCover{\baseManifold}{}}(\magneticFieldStrength)\leq\eigenvalue_{0}^{\abelianCover{\baseManifold}{}}(\magneticFieldStrength)$
with equality only if $\norm{\magneticFieldStrength}\leq\frac{1}{2}$
or $\norm{\magneticFieldStrength}-\frac{1}{4}\in2\pi\multipliedBy\mathbb{Z}$
contrasting Theorem~\ref{thm:MCV_of_Amenable_Covers_and_Abelian_Subcovers_coincide}
which yields $\manesCriticalValue(\universalCoverWithoutHat L{\magneticFieldStrength\cover{\magneticPotential}})=\manesCriticalValue(\abelianCoverWithoutHat L{\magneticFieldStrength\cover{\magneticPotential}})$
for all $\magneticFieldStrength\in\mathbb{R}$, see also Corollary~\ref{cor:GSE_of_amenable_covers}.
It is worth mentioning that all examples of compact homogeneous spaces
$\baseManifold=\Lambda\backslash\Gamma$ in Section~\ref{sec:Hom_Ex_3_Body_Problem},
where $\Gamma$ is a Lie group with cocompact lattice $\Lambda\subset\Gamma$
and left-invariant potential descending to $\magneticPotential\in\smoothOneForms{\baseManifold,\mathbb{R}}$,
satisfy
\begin{equation}
\eigenvalue_{0}^{\cover{\baseManifold}}(\magneticFieldStrength)=\eigenvalue_{0}^{\cover{\baseManifold}}(0)+\magneticFieldStrength^{2}\manesCriticalValue(L_{\cover{\magneticPotential}})\qquad\textnormal{near }\magneticFieldStrength=0\label{eq:GSE_in_terms_of_MCV}
\end{equation}
on the universal cover and various intermediate covers, irrespective
of amenability of $\pi_{1}(\baseManifold)$. In particular, $\eigenvalue_{0}^{\cover{\baseManifold}}$
is a quadratic function near $\magneticFieldStrength=0$ with $\eigenvalue_{0}^{\cover{\baseManifold}}\hspace{0.5mm}''(0)=2\multipliedBy\manesCriticalValue(L_{\cover{\magneticPotential}})$.
Note that each such $\magneticPotential$ coincides with its coclosed
part $\magneticPotential_{\mathrm{cc}}$ since the Hodge dual $*\magneticPotential$
as well as $d*\magneticPotential$ lift to left-invariant forms on
$\Gamma$ for which reason $\exteriorDifferential*\magneticPotential$
is a constant multiple of the non-exact volume form on $\baseManifold$,
hence, $\adjoint{\exteriorDifferential}\magneticPotential=-*\exteriorDifferential*\magneticPotential=0$.
For the trivial cover $\cover{\baseManifold}=\baseManifold$ and arbitrary
$\magneticPotential\in\smoothOneForms{\baseManifold,\mathbb{R}}$,
Paternain~\cite[Proposition 4.2]{Paternain2001} showed that (\ref{eq:GSE_in_terms_of_MCV})
holds precisely if $\manesCriticalValue(L_{\magneticPotential})$
coincides with 
\[
h(L_{\magneticPotential})=\inf_{f\in\smoothFunctions{\baseManifold,\mathbb{R}}}\frac{\int_{\baseManifold}\frac{1}{2}\norm{\magneticPotential+df}^{2}}{\volume{\baseManifold}}=\frac{1}{2}\frac{\normComingFromInnerProduct{\magneticPotential_{\mathrm{cc}}}^{2}}{\volume{\baseManifold}}.
\]
This condition is satisfied if $\baseManifold$ is a homogeneous space
with $\magneticPotential\in\smoothOneForms{\baseManifold,\mathbb{R}}$
as above or, more generally, if $\norm{\magneticPotential_{\mathrm{cc}}}_{x}^{2}=2\multipliedBy h(L_{\magneticPotential})$
for all $x\in\baseManifold$ as this implies 
\[
h(L_{\magneticPotential})\leq\manesCriticalValue(L_{\magneticPotential})\leq\sup_{x\in\baseManifold}\frac{1}{2}\norm{\magneticPotential_{\mathrm{cc}}}_{x}^{2}=h(L_{\magneticPotential}).
\]
 In the former case, Theorem~\ref{thm:MCV_of_Amenable_Covers_and_Abelian_Subcovers_coincide}
and Theorem~\ref{thm:Derivative_of_GSE_of_abelian_covers} give $\eigenvalue_{0}^{\cover{\baseManifold}}\hspace{0.5mm}''(0)=2\multipliedBy\manesCriticalValue(L_{\cover{\magneticPotential}})$
on covers with abelian covering transformation group $\coveringTransformationGroup$
for which $\homomorphismGroup(\coveringTransformationGroup,\mathbb{R})$
is generated by left-invariant forms.

\newpage{}

\section{Homogeneous examples and the $3$-body problem\label{sec:Hom_Ex_3_Body_Problem}}

In this section, we study magnetic Schr\"odinger operators on covers
$\cover{\baseManifold}$ of compact homogeneous spaces $\baseManifold=\Lambda\backslash\Gamma$,
and compare with the corresponding classical data. In each case, $\Gamma$
is a Lie group equipped with a left-invariant metric and a left-invariant
magnetic field $\magneticField\in\smoothTwoForms{\Gamma,\mathbb{R}}$,
and $\Lambda\subset\Gamma$ is a cocompact lattice, that is, a discrete
subgroup such that the quotient $\Lambda\backslash\Gamma$ is compact.
According to Theorem~\ref{thm:MSO_are_Essentially_Selfadjoint},
the corresponding magnetic Schr\"odinger operators are essentially
self-adjoint. Therefore, we use the same symbol $\schroedingerOperator^{\cover{\baseManifold}}$
for the respective operator and its self-adjoint closure with the
exception of the last subsection, where we discuss the quantum analogue
of the classical $3$-body problem.\global\long\def\closure#1{#1}

\subsection{Tori}

We consider $\mathbb{T}^{n}=\mathbb{Z}^{n}\backslash\mathbb{R}^{n}=\mathbb{R}^{n}/\mathbb{Z}^{n}$
with its usual flat metric coming from the Euclidean metric on $\mathbb{R}^{n}$.
Note that the lift of any left-invariant $2$-form $\magneticField$
on $\mathbb{T}^{n}$ to the universal cover $\mathbb{R}^{n}$ takes
the form $\magneticField^{\mathbb{R}^{n}}=\sum_{j<k}B_{jk}dy^{j}\wedge dy^{k}$
for some $B_{jk}\in\mathbb{R}$. We briefly recall that there exists
an isometry $Q\in O(n)$ of $\mathbb{R}^{n}$ with respect to which
\begin{equation}
\magneticField^{\mathbb{R}^{n}}=\sum_{j=1}^{\frac{r(B)}{2}}\lambda_{j}dx^{2j-1}\wedge dx^{2j}\text{ with all }\lambda_{j}\neq0,\label{eq:Normal_form_of_skewsymmetric_form}
\end{equation}
where $(x_{1},x_{2},\ldots,x_{n})$ denote the linear coordinates
given by $Q\colon\mathbb{R}^{n}\to\mathbb{R}^{n}$, and $r(B)\in2\multipliedBy\mathbb{N}_{0}$
denotes the rank of the skew-symmetric matrix $B=(B_{jk})_{j,k=1}^{n}$
with entries $B_{jk}=-B_{kj}$ for $j>k$. Since $iB$ is hermitian,
it has real eigenvalues and can be diagonalized by a unitary matrix
each of whose columns $u$ satisfies $(iB)u=\lambda u$ for some $\lambda\in\mathbb{R}$.
Complex conjugation leads to $(iB)\conjugate u=-\lambda\conjugate u$.
Hence, $iB$ can be diagonalized by a unitary matrix having columns
\[
(u_{1},\conjugate{u_{1}},u_{2},\conjugate{u_{2}},\ldots u_{\frac{r(B)}{2}},\conjugate{u_{\frac{r(B)}{2}}},u_{r(B)+1},u_{r(B)+2},\ldots,u_{n}),
\]
where $u_{j}\in\ker B\cap\mathbb{R}^{n}$ for $j>r(B)$. Moreover,
$B$ has eigenvalues of the form 
\[
(-i\lambda_{1},i\lambda_{1},-i\lambda_{2},i\lambda_{2},\ldots,-i\lambda_{\frac{r(B)}{2}},i\lambda_{\frac{r(B)}{2}},0,0,\ldots,0),
\]
where all $\lambda_{j}\neq0$. If $u_{j}=v_{j}+iw_{j}$ with real
and imaginary parts $v_{j},w_{j}\in\mathbb{R}^{n}$, then we can use
the orthonormal basis $\{v_{j}\pm w_{j}\}_{j\leq\frac{r(B)}{2}}\cup\{u_{j}\}_{j>r(B)}$
of $\mathbb{R}^{n}$ to obtain~(\ref{eq:Normal_form_of_skewsymmetric_form}).
Since $\homologyGroup^{1}(\mathbb{R}^{n},\mathbb{R})$ is trivial,
any magnetic potential of (\ref{eq:Normal_form_of_skewsymmetric_form})
is gauge equivalent to
\begin{equation}
\magneticPotential=\sum_{j=1}^{\frac{r(B)}{2}}\lambda_{j}\multipliedBy x_{2j-1}\multipliedBy dx^{2j}.\label{eq:Mag_Pot_on_Rn}
\end{equation}
If $\magneticField^{\mathbb{R}^{n}}$ has bounded primitives, then,
by amenability of $\pi_{1}(\mathbb{T}^{n})=\mathbb{Z}^{n}$ and~\cite[Lemma 5.3]{Paternain2006},
it also has left-invariant ones, that is, $[\magneticField]=0\in\homologyGroup^{2}(\mathbb{T}^{n},\mathbb{R})$,
and we have $\magneticField=0$. The critical value on $\mathbb{R}^{n}$
is therefore given by
\[
\manesCriticalValue(H_{\magneticPotential}^{\mathbb{R}^{n}})=\begin{cases}
0 & \textnormal{if }\magneticField=0\\
\infty & \textnormal{otherwise},
\end{cases}
\]
where $H_{\magneticPotential}^{\mathbb{R}^{n}}\colon T^{*}\mathbb{R}^{n}\to\mathbb{R}$
is the magnetic Hamiltonian corresponding to the potential~(\ref{eq:Mag_Pot_on_Rn}).
If $\magneticField$ is symplectic, meaning $r(B)=n$, then a similarly
drastic behavior can be observed for the spectrum of the corresponding
magnetic Schr\"odinger operator. Following~\cite[Section 1.4.3]{FournaisHelffer2010},
we let $\mathrm{Tr}^{+}(\magneticField)=\sum_{j=1}^{\frac{r(B)}{2}}\norm{\lambda_{j}}$.
\begin{thm}
\label{thm:Spectrum_of_Tori}The closure of
\[
\schroedingerOperator_{\magneticPotential}^{\mathbb{R}^{n}}=\sum_{j=1}^{\frac{r(B)}{2}}\left(\left(\frac{1}{i}\frac{\partial}{\partial x_{2j-1}}\right)^{2}+\left(\frac{1}{i}\frac{\partial}{\partial x_{2j}}+\lambda_{j}x_{2j-1}\right)^{2}\right)+\sum_{j=r(B)+1}^{n}\left(\frac{1}{i}\frac{\partial}{\partial x_{j}}\right)^{2}
\]
with initial domain $\smoothCompactlySupportedFunctions{\mathbb{R}^{n}}$
has the purely essential spectrum
\[
\spectrum{\closure{\schroedingerOperator_{\magneticPotential}^{\mathbb{R}^{n}}}}=\essentialSpectrum{\closure{\schroedingerOperator_{\magneticPotential}^{\mathbb{R}^{n}}}}=\begin{cases}
\mathrm{\mathrm{[\mathrm{Tr}^{+}(\magneticField),\infty)}} & \textnormal{if }r(B)<n\\
\left\{ \sum_{j=1}^{\frac{r(B)}{2}}\norm{\lambda_{j}}(2k_{j}+1)\,\Bigl|\, k_{j}\in\{0,1,2,\ldots\}\right\}  & \textnormal{\textnormal{if }\ensuremath{\magneticField\textnormal{ is symplectic}}}.
\end{cases}
\]
In any case, we have $\groundStateEnergy(\closure{\schroedingerOperator_{\magneticPotential}^{\mathbb{R}^{n}}})=\mathrm{Tr}^{+}(\magneticField)$.\end{thm}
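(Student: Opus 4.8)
The plan is to diagonalise $\closure{\schroedingerOperator_{\magneticPotential}^{\mathbb{R}^{n}}}$ by recognising it as a sum of pairwise commuting operators, each living on a single tensor factor of
\[
L^{2}(\mathbb{R}^{n})\;\cong\;\Bigl(\,\bigotimes_{j=1}^{r(B)/2}L^{2}\bigl(\mathbb{R}^{2}_{(x_{2j-1},x_{2j})}\bigr)\Bigr)\otimes\Bigl(\,\bigotimes_{j=r(B)+1}^{n}L^{2}\bigl(\mathbb{R}_{x_{j}}\bigr)\Bigr).
\]
On the $j$-th two-dimensional factor the relevant summand is the Landau Hamiltonian $h_{j}=\bigl(\frac{1}{i}\partial_{x_{2j-1}}\bigr)^{2}+\bigl(\frac{1}{i}\partial_{x_{2j}}+\lambda_{j}x_{2j-1}\bigr)^{2}$, and on each one-dimensional factor ($j>r(B)$) it is the free Laplacian $\bigl(\frac{1}{i}\partial_{x_{j}}\bigr)^{2}$. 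Since Theorem~\ref{thm:MSO_are_Essentially_Selfadjoint} gives essential self-adjointness on $\smoothCompactlySupportedFunctions{\mathbb{R}^{n}}$, and $\smoothCompactlySupportedFunctions{\mathbb{R}^{n}}$ contains the algebraic tensor product of the $\smoothCompactlySupportedFunctions{\mathbb{R}^{2}}$'s and $\smoothCompactlySupportedFunctions{\mathbb{R}}$'s, the closure of $\schroedingerOperator_{\magneticPotential}^{\mathbb{R}^{n}}$ coincides with the closed sum $\overline{\sum_{j\leq r(B)/2}\closure{h_{j}}\otimes\mathrm{I}\otimes\cdots+\sum_{j>r(B)}\mathrm{I}\otimes\cdots\otimes\closure{(\frac{1}{i}\partial_{x_{j}})^{2}}}$, so I may invoke the standard formula $\spectrum{\overline{A\otimes\mathrm{I}+\mathrm{I}\otimes B}}=\overline{\spectrum{A}+\spectrum{B}}$ for strongly commuting self-adjoint operators (\cite[Section VIII.10]{ReedSimon1978}).

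Next I would compute $\spectrum{\closure{h_{j}}}$. Conjugating by the partial Fourier transform in the variable $x_{2j}$ replaces $\frac{1}{i}\partial_{x_{2j}}$ by multiplication by $\tau\in\mathbb{R}$, exhibiting $\closure{h_{j}}$ as the direct integral over $\tau$ of the operators $-\partial_{s}^{2}+(\tau+\lambda_{j}s)^{2}=-\partial_{s}^{2}+\lambda_{j}^{2}\bigl(s+\tau/\lambda_{j}\bigr)^{2}$ on $L^{2}(\mathbb{R}_{s})$; a translation in $s$ removes the shift and leaves the rescaled harmonic oscillator $-\partial_{s}^{2}+\lambda_{j}^{2}s^{2}$, whose spectrum is the set of Landau levels $\{\,|\lambda_{j}|(2k+1):k\in\{0,1,2,\ldots\}\,\}$, diagonalised by Hermite functions. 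Since this spectrum does not depend on $\tau$, every Landau level is an eigenvalue of $\closure{h_{j}}$ of infinite multiplicity, and $\spectrum{\closure{h_{j}}}=\essentialSpectrum{\closure{h_{j}}}=\{\,|\lambda_{j}|(2k+1):k\geq0\,\}$. The one-dimensional free Laplacian on $\mathbb{R}$ has purely absolutely continuous spectrum $[0,\infty)$.

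It then remains to assemble the pieces via the formula from the first paragraph, so that $\spectrum{\closure{\schroedingerOperator_{\magneticPotential}^{\mathbb{R}^{n}}}}$ is the closure of $\sum_{j=1}^{r(B)/2}\{\,|\lambda_{j}|(2k+1):k\geq0\,\}+\sum_{j>r(B)}[0,\infty)$. If $\magneticField$ is symplectic there are no free directions, and we obtain $\bigl\{\,\sum_{j=1}^{r(B)/2}|\lambda_{j}|(2k_{j}+1):k_{j}\in\{0,1,2,\ldots\}\,\bigr\}$; this set is locally finite — only finitely many multi-indices keep the sum below any given bound — hence already closed, and every one of its points is an eigenvalue of infinite multiplicity, so the spectrum is purely essential. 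If $r(B)<n$ there is at least one summand $[0,\infty)$, and since the minimum of the Landau part equals $\sum_{j=1}^{r(B)/2}|\lambda_{j}|$, the sum fills the half-line $[\mathrm{Tr}^{+}(\magneticField),\infty)$; a half-line has no isolated points, so it again coincides with the essential spectrum. In every case $\inf\spectrum{\closure{\schroedingerOperator_{\magneticPotential}^{\mathbb{R}^{n}}}}=\sum_{j=1}^{r(B)/2}|\lambda_{j}|=\mathrm{Tr}^{+}(\magneticField)$, which is the claimed value of $\groundStateEnergy(\closure{\schroedingerOperator_{\magneticPotential}^{\mathbb{R}^{n}}})$.

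The only genuinely delicate point is the functional-analytic bookkeeping: one must verify that $\closure{\schroedingerOperator_{\magneticPotential}^{\mathbb{R}^{n}}}$ equals the closed tensor sum of the blocks. This follows because the tensor sum is again self-adjoint (\cite[Section VIII.10]{ReedSimon1978}, using essential self-adjointness of each two-dimensional factor on $\smoothCompactlySupportedFunctions{\mathbb{R}^{2}}$), while $\closure{\schroedingerOperator_{\magneticPotential}^{\mathbb{R}^{n}}}$ — the closure of the same differential operator on the larger domain $\smoothCompactlySupportedFunctions{\mathbb{R}^{n}}\supseteq\smoothCompactlySupportedFunctions{\mathbb{R}^{2}}\otimes\cdots\otimes\smoothCompactlySupportedFunctions{\mathbb{R}}$ — is a self-adjoint extension of it, whence the two coincide; one must also check that the partial Fourier transform and the translations are unitaries compatible with the direct integral decomposition. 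The Hermite-function diagonalisation of $-\partial^{2}+\lambda^{2}s^{2}$ and the constant-field computation itself are classical and can be quoted from~\cite{AvronHerbstSimon1978,FournaisHelffer2010}; beyond this routine verification no new ideas are required.
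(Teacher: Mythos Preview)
Your proposal is correct and follows essentially the same route as the paper: tensor-decompose $L^{2}(\mathbb{R}^{n})$ into two-dimensional Landau blocks and free one-dimensional factors via \cite[Theorem~VIII.33]{ReedSimon1980}, diagonalise each Landau block by partial Fourier transform in the even coordinate to obtain a shifted harmonic oscillator with Hermite eigenfunctions, and then assemble the spectra. The only cosmetic difference is that the paper invokes Theorem~\ref{thm:Eigenspaces_are_infinite_dimensional} (infinite $\coveringTransformationGroup^{\cover{\magneticPotential}}$) to get purely essential spectrum in one stroke, whereas you argue it directly from infinite multiplicity of the Landau levels and the absence of isolated points in a half-line.
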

\begin{proof}
The theorem is a special case of~\cite[Theorem 2.2 and Theorem 2.4]{Shigekawa1991},
which deal with asymptotically constant magnetic fields on $\mathbb{R}^{n}$.
For the reader's convenience, we provide a less involved proof along
the lines of~\cite[Section 1.4.3]{FournaisHelffer2010}. Due to Theorem~\ref{thm:Eigenspaces_are_infinite_dimensional},
we know that $\closure{\schroedingerOperator_{\magneticPotential}^{\mathbb{R}^{n}}}$
has purely essential spectrum. Since $\closure{\schroedingerOperator_{\magneticPotential}^{\mathbb{R}^{n}}}$
is a sum of operators as in~\cite[Theorem VIII.33]{ReedSimon1980},
we see that 
\[
\left(\bigotimes_{j=1}^{\frac{r(B)}{2}}\smoothCompactlySupportedFunctions{\mathbb{R}^{2}}\right)\otimes\smoothCompactlySupportedFunctions{\mathbb{R}^{n-r(B)}}\subset\left(\bigotimes_{j=1}^{\frac{r(B)}{2}}L^{2}(\mathbb{R}^{2})\right)\otimes L^{2}(\mathbb{R}^{n-r(B)})\simeq L^{2}(\mathbb{R}^{n})
\]
is a core, and it suffices to study the self-adjoint model operators
$\schroedingerOperator_{\lambda}^{\mathbb{R}^{2}}$ and $\Delta_{\mathbb{R}^{n-r(B)}}$
given as 
\begin{equation}
\schroedingerOperator_{\lambda}^{\mathbb{R}^{2}}=\left(\frac{1}{i}\frac{\partial}{\partial x}\right)^{2}+\left(\frac{1}{i}\frac{\partial}{\partial y}+\lambda x\right)^{2}\qquad\textnormal{ and }\qquad\Delta_{\mathbb{R}^{n-r(B)}}=\sum_{j=1}^{n-r(B)}\left(\frac{1}{i}\frac{\partial}{\partial x_{j}}\right)^{2}\label{eq:Model_Operators_on_Rn}
\end{equation}
on their initial domains $\smoothCompactlySupportedFunctions{\mathbb{R}^{2}}\subset L^{2}(\mathbb{R}^{2})$
and $\smoothCompactlySupportedFunctions{\mathbb{R}^{n-r(B)}}\subset L^{2}(\mathbb{R}^{n-r(B)})$,
respectively. Using Fourier transformation or quasi-modes as in Theorem~\ref{thm:Spec_of_Sphere_Bundle_of_Hyp_Space}
and Theorem~\ref{thm:EMSO_on_Sol} below, the minimal Laplacian $\Delta_{\mathbb{R}^{n-r(B)}}$
is easily seen to have spectrum $\spectrum{\Delta_{\mathbb{R}^{n-r(B)}}}=[0,\infty)$,
see also \cite[Theorems 3.5.3, 3.7.4, 8.3.1]{Davies1995}. The spectral
analysis of $\schroedingerOperator_{\lambda}^{\mathbb{R}^{2}}$ dates
back to Landau in the 1930s. Using dominated convergence, one can
show that $\domain{\schroedingerOperator_{\lambda}^{\mathbb{R}^{2}}}$
contains the space $\mathcal{S}(\mathbb{R}^{2})\subset\smoothFunctions{\mathbb{R}^{2}}\cap L^{2}(\mathbb{R}^{2})$
of Schwartz functions, and the restriction of $\schroedingerOperator_{\lambda}^{\mathbb{R}^{2}}$
to $\mathcal{S}(\mathbb{R}^{2})$ is given by the differential operator
in~(\ref{eq:Model_Operators_on_Rn}). We conjugate by the partial
Fourier transformation $\mathcal{F}_{y}\colon\mathcal{S}(\mathbb{R}^{2})\to\mathcal{S}(\mathbb{R}^{2})$
given by
\[
\mathcal{F}_{y}u(x,\momentumVariable y{})=\frac{1}{\sqrt{2\pi}}\intop_{-\infty}^{\infty}u(x,y)\multipliedBy e^{-i\momentumVariable y{}y}\multipliedBy dy
\]
to obtain 
\begin{equation}
\mathcal{F}_{y}\multipliedBy\schroedingerOperator_{\lambda}^{\mathbb{R}^{2}}\multipliedBy\mathcal{F}_{y}^{-1}=-\frac{\partial^{2}}{\partial x^{2}}+\lambda^{2}\left(x+\frac{\momentumVariable y{}}{\lambda}\right)^{2}.\label{eq:Conjugated_Landau_Hamiltonian}
\end{equation}
The shift $x\to x+\frac{\momentumVariable y{}}{\lambda}$ leads to
the harmonic oscillator in $x$ with frequency $\norm{\lambda}$,
which has the well-known discrete spectrum $\{\norm{\lambda}(2k+1)\,|\, k=0,1,2,\ldots\}$
with eigenfunctions~\cite{ReedSimon1980} 
\[
u_{k}(x)=\frac{1}{\sqrt{2^{k}\, k!}}\left(\frac{\norm{\lambda}}{\pi}\right)^{\frac{1}{4}}P_{k}\left(\sqrt{\norm{\lambda}}x\right)\multipliedBy e^{-\frac{\norm{\lambda}x^{2}}{2}},\quad\textnormal{where}\quad P_{k}(x)=\left(-1\right)^{k}\, e^{x^{2}}\,\frac{d^{k}}{dx^{k}}e^{-x^{2}}
\]
is the $k$th Hermite polynomial. Each $u_{k}$ leads to an infinite-dimensional
eigenspace of~(\ref{eq:Conjugated_Landau_Hamiltonian}) containing
products of the form $u_{k}(x+\frac{\momentumVariable y{}}{\lambda})\psi(\momentumVariable y{})$
where $\psi\in\smoothCompactlySupportedFunctions{\mathbb{R}}$.
\end{proof}

\subsection{Higher genus surfaces and $PSL(2,\mathbb{R})$\label{sub:Higher_Genus_Surfaces}}

\global\long\def\hyperbolicSpace{\mathbb{H}}

Let $\hyperbolicSpace=\{z=x+iy\in\mathbb{C}\simeq\mathbb{R}^{2}\,|\,\mathrm{Im}z=y>0\}$
denote the upper half plane with its usual metric $ds^{2}=y^{-2}(dx^{2}+dy^{2})$
of constant curvature $-1$. Recall that $PSL(2,\mathbb{R})$ can
be regarded as the group of orientation-preserving isometries of $\hyperbolicSpace$,
that is, Möbius transformations 
\[
z\mapsto\frac{az+b}{cz+d}\qquad\textnormal{with }\left(\begin{array}{cc}
a & b\\
c & d
\end{array}\right)\in SL(2,\mathbb{R}).
\]
This allows to view $PSL(2,\mathbb{R})$ as the unit sphere bundle
$S\hyperbolicSpace$ by identifying $(x,y,v)\in S\hyperbolicSpace$
with the unique Möbius transformation that takes $i$ to $x+iy$,
and whose derivative takes $(0,1)\in T_{i}\hyperbolicSpace$ to $v\in T_{x+iy}\hyperbolicSpace$.
We let $\Lambda\subset PSL(2,\mathbb{R})$ be a cocompact lattice
that acts on $\hyperbolicSpace$ without fixed points. The quotient
space $\baseManifold=\Lambda\backslash\hyperbolicSpace$ is a compact
hyperbolic surface, whose unit sphere bundle $S\baseManifold$ can
be identified with $\Lambda\backslash PSL(2,\mathbb{R})$. Following~\cite[Section 5.2]{CieliebakFrauenfelderPaternain2010},
we let $\magneticFieldStrength\in\mathbb{R}$ and consider the magnetic
flow $\phi^{\magneticFieldStrength}$ on $T\baseManifold$ generated
by the magnetic field $\magneticFieldStrength\multipliedBy\volumeForm_{M}$.
In other words, $\phi^{\magneticFieldStrength}$ is the Hamiltonian
flow of $E\colon T\baseManifold\to\mathbb{R}$ given by $E(x,v)=\frac{1}{2}\norm v_{x}^{2}$
with respect to the twisted symplectic form 
\[
\omega_{\magneticFieldStrength}=-\flat^{*}\exteriorDifferential\lambda+\magneticFieldStrength\multipliedBy\pi_{T\baseManifold}^{*}\volumeForm_{M},
\]
where $\flat\colon T\baseManifold\to T^{*}\baseManifold$ is the canonical
isomorphism induced by the metric, $\lambda$ is the Liouville $1$-form
on $T^{*}M$, and $\pi_{T\baseManifold}\colon TM\to M$ is the canonical
projection. As the lift of $\volumeForm_{M}$ to $\hyperbolicSpace$
has the primitve $\magneticPotential=y^{-1}dx$ with constant norm
$1$, the corresponding Hamiltonian 
\begin{equation}
H_{\magneticFieldStrength}^{\hyperbolicSpace}\colon T^{*}\hyperbolicSpace\to\mathbb{R}\qquad\textnormal{given by}\qquad H_{\magneticFieldStrength}^{\hyperbolicSpace}(x,y,p_{x},p_{y})=\frac{1}{2}\left((yp_{x}+\magneticFieldStrength)^{2}+y^{2}p_{y}^{2}\right)\label{eq:Hamiltonian_on_Hyp_Space}
\end{equation}
has critical value $\manesCriticalValue(H_{\magneticFieldStrength}^{\hyperbolicSpace})\leq\frac{1}{2}\magneticFieldStrength^{2}$.
In fact, $\manesCriticalValue(H_{\magneticFieldStrength}^{\hyperbolicSpace})=\frac{1}{2}\magneticFieldStrength^{2}$
as was shown in~\cite[Example 6.2]{Contreras2001} and~\cite[Lemma 6.11]{CieliebakFrauenfelderPaternain2010}.
For $k\geq0$, we denote the restriction of $\phi^{\magneticFieldStrength}$
to $E^{-1}(k)$ by $\phi^{\magneticFieldStrength,k}$. Using a simple
scaling in the fibres of $T\hyperbolicSpace$ of the form $(x,v)\mapsto(x,\magneticFieldStrength v)$,
we obtain the following from~\cite{CieliebakFrauenfelderPaternain2010}:
\begin{itemize}
\item For $k>\manesCriticalValue(H_{\magneticFieldStrength}^{\hyperbolicSpace})$,
the flow $\phi^{\magneticFieldStrength,k}$ is conjugate to the underlying
geodesic flow on $\baseManifold$ up to a constant time scaling.
\item For $k=\manesCriticalValue(H_{\magneticFieldStrength}^{\hyperbolicSpace})$,
the flow $\phi^{\magneticFieldStrength,k}$ is conjugate to the horocycle
flow on $S\baseManifold$ up to a constant time scaling, in particular,
it has no closed orbits.
\item For $0\leq k<\manesCriticalValue(H_{\magneticFieldStrength}^{\hyperbolicSpace})$,
all orbits of $\phi^{\magneticFieldStrength,k}$ are closed and their
projections to $\hyperbolicSpace$ are circles.
\end{itemize}
The quantum analogue of~(\ref{eq:Hamiltonian_on_Hyp_Space}) is known
as the Maass Laplacian~\cite{Maass1952}
\begin{equation}
\schroedingerOperator_{\magneticFieldStrength}^{\hyperbolicSpace}=\frac{1}{2}\multipliedBy y^{2}\left(\left(\frac{1}{i}\frac{\partial}{\partial x}+\magneticFieldStrength y^{-1}\right)^{2}+\left(\frac{1}{i}\frac{\partial}{\partial y}\right)^{2}\right).\label{eq:Maass_Laplacian}
\end{equation}
Due to its close connections to number theory, in particular, to automorphic
forms, $\schroedingerOperator_{\magneticFieldStrength}^{\hyperbolicSpace}$
has been widely studied in both the mathematics and theoretical physics
literature~\cite{Roelcke1966,Roelcke1966a,Elstrodt1973a,Elstrodt1973,Elstrodt1974,ComtetHouston1985,Comtet1987}.
Note that $\schroedingerOperator_{\magneticFieldStrength}^{\hyperbolicSpace}$
and $\schroedingerOperator_{-\magneticFieldStrength}^{\hyperbolicSpace}$
are conjugate to each other with respect to the isometry $\mathcal{I}\colon L^{2}(\hyperbolicSpace)\to L^{2}(\hyperbolicSpace)$
given by $\mathcal{I}u(x,y)=u(-x,y)$. Derivations of the well-known
spectrum of (\ref{eq:Maass_Laplacian}) can be found in~\cite{InahamaShirai2003,MorameTruc2008},
which motivated the spectral analysis in the remainder of this section.
\begin{thm}
\label{thm:Spec_of_Hyp_Space}The closure of~(\ref{eq:Maass_Laplacian})
has spectrum

\begin{equation}
\spectrum{\closure{\schroedingerOperator_{\magneticFieldStrength}^{\hyperbolicSpace}}}=\begin{cases}
\left[\groundStateEnergyOfContinuousSpectrum(\closure{\schroedingerOperator_{\magneticFieldStrength}^{\hyperbolicSpace}}),\infty\right) & \textnormal{if }|\magneticFieldStrength|\leq\frac{1}{2}\\
\purePointSpectrum{\closure{\closure{\schroedingerOperator_{\magneticFieldStrength}^{\hyperbolicSpace}}}}\cup\left[\groundStateEnergyOfContinuousSpectrum(\closure{\schroedingerOperator_{\magneticFieldStrength}^{\hyperbolicSpace}}),\infty\right) & \textnormal{if }|\magneticFieldStrength|>\frac{1}{2},
\end{cases}\label{eq:Spec_of_Hyp_space}
\end{equation}
where $\groundStateEnergyOfContinuousSpectrum(\closure{\schroedingerOperator_{\magneticFieldStrength}^{\hyperbolicSpace}})=\frac{1}{2}\left(|\magneticFieldStrength|^{2}+\frac{1}{4}\right)$
and 
\[
\purePointSpectrum{\closure{\schroedingerOperator_{\magneticFieldStrength}^{\hyperbolicSpace}}}=\bigcup_{0\leq k<|\magneticFieldStrength|-\frac{1}{2}}\left\{ \frac{1}{2}\left((2k+1)|\magneticFieldStrength|-k(k+1)\right)\right\} \subset\essentialSpectrum{\closure{\schroedingerOperator_{\magneticFieldStrength}^{\hyperbolicSpace}}}.
\]

\end{thm}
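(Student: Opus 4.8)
The plan is to diagonalize $\schroedingerOperator_{\magneticFieldStrength}^{\hyperbolicSpace}$ along the translation symmetry $x\mapsto x+a$, which it manifestly commutes with. Conjugating by the Fourier transform in the $x$-variable produces a direct integral decomposition $L^{2}(\hyperbolicSpace)\simeq\int_{\mathbb{R}}^{\oplus}L^{2}\big((0,\infty),y^{-2}\exteriorDifferential y\big)\,\exteriorDifferential\xi$ under which $\schroedingerOperator_{\magneticFieldStrength}^{\hyperbolicSpace}$ becomes $\int_{\mathbb{R}}^{\oplus}\mathcal{L}_{\xi}\,\exteriorDifferential\xi$ with fibre operators $\mathcal{L}_{\xi}=\tfrac{1}{2}\big(-y^{2}\tfrac{\exteriorDifferential^{2}}{\exteriorDifferential y^{2}}+(y\xi+\magneticFieldStrength)^{2}\big)$ defined initially on $\smoothCompactlySupportedFunctions{(0,\infty)}$. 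Essential self-adjointness of the total operator is Theorem~\ref{thm:MSO_are_Essentially_Selfadjoint} (applied with $\cover{\electricPotential}=0$, using that $\hyperbolicSpace$ is the universal cover of the closed surface $\baseManifold$), and each $\mathcal{L}_{\xi}$ is in the limit-point case at both endpoints, hence essentially self-adjoint as well. The problem thus reduces to determining $\spectrum{\closure{\mathcal{L}_{\xi}}}$ for every $\xi\in\mathbb{R}$ and reassembling the answer via the theory of direct integrals, cf.~\cite[Section XIII.16]{ReedSimon1978}.

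For the fibre analysis I would pass to the variable $y=e^{t}$ together with the unitary identification $L^{2}\big((0,\infty),y^{-2}\exteriorDifferential y\big)\simeq L^{2}(\mathbb{R},\exteriorDifferential t)$; a short computation turns $-y^{2}\partial_{y}^{2}$ into $-\partial_{t}^{2}+\tfrac14$, so that $\closure{\mathcal{L}_{\xi}}$ is unitarily equivalent to the one-dimensional Schr\"odinger operator $-\tfrac12\partial_{t}^{2}+\tfrac12(\xi e^{t}+\magneticFieldStrength)^{2}+\tfrac18$, whose potential is of Morse type. This potential tends to $\tfrac12(\magneticFieldStrength^{2}+\tfrac14)$ as $t\to-\infty$ and to $+\infty$ as $t\to+\infty$; its infimum equals $\tfrac12(\magneticFieldStrength^{2}+\tfrac14)$, not attained, when $\xi\magneticFieldStrength\geq0$, and equals $\tfrac18$, attained, when $\xi\magneticFieldStrength<0$. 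Classical one-dimensional Schr\"odinger theory then gives $\spectrum{\closure{\mathcal{L}_{\xi}}}=[\groundStateEnergyOfContinuousSpectrum,\infty)$, purely absolutely continuous, when $\xi\magneticFieldStrength\geq0$, where $\groundStateEnergyOfContinuousSpectrum=\tfrac12(|\magneticFieldStrength|^{2}+\tfrac14)$ is forced by the asymptotically constant end; when $\xi\magneticFieldStrength<0$ the absolutely continuous part is still $[\groundStateEnergyOfContinuousSpectrum,\infty)$, but there are additionally finitely many simple eigenvalues in $(\tfrac18,\groundStateEnergyOfContinuousSpectrum)$ coming from the potential well, and none embedded above $\groundStateEnergyOfContinuousSpectrum$.

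To locate these eigenvalues I would recognize the fibre equation $\tfrac12\big(-y^{2}f''+(y\xi+\magneticFieldStrength)^{2}f\big)=\eigenvalue f$, after the substitution $z=2|\xi|y$, as Whittaker's equation with parameters $\kappa=-\mathrm{sign}(\xi)\magneticFieldStrength$ and $\mu=\sqrt{\magneticFieldStrength^{2}+\tfrac14-2\eigenvalue}$. An $L^{2}$-eigenfunction must be the Whittaker function $W_{\kappa,\mu}$, the solution decaying at $z=\infty$, and must simultaneously lie in $L^{2}$ near $z=0$, where $W_{\kappa,\mu}$ is a combination of $z^{1/2+\mu}$ and $z^{1/2-\mu}$; since only $z^{1/2+\mu}$ is square-integrable at the inner end (for $\mu>0$), the coefficient of $z^{1/2-\mu}$, which is proportional to $\Gamma(\tfrac12+\mu-\kappa)^{-1}$, must vanish, i.e.\ $\kappa=\mu+\tfrac12+k$ for some $k\in\{0,1,2,\dots\}$. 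This forces $\kappa>0$, hence $\xi\magneticFieldStrength<0$ and $\kappa=|\magneticFieldStrength|$, so $\mu=|\magneticFieldStrength|-\tfrac12-k>0$, which imposes $0\leq k<|\magneticFieldStrength|-\tfrac12$; solving $\mu^{2}=\magneticFieldStrength^{2}+\tfrac14-2\eigenvalue$ yields $\eigenvalue=\eigenvalue_{k}:=\tfrac12\big((2k+1)|\magneticFieldStrength|-k(k+1)\big)$, and one checks $\groundStateEnergyOfContinuousSpectrum-\eigenvalue_{k}=\tfrac12\big(|\magneticFieldStrength|-k-\tfrac12\big)^{2}>0$, so these levels sit strictly below the continuous spectrum. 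For $\xi\magneticFieldStrength\geq0$ the potential is monotone and there are no eigenvalues at all.

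The key point for the total spectrum is that each $\eigenvalue_{k}$ is independent of $\xi$, so it belongs to $\spectrum{\closure{\mathcal{L}_{\xi}}}$ for the entire half-line $\{\xi:\xi\magneticFieldStrength<0\}$, a set of infinite measure. Feeding this into the direct integral, the absolutely continuous parts of the fibres assemble into $[\groundStateEnergyOfContinuousSpectrum,\infty)$, while each $\eigenvalue_{k}$ becomes an eigenvalue of $\schroedingerOperator_{\magneticFieldStrength}^{\hyperbolicSpace}$ of infinite multiplicity; this is consistent with Theorem~\ref{thm:Eigenspaces_are_infinite_dimensional} (here $\coveringTransformationGroup^{\cover{\magneticPotential}}=\pi_{1}(\baseManifold)$ is infinite) and places each $\eigenvalue_{k}$ in $\purePointSpectrum{\closure{\schroedingerOperator_{\magneticFieldStrength}^{\hyperbolicSpace}}}\subset\essentialSpectrum{\closure{\schroedingerOperator_{\magneticFieldStrength}^{\hyperbolicSpace}}}$. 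Since the bound states occur precisely when the index range $0\leq k<|\magneticFieldStrength|-\tfrac12$ is non-empty, i.e.\ when $|\magneticFieldStrength|>\tfrac12$, this reproduces the two cases of~(\ref{eq:Spec_of_Hyp_space}) with $\groundStateEnergyOfContinuousSpectrum=\tfrac12(|\magneticFieldStrength|^{2}+\tfrac14)$. The main obstacle is the fibre ODE analysis: confirming the limit-point property at $y=0$, ruling out unwanted and embedded eigenvalues, and extracting the quantization condition $\kappa=\mu+\tfrac12+k$ from the connection formula for $W_{\kappa,\mu}$ between $z=0$ and $z=\infty$ (or importing it from~\cite{InahamaShirai2003,MorameTruc2008}); the direct-integral bookkeeping—measurability of $\xi\mapsto\closure{\mathcal{L}_{\xi}}$ and the passage from fibrewise eigenvalues on a positive-measure set to infinite-multiplicity eigenvalues of the total operator—is routine.
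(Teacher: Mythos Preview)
Your proposal is correct and is essentially the standard derivation found in the references the paper cites, \cite{InahamaShirai2003,MorameTruc2008}: partial Fourier transform in $x$, reduction to a one-dimensional Schr\"odinger operator with Morse potential after the substitution $y=e^{t}$, and identification of the discrete levels via the Whittaker quantization $\kappa=\mu+\tfrac12+k$. The paper itself does not reprove Theorem~\ref{thm:Spec_of_Hyp_Space}; it simply quotes the result from the literature and supplements it with a direct check that, for $|\magneticFieldStrength|>\tfrac12$, the functions
\[
u(x,y)=y^{|\magneticFieldStrength|}\int_{-\infty}^{0}e^{\xi(ix+y)}v(\xi)\,d\xi,\qquad v\in\smoothCompactlySupportedFunctions{(-\infty,0),\mathbb{C}},
\]
are $L^{2}$-ground states, which is exactly the $k=0$ layer of your Whittaker eigenfunctions written as a superposition over the half-line $\{\xi:\xi\magneticFieldStrength<0\}$. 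So your argument is more complete than what the paper itself records; the only substantive work you have flagged as ``obstacles'' (limit-point at $y=0$, absence of embedded eigenvalues, the connection formula) is precisely what \cite{InahamaShirai2003,MorameTruc2008} carry out, and you would be entitled simply to import it.
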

The preceding theorem is visualized in Figure~\ref{fig:Plot_of_Hyp_Space}.
We briefly verify that for $|\magneticFieldStrength|>\frac{1}{2}$,
any $v\in\smoothCompactlySupportedFunctions{(-\infty,0),\mathbb{C}}$
leads to a ground state of~(\ref{eq:Maass_Laplacian}) given by 
\[
u(x,y)=y^{|\magneticFieldStrength|}\intop_{-\infty}^{\infty}e^{\momentumVariable x{}(ix+y)}v(\momentumVariable x{})d\momentumVariable x{}.
\]
Note that $w\colon\mathbb{R}\times\mathbb{R}_{+}\to\mathbb{C}$ with
$w(x,y)=y^{-1}u(x,y)$ is the inverse partial Fourier transform of
$\widehat{w}(\momentumVariable x{},y)=\sqrt{2\pi}\multipliedBy y^{|\magneticFieldStrength|-1}e^{\momentumVariable x{}y}v(\momentumVariable x{})$.
We apply Parseval's theorem to obtain 
\[
\normComingFromInnerProduct u_{L^{2}(\hyperbolicSpace)}=\normComingFromInnerProduct w_{L^{2}(\mathbb{R}\times\mathbb{R}_{+})}=\normComingFromInnerProduct{\widehat{w}}_{L^{2}(\mathbb{R}\times\mathbb{R}_{+})},
\]
where $\mathbb{R}\times\mathbb{R}_{+}$ is equipped with standard
Lebesque product measure. Hence,
\[
\normComingFromInnerProduct u_{L^{2}(\hyperbolicSpace)}^{2}\leq2\pi\intop_{-\infty}^{\infty}\intop_{0}^{\infty}y^{2|\magneticFieldStrength|-2}e^{2\momentumVariable x{}y}|v(\momentumVariable x{})|^{2}dy\multipliedBy d\momentumVariable x{}<\infty
\]
since $|\magneticFieldStrength|>\frac{1}{2}$ and since $v$ vanishes
on $[-\varepsilon,\infty)$ for some $\varepsilon>0$. Moreover,
\begin{eqnarray*}
\left(y\frac{1}{i}\frac{\partial}{\partial x}+|\magneticFieldStrength|\right)^{2}u & = & (y\momentumVariable x{}+|\magneticFieldStrength|)^{2}u\\
y^{2}\left(\frac{1}{i}\frac{\partial}{\partial y}\right)^{2}u & = & -\left(y^{2}\momentumVariable x2+2|\magneticFieldStrength|y\momentumVariable x{}+|\magneticFieldStrength|(|\magneticFieldStrength|-1)\right)u.
\end{eqnarray*}
In the following, we examine the $3$-dimensional unit sphere bundle
$S\baseManifold\simeq\Lambda\backslash PSL(2,\mathbb{R})$. The cover
$PSL(2,\mathbb{R})\simeq S\hyperbolicSpace$ is diffeomorphic to $\hyperbolicSpace\times\mathbb{S}^{1}$
with coordinates $(x,y,\varphi)$, with respect to which we have the
following left-invariant $1$-forms~\cite[Section 6.3]{CieliebakFrauenfelderPaternain2010}
\[
\magneticPotential_{1}=\frac{\cos\varphi\multipliedBy dx+\sin\varphi\multipliedBy dy}{y}\hspace{1cm}\magneticPotential_{2}=\frac{-\sin\varphi\multipliedBy dx+\cos\varphi\multipliedBy dy}{y}\hspace{1cm}\magneticPotential_{3}=\frac{dx}{y}+d\varphi.
\]
We endow $S\hyperbolicSpace$ with the left-invariant metric
\[
ds^{2}=\magneticPotential_{1}^{2}+\magneticPotential_{2}^{2}+\magneticPotential_{3}^{2}=\frac{1}{y^{2}}\left(dx^{2}+dy^{2}+(y\multipliedBy d\varphi+dx)^{2}\right),
\]
that is,
\[
g=\frac{1}{y^{2}}\left(\begin{array}{ccc}
2 & 0 & y\\
0 & 1 & 0\\
y & 0 & y^{2}
\end{array}\right)\hspace{3mm}\sqrt{|g|}=y^{-2}\hspace{3mm}g^{-1}=\left(\begin{array}{ccc}
y^{2} & 0 & -y\\
0 & y^{2} & 0\\
-y & 0 & 2
\end{array}\right).
\]
We let $\magneticFieldStrength\in\mathbb{R}$ and consider $\magneticFieldStrength\multipliedBy\magneticPotential_{3}$
as a left-invariant magnetic potential with associated magnetic field
$\magneticField=\magneticFieldStrength\multipliedBy\frac{1}{y^{2}}dx\wedge dy$.
The corresponding Hamiltonian $H_{\magneticFieldStrength}^{S\baseManifold}\colon T^{*}S\baseManifold\to\mathbb{R}$
and its lift 
\[
H_{\magneticFieldStrength}^{\overline{SL(2,\mathbb{R})}}(x,y,\varphi,p_{x},p_{y},p_{\varphi})=\frac{1}{2}\left((y\multipliedBy p_{x}-p_{\varphi})^{2}+(y\multipliedBy p_{y})^{2}+(p_{\varphi}+\magneticFieldStrength)^{2}\right)
\]
to the universal cover $\overline{SL(2,\mathbb{R})}$ are known to
have critical values~\cite[Section 6.3]{CieliebakFrauenfelderPaternain2010}
\[
\manesCriticalValue(H_{\magneticFieldStrength}^{S\baseManifold})=\frac{1}{2}\magneticFieldStrength^{2}\quad\textnormal{and}\quad\manesCriticalValue(H_{\magneticFieldStrength}^{\overline{SL(2,\mathbb{R})}})=\frac{1}{4}\magneticFieldStrength^{2}.
\]
We examine the corresponding magnetic Schr\"odinger operator 
\[
\schroedingerOperator_{\magneticFieldStrength}^{S\hyperbolicSpace}=\frac{1}{2}\left(\left(y\frac{1}{i}\frac{\partial}{\partial x}-\frac{1}{i}\frac{\partial}{\partial\varphi}\right)^{2}+y^{2}\left(\frac{1}{i}\frac{\partial}{\partial y}\right)^{2}+\left(\frac{1}{i}\frac{\partial}{\partial\varphi}+\magneticFieldStrength\right)^{2}\right)
\]
on the intermediate cover $S\hyperbolicSpace$ and its lift $\schroedingerOperator_{\magneticFieldStrength}^{\overline{SL(2,\mathbb{R})}}$
to $\overline{SL(2,\mathbb{R})}$. Figure~\ref{fig:Plot_of_Sphere_Bundle_of_Hyp_Space}
summarizes the content of the following thoerem, which uses the notation
$\lfloor|\magneticFieldStrength|\rfloor=\max\{m\in\mathbb{Z}\,|\, m\leq|\magneticFieldStrength|\}$.

\begin{figure}
\noindent \begin{centering}
\psfrag{0}{} \psfrag{1}{\hspace{-0.6mm}\raisebox{-0.8mm}{\scriptsize{$1$}}} \psfrag{2}{\hspace{-0.6mm}\raisebox{-0.8mm}{\scriptsize{$2$}}} \psfrag{3}{\hspace{-0.6mm}\raisebox{-0.8mm}{\scriptsize{$3$}}} \psfrag{4}{\hspace{-0.6mm}\raisebox{-0.8mm}{\scriptsize{$4$}}} \psfrag{5}{\hspace{-0.6mm}\raisebox{-0.8mm}{\scriptsize{$5$}}}\psfrag{B}{\hspace{-2mm}\raisebox{-3.5mm}{$|\magneticFieldStrength|$}}\subfloat[$\spectrum{\closure{\schroedingerOperator_{\magneticFieldStrength}^{\hyperbolicSpace}}}$
as given in~(\ref{eq:Spec_of_Hyp_space})\label{fig:Plot_of_Hyp_Space}]{\noindent \begin{centering}
\psfrag{a}{\raisebox{-0mm}{$\groundStateEnergy(\closure{\schroedingerOperator_{\magneticFieldStrength}^{\overline{SL(2,\mathbb{R})}}})$}}\includegraphics[scale=0.75]{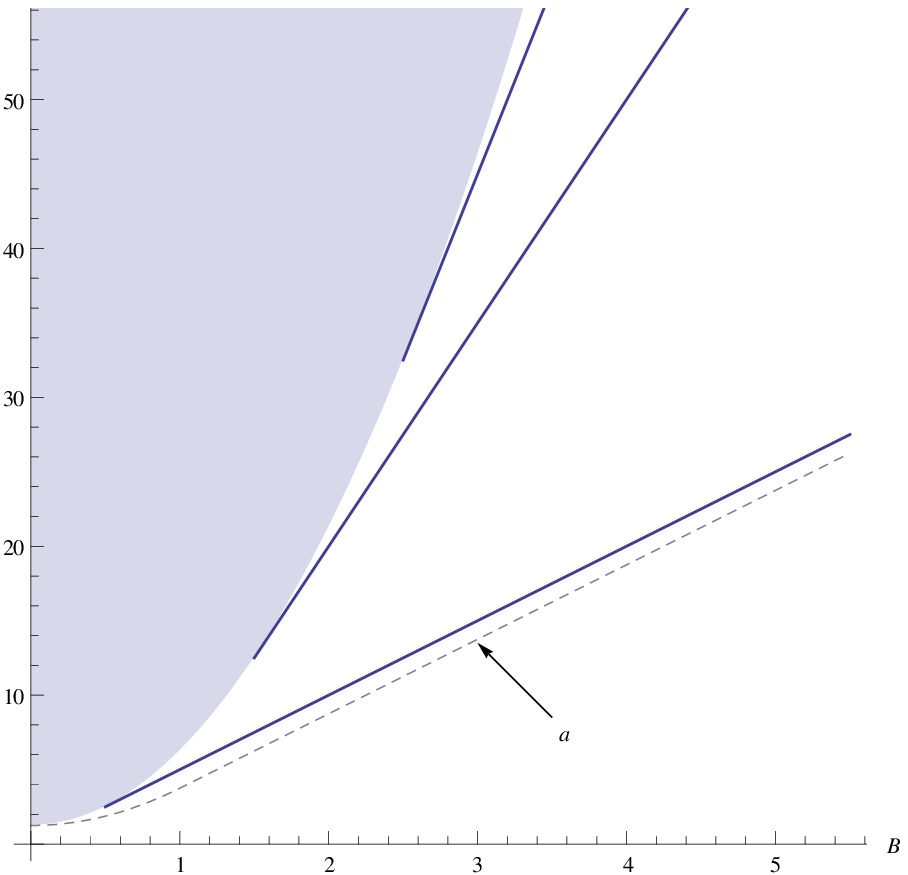}
\par\end{centering}

}\hfill{}\subfloat[$\spectrum{\closure{\schroedingerOperator_{\magneticFieldStrength}^{S\hyperbolicSpace}}}$
as given in~(\ref{eq:Spec_of_Sphere_Bundle_of_Hyp_Space})\label{fig:Plot_of_Sphere_Bundle_of_Hyp_Space}]{\noindent \begin{centering}
\psfrag{a}{\raisebox{-0mm}{$\groundStateEnergy(\closure{\schroedingerOperator_{\magneticFieldStrength}^{\overline{SL(2,\mathbb{R})}}})$}} \psfrag{b}{\raisebox{-0mm}{$\frac{1}{2}|\magneticFieldStrength|$}}  \psfrag{c}{\raisebox{-0mm}{$\frac{1}{2}(\frac{1}{2}|\magneticFieldStrength|^{2}+\frac{1}{4})$}}\includegraphics[scale=0.75]{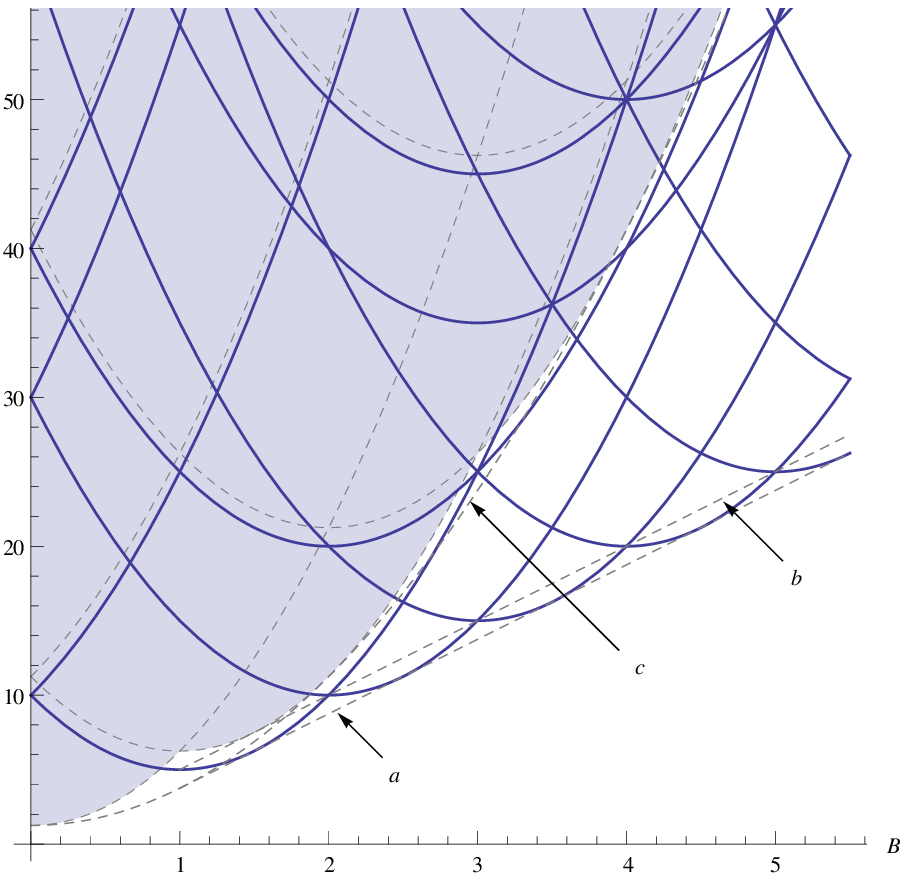}
\par\end{centering}

}
\par\end{centering}

\caption{Plots of (\ref{eq:Spec_of_Hyp_space}) and (\ref{eq:Spec_of_Sphere_Bundle_of_Hyp_Space}),
where solid lines indicate point spectrum and shaded regions indicate
continuous spectrum. The dotted lines are drawn for comparison.}
\end{figure}

\begin{thm}
\label{thm:Spec_of_Sphere_Bundle_of_Hyp_Space}The closures of $\schroedingerOperator_{\magneticFieldStrength}^{S\hyperbolicSpace}$
and $\schroedingerOperator_{\magneticFieldStrength}^{\overline{SL(2,\mathbb{R})}}$
have spectra
\begin{eqnarray}
\spectrum{\closure{\schroedingerOperator_{\magneticFieldStrength}^{S\hyperbolicSpace}}} & = & \purePointSpectrum{\closure{\closure{\schroedingerOperator_{\magneticFieldStrength}^{S\hyperbolicSpace}}}}\cup[\groundStateEnergyOfContinuousSpectrum(\closure{\closure{\schroedingerOperator_{\magneticFieldStrength}^{S\hyperbolicSpace}}}),\infty)\label{eq:Spec_of_Sphere_Bundle_of_Hyp_Space}\\
\spectrum{\closure{\schroedingerOperator_{\magneticFieldStrength}^{\overline{SL(2,\mathbb{R})}}}} & = & \continuousSpectrum{\closure{\schroedingerOperator_{\magneticFieldStrength}^{\overline{SL(2,\mathbb{R})}}}}=[\groundStateEnergy(\closure{\schroedingerOperator_{\magneticFieldStrength}^{\overline{SL(2,\mathbb{R})}}}),\infty),\label{eq:Spec_of_Univ_Cover_of_Sphere_Bundle_of_Hyp_Space}
\end{eqnarray}
where
\begin{eqnarray*}
\purePointSpectrum{\closure{\schroedingerOperator_{\magneticFieldStrength}^{S\hyperbolicSpace}}} & = & \bigcup_{m\in\mathbb{Z}\backslash\{0\}}\multipliedBy\bigcup_{0\leq k<|m|}\left\{ \frac{1}{2}\left((\magneticFieldStrength+m)^{2}+(2k+1)|m|-k(k+1)\right)\right\} \subset\essentialSpectrum{\closure{\schroedingerOperator_{\magneticFieldStrength}^{S\hyperbolicSpace}}}\\
\groundStateEnergyOfContinuousSpectrum(\closure{\closure{\schroedingerOperator_{\magneticFieldStrength}^{S\hyperbolicSpace}}}) & = & \frac{1}{2}\left(\lfloor|\magneticFieldStrength|\rfloor^{2}+(|\magneticFieldStrength|-\lfloor|\magneticFieldStrength|\rfloor)^{2}+\frac{1}{4}\right)\\
\groundStateEnergy(\closure{\closure{\schroedingerOperator_{\magneticFieldStrength}^{\overline{SL(2,\mathbb{R})}}}}) & = & \begin{cases}
\frac{1}{2}\left(\frac{1}{2}|\magneticFieldStrength|^{2}+\frac{1}{4}\right) & \textnormal{if }|\magneticFieldStrength|\leq1\\
\frac{1}{2}\left(|\magneticFieldStrength|-\frac{1}{4}\right) & \textnormal{if }|\magneticFieldStrength|>1.
\end{cases}
\end{eqnarray*}
In particular, 
\[
\groundStateEnergy(\closure{\closure{\schroedingerOperator_{\magneticFieldStrength}^{S\hyperbolicSpace}}})=\begin{cases}
\frac{1}{2}\left(|\magneticFieldStrength|^{2}+\frac{1}{4}\right) & \textnormal{if }|\magneticFieldStrength|\leq\frac{7}{8}\\
\frac{1}{2}\left(1+(1-|\magneticFieldStrength|)^{2}\right) & \textnormal{if }\frac{7}{8}<|\magneticFieldStrength|\leq1\\
\frac{1}{2}\left(\lfloor|\magneticFieldStrength|\rfloor+(|\magneticFieldStrength|-\lfloor|\magneticFieldStrength|\rfloor)^{2}\right) & \textnormal{if }|\magneticFieldStrength|>1,
\end{cases}
\]
which has local minima at $|\magneticFieldStrength|=\frac{7}{8}$,
and $\frac{1}{2}(|\magneticFieldStrength|-\frac{1}{4})\leq\groundStateEnergy(\closure{\schroedingerOperator_{\magneticFieldStrength}^{S\hyperbolicSpace}})\leq\frac{1}{2}|\magneticFieldStrength|$
for $|\magneticFieldStrength|\geq1$. \end{thm}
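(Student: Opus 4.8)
To prove Theorem~\ref{thm:Spec_of_Sphere_Bundle_of_Hyp_Space}, the plan is to diagonalize in the fibre variable $\varphi$. In the coordinates $(x,y,\varphi)$ one has $S\hyperbolicSpace\cong\hyperbolicSpace\times\mathbb{S}^{1}$ and $\overline{SL(2,\mathbb{R})}\cong\hyperbolicSpace\times\mathbb{R}$, and the coefficients of $\schroedingerOperator_{\magneticFieldStrength}^{S\hyperbolicSpace}$ and $\schroedingerOperator_{\magneticFieldStrength}^{\overline{SL(2,\mathbb{R})}}$ are independent of $\varphi$; hence both operators commute with $\tfrac{1}{i}\tfrac{\partial}{\partial\varphi}$. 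Decomposing $L^{2}$ along the spectral resolution of $\tfrac{1}{i}\tfrac{\partial}{\partial\varphi}$ gives an orthogonal direct sum over the modes $e^{im\varphi}$, $m\in\mathbb{Z}$, in the compact case, and a direct integral $\int_{\mathbb{R}}^{\oplus}\cdots\,d\xi$ (as in~\cite[Section XIII.16]{ReedSimon1978}) in the universal-cover case; essential self-adjointness (Theorem~\ref{thm:MSO_are_Essentially_Selfadjoint}) ensures the closure respects these decompositions. Replacing $\tfrac{1}{i}\tfrac{\partial}{\partial\varphi}$ by the spectral parameter $\mu\in\{m,\xi\}$ and completing the square in the quadratic terms, one finds that on the corresponding fibre the operator becomes
\[
\schroedingerOperator_{\magneticFieldStrength}^{(\mu)}=\schroedingerOperator_{-\mu}^{\hyperbolicSpace}+\tfrac{1}{2}(\mu+\magneticFieldStrength)^{2}
\]
on $L^{2}(\hyperbolicSpace)$, where $\schroedingerOperator_{-\mu}^{\hyperbolicSpace}$ is the Maass Laplacian~(\ref{eq:Maass_Laplacian}) with field strength $-\mu$. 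Since $\schroedingerOperator_{\magneticFieldStrength}^{\hyperbolicSpace}$ and $\schroedingerOperator_{-\magneticFieldStrength}^{\hyperbolicSpace}$ are unitarily equivalent via the isometry $x\mapsto-x$, the spectrum of each fibre equals $\tfrac{1}{2}(\mu+\magneticFieldStrength)^{2}+\spectrum{\closure{\schroedingerOperator_{|\mu|}^{\hyperbolicSpace}}}$, which is explicit by Theorem~\ref{thm:Spec_of_Hyp_Space}; in particular $\groundStateEnergy(\closure{\schroedingerOperator_{|\mu|}^{\hyperbolicSpace}})$ equals $\tfrac{1}{2}(|\mu|^{2}+\tfrac14)$ for $|\mu|\le\tfrac12$ and $\tfrac{1}{2}|\mu|$ for $|\mu|>\tfrac12$, and the continuous spectrum of the $|\mu|$-th fibre is $[\tfrac{1}{2}(|\mu|^{2}+\tfrac14),\infty)$.

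In the compact case, the discrete direct sum gives $\spectrum{\closure{\schroedingerOperator_{\magneticFieldStrength}^{S\hyperbolicSpace}}}=\overline{\bigcup_{m\in\mathbb{Z}}\spectrum{\closure{\schroedingerOperator_{\magneticFieldStrength}^{(m)}}}}$. Every eigenvalue of a fibre $\schroedingerOperator_{\magneticFieldStrength}^{(m)}$ with $m\neq0$, tensored with $e^{im\varphi}$, is an eigenfunction of $\schroedingerOperator_{\magneticFieldStrength}^{S\hyperbolicSpace}$, and collecting the eigenvalues of $\schroedingerOperator_{-m}^{\hyperbolicSpace}$ from Theorem~\ref{thm:Spec_of_Hyp_Space} yields the stated formula for $\purePointSpectrum{\closure{\schroedingerOperator_{\magneticFieldStrength}^{S\hyperbolicSpace}}}$; these eigenvalues have infinite multiplicity, hence lie in $\essentialSpectrum{\closure{\schroedingerOperator_{\magneticFieldStrength}^{S\hyperbolicSpace}}}$ by Theorem~\ref{thm:Eigenspaces_are_infinite_dimensional}. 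The absolutely continuous half-line $[\tfrac{1}{2}((m+\magneticFieldStrength)^{2}+m^{2}+\tfrac14),\infty)$ of the $m$-th fibre unions over $m\in\mathbb{Z}$ to $[\groundStateEnergyOfContinuousSpectrum(\closure{\schroedingerOperator_{\magneticFieldStrength}^{S\hyperbolicSpace}}),\infty)$, and minimizing $(m+\magneticFieldStrength)^{2}+m^{2}$ over $m\in\mathbb{Z}$ puts $\groundStateEnergyOfContinuousSpectrum$ in the stated closed form. Then $\groundStateEnergy(\closure{\schroedingerOperator_{\magneticFieldStrength}^{S\hyperbolicSpace}})$ is the minimum of $\groundStateEnergyOfContinuousSpectrum(\closure{\schroedingerOperator_{\magneticFieldStrength}^{S\hyperbolicSpace}})$ and the smallest eigenvalue $\tfrac{1}{2}((\magneticFieldStrength-j)^{2}+j)$ coming from the $k=0$ bound state of $\schroedingerOperator_{j}^{\hyperbolicSpace}$, minimized over $j\in\mathbb{Z}\backslash\{0\}$; a case split over $|\magneticFieldStrength|\le\tfrac78$, $\tfrac78<|\magneticFieldStrength|\le1$ and $|\magneticFieldStrength|>1$ gives the three-part formula, the transition at $|\magneticFieldStrength|=\tfrac78$ being the solution of $\tfrac{1}{2}((|\magneticFieldStrength|-1)^{2}+1)=\tfrac{1}{2}(|\magneticFieldStrength|^{2}+\tfrac14)$ and accounting for the local minimum. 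For the lower bound $\tfrac{1}{2}(|\magneticFieldStrength|-\tfrac14)\le\groundStateEnergy(\closure{\schroedingerOperator_{\magneticFieldStrength}^{S\hyperbolicSpace}})$ when $|\magneticFieldStrength|\ge1$, observe that the modes of $\schroedingerOperator_{\magneticFieldStrength}^{S\hyperbolicSpace}$ are the integer fibres of $\schroedingerOperator_{\magneticFieldStrength}^{\overline{SL(2,\mathbb{R})}}$, so $\spectrum{\closure{\schroedingerOperator_{\magneticFieldStrength}^{S\hyperbolicSpace}}}\subseteq\spectrum{\closure{\schroedingerOperator_{\magneticFieldStrength}^{\overline{SL(2,\mathbb{R})}}}}$ and hence $\groundStateEnergy(\closure{\schroedingerOperator_{\magneticFieldStrength}^{\overline{SL(2,\mathbb{R})}}})\le\groundStateEnergy(\closure{\schroedingerOperator_{\magneticFieldStrength}^{S\hyperbolicSpace}})$; the upper bound $\le\tfrac{1}{2}|\magneticFieldStrength|$ is the trial eigenvalue obtained by taking $j$ the nearest integer to $|\magneticFieldStrength|$.

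For the universal cover, $\spectrum{\closure{\schroedingerOperator_{\magneticFieldStrength}^{\overline{SL(2,\mathbb{R})}}}}=\overline{\bigcup_{\xi\in\mathbb{R}}\spectrum{\closure{\schroedingerOperator_{\magneticFieldStrength}^{(\xi)}}}}$. Each band function $\xi\mapsto\tfrac{1}{2}((\xi+\magneticFieldStrength)^{2}+(2k+1)|\xi|-k(k+1))$ and each band bottom $\xi\mapsto\tfrac{1}{2}((\xi+\magneticFieldStrength)^{2}+\xi^{2}+\tfrac14)$ is a non-constant continuous function on its domain, so no value is an eigenvalue of $\schroedingerOperator_{\magneticFieldStrength}^{(\xi)}$ for a set of $\xi$ of positive measure; by the direct-integral calculus the spectrum is therefore purely absolutely continuous and equals $[\groundStateEnergy(\closure{\schroedingerOperator_{\magneticFieldStrength}^{\overline{SL(2,\mathbb{R})}}}),\infty)$ with $\groundStateEnergy(\closure{\schroedingerOperator_{\magneticFieldStrength}^{\overline{SL(2,\mathbb{R})}}})=\inf_{\xi\in\mathbb{R}}\bigl(\tfrac{1}{2}(\xi+\magneticFieldStrength)^{2}+\groundStateEnergy(\closure{\schroedingerOperator_{|\xi|}^{\hyperbolicSpace}})\bigr)$. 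Inserting the two formulas for $\groundStateEnergy(\closure{\schroedingerOperator_{|\xi|}^{\hyperbolicSpace}})$ recalled above, one minimizes this piecewise-smooth function of one real variable over $|\xi|\le\tfrac12$ and over $|\xi|>\tfrac12$ separately; the two candidate minima are $\tfrac{1}{2}(\tfrac12|\magneticFieldStrength|^{2}+\tfrac14)$ (interior minimizer $\xi=-\magneticFieldStrength/2$, admissible when $|\magneticFieldStrength|\le1$) and $\tfrac{1}{2}(|\magneticFieldStrength|-\tfrac14)$ (interior minimizer $|\xi|=|\magneticFieldStrength|-\tfrac12$, admissible when $|\magneticFieldStrength|>1$), and the comparison reduces to the inequality $(|\magneticFieldStrength|-1)^{2}\ge0$, producing the two-case formula.

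The part I expect to be most delicate is the universal-cover step: justifying rigorously via the direct-integral machinery that all fibre eigenvalues dissolve into absolutely continuous spectrum and contribute no $L^{2}$ eigenvalue requires verifying that none of the band functions is locally constant (no flat bands) and controlling the continuity of the fibre spectra in $\xi$. The remaining work is the nested but elementary optimization over the lattice $m\in\mathbb{Z}$ (and over the bound-state index $k$) needed to pass from the ``$\min$ over the lattice'' descriptions to the stated closed forms and to pinpoint the local minimum at $|\magneticFieldStrength|=\tfrac78$; this is routine once the ranges of $|\magneticFieldStrength|$ are separated carefully.
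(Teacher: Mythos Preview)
Your approach is essentially identical to the paper's: both perform Fourier decomposition in the fibre variable $\varphi$ (discrete series $e^{im\varphi}$ for $S\hyperbolicSpace$, partial Fourier transform $\mathcal{F}_{\varphi}$ giving a direct integral for $\overline{SL(2,\mathbb{R})}$), identify each fibre operator as a shifted Maass Laplacian $\schroedingerOperator_{-\mu}^{\hyperbolicSpace}+\tfrac12(\mu+\magneticFieldStrength)^2$, invoke Theorem~\ref{thm:Spec_of_Hyp_Space} for the fibre spectra, and then use \cite[Theorem~XIII.85]{ReedSimon1978} together with the non-constancy of the continuous band functions to rule out eigenvalues on the universal cover. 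The paper adds the small technical point that the span of $\smoothCompactlySupportedFunctions{\hyperbolicSpace}\times\mathcal{S}(\mathbb{R})$ is a core for the closure (so that conjugation by $\mathcal{F}_{\varphi}$ is justified), which you should make explicit, but otherwise the strategies coincide.
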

\begin{proof}
The operators $\schroedingerOperator_{\magneticFieldStrength}^{S\hyperbolicSpace}$
and $\schroedingerOperator_{\magneticFieldStrength}^{\overline{SL(2,\mathbb{R})}}$
allow for a partial diagonalization by means of discrete and continuous
Fourier transformation, respectively. Since we use both techniques
repeatedly in the following sections, we recall them in detail.

Let $D\subset\domain{\schroedingerOperator_{\magneticFieldStrength}^{S\hyperbolicSpace}}$
denote the set of functions which are finite linear combinations of
products $u\multipliedBy w$ with $u\in\smoothCompactlySupportedFunctions{\hyperbolicSpace}$
and $w\in\smoothFunctions{\mathbb{S}^{1}}$. According to~\cite[Theorem II.10]{ReedSimon1975},
the mapping $u\otimes w\mapsto u\multipliedBy w$ from $L^{2}(\hyperbolicSpace,y^{-2}\multipliedBy dx\multipliedBy dy)\otimes L^{2}(\mathbb{S}^{1},d\varphi)$
to $L^{2}(S\hyperbolicSpace)\simeq L^{2}(\hyperbolicSpace\times\mathbb{S}^{1},y^{-2}\multipliedBy dx\multipliedBy dy\multipliedBy d\varphi)$
extends to an isometry, which implies that $D$ is dense in $L^{2}(S\hyperbolicSpace)$.
Fourier analysis on $\mathbb{S}^{1}$ shows that the functions $(e_{m})_{m\in\mathbb{Z}}$
given by $e_{m}(\varphi)=\frac{1}{\sqrt{2\pi}}e^{im\varphi}$ yield
a complete orthonormal set of eigenfunctions of the symmetric operator
$\frac{1}{i}\frac{\partial}{\partial\varphi}$. Thus, $L^{2}(\mathbb{S}^{1},d\varphi)\simeq\bigoplus_{m\in\mathbb{Z}}\mathbb{C}\multipliedBy e_{m}$,
and therefore 
\[
L^{2}(S\hyperbolicSpace)\simeq\bigoplus_{m=-\infty}^{\infty}L_{m}\qquad\textnormal{with}\qquad L_{m}=L^{2}(\hyperbolicSpace)\otimes\mathbb{C}\multipliedBy e_{m}.
\]
The number $m$ is the quantum analogue of the classical angular momentum.
On each $D_{m}=D\cap L_{m}$, $\schroedingerOperator_{\magneticFieldStrength}^{S\hyperbolicSpace}$
reduces to a shifted version of the Maass Laplacian~(\ref{eq:Maass_Laplacian}),
namely,
\[
\schroedingerOperator_{\magneticFieldStrength,m}^{S\hyperbolicSpace}=\frac{1}{2}\left(\left(y\frac{1}{i}\frac{\partial}{\partial x}-m\right)^{2}+y^{2}\left(\frac{1}{i}\frac{\partial}{\partial y}\right)^{2}+(m+\magneticFieldStrength)^{2}\right)=\schroedingerOperator_{-m}^{\hyperbolicSpace}+\frac{1}{2}(m+\magneticFieldStrength)^{2}
\]
with domain $\smoothCompactlySupportedFunctions{\hyperbolicSpace}$.
From~(\ref{eq:Spec_of_Hyp_space}), we get
\[
\spectrum{\closure{\schroedingerOperator_{\magneticFieldStrength,m}^{S\hyperbolicSpace}}}=\begin{cases}
\left[\frac{1}{2}\left(\magneticFieldStrength^{2}+\frac{1}{4}\right),\infty\right) & \textnormal{if }m=0\\
\purePointSpectrum{\closure{\schroedingerOperator_{\magneticFieldStrength,m}^{S\hyperbolicSpace}}}\cup\left[\frac{1}{2}\left((\magneticFieldStrength+m)^{2}+|m|^{2}+\frac{1}{4}\right),\infty\right) & \textnormal{if }m\neq0,
\end{cases}
\]
where
\[
\purePointSpectrum{\closure{\schroedingerOperator_{\magneticFieldStrength,m}^{S\hyperbolicSpace}}}=\bigcup_{0\leq k<|m|}\left\{ \frac{1}{2}\left((\magneticFieldStrength+m)^{2}+(2k+1)|m|-k(k+1)\right)\right\} \subset\essentialSpectrum{\closure{\schroedingerOperator_{\magneticFieldStrength,m}^{S\hyperbolicSpace}}}.
\]
The claim~(\ref{eq:Spec_of_Sphere_Bundle_of_Hyp_Space}) now follows
from 
\[
\spectrum{\closure{\schroedingerOperator_{\magneticFieldStrength}^{S\hyperbolicSpace}}}=\overline{\bigcup_{m\in\mathbb{Z}}\spectrum{\closure{\schroedingerOperator_{\magneticFieldStrength,m}^{S\hyperbolicSpace}}}}.
\]
In order to determine $\spectrum{\closure{\schroedingerOperator_{\magneticFieldStrength}^{\overline{SL(2,\mathbb{R})}}}}$,
we use that 
\[
L^{2}(\overline{SL(2,\mathbb{R})})\simeq L^{2}(\hyperbolicSpace\times\mathbb{R},y^{-2}dx\multipliedBy dy\multipliedBy d\varphi)
\]
and consider the partial Fourier transformation
\[
\mathcal{F}_{\varphi}\colon L^{2}(\hyperbolicSpace\times\mathbb{R},y^{-2}dx\multipliedBy dy\multipliedBy d\varphi)\to L^{2}(\hyperbolicSpace\times\mathbb{R},y^{-2}dx\multipliedBy dy\multipliedBy d\momentumVariable{\varphi}{})
\]
 given by
\[
\mathcal{F}_{\varphi}u(x,y,\momentumVariable{\varphi}{})=\frac{1}{\sqrt{2\pi}}\intop_{-\infty}^{\infty}u(x,y,\varphi)\multipliedBy e^{-i\momentumVariable{\varphi}{}\varphi}\multipliedBy d\varphi.
\]
Let $D$ denote the linear span of $\smoothCompactlySupportedFunctions{\hyperbolicSpace}\times\mathcal{S}(\mathbb{R})$.
Using cut-off functions, dominated convergence and the canonical isometry
$L^{2}(\hyperbolicSpace\times\mathbb{R})\simeq L^{2}(\hyperbolicSpace)\otimes L^{2}(\mathbb{S}^{1})$~\cite[Theorem II.10]{ReedSimon1975},
one sees that $D$ is a core for $\closure{\schroedingerOperator_{\magneticFieldStrength}^{\overline{SL(2,\mathbb{R})}}}$.
Moreover, $\mathcal{F}_{\varphi}$ restricts to an isometry of $D$,
on which we have
\begin{equation}
\mathcal{F}_{\varphi}\multipliedBy\closure{\schroedingerOperator_{\magneticFieldStrength}^{\overline{SL(2,\mathbb{R})}}}\multipliedBy\mathcal{F}_{\varphi}^{-1}=\frac{1}{2}\left(\left(y\frac{1}{i}\frac{\partial}{\partial x}-\momentumVariable{\varphi}{}\right)^{2}+y^{2}\left(\frac{1}{i}\frac{\partial}{\partial y}\right)^{2}+\left(\momentumVariable{\varphi}{}+\magneticFieldStrength\right)^{2}\right).\label{eq:Fourier_Transformation_as_Direct_Integral}
\end{equation}
More generally, one can use the canonical isomorphism~\cite[Theorem II.10]{ReedSimon1980}
\[
\mathcal{I}\colon L^{2}(\hyperbolicSpace\times\mathbb{R},y^{-2}dx\multipliedBy dy\multipliedBy d\momentumVariable{\varphi}{})\to L^{2}(\mathbb{R},d\momentumVariable{\varphi}{},L^{2}(\hyperbolicSpace,y^{-2}dx\multipliedBy dy))
\]
given by $\mathcal{I}\hat{u}(\momentumVariable{\varphi}{})(x,y)=\hat{u}(x,y,\momentumVariable{\varphi}{})$
to reinterpret~(\ref{eq:Fourier_Transformation_as_Direct_Integral})
as a constant fibre direct integral decomposition over $L^{2}(\mathbb{R},d\momentumVariable{\varphi}{},L^{2}(\hyperbolicSpace,y^{-2}dx\multipliedBy dy))$~\cite[Section XIII.16]{ReedSimon1978}.
For each $\momentumVariable{\varphi}{}\in\mathbb{R}$, we define $\schroedingerOperator_{\magneticFieldStrength,\momentumVariable{\varphi}{}}^{\overline{SL(2,\mathbb{R})}}$
as the differential operator~(\ref{eq:Fourier_Transformation_as_Direct_Integral})
acting on $\smoothCompactlySupportedFunctions{\hyperbolicSpace}\subset L^{2}(\hyperbolicSpace,y^{-2}dx\multipliedBy dy\multipliedBy d\momentumVariable{\varphi}{})$.
According to~(\ref{eq:Spec_of_Hyp_space}), we have 
\[
\spectrum{\closure{\schroedingerOperator_{\magneticFieldStrength,\momentumVariable{\varphi}{}}^{\overline{SL(2,\mathbb{R})}}}}=\begin{cases}
\left[\groundStateEnergyOfContinuousSpectrum(\closure{\schroedingerOperator_{\magneticFieldStrength,\momentumVariable{\varphi}{}}^{\overline{SL(2,\mathbb{R})}}}),\infty\right) & \textnormal{if }|\momentumVariable{\varphi}{}|\leq\frac{1}{2}\\
\purePointSpectrum{\closure{\schroedingerOperator_{\magneticFieldStrength,\momentumVariable{\varphi}{}}^{\overline{SL(2,\mathbb{R})}}}}\cup\left[\groundStateEnergyOfContinuousSpectrum(\closure{\schroedingerOperator_{\magneticFieldStrength,\momentumVariable{\varphi}{}}^{\overline{SL(2,\mathbb{R})}}}),\infty\right) & \textnormal{if }|\momentumVariable{\varphi}{}|>\frac{1}{2},
\end{cases}
\]
where 
\[
\groundStateEnergyOfContinuousSpectrum(\closure{\schroedingerOperator_{\magneticFieldStrength,\momentumVariable{\varphi}{}}^{\overline{SL(2,\mathbb{R})}}})=\frac{1}{2}\left(\left(\magneticFieldStrength+\momentumVariable{\varphi}{}\right)^{2}+\momentumVariable{\varphi}2+\frac{1}{4}\right)=\left(\frac{1}{2}\magneticFieldStrength+\momentumVariable{\varphi}{}\right)^{2}+\frac{1}{2}\left(\frac{1}{2}\magneticFieldStrength^{2}+\frac{1}{4}\right)
\]
 and
\[
\purePointSpectrum{\closure{\schroedingerOperator_{\magneticFieldStrength,\momentumVariable{\varphi}{}}^{\overline{SL(2,\mathbb{R})}}}}=\bigcup_{0\leq k<|\momentumVariable{\varphi}{}|-\frac{1}{2}}\left\{ \frac{1}{2}\left(\left(\magneticFieldStrength+\momentumVariable{\varphi}{}\right)^{2}+(2k+1)|\momentumVariable{\varphi}{}|-k(k+1)\right)\right\} ,
\]
with infimum $\frac{1}{2}\left(|\magneticFieldStrength|-\frac{1}{4}\right)$
for $|\magneticFieldStrength|>1$ attained at $k=0$ and $\momentumVariable{\varphi}{}=\frac{\magneticFieldStrength}{|\magneticFieldStrength|}\left(\frac{1}{2}-|\magneticFieldStrength|\right)$.
Since the mappings $\momentumVariable{\varphi}{}\mapsto\groundStateEnergyOfContinuousSpectrum(\closure{\schroedingerOperator_{\magneticFieldStrength,\momentumVariable{\varphi}{}}^{\overline{SL(2,\mathbb{R})}}})$
and $\momentumVariable{\varphi}{}\mapsto\frac{1}{2}((\magneticFieldStrength+\momentumVariable{\varphi}{})^{2}+(2k+1)|\momentumVariable{\varphi}{}|-k(k+1))$
are continuous, we can use~\cite[Theorem XIII.85]{ReedSimon1978}
to obtain
\[
\spectrum{\closure{\schroedingerOperator_{\magneticFieldStrength}^{\overline{SL(2,\mathbb{R})}}}}=\bigcup_{\momentumVariable{\varphi}{}\in\mathbb{R}}\spectrum{\closure{\schroedingerOperator_{\magneticFieldStrength,\momentumVariable{\varphi}{}}^{\overline{SL(2,\mathbb{R})}}}}.
\]
Moreover, $\lambda\in\purePointSpectrum{\closure{\schroedingerOperator_{\magneticFieldStrength}^{\overline{SL(2,\mathbb{R})}}}}$
if and only if $\{\momentumVariable{\varphi}{}\in\mathbb{R}\,\big|\,\lambda\in\purePointSpectrum{\closure{\schroedingerOperator_{\magneticFieldStrength,\momentumVariable{\varphi}{}}^{\overline{SL(2,\mathbb{R})}}}}\}$
has non-zero Lebesgue measure. Thus, $\purePointSpectrum{\closure{\schroedingerOperator_{\magneticFieldStrength}^{\overline{SL(2,\mathbb{R})}}}}=\varnothing$,
which completes the proof.
\end{proof}
It is worth mentioning, that Erd\"os~\cite[Section E, Remark 1]{ErdHos1997}
constructed a radially symmetric magnetic potential $\magneticPotential$
on the Euclidean space $\mathbb{R}^{n}$ such that $\groundStateEnergy(\magneticFieldStrength\multipliedBy\magneticPotential)$
is a non-monotone function of $\magneticFieldStrength$. However,
$\magneticPotential$ has non-constant norm whereas $\magneticPotential_{3}$
above is left-invariant. The spectrum of the Laplacian $\Delta^{S\hyperbolicSpace}=2\multipliedBy\closure{\schroedingerOperator_{0}^{S\hyperbolicSpace}}$
should be compared with~\cite[Example A]{Sunada1988}.

\subsection{Heisenberg group\label{sub:HG}}

The Heisenberg group $\heisenbergGroup$ is the semidirect product
$\mathbb{R}\ltimes_{\eta}\mathbb{R}^{2}$ with $\eta\colon\mathbb{R}\to\mathrm{Aut}(\mathbb{R}^{2})$
given by $\eta(x)\multipliedBy(y,z)=(y,x\multipliedBy y+z)$. In other
words, $\heisenbergGroup$ is $\mathbb{R}^{3}$ viewed as a nilpotent
Lie group with multiplication
\[
(x,y,z)\multipliedBy(x',y',z')=(x+x',y+y',z+z'+xy')
\]
coming from the matrix representation
\[
\left(\begin{array}{ccc}
1 & x & z\\
0 & 1 & y\\
0 & 0 & 1
\end{array}\right).
\]
Following~\cite{CieliebakFrauenfelderPaternain2010}, we consider
the cocompact lattice $\Lambda=\mathbb{Z}\ltimes_{\eta}\mathbb{Z}^{2}$
of matrices with $x,y,z\in\mathbb{Z}$. All cocompact lattices of
$\heisenbergGroup$ are isomorphic to $\Lambda$~\cite{Scott1983}.
The left-invariant $1$-forms 
\[
\magneticPotential_{1}=dx\hspace{1cm}\magneticPotential_{2}=dy\hspace{1cm}\magneticPotential_{3}=dz-x\multipliedBy dy
\]
give rise to the left-invariant metric
\[
ds^{2}=\magneticPotential_{1}^{2}+\magneticPotential_{2}^{2}+\magneticPotential_{3}^{2}=dx^{2}+dy^{2}+(dz-x\multipliedBy dy)^{2},
\]
that is,
\[
g=\left(\begin{array}{ccc}
1 & 0 & 0\\
0 & 1+x^{2} & -x\\
0 & -x & 1
\end{array}\right)\hspace{5mm}\sqrt{|g|}=1\hspace{5mm}g^{-1}=\left(\begin{array}{ccc}
1 & 0 & 0\\
0 & 1 & x\\
0 & x & 1+x^{2}
\end{array}\right).
\]
We let $\magneticFieldStrength\in\mathbb{R}$ and regard $\magneticFieldStrength\magneticPotential_{3}$
as a left-invariant magnetic potential. The corresponding Hamiltonian
$H_{\magneticFieldStrength}^{\Lambda\backslash\heisenbergGroup}\colon T^{*}(\Lambda\backslash\heisenbergGroup)\to\mathbb{R}$
and its lift $H_{\magneticFieldStrength}^{\heisenbergGroup}\colon T^{*}\heisenbergGroup\to\mathbb{R}$
given by 
\[
H_{\magneticFieldStrength}^{\heisenbergGroup}(x,y,z,p_{x},p_{y},p_{z})=\frac{1}{2}\left(p_{x}^{2}+((p_{y}-\magneticFieldStrength\multipliedBy x)+x\multipliedBy(p_{z}+\magneticFieldStrength))^{2}+(p_{z}+\magneticFieldStrength)^{2}\right)
\]
have identical critical values~\cite[Section 6.3]{CieliebakFrauenfelderPaternain2010}
\[
\manesCriticalValue(H_{\magneticFieldStrength}^{\Lambda\backslash\heisenbergGroup})=\manesCriticalValue(H_{\magneticFieldStrength}^{\heisenbergGroup})=\frac{1}{2}\magneticFieldStrength^{2}
\]
since $\pi_{1}(\Lambda\backslash\heisenbergGroup)\simeq\Lambda$ is
nilpotent and therefore amenable. The maximal abelian cover of $\Lambda\backslash\heisenbergGroup$
is $[\Lambda,\Lambda]\backslash\heisenbergGroup$, where $[\Lambda,\Lambda]=\{0\}\times\{0\}\times\mathbb{Z}$.
In the following, we study the corresponding magnetic Schr\"odinger
operator $\schroedingerOperator_{\magneticFieldStrength}^{[\Lambda,\Lambda]\backslash\heisenbergGroup}$
acting as 
\[
\schroedingerOperator_{\magneticFieldStrength}^{[\Lambda,\Lambda]\backslash\heisenbergGroup}=\frac{1}{2}\left(\left(\frac{1}{i}\frac{\partial}{\partial x}\right)^{2}+\left(\frac{1}{i}\frac{\partial}{\partial y}+x\multipliedBy\frac{1}{i}\frac{\partial}{\partial z}\right)^{2}+\left(\frac{1}{i}\frac{\partial}{\partial z}+\magneticFieldStrength\right)^{2}\right)
\]
on $\smoothCompactlySupportedFunctions{\mathbb{R}^{2}\times\mathbb{Z}\backslash\mathbb{R}}\subset L^{2}(\mathbb{R}^{2}\times\mathbb{Z}\backslash\mathbb{R})\simeq L^{2}([\Lambda,\Lambda]\backslash\heisenbergGroup)$,
and its lift $\schroedingerOperator_{\magneticFieldStrength}^{\heisenbergGroup}$
to $\heisenbergGroup$ with domain $\smoothCompactlySupportedFunctions{\mathbb{R}^{3}}\subset L^{2}(\mathbb{R}^{3})\simeq L^{2}(\heisenbergGroup)$.
Figure~\ref{fig:Plot_of_HG} visualizes the following thoerem.

\begin{figure}
\noindent \begin{centering}
\psfrag{0}{}
\psfrag{6}{}
\psfrag{1}{\hspace{0mm}\raisebox{-0.8mm}{\scriptsize{$5$}}} \psfrag{2}{\hspace{-1mm}\raisebox{-0.8mm}{\scriptsize{$10$}}} \psfrag{3}{\hspace{-1mm}\raisebox{-0.8mm}{\scriptsize{$15$}}} \psfrag{4}{\hspace{-1mm}\raisebox{-0.8mm}{\scriptsize{$20$}}} \psfrag{5}{\hspace{-1mm}\raisebox{-0.8mm}{\scriptsize{$25$}}}\psfrag{B}{\hspace{-2mm}\raisebox{-3.5mm}{$|\magneticFieldStrength|$}}\psfrag{a}{$\groundStateEnergy(\closure{\schroedingerOperator_{\magneticFieldStrength}^{\heisenbergGroup}})$}\includegraphics[scale=0.75]{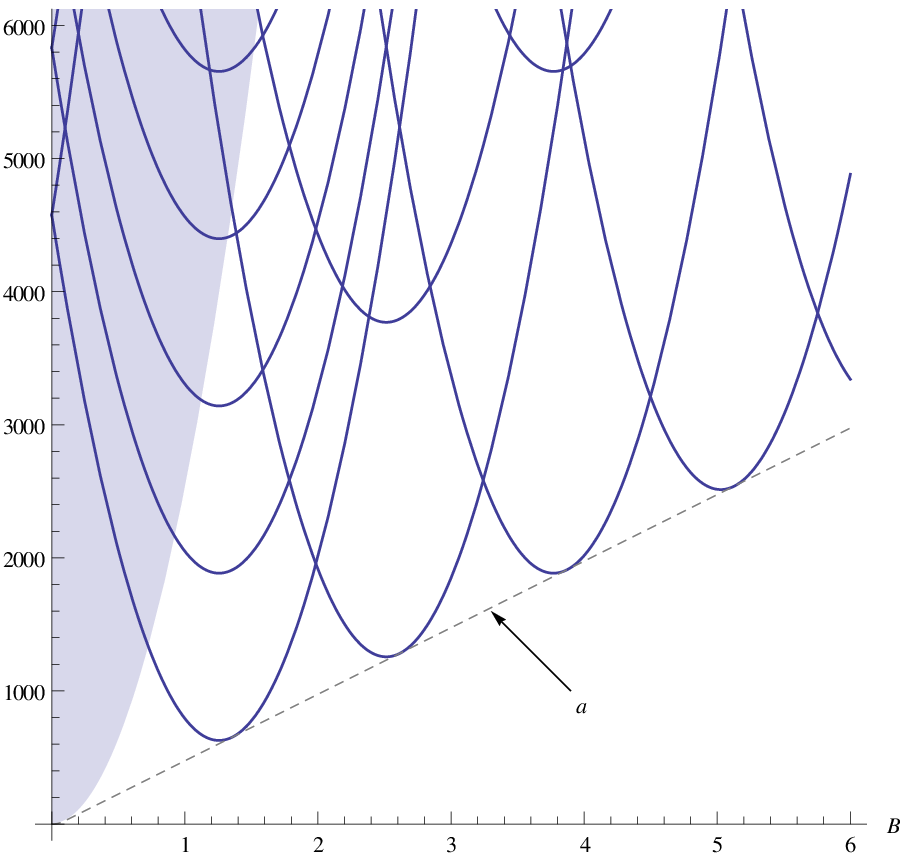}
\par\end{centering}

\caption{Plot of (\ref{eq:Spec_of_Abelian_Subvcover_of_HG}), where solid lines
indicate point spectrum and the shaded region indicates continuous
spectrum.\label{fig:Plot_of_HG}}
\end{figure}

\begin{thm}
The closures of $\schroedingerOperator_{\magneticFieldStrength}^{[\Lambda,\Lambda]\backslash\heisenbergGroup}$
and $\schroedingerOperator_{\magneticFieldStrength}^{\heisenbergGroup}$
have spectra
\begin{eqnarray}
\spectrum{\closure{\schroedingerOperator_{\magneticFieldStrength}^{[\Lambda,\Lambda]\backslash\heisenbergGroup}}} & = & \purePointSpectrum{\closure{\schroedingerOperator_{\magneticFieldStrength}^{[\Lambda,\Lambda]\backslash\heisenbergGroup}}}\cup[\groundStateEnergyOfContinuousSpectrum(\closure{\schroedingerOperator_{\magneticFieldStrength}^{[\Lambda,\Lambda]\backslash\heisenbergGroup}}),\infty)\label{eq:Spec_of_Abelian_Subvcover_of_HG}\\
\spectrum{\closure{\schroedingerOperator_{\magneticFieldStrength}^{\heisenbergGroup}}} & = & \continuousSpectrum{\closure{\schroedingerOperator_{\magneticFieldStrength}^{\heisenbergGroup}}}=[\groundStateEnergy(\closure{\schroedingerOperator_{\magneticFieldStrength}^{\heisenbergGroup}}),\infty),\label{eq:Spec_of_HG}
\end{eqnarray}
where
\begin{eqnarray*}
\purePointSpectrum{\closure{\schroedingerOperator_{\magneticFieldStrength}^{[\Lambda,\Lambda]\backslash\heisenbergGroup}}} & = & \bigcup_{m\in\mathbb{Z}\backslash\{0\}}\multipliedBy\bigcup_{k\in\mathbb{N}_{0}}\left\{ \frac{1}{2}\left((\magneticFieldStrength+2\pi m)^{2}+2\pi\multipliedBy(2k+1)|m|\right)\right\} \subset\essentialSpectrum{\closure{\schroedingerOperator_{\magneticFieldStrength}^{[\Lambda,\Lambda]\backslash\heisenbergGroup}}}\\
\groundStateEnergyOfContinuousSpectrum(\closure{\schroedingerOperator_{\magneticFieldStrength}^{[\Lambda,\Lambda]\backslash\heisenbergGroup}}) & = & \frac{1}{2}|\magneticFieldStrength|^{2}\\
\groundStateEnergy(\closure{\schroedingerOperator_{\magneticFieldStrength}^{\heisenbergGroup}}) & = & \begin{cases}
\frac{1}{2}|\magneticFieldStrength|^{2} & \textnormal{if }|\magneticFieldStrength|\leq\frac{1}{2}\\
\frac{1}{2}\left(|\magneticFieldStrength|-\frac{1}{4}\right) & \textnormal{if }|\magneticFieldStrength|>\frac{1}{2}.
\end{cases}
\end{eqnarray*}
In particular, the function $\magneticFieldStrength\mapsto\groundStateEnergy(\closure{\schroedingerOperator_{\magneticFieldStrength}^{[\Lambda,\Lambda]\backslash\heisenbergGroup}})$
has countably many local minima.\end{thm}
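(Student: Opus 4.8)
The plan is to imitate the proof of Theorem~\ref{thm:Spec_of_Sphere_Bundle_of_Hyp_Space}, using that both $\schroedingerOperator_{\magneticFieldStrength}^{\heisenbergGroup}$ and $\schroedingerOperator_{\magneticFieldStrength}^{[\Lambda,\Lambda]\backslash\heisenbergGroup}$ commute with translation in the central variable $z$, which ranges over $\mathbb{R}$ resp.\ over $\mathbb{Z}\backslash\mathbb{R}$. First I would run a short reduction: with $\sqrt{|\metric|}=1$, the displayed $\metric^{-1}$, and $[\frac{1}{i}\frac{\partial}{\partial z},x]=0$, substituting $\frac{1}{i}\frac{\partial}{\partial z}$ by a real parameter $\momentumVariable z{}$ turns $\schroedingerOperator_{\magneticFieldStrength}^{\heisenbergGroup}$ into the family of shifted Landau operators
\[
\schroedingerOperator_{\magneticFieldStrength,\momentumVariable z{}}^{\heisenbergGroup}=\frac{1}{2}\schroedingerOperator_{\momentumVariable z{}}^{\mathbb{R}^{2}}+\frac{1}{2}(\magneticFieldStrength+\momentumVariable z{})^{2}\qquad\textnormal{on }\smoothCompactlySupportedFunctions{\mathbb{R}^{2}},
\]
where $\schroedingerOperator_{\lambda}^{\mathbb{R}^{2}}=(\frac{1}{i}\frac{\partial}{\partial x})^{2}+(\frac{1}{i}\frac{\partial}{\partial y}+\lambda x)^{2}$ is the operator of Theorem~\ref{thm:Spectrum_of_Tori}; correspondingly $\schroedingerOperator_{\magneticFieldStrength}^{[\Lambda,\Lambda]\backslash\heisenbergGroup}$ restricts on the $m$-th Fourier mode $e^{2\pi imz}$ to $\schroedingerOperator_{\magneticFieldStrength,2\pi m}^{\heisenbergGroup}$. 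As in Theorem~\ref{thm:Spec_of_Sphere_Bundle_of_Hyp_Space} I would then check that finite linear combinations of products $u\cdot w$ with $u\in\smoothCompactlySupportedFunctions{\mathbb{R}^{2}}$ and $w\in\mathcal{S}(\mathbb{R})$, resp.\ $w\in\smoothFunctions{\mathbb{S}^{1}}$, form a core and that the continuous, resp.\ discrete, Fourier transform in $z$ restricts to an isometry on it, exhibiting $\schroedingerOperator_{\magneticFieldStrength}^{\heisenbergGroup}$ as the constant-fibre direct integral $\int_{\mathbb{R}}^{\oplus}\schroedingerOperator_{\magneticFieldStrength,\momentumVariable z{}}^{\heisenbergGroup}\,d\momentumVariable z{}$ and $\schroedingerOperator_{\magneticFieldStrength}^{[\Lambda,\Lambda]\backslash\heisenbergGroup}$ as the direct sum $\bigoplus_{m\in\mathbb{Z}}\schroedingerOperator_{\magneticFieldStrength,2\pi m}^{\heisenbergGroup}$.

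Next I would read off the fibre spectra from Theorem~\ref{thm:Spectrum_of_Tori}: for $\lambda\neq0$ the operator $\schroedingerOperator_{\lambda}^{\mathbb{R}^{2}}$ has spectrum $\{|\lambda|(2k+1):k\in\mathbb{N}_{0}\}$, each Landau level an eigenvalue of infinite multiplicity, while $\schroedingerOperator_{0}^{\mathbb{R}^{2}}=\Delta_{\mathbb{R}^{2}}$ has spectrum $[0,\infty)$; hence $\spectrum{\closure{\schroedingerOperator_{\magneticFieldStrength,\momentumVariable z{}}^{\heisenbergGroup}}}=\{\frac{1}{2}((\magneticFieldStrength+\momentumVariable z{})^{2}+(2k+1)|\momentumVariable z{}|):k\in\mathbb{N}_{0}\}$ for $\momentumVariable z{}\neq0$ and $=[\frac{1}{2}\magneticFieldStrength^{2},\infty)$ for $\momentumVariable z{}=0$. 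For the abelian cover these are genuine eigenvalues of $\schroedingerOperator_{\magneticFieldStrength}^{[\Lambda,\Lambda]\backslash\heisenbergGroup}$; writing $E_{m,k}=\frac{1}{2}((\magneticFieldStrength+2\pi m)^{2}+2\pi(2k+1)|m|)$, the bound $E_{m,k}\geq\frac{1}{2}(\magneticFieldStrength+2\pi m)^{2}+2\pi|m|$ shows $\bigcup_{m\neq0,\,k\geq0}\{E_{m,k}\}$ is locally finite, hence closed, so together with the $m=0$ summand one gets $\spectrum{\closure{\schroedingerOperator_{\magneticFieldStrength}^{[\Lambda,\Lambda]\backslash\heisenbergGroup}}}=\purePointSpectrum{\closure{\schroedingerOperator_{\magneticFieldStrength}^{[\Lambda,\Lambda]\backslash\heisenbergGroup}}}\cup[\frac{1}{2}|\magneticFieldStrength|^{2},\infty)$ with $\purePointSpectrum{\closure{\schroedingerOperator_{\magneticFieldStrength}^{[\Lambda,\Lambda]\backslash\heisenbergGroup}}}=\bigcup_{m\neq0,\,k\geq0}\{E_{m,k}\}$ and $\groundStateEnergyOfContinuousSpectrum(\closure{\schroedingerOperator_{\magneticFieldStrength}^{[\Lambda,\Lambda]\backslash\heisenbergGroup}})=\frac{1}{2}|\magneticFieldStrength|^{2}$; infinite multiplicity of the Landau levels gives $\purePointSpectrum{\closure{\schroedingerOperator_{\magneticFieldStrength}^{[\Lambda,\Lambda]\backslash\heisenbergGroup}}}\subseteq\essentialSpectrum{\closure{\schroedingerOperator_{\magneticFieldStrength}^{[\Lambda,\Lambda]\backslash\heisenbergGroup}}}$. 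For the universal cover I would argue exactly as in the $\overline{SL(2,\mathbb{R})}$ part of Theorem~\ref{thm:Spec_of_Sphere_Bundle_of_Hyp_Space}: the fibre-spectrum functions are continuous, so \cite[Theorem XIII.85]{ReedSimon1978} gives $\spectrum{\closure{\schroedingerOperator_{\magneticFieldStrength}^{\heisenbergGroup}}}=\bigcup_{\momentumVariable z{}\in\mathbb{R}}\spectrum{\closure{\schroedingerOperator_{\magneticFieldStrength,\momentumVariable z{}}^{\heisenbergGroup}}}$, and a value $\lambda$ lies in the point spectrum only if $\{\momentumVariable z{}:\lambda\in\spectrum{\closure{\schroedingerOperator_{\magneticFieldStrength,\momentumVariable z{}}^{\heisenbergGroup}}}\}$ has positive Lebesgue measure; since each $\lambda$ occurs for only finitely many $\momentumVariable z{}$, the point spectrum is empty and $\spectrum{\closure{\schroedingerOperator_{\magneticFieldStrength}^{\heisenbergGroup}}}=\continuousSpectrum{\closure{\schroedingerOperator_{\magneticFieldStrength}^{\heisenbergGroup}}}$.

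It remains to carry out two elementary optimisations. On the universal cover the lowest fibre eigenvalue is $E_{0}(\momentumVariable z{})=\frac{1}{2}((\magneticFieldStrength+\momentumVariable z{})^{2}+|\momentumVariable z{}|)$, which is continuous, invariant under $(\magneticFieldStrength,\momentumVariable z{})\mapsto(-\magneticFieldStrength,-\momentumVariable z{})$, and tends to $+\infty$; for $\magneticFieldStrength\geq0$ it is increasing on $\momentumVariable z{}\geq0$ and has an interior critical point $\momentumVariable z{}=\frac{1}{2}-\magneticFieldStrength$ on $\momentumVariable z{}\leq0$ precisely when $\magneticFieldStrength\geq\frac{1}{2}$, so $\min_{\momentumVariable z{}}E_{0}=\frac{1}{2}\magneticFieldStrength^{2}$ for $\magneticFieldStrength\leq\frac{1}{2}$ and $\frac{1}{2}(\magneticFieldStrength-\frac{1}{4})$ for $\magneticFieldStrength>\frac{1}{2}$; as $E_{0}$ is continuous and unbounded above, its range is $[\min_{\momentumVariable z{}}E_{0},\infty)$, which (lying below the $\momentumVariable z{}=0$ fibre) equals $\spectrum{\closure{\schroedingerOperator_{\magneticFieldStrength}^{\heisenbergGroup}}}$ and yields the stated $\groundStateEnergy(\closure{\schroedingerOperator_{\magneticFieldStrength}^{\heisenbergGroup}})$. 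On the abelian cover $\groundStateEnergy(\closure{\schroedingerOperator_{\magneticFieldStrength}^{[\Lambda,\Lambda]\backslash\heisenbergGroup}})=\min_{m\in\mathbb{Z}}h_{m}(\magneticFieldStrength)$ with $h_{m}(\magneticFieldStrength)=\frac{1}{2}(\magneticFieldStrength+2\pi m)^{2}+\pi|m|$ (so $h_{0}(\magneticFieldStrength)=\frac{1}{2}\magneticFieldStrength^{2}$, the bottom of the $m=0$ band); each $h_{m}$ is a parabola with vertex at $\magneticFieldStrength=-2\pi m$ of height $\pi|m|$, and there $h_{m'}(-2\pi m)-h_{m}(-2\pi m)=2\pi^{2}(m'-m)^{2}+\pi(|m'|-|m|)\geq|m'-m|(2\pi^{2}-\pi)>0$ for every $m'\neq m$, so the lower envelope $\min_{m}h_{m}$ coincides with $h_{m}$ near $\magneticFieldStrength=-2\pi m$ and has a strict local minimum there, which produces the asserted countably many local minima.

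The genuinely new content is confined to the reduction computation and these two optimisations; the core/isometry verifications and the direct-integral bookkeeping are copied verbatim from Theorem~\ref{thm:Spec_of_Sphere_Bundle_of_Hyp_Space}. The point I expect to need the most care is the quadratic growth of $E_{m,k}$ in $|m|$: it is exactly what makes the point spectrum of $\closure{\schroedingerOperator_{\magneticFieldStrength}^{[\Lambda,\Lambda]\backslash\heisenbergGroup}}$ a closed set, so that $\spectrum{\closure{\schroedingerOperator_{\magneticFieldStrength}^{[\Lambda,\Lambda]\backslash\heisenbergGroup}}}$ can be written as a union of this discrete set with $[\frac{1}{2}|\magneticFieldStrength|^{2},\infty)$ without a closure, and it is also the source of the countably-many-local-minima phenomenon.
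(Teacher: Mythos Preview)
Your proposal is correct and follows the same route as the paper: Fourier decomposition in the central variable $z$, reduction to shifted Landau operators on $\mathbb{R}^2$ whose spectra are read off from Theorem~\ref{thm:Spectrum_of_Tori}, and the direct-integral machinery of \cite[Theorem~XIII.85]{ReedSimon1978} exactly as in Theorem~\ref{thm:Spec_of_Sphere_Bundle_of_Hyp_Space}. One small correction: for a fixed $\lambda$ the set $\{\xi_z\neq 0:\lambda=E_k(\xi_z)\text{ for some }k\}$ is in general only countable rather than finite (for instance, when $\boldsymbol{B}=0$ and $\lambda>0$ each $k$ contributes a root $\xi_z^{(k)}\approx\frac{2\lambda}{2k+1}\searrow 0$), but countable is still Lebesgue-null, so your conclusion that $\mathrm{spec}_{\mathrm{pp}}(\schroedingerOperator_{\boldsymbol{B}}^{\mathrm{Nil}})=\varnothing$ stands; the paper phrases this as the eigenvalue branches being ``nowhere constant'' on the full-measure set $\{\xi_z\neq 0\}$.
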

\begin{proof}
We mimic the proof of Theorem~\ref{thm:Spec_of_Sphere_Bundle_of_Hyp_Space}
and use that $\schroedingerOperator_{\magneticFieldStrength}^{[\Lambda,\Lambda]\backslash\heisenbergGroup}$
and $\schroedingerOperator_{\magneticFieldStrength}^{\heisenbergGroup}$
allow for discrete and continuous Fourier transformation in the $z$-coordinate,
respectively. One obtains shifted versions of the magnetic Schr\"odinger
operators in~(\ref{eq:Model_Operators_on_Rn}) acting on $\smoothCompactlySupportedFunctions{\mathbb{R}^{2}}$
as
\[
\schroedingerOperator_{\magneticFieldStrength,\momentumVariable z{}}^{[\Lambda,\Lambda]\backslash\heisenbergGroup}=\schroedingerOperator_{\magneticFieldStrength,\momentumVariable z{}}^{\heisenbergGroup}=\frac{1}{2}\left(\left(\frac{1}{i}\frac{\partial}{\partial x}\right)^{2}+\left(\frac{1}{i}\frac{\partial}{\partial y}+2\pi\momentumVariable z{}x\right)^{2}+\left(2\pi\momentumVariable z{}+\magneticFieldStrength\right)^{2}\right)
\]
with $\momentumVariable z{}\in\mathbb{Z}$ in the former case and
$\momentumVariable z{}\in\mathbb{R}$ in the latter case. According
to Theorem~\ref{thm:Spectrum_of_Tori}, 
\[
\spectrum{\closure{\schroedingerOperator_{\magneticFieldStrength,\momentumVariable z{}}^{\heisenbergGroup}}}=\begin{cases}
\mathrm{\left[\frac{1}{2}\magneticFieldStrength^{2},\infty\right)} & \textnormal{if }\momentumVariable z{}=0\\
\left\{ \frac{1}{2}\left((\magneticFieldStrength+2\pi\momentumVariable z{})^{2}+2\pi\multipliedBy(2k+1)|\momentumVariable z{}|\right)\,|\, k=0,1,2,\ldots\right\}  & \textnormal{if }\momentumVariable z{}\neq0,
\end{cases}
\]
with ground state energy 
\begin{equation}
\groundStateEnergy(\closure{\closure{\schroedingerOperator_{\magneticFieldStrength,\momentumVariable z{}}^{\heisenbergGroup}}})=\frac{1}{2}((\magneticFieldStrength+2\pi\momentumVariable z{})^{2}+2\pi|\momentumVariable z{}|).\label{eq:Ground_State_of_HG_with_Angular_Momentum}
\end{equation}
Using 
\[
\spectrum{\closure{\schroedingerOperator_{\magneticFieldStrength}^{[\Lambda,\Lambda]\backslash\heisenbergGroup}}}=\bigcup_{m\in\mathbb{Z}}\spectrum{\closure{\schroedingerOperator_{\magneticFieldStrength,m}^{[\Lambda,\Lambda]\backslash\heisenbergGroup}}}=\bigcup_{m\in\mathbb{Z}}\spectrum{\closure{\schroedingerOperator_{\magneticFieldStrength,m}^{\heisenbergGroup}}},
\]
we easily deduce (\ref{eq:Spec_of_Abelian_Subvcover_of_HG}). As for~(\ref{eq:Spec_of_HG}),
note that $\schroedingerOperator_{\magneticFieldStrength}^{\heisenbergGroup}$
is conjugate to a direct integral of model operators $\closure{\closure{\schroedingerOperator_{\magneticFieldStrength,\momentumVariable z{}}^{\heisenbergGroup}}}$
over $L^{2}(\mathbb{R},d\momentumVariable z{},L^{2}(\mathbb{R}^{2},dx\multipliedBy dy))$.
On the full measure set $\{\momentumVariable z{}\neq0\}=\mathbb{R}\backslash\{0\}\subset\mathbb{R}$,
the operators $\closure{\closure{\schroedingerOperator_{\magneticFieldStrength,\momentumVariable z{}}^{\heisenbergGroup}}}$
have pure point spectrum with nowhere constant eigenvalue functions
that depend continuously on $\momentumVariable z{}$, hence~\cite[Theorem XIII.85]{ReedSimon1978}
\[
\spectrum{\schroedingerOperator_{\magneticFieldStrength}^{\heisenbergGroup}}=\bigcup_{\momentumVariable z{}\neq0}\spectrum{\closure{\closure{\schroedingerOperator_{\magneticFieldStrength,\momentumVariable z{}}^{\heisenbergGroup}}}},
\]
and $\purePointSpectrum{\schroedingerOperator_{\magneticFieldStrength}^{\heisenbergGroup}}=\varnothing$.
The claim now follows by an inspection of~(\ref{eq:Ground_State_of_HG_with_Angular_Momentum}),
namely,
\[
\inf_{\momentumVariable z{}\neq0}\groundStateEnergy(\closure{\closure{\schroedingerOperator_{\magneticFieldStrength,\momentumVariable z{}}^{\heisenbergGroup}}})=\left\{ \begin{array}{rll}
\frac{1}{2}|\magneticFieldStrength|^{2} & \textnormal{ obtained for }\momentumVariable z{}\to0 & \textnormal{if }|\magneticFieldStrength|\leq\frac{1}{2}\\
\frac{1}{2}(|\magneticFieldStrength|-\frac{1}{4}) & \textnormal{ attained at }\momentumVariable z{}=\frac{1}{2\pi}\frac{\magneticFieldStrength}{|\magneticFieldStrength|}(|\magneticFieldStrength|-\frac{1}{4}) & \textnormal{if }|\magneticFieldStrength|>\frac{1}{2}.
\end{array}\right.
\]

\end{proof}

\subsection{Solvable geometry}

The Lie group $\solvableGeometry$ is the semidirect product $\solvableGeometry=\mathbb{R}^{2}\rtimes_{\eta}\mathbb{R}$
with $\eta\colon\mathbb{R}\to\mathrm{Aut}(\mathbb{R}^{2})$ given
by $\eta(z)\multipliedBy(x,y)=(e^{z}\multipliedBy x,e^{-z}\multipliedBy y)$.
In other words, $\solvableGeometry$ is the manifold $\mathbb{R}^{3}$
equipped with the multiplication
\[
(x,y,z)\multipliedBy(x',y',z')=(x+e^{z}x',y+e^{-z}y',z+z')
\]
coming from the matrix representation
\[
\left(\begin{array}{ccc}
e^{z} & 0 & x\\
0 & e^{-z} & y\\
0 & 0 & 1
\end{array}\right).
\]
The left-invariant $1$-forms 
\[
\magneticPotential_{x}=e^{-z}dx\hspace{1cm}\magneticPotential_{y}=e^{z}dy\hspace{1cm}\magneticPotential_{z}=dz
\]
give rise to the left-invariant metric 
\[
\mathrm{d}s^{2}=\magneticPotential_{x}^{2}+\magneticPotential_{y}^{2}+\magneticPotential_{z}^{2}=e^{-2z}dx^{2}+e^{2z}dy^{2}+dz^{2},
\]
in particular, $L^{2}(\solvableGeometry)=L^{2}(\mathbb{R}^{3},dx\multipliedBy dy\multipliedBy dz)$.
Following~\cite{ButlerPaternain2008}, we consider compact quotients
of $\solvableGeometry$ obtained from hyperbolic gluing maps of $\mathbb{T}^{2}$
as follows. Let $A\in SL(2,\mathbb{Z})$ have real eigenvalues $\lambda>1$
and $\lambda^{-1}<1$, and let $P\in GL(2,\mathbb{R})$ be such that
\begin{equation}
PAP^{-1}=\left(\begin{array}{cc}
\lambda & 0\\
0 & \lambda^{-1}
\end{array}\right).\label{eq:Hyperbolic_Gluing_Map_Conjugated}
\end{equation}
The image of the injective homomorphism
\begin{equation}
\mathbb{Z}^{2}\rtimes_{A}\mathbb{Z}\hookrightarrow\solvableGeometry\qquad\textnormal{given by }\left(\left(\begin{array}{c}
m\\
n
\end{array}\right),l\right)\mapsto\left(P\left(\begin{array}{c}
m\\
n
\end{array}\right),l\multipliedBy\log\lambda\right)\label{eq:Cocompact_Lattice_in_Sol}
\end{equation}
is a cocompact lattice $\Lambda_{A}$ in $\solvableGeometry$. The
closed $3$-manifold $\Lambda_{A}\backslash\solvableGeometry$ is
a torus bundle over $\mathbb{S}^{1}$. Its  harmonic monopole $dx\wedge dy$
generates $\homologyGroup^{2}(\Lambda_{A}\backslash\solvableGeometry,\mathbb{R})$
and is Hodge dual to the generator $dz$ of $\homologyGroup^{1}(\Lambda_{A}\backslash\solvableGeometry,\mathbb{R})$.
For the sake of convenience, we introduce 
\[
\derivative x=\frac{1}{i}\frac{\partial}{\partial x}\hspace{1cm}\derivative y=\frac{1}{i}\frac{\partial}{\partial y}\hspace{1cm}\derivative z=\frac{1}{i}\frac{\partial}{\partial z}.
\]
A simple integration by parts shows that $\derivative x$, $\derivative y$
and $\derivative z$ are symmetric on $\smoothCompactlySupportedFunctions{\mathbb{R}^{3}}$,
on which we have $\Delta^{\solvableGeometry}=e^{2z}\multipliedBy\derivative x^{2}+e^{-2z}\multipliedBy\derivative y^{2}+\derivative z^{2}$.
Since $\solvableGeometry$ is a globally symmetric space of non-compact
type, $\spectrum{\closure{\Delta^{\solvableGeometry}}}$ is known
to be purely continuous and of the form $[\groundStateEnergy(\closure{\Delta^{\solvableGeometry}}),\infty)$
for some $\groundStateEnergy(\closure{\Delta^{\solvableGeometry}})\geq0$,
see also~\cite{Sunada1988}. On the other hand, $\solvableGeometry$
is simply connected and has cocompact, solvable and therefore amenable
lattices, which yields~$\groundStateEnergy(\closure{\Delta^{\solvableGeometry}})=0$
by virtue of Brooks' theorem~\cite[Theorem 1]{Brooks1981}. Using
ideas from~\cite{InahamaShirai2003}, we give a more basic derivation
of $\spectrum{\closure{\Delta^{\solvableGeometry}}}=\continuousSpectrum{\closure{\Delta^{\solvableGeometry}}}=[0,\infty)$
in the following section.\vspace{-3mm}

\subsubsection{Exact case}

\global\long\def\hypSpaceFirstCoor{x_{\hyperbolicSpace}}
\global\long\def\hypSpaceSecondCoor{y_{\hyperbolicSpace}}
\global\long\def\realLineCoor{t}
Let $(\magneticFieldStrength_{x},\magneticFieldStrength_{y})\in\mathbb{R}^{2}$
and denote its norm by $\magneticFieldStrength=\sqrt{\magneticFieldStrength_{x}^{2}+\magneticFieldStrength_{y}^{2}}$.
We consider the left-invariant magnetic potential $\magneticFieldStrength_{x}\magneticPotential_{x}+\magneticFieldStrength_{y}\magneticPotential_{y}=\magneticFieldStrength_{x}e^{-z}dx+\magneticFieldStrength_{y}e^{z}dy$
with associated Hamiltonian
\[
H_{\magneticFieldStrength_{x},\magneticFieldStrength_{y}}^{\solvableGeometry}(x,y,z,p_{x},p_{y},p_{z})=\frac{1}{2}\left((e^{z}\multipliedBy p_{x}+\magneticFieldStrength_{x})^{2}+(e^{-z}\multipliedBy p_{y}+\magneticFieldStrength_{y}){}^{2}+p_{z}^{2}\right).
\]
Note that $H_{\magneticFieldStrength_{x},\magneticFieldStrength_{y}}^{\solvableGeometry}$
descends to any quotient of $\solvableGeometry$. Using the same arguments
as in~\cite[Section 3.1]{MacariniPaternain2010}, one sees that any
cocompact lattice $\Lambda_{A}\subset\solvableGeometry$ as in (\ref{eq:Cocompact_Lattice_in_Sol})
satisfies 
\[
\manesCriticalValue(H_{\magneticFieldStrength_{x},\magneticFieldStrength_{y}}^{\Lambda_{A}\backslash\solvableGeometry})=\manesCriticalValue(H_{\magneticFieldStrength_{x},\magneticFieldStrength_{y}}^{\solvableGeometry})=\frac{1}{2}\magneticFieldStrength^{2}.
\]
For the case $\magneticFieldStrength_{y}=0$, Macarini and Schlenk~\cite[Proposition 7.1]{MacariniSchlenk2011}
discovered that the magnetic flow on $(H_{\magneticFieldStrength_{x},0}^{\Lambda_{A}\backslash\solvableGeometry})^{-1}(k)$
has non-vanishing topological entropy if and only if $k>\manesCriticalValue(H_{\magneticFieldStrength_{x},0}^{\solvableGeometry})$.
For $\magneticFieldStrength\leq\frac{1}{2}$, the critical energy
$\manesCriticalValue(H_{\magneticFieldStrength_{x},\magneticFieldStrength_{y}}^{\solvableGeometry})$
turns out to coincide with the ground state energy of the corresponding
magnetic Schr\"odinger operator, which is given as
\begin{equation}
\schroedingerOperator_{\magneticFieldStrength_{x},\magneticFieldStrength_{y}}^{\solvableGeometry}=\frac{1}{2}\left((e^{z}\multipliedBy\derivative x+\magneticFieldStrength_{x})^{2}+(e^{-z}\multipliedBy\derivative y+\magneticFieldStrength_{y}){}^{2}+\derivative z^{2}\right)\label{eq:EMSO_on_Sol}
\end{equation}
on its initial domain $C_{0}^{\infty}(\mathbb{R}^{3})$.
\begin{thm}
\label{thm:EMSO_on_Sol}The spectrum of $\closure{\schroedingerOperator_{\magneticFieldStrength_{x},\magneticFieldStrength_{y}}^{\solvableGeometry}}$
depends on $\magneticFieldStrength=\sqrt{\magneticFieldStrength_{x}^{2}+\magneticFieldStrength_{y}^{2}}$
as follows:
\begin{enumerate}
\item We have $\spectrum{\closure{\schroedingerOperator_{\magneticFieldStrength_{x},\magneticFieldStrength_{y}}^{\solvableGeometry}}}=\spectrum{\closure{\schroedingerOperator_{\magneticFieldStrength_{y},\magneticFieldStrength_{x}}^{\solvableGeometry}}}\supseteq[\frac{1}{2}\magneticFieldStrength^{2},\infty)$.
\item If $0\leq\magneticFieldStrength\leq\frac{1}{2}$\textup{,} then equality
holds in 1., that is,\textup{ $\spectrum{\closure{\schroedingerOperator_{\magneticFieldStrength_{x},\magneticFieldStrength_{y}}^{\solvableGeometry}}}=[\frac{1}{2}\magneticFieldStrength^{2},\infty)$.}
\item If $\magneticFieldStrength>\frac{1}{2}$, then we have $\groundStateEnergy(\closure{\schroedingerOperator_{\magneticFieldStrength_{x},\magneticFieldStrength_{y}}^{\solvableGeometry}})\geq\frac{1}{2}\left(\magneticFieldStrength-\frac{1}{4}\right)$
with equality if also $\magneticFieldStrength_{y}=0$.
\item If $|\magneticFieldStrength_{x}|>\frac{1}{2}$, then $\frac{1}{2}(|\magneticFieldStrength_{x}|+|\magneticFieldStrength_{y}|^{2}-\frac{1}{4})\in\spectrum{\closure{\schroedingerOperator_{\magneticFieldStrength_{x},\magneticFieldStrength_{y}}^{\solvableGeometry}}}$.
\end{enumerate}
\end{thm}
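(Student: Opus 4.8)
The plan is to pass to a direct integral in the Fourier variables of $x$ and $y$. Since $\schroedingerOperator_{\magneticFieldStrength_{x},\magneticFieldStrength_{y}}^{\solvableGeometry}$ commutes with translations in $x$ and $y$, conjugating by the partial Fourier transform realizes it (as in the proofs of Theorem~\ref{thm:Spec_of_Sphere_Bundle_of_Hyp_Space} and its Heisenberg analogue, using cut-offs, dominated convergence, and \cite[Theorem XIII.85]{ReedSimon1978}) as $\int_{\mathbb{R}^{2}}^{\oplus}\schroedingerOperator_{a,b}\,da\,db$ over $L^{2}(\mathbb{R},dz)$, where for $(a,b)\in\mathbb{R}^{2}$
\[
\schroedingerOperator_{a,b}=\tfrac{1}{2}\bigl(\derivative z^{2}+(ae^{z}+\magneticFieldStrength_{x})^{2}+(be^{-z}+\magneticFieldStrength_{y})^{2}\bigr)=\tfrac{1}{2}\bigl(-\partial_{z}^{2}+(ae^{z}+\magneticFieldStrength_{x})^{2}+(be^{-z}+\magneticFieldStrength_{y})^{2}\bigr)
\]
on $\smoothCompactlySupportedFunctions{\mathbb{R}}$, each essentially self-adjoint by Theorem~\ref{thm:MSO_are_Essentially_Selfadjoint}. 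Then $\spectrum{\closure{\schroedingerOperator_{\magneticFieldStrength_{x},\magneticFieldStrength_{y}}^{\solvableGeometry}}}\subseteq\closure{\bigcup_{(a,b)}\spectrum{\closure{\schroedingerOperator_{a,b}}}}$ always, while conversely every point of $\spectrum{\closure{\schroedingerOperator_{a_{0},b_{0}}}}$ lies in $\spectrum{\closure{\schroedingerOperator_{\magneticFieldStrength_{x},\magneticFieldStrength_{y}}^{\solvableGeometry}}}$ whenever the family is strongly resolvent continuous at $(a_{0},b_{0})$ — and this holds automatically, since $\schroedingerOperator_{a,b}\varphi\to\schroedingerOperator_{a_{0},b_{0}}\varphi$ for each $\varphi\in\smoothCompactlySupportedFunctions{\mathbb{R}}$ and strong resolvent convergence forbids the spectrum of the limit from exceeding $\liminf$ of the spectra. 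The symmetry $\spectrum{\closure{\schroedingerOperator_{\magneticFieldStrength_{x},\magneticFieldStrength_{y}}^{\solvableGeometry}}}=\spectrum{\closure{\schroedingerOperator_{\magneticFieldStrength_{y},\magneticFieldStrength_{x}}^{\solvableGeometry}}}$ of statement~1 comes from the isometry $(x,y,z)\mapsto(y,x,-z)$ of $\solvableGeometry$, which interchanges $\magneticFieldStrength_{x}\magneticPotential_{x}+\magneticFieldStrength_{y}\magneticPotential_{y}$ and $\magneticFieldStrength_{y}\magneticPotential_{x}+\magneticFieldStrength_{x}\magneticPotential_{y}$; likewise $(x,y,z)\mapsto(-x,y,z)$ leaves the spectrum invariant under $\magneticFieldStrength_{x}\mapsto-\magneticFieldStrength_{x}$, so in statement~4 I may assume $\magneticFieldStrength_{x}>0$.

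For the inclusion $\spectrum{\closure{\schroedingerOperator_{\magneticFieldStrength_{x},\magneticFieldStrength_{y}}^{\solvableGeometry}}}\supseteq[\tfrac{1}{2}\magneticFieldStrength^{2},\infty)$ I use the fibre at $(a,b)=(0,0)$: there $\schroedingerOperator_{0,0}=\tfrac{1}{2}(-\partial_{z}^{2}+\magneticFieldStrength^{2})$ has spectrum exactly $[\tfrac{1}{2}\magneticFieldStrength^{2},\infty)$, and by the remark above this whole half-line lies in $\spectrum{\closure{\schroedingerOperator_{\magneticFieldStrength_{x},\magneticFieldStrength_{y}}^{\solvableGeometry}}}$. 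Statement~2 then follows once I show the reverse inclusion for $\magneticFieldStrength\leq\tfrac12$, which reduces fibrewise to $\innerProduct{\schroedingerOperator_{a,b}f}f\geq\tfrac{1}{2}\magneticFieldStrength^{2}\normComingFromInnerProduct f^{2}$ for all $(a,b)$ and $f\in\smoothCompactlySupportedFunctions{\mathbb{R}}$. Setting $P_{k}=\int_{\mathbb{R}}e^{kz}|f|^{2}$, expanding the squares and minimising the quadratics in $a$ and $b$ separately reduces this to $\normComingFromInnerProduct{f'}^{2}\geq\magneticFieldStrength_{x}^{2}P_{1}^{2}/P_{2}+\magneticFieldStrength_{y}^{2}P_{-1}^{2}/P_{-2}$; integrating $\tfrac{d}{dz}(e^{\pm z}|f|^{2})$ gives $P_{\pm1}=\mp2\re\int e^{\pm z}\conjugate{f}f'$, hence by Cauchy--Schwarz $P_{\pm1}^{2}\leq4P_{\pm2}\normComingFromInnerProduct{f'}^{2}$, so the right-hand side is at most $4\magneticFieldStrength^{2}\normComingFromInnerProduct{f'}^{2}\leq\normComingFromInnerProduct{f'}^{2}$ exactly because $\magneticFieldStrength^{2}\leq\tfrac14$.

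For statement~3 the core is the uniform fibre bound $\schroedingerOperator_{a,b}\geq\tfrac{1}{2}(\magneticFieldStrength-\tfrac14)$, obtained by a supersymmetric splitting. For $0<t\leq1$ and any $c,a\in\mathbb{R}$ one has $-t\partial_{z}^{2}+(ae^{z}+c)^{2}\geq\sqrt{t}\,|c|-\tfrac{t}{4}$ on $\smoothCompactlySupportedFunctions{\mathbb{R}}$: writing $(ae^{z}+c)^{2}=W^{2}-2\mu(ae^{z}+c)-\mu^{2}$ with $W=ae^{z}+c+\mu$ and using $-t\partial_{z}^{2}+W^{2}\geq\sqrt{t}\,W'$ (namely $(\sqrt{t}\,\partial_{z}+W)^{*}(\sqrt{t}\,\partial_{z}+W)\geq0$), the choice $\mu=\tfrac{\sqrt t}{2}$ gives $\geq-\sqrt{t}\,c-\tfrac{t}{4}$ for all $a$, and $\mu=-\tfrac{\sqrt t}{2}$ gives $\geq\sqrt{t}\,c-\tfrac{t}{4}$ when $a\geq0$, while the case $a<0$ reduces to $a\geq0$ via $(ae^{z}+c)^{2}=(|a|e^{z}-c)^{2}$; the same bound with $e^{z}$ replaced by $e^{-z}$ follows by reflecting $z\mapsto-z$. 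Splitting $-\partial_{z}^{2}=-t\partial_{z}^{2}-(1-t)\partial_{z}^{2}$ in $2\,\schroedingerOperator_{a,b}$ therefore yields $\schroedingerOperator_{a,b}\geq\tfrac{1}{2}(\sqrt{t}\,|\magneticFieldStrength_{x}|+\sqrt{1-t}\,|\magneticFieldStrength_{y}|-\tfrac14)$ for every $t\in[0,1]$, and maximising over $t$ (Cauchy--Schwarz, $t=\magneticFieldStrength_{x}^{2}/\magneticFieldStrength^{2}$) gives $\schroedingerOperator_{a,b}\geq\tfrac{1}{2}(\magneticFieldStrength-\tfrac14)$, hence $\groundStateEnergy(\closure{\schroedingerOperator_{\magneticFieldStrength_{x},\magneticFieldStrength_{y}}^{\solvableGeometry}})\geq\tfrac{1}{2}(\magneticFieldStrength-\tfrac14)$. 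Statement~4 is proved by an explicit fibre ground state: assuming $\magneticFieldStrength_{x}>0$, for any $a<0$ the positive function $\psi(z)=\exp\bigl(ae^{z}+(\magneticFieldStrength_{x}-\tfrac12)z\bigr)$ lies in $L^{2}(\mathbb{R},dz)$ (super-exponential decay as $z\to+\infty$, decay like $e^{(\magneticFieldStrength_{x}-1/2)z}$ as $z\to-\infty$, square-integrable since $\magneticFieldStrength_{x}>\tfrac12$) and solves $(-\partial_{z}^{2}+(ae^{z}+\magneticFieldStrength_{x})^{2})\psi=(\magneticFieldStrength_{x}-\tfrac14)\psi$; being positive it is the ground state, so $\groundStateEnergy(\closure{\schroedingerOperator_{a,0}})=\tfrac{1}{2}(\magneticFieldStrength_{x}+\magneticFieldStrength_{y}^{2}-\tfrac14)$, and by strong resolvent continuity of $b\mapsto\schroedingerOperator_{a,b}$ at $b=0$ this value lies in $\spectrum{\closure{\schroedingerOperator_{\magneticFieldStrength_{x},\magneticFieldStrength_{y}}^{\solvableGeometry}}}$, which is statement~4. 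When $\magneticFieldStrength_{y}=0$ this value is $\tfrac{1}{2}(\magneticFieldStrength-\tfrac14)$, and combined with the lower bound just proved it forces $\groundStateEnergy(\closure{\schroedingerOperator_{\magneticFieldStrength_{x},0}^{\solvableGeometry}})=\tfrac{1}{2}(\magneticFieldStrength-\tfrac14)$.

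The main obstacle is getting the sharp constants in the two lower bounds, which require genuinely different mechanisms: the bound $\tfrac12\magneticFieldStrength^{2}$ for $\magneticFieldStrength\leq\tfrac12$ rests on the integration-by-parts identities for $\int e^{\pm z}|f|^{2}$ and Cauchy--Schwarz, whereas the bound $\tfrac{1}{2}(\magneticFieldStrength-\tfrac14)$ for $\magneticFieldStrength>\tfrac12$ needs the supersymmetric factorisation together with the optimal superpotential shift and the optimal split parameter $t$, and neither argument is strong enough in the other regime. Everything else — the direct-integral bookkeeping, the strong-resolvent-continuity observations, the reflection and translation symmetries, and verifying that $\psi$ is a genuine eigenfunction — is routine; the one point that must be handled with care is that the fibres $(a,b)=(0,0)$ and $\{b=0\}$ sit on null sets of the integration parameter, so their spectrum only enters the direct integral through strong resolvent continuity, which is precisely why that observation is recorded at the outset.
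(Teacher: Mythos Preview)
Your proof is correct and takes a genuinely different route from the paper's. You work consistently through the direct-integral decomposition in the Fourier variables $(a,b)$ and prove everything fibrewise on $L^{2}(\mathbb{R},dz)$, whereas the paper avoids the direct integral almost entirely: for part~(1) it builds explicit three-dimensional Weyl sequences (products of cut-offs and concentrating Gaussians), for parts~(2) and~(3) it factorises the full operator on $\smoothCompactlySupportedFunctions{\mathbb{R}^{3}}$ via creation/annihilation pairs $\mathcal{A}_{x},\mathcal{A}_{y}$ and $\mathcal{K}_{x},\mathcal{K}_{y}$ that mix $\derivative x,\derivative y$ with $\derivative z$, and for part~(4) it passes through an explicit isometry to $\mathbb{R}\times\hyperbolicSpace$ and invokes the known spectrum of the Maass Laplacian together with another Weyl-sequence construction. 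Under Fourier transform the paper's factorisations in~(2) and~(3) reduce exactly to your fibrewise inequalities, so the two arguments are secretly the same bound in different clothing; but your presentation via the one-dimensional supersymmetric estimate $-t\partial_{z}^{2}+(ae^{z}+c)^{2}\geq\sqrt{t}\,|c|-t/4$ with an optimised split parameter is cleaner and makes the mechanism more transparent. Your treatment of parts~(1) and~(4) is more elementary than the paper's: you replace two lengthy quasi-mode computations by the single observation that strong resolvent continuity of $(a,b)\mapsto\schroedingerOperator_{a,b}$ on the common core $\smoothCompactlySupportedFunctions{\mathbb{R}}$ forces the spectra at the null fibres $(0,0)$ and $\{b=0\}$ into the full spectrum, and your explicit ground state $\psi(z)=\exp(ae^{z}+(\magneticFieldStrength_{x}-\tfrac12)z)$ sidesteps the detour through the Maass Laplacian entirely. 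One cosmetic point: your appeal to Theorem~\ref{thm:MSO_are_Essentially_Selfadjoint} for the fibres is not literally applicable since the potentials $(ae^{z}+\magneticFieldStrength_{x})^{2}+(be^{-z}+\magneticFieldStrength_{y})^{2}$ are not lifted from a closed base, but the same Shubin/Braverman--Milatovich--Shubin results cited there apply to any smooth, below-bounded potential on the complete line, so the conclusion stands.
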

\begin{proof}
[Proof of (1).]\global\long\def\Exponent{c}
Since $\schroedingerOperator_{\magneticFieldStrength_{x},\magneticFieldStrength_{y}}^{\solvableGeometry}$
and $\schroedingerOperator_{\magneticFieldStrength_{y},\magneticFieldStrength_{x}}^{\solvableGeometry}$
are conjugate to each other with respect to the idempotent isometry
$\mathcal{I}\colon L^{2}(\solvableGeometry)\to L^{2}(\solvableGeometry)$
given by $\mathcal{I}u(x,y,z)=u(y,x,-z)$, their spectra coincide.
In order to verify $\spectrum{\closure{\schroedingerOperator_{\magneticFieldStrength_{x},\magneticFieldStrength_{y}}^{\solvableGeometry}}}\supseteq\left[\frac{1}{2}\magneticFieldStrength^{2},\infty\right)$,
it suffices to show that for any $\kappa\geq0$ there exists a Weyl
sequence $(u_{n})_{n\in\mathbb{N}}$ with $u_{n}\in C_{0}^{\infty}(\mathbb{R}^{3})$
such that
\[
\normComingFromInnerProduct{u_{n}}_{L^{2}(\mathbb{R}^{3})}\to\sqrt{\pi}\hspace{1cm}\textnormal{and}\hspace{1cm}\normComingFromInnerProduct{(2\multipliedBy\schroedingerOperator_{\magneticFieldStrength_{x},\magneticFieldStrength_{y}}^{\solvableGeometry}-\magneticFieldStrength^{2}-\kappa^{2})u_{n}}_{L^{2}(\mathbb{R}^{3})}\to0.
\]
We consider products of the form $u_{n}(x,y,z)=\chi_{n}(x)\multipliedBy\psi_{n}(y)\multipliedBy\zeta_{n}(z)\multipliedBy\gamma_{n}(x,y)$
with $\chi_{n},\psi_{n},\zeta_{n}\in\smoothCompactlySupportedFunctions{\mathbb{R}}$
and $\gamma_{n}\in\smoothFunctions{\mathbb{R}^{2}}$. The classical
Poincar\'e inequality~\cite[Section 5.8.1]{Evans2010} implies that
for each compact set $K\subset\mathbb{R}$ there exists $C_{K}>0$
such that if $\zeta_{n}$ is supported inside $K$, we have
\[
\normComingFromInnerProduct{\zeta_{n}}_{L^{2}(\mathbb{R})}\leq C_{K}\normComingFromInnerProduct{\derivative z^{2}\zeta_{n}}_{L^{2}(\mathbb{R})}.
\]
Hence, we choose $(\chi_{n},\psi_{n},\zeta_{n})_{n\in\mathbb{N}}$
with growing supports. In order to motivate the choices below, we
conjugate~(\ref{eq:EMSO_on_Sol}) by the partial Fourier transformation
$\mathcal{F}_{x,y}\colon L^{2}(\mathbb{R}^{3})\to L^{2}(\mathbb{R}^{3})$
given by
\[
\mathcal{F}_{x,y}u(\momentumVariable x{},\momentumVariable y{},z)=\frac{1}{2\pi}\intop_{-\infty}^{\infty}\intop_{-\infty}^{\infty}u(x,y,z)\multipliedBy e^{-i(\momentumVariable x{}x+\momentumVariable y{}y)}\multipliedBy dx\multipliedBy dy
\]
to obtain that on $\mathcal{F}_{x,y}(\smoothCompactlySupportedFunctions{\mathbb{R}^{3}})$
\[
\mathcal{F}_{x,y}\left(2\multipliedBy\schroedingerOperator_{\magneticFieldStrength_{x},\magneticFieldStrength_{y}}^{\solvableGeometry}-\magneticFieldStrength^{2}\right)\mathcal{F}_{x,y}^{-1}=\derivative z^{2}+\momentumVariable x2e^{2z}+2\magneticFieldStrength_{x}\momentumVariable x{}e^{z}+\momentumVariable y2e^{-2z}+2\magneticFieldStrength_{y}\momentumVariable y{}e^{-z}.
\]
This suggests to choose $(\gamma_{n})_{n\in\mathbb{N}}$ such that
the transforms $(\mathcal{F}_{x,y}\multipliedBy\zeta_{n}\multipliedBy\gamma_{n}\multipliedBy\mathcal{F}_{x,y}^{-1})_{n\in\mathbb{N}}$
concentrate around the line $\{0\}\times\{0\}\times\mathbb{R}$ fast
enough to compensate for the growing support of $(\zeta_{n})_{n\in\mathbb{N}}$
and the resulting growth of the factors $(e^{k\multipliedBy z}\zeta_{n})_{n\in\mathbb{N}}$
for $k=-2,-1,1,2$. We control these concentration and growth rates
by real sequences $(c_{n})_{n\in\mathbb{N}}$ and $(s_{n})_{n\in\mathbb{N}}$
with $c_{n}\searrow0$ and $s_{n}\nearrow\infty$. First, we choose
$(\chi_{n})_{n\in\mathbb{N}}$ as smooth real-valued cut-off functions
on $\mathbb{R}$ with 
\begin{equation}
0\leq\chi_{n}\leq1\hspace{1cm}\chi_{n}\equiv1\textnormal{ on }[-s_{n},s_{n}]\hspace{1cm}\chi_{n}\equiv0\textnormal{ outside }[-(s_{n}+1),(s_{n}+1)],\label{eq:Smooth_Cutoff_Functions}
\end{equation}
and such that the first and second derivatives are uniformly bounded
by some $C>0$, that is, 
\[
\sup_{n\in\mathbb{N}}\max\left\{ \normComingFromInnerProduct{\chi_{n}^{'}}_{L^{\infty}(\mathbb{R})},\normComingFromInnerProduct{\chi_{n}^{''}}_{L^{\infty}(\mathbb{R})}\right\} \leq C.
\]
We let $(\psi_{n})_{n\in\mathbb{N}}=(\chi_{n})_{n\in\mathbb{N}}$
denote the same sequence in $\smoothCompactlySupportedFunctions{\mathbb{R}}$
with the implicit understanding that $\chi_{n}$ and $\psi_{n}$ will
henceforth be regarded as functions of $x$ and $y$, respectively.
Moreover, we choose some $\upsilon\in\smoothCompactlySupportedFunctions{\mathbb{R},\mathbb{R}}$
with norm $\normComingFromInnerProduct{\upsilon}_{L^{2}(\mathbb{R})}=1$
and support in $\left(\frac{1}{2},1\right)$ to define 
\[
\zeta_{n}(z)=2^{-\frac{n}{2}}e^{i\kappa z}\upsilon(2^{-n}z),\quad\textnormal{ and let }\quad\gamma_{n}(x,y)=\Exponent_{n}\multipliedBy e^{-\frac{1}{2}\Exponent_{n}^{2}(x^{2}+y^{2})}.
\]
Note that
\[
u_{n}(x,y,z)=2^{-\frac{n}{2}}\Exponent_{n}\multipliedBy\chi_{n}(x)\multipliedBy\psi_{n}(y)\multipliedBy e^{i\kappa z}\multipliedBy e^{-\frac{1}{2}\Exponent_{n}^{2}(x^{2}+y^{2})}\upsilon(2^{-n}z)
\]
is supported in $\mathbb{R}^{2}\times(2^{n-1},2^{n})$, for which
reason $(u_{n})_{n\in\mathbb{N}}$ is an orthogonal sequence. If we
require $c_{n}s_{n}\to\infty$, then the dominated convergence theorem
yields 
\begin{eqnarray}
\normComingFromInnerProduct{u_{n}}^{2} & = & \intop_{-(s_{n}+1)}^{s_{n}+1}\chi_{n}^{2}(x)\multipliedBy e^{-\Exponent_{n}^{2}x^{2}}\Exponent_{n}\mathrm{d}x\multipliedBy\intop_{-(s_{n}+1)}^{s_{n}+1}\psi_{n}^{2}(y)\multipliedBy e^{-\Exponent_{n}^{2}y^{2}}\Exponent_{n}\mathrm{d}y\multipliedBy\intop_{-\infty}^{\infty}\upsilon^{2}(2^{-n}z)\multipliedBy2^{-n}\mathrm{d}z\nonumber \\
 & = & \left(\intop_{-c_{n}(s_{n}+1)}^{c_{n}(s_{n}+1)}\chi_{n}^{2}(\Exponent_{n}^{-1}t)\multipliedBy e^{-t^{2}}\mathrm{d}t\right)^{2}\to\pi.\label{eq:Norm_of_Quasi_Modes_for_EMSO_on_Sol}
\end{eqnarray}
One easily computes that
\begin{equation}
\derivative x^{2}u_{n}(x,y,z)=\psi_{n}(y)\multipliedBy\zeta_{n}(z)\multipliedBy\gamma_{n}(x,y)\left((\Exponent_{n}^{2}-\Exponent_{n}^{4}x^{2})\multipliedBy\chi_{n}(x)+2\multipliedBy\Exponent_{n}^{2}\multipliedBy x\multipliedBy\chi_{n}^{'}(x)-\chi_{n}^{''}(x)\right).\label{eq:Norm_of_exp_two_z_DxDx_f}
\end{equation}
The first summand equals $(\Exponent_{n}^{2}-\Exponent_{n}^{4}x^{2})u_{n}(x,y,z)$,
and we obtain
\begin{eqnarray}
\normComingFromInnerProduct{e^{2z}(\Exponent_{n}^{2}-\Exponent_{n}^{4}x^{2})u_{n}}^{2} & \leq & \intop_{-\infty}^{\infty}(\Exponent_{n}^{2}-\Exponent_{n}^{4}x^{2})^{2}e^{-\Exponent_{n}^{2}x^{2}}\Exponent_{n}\mathrm{d}x\intop_{-\infty}^{\infty}e^{-\Exponent_{n}^{2}y^{2}}\Exponent_{n}\mathrm{d}y\intop_{-\infty}^{\infty}e^{4z}\upsilon^{2}(2^{-n}z)\multipliedBy2^{-n}\mathrm{d}z\nonumber \\
 & \leq & \sqrt{\pi}\multipliedBy\Exponent_{n}^{4}\intop_{-\infty}^{\infty}(1-t^{2})^{2}e^{-t^{2}}\mathrm{d}t\sup_{z\in[2^{n-1},2^{n}]}|e^{4z}|\leq\frac{3}{4}\multipliedBy\pi\multipliedBy\Exponent_{n}^{4}\multipliedBy e^{2^{n+2}}.\label{eq:Norm_of_exp_two_z_DxDx_f2}
\end{eqnarray}
As for the second summand, note that $\chi_{n}^{'}\equiv0$ outside
$I_{n}=[-(s_{n}+1),-s_{n}]\cup[s_{n},s_{n}+1]$, so 
\begin{eqnarray}
\normComingFromInnerProduct{\Exponent_{n}^{2}\multipliedBy x\multipliedBy e^{2z}\chi_{n}^{'}\multipliedBy\psi_{n}\multipliedBy\zeta_{n}\multipliedBy\gamma_{n}}^{2} & \leq & \Exponent_{n}^{2}\multipliedBy e^{2^{n+2}}\intop_{I_{n}}(\Exponent_{n}\multipliedBy x\multipliedBy\chi_{n}^{'}(x))^{2}e^{-\Exponent_{n}^{2}x^{2}}\Exponent_{n}\mathrm{d}x\intop_{-\infty}^{\infty}e^{-\Exponent_{n}^{2}y^{2}}\Exponent_{n}\mathrm{d}y\nonumber \\
 & \leq & 2\sqrt{\pi}\multipliedBy C^{2}\multipliedBy\Exponent_{n}^{2}\multipliedBy e^{2^{n+2}}\intop_{c_{n}s_{n}}^{c_{n}(s_{n}+1)}t^{2}e^{-t^{2}}\mathrm{d}t,\label{eq:Norm_of_exp_two_z_DxDx_f3}
\end{eqnarray}
and similarly for $e^{2z}\chi_{n}^{''}\multipliedBy\psi_{n}\multipliedBy\zeta_{n}\multipliedBy\gamma_{n}$.
We set $\Exponent_{n}=e^{-3^{n}}$ and $s_{n}=e^{4^{n}}$ to satisfy
$c_{n}s_{n}\to\infty$ and $\Exponent_{n}^{2}\multipliedBy e^{2^{n+2}}\to0$,
in particular, $\normComingFromInnerProduct{e^{2z}\derivative x^{2}u_{n}}_{L^{2}(\mathbb{R}^{3})}\to0$.
Similar computations show that 
\[
\normComingFromInnerProduct{e^{z}\derivative xu_{n}}_{L^{2}(\mathbb{R}^{3})}\to0\hspace{1cm}\normComingFromInnerProduct{e^{-2z}\derivative y^{2}u_{n}}_{L^{2}(\mathbb{R}^{3})}\to0\hspace{1cm}\normComingFromInnerProduct{e^{-z}\derivative yu_{n}}_{L^{2}(\mathbb{R}^{3})}\to0.
\]
Since 
\[
\derivative z\zeta_{n}(z)=2^{-\frac{n}{2}}e^{i\kappa z}(\kappa\multipliedBy\upsilon(2^{-n}z)-2^{-n}i\multipliedBy\upsilon'(2^{-n}z)),
\]
we have
\[
\derivative z^{2}\zeta_{n}(z)=2^{-\frac{n}{2}}e^{i\kappa z}(\kappa^{2}\upsilon(2^{-n}z)-2^{-(n-1)}i\multipliedBy\kappa\multipliedBy\upsilon'(2^{-n}z)-2^{-2n}\upsilon''(2^{-n}z)).
\]
A simple change of variables leads to
\begin{eqnarray*}
\left\Vert (\derivative z^{2}-\kappa^{2})u_{n}\right\Vert _{L^{2}(\mathbb{R}^{3})}^{2} & \leq & \intop_{-\infty}^{\infty}e^{-\Exponent_{n}^{2}x^{2}}\Exponent_{n}\mathrm{d}x\multipliedBy\intop_{-\infty}^{\infty}e^{-\Exponent_{n}^{2}y^{2}}\Exponent_{n}\mathrm{d}y\multipliedBy\intop_{-\infty}^{\infty}\left|2^{-(n-1)}i\multipliedBy\kappa\multipliedBy\upsilon'(t)+2^{-2n}\upsilon''(t)\right|^{2}\mathrm{d}t\\
 & = & \pi\left(2^{-2n+2}\kappa^{2}\normComingFromInnerProduct{\upsilon'}_{L^{2}(\mathbb{R})}^{2}+2^{-4n}\normComingFromInnerProduct{\upsilon''}_{L^{2}(\mathbb{R})}^{2}\right)\to0.
\end{eqnarray*}

\emph{Proof of (2).} In order to show that $\groundStateEnergy(\closure{\schroedingerOperator_{\magneticFieldStrength_{x},\magneticFieldStrength_{y}}^{\solvableGeometry}})\geq\frac{1}{2}\magneticFieldStrength^{2}$
for $\magneticFieldStrength\leq\frac{1}{2}$, we define creation and
annihilation operators acting on $C_{0}^{\infty}(\mathbb{R}^{3})$
as follows
\begin{eqnarray*}
\mathcal{A}_{x}=e^{z}\multipliedBy\derivative x-2\multipliedBy i\multipliedBy\magneticFieldStrength_{x}\multipliedBy\derivative z & \qquad & \mathcal{A}_{y}=e^{-z}\multipliedBy\derivative y+2\multipliedBy i\multipliedBy\magneticFieldStrength_{y}\multipliedBy\derivative z\\
\mathcal{A}_{x}^{\dagger}=e^{z}\multipliedBy\derivative x+2\multipliedBy i\multipliedBy\magneticFieldStrength_{x}\multipliedBy\derivative z & \qquad & \mathcal{A}_{y}^{\dagger}=e^{-z}\multipliedBy\derivative y-2\multipliedBy i\multipliedBy\magneticFieldStrength_{y}\multipliedBy\derivative z.
\end{eqnarray*}
As $\derivative x$, $\derivative y$ and $\derivative z$ are symmetric
on $C_{0}^{\infty}(\mathbb{R}^{3})$, we have $\adjoint{\mathcal{A}_{x}}u=\mathcal{A}_{x}^{\dagger}u$
and $\adjoint{\mathcal{A}_{y}}u=\mathcal{A}_{y}^{\dagger}u$ for any
$u\in C_{0}^{\infty}(\mathbb{R}^{3})$. One easily computes the following
operator identities on $C_{0}^{\infty}(\mathbb{R}^{3})$ 
\begin{eqnarray*}
\mathcal{A}_{x}^{\dagger}\multipliedBy\mathcal{A}_{x} & = & e^{2z}\multipliedBy\derivative x^{2}+2\multipliedBy\magneticFieldStrength_{x}\multipliedBy e^{z}\multipliedBy\derivative x+4\multipliedBy\magneticFieldStrength_{x}^{2}\multipliedBy\derivative z^{2}\\
\mathcal{A}_{y}^{\dagger}\multipliedBy\mathcal{A}_{y} & = & e^{-2z}\multipliedBy\derivative y^{2}+2\multipliedBy\magneticFieldStrength_{y}\multipliedBy e^{-z}\multipliedBy\derivative y+4\multipliedBy\magneticFieldStrength_{y}^{2}\multipliedBy\derivative z^{2},
\end{eqnarray*}
In particular, 
\[
2\multipliedBy\schroedingerOperator_{\magneticFieldStrength_{x},\magneticFieldStrength_{y}}^{\solvableGeometry}=\mathcal{A}_{x}^{\dagger}\multipliedBy\mathcal{A}_{x}+\mathcal{A}_{y}^{\dagger}\multipliedBy\mathcal{A}_{y}+(1-4\multipliedBy\magneticFieldStrength^{2})\multipliedBy\derivative z^{2}+\magneticFieldStrength^{2}.
\]
For $u\in\smoothCompactlySupportedFunctions{\mathbb{R}^{3}}$, we
obtain
\[
\innerProductThatAdapts{\left(2\multipliedBy\schroedingerOperator_{\magneticFieldStrength_{x},\magneticFieldStrength_{y}}^{\solvableGeometry}-\magneticFieldStrength^{2}\right)u}u_{L^{2}(\mathbb{R}^{3})}=\normComingFromInnerProduct{\mathcal{A}_{x}u}_{L^{2}(\mathbb{R}^{3})}^{2}+\normComingFromInnerProduct{\mathcal{A}_{y}u}_{L^{2}(\mathbb{R}^{3})}^{2}+(1-4\multipliedBy\magneticFieldStrength^{2})\normComingFromInnerProduct{\multipliedBy\derivative zu}_{L^{2}(\mathbb{R}^{3})}^{2},
\]
 which gives $\spectrum{\closure{\schroedingerOperator_{\magneticFieldStrength_{x},\magneticFieldStrength_{y}}^{\solvableGeometry}}}\subseteq\left[\frac{1}{2}\magneticFieldStrength^{2},\infty\right)$
for $\magneticFieldStrength^{2}\leq\frac{1}{4}$ by virtue of~\cite[Theorem 4.3.1]{Davies1995}.

\emph{Proof of (3).} Let $\magneticFieldStrength=\sqrt{\magneticFieldStrength_{x}^{2}+\magneticFieldStrength_{y}^{2}}>\frac{1}{2}$,
and choose $\varphi\in[0,2\pi)$ such that
\[
\magneticFieldStrength_{x}=\sin\varphi\multipliedBy\magneticFieldStrength\hspace{1cm}\text{and}\hspace{1cm}\magneticFieldStrength_{y}=\cos\varphi\multipliedBy\magneticFieldStrength.
\]
We consider further pairs of creation and annihilation operators acting
on $C_{0}^{\infty}(\mathbb{R}^{3})$ as
\begin{eqnarray*}
\mathcal{K}_{x}=e^{z}\multipliedBy\derivative x+\sin\varphi\multipliedBy\left(\magneticFieldStrength-\frac{1}{2}-i\multipliedBy\derivative z\right) & \qquad & \mathcal{K}_{y}=e^{-z}\multipliedBy\derivative y+\cos\varphi\multipliedBy\left(\magneticFieldStrength-\frac{1}{2}+i\multipliedBy\derivative z\right)\\
\mathcal{K}_{x}^{\dagger}=e^{z}\multipliedBy\derivative x+\sin\varphi\multipliedBy\left(\magneticFieldStrength-\frac{1}{2}+i\multipliedBy\derivative z\right) & \qquad & \mathcal{K}_{y}^{\dagger}=e^{-z}\multipliedBy\derivative y+\cos\varphi\multipliedBy\left(\magneticFieldStrength-\frac{1}{2}-i\multipliedBy\derivative z\right).
\end{eqnarray*}
On $C_{0}^{\infty}(\mathbb{R}^{3})$, we have $\adjoint{\mathcal{K}_{x}}=\mathcal{K}_{x}^{\dagger}$
and $\adjoint{\mathcal{K}_{y}}=\mathcal{K}_{y}^{\dagger}$. Moreover,
\begin{eqnarray*}
\mathcal{K}_{x}^{\dagger}\multipliedBy\mathcal{K}_{x} & = & (e^{z}\multipliedBy\derivative x+\sin\varphi\multipliedBy\magneticFieldStrength)^{2}+\sin^{2}\varphi\multipliedBy\left(\derivative z^{2}-\magneticFieldStrength+\frac{1}{4}\right)\\
\mathcal{K}_{y}^{\dagger}\multipliedBy\mathcal{K}_{y} & = & (e^{-z}\multipliedBy\derivative y+\cos\varphi\multipliedBy\magneticFieldStrength)^{2}+\cos^{2}\varphi\multipliedBy\left(\derivative z^{2}-\magneticFieldStrength+\frac{1}{4}\right).
\end{eqnarray*}
The claimed inequality now follows as in 2. from the following operator
identity 
\[
2\multipliedBy\schroedingerOperator_{\magneticFieldStrength_{x},\magneticFieldStrength_{y}}^{\solvableGeometry}=\mathcal{K}_{x}^{\dagger}\multipliedBy\mathcal{K}_{x}+\mathcal{K}_{y}^{\dagger}\multipliedBy\mathcal{K}_{y}+\magneticFieldStrength-\frac{1}{4}.
\]
The statement about the special case $\magneticFieldStrength_{y}=0$
is a consequence of 4., which is proven below.

\emph{Proof of (4).} We finally show that $\frac{1}{2}(|\magneticFieldStrength_{x}|+|\magneticFieldStrength_{y}|^{2}-\frac{1}{4})\in\spectrum{\closure{\schroedingerOperator_{\magneticFieldStrength_{x},\magneticFieldStrength_{y}}^{\solvableGeometry}}}$
whenever $|\magneticFieldStrength_{x}|>\frac{1}{2}$ by relating $\schroedingerOperator_{\magneticFieldStrength_{x},\magneticFieldStrength_{y}}^{\solvableGeometry}$
to the Maass Laplacian~(\ref{eq:Maass_Laplacian}). We consider $\mathbb{R}\times\hyperbolicSpace$
with coordinates $(\realLineCoor,\hypSpaceFirstCoor,\hypSpaceSecondCoor)$
and metric $d\realLineCoor^{2}+\hypSpaceSecondCoor^{-2}(d\hypSpaceFirstCoor^{2}+d\hypSpaceSecondCoor^{2})$,
and let $\mathcal{I}\colon L^{2}(\mathbb{R}^{3})\to L^{2}(\mathbb{R}\times\hyperbolicSpace)$
be the isometry given by
\begin{equation}
\mathcal{I}u(\realLineCoor,\hypSpaceFirstCoor,\hypSpaceSecondCoor)=\hypSpaceSecondCoor^{\frac{1}{2}}\multipliedBy u(\hypSpaceFirstCoor,\realLineCoor,\log\hypSpaceSecondCoor).\label{eq:Isometry_between_Sol_and_R_x_Hyp_Space}
\end{equation}
Similar canonical transformations appear in~\cite{Duru1983,Comtet1987,InahamaShirai2003}.
Note that $\mathcal{I}$ maps $\smoothCompactlySupportedFunctions{\mathbb{R}^{3}}$
onto $\smoothCompactlySupportedFunctions{\mathbb{R}\times\hyperbolicSpace}$,
and the inverse is given by
\[
\mathcal{I}^{-1}w(x,y,z)=e^{-\frac{1}{2}z}\multipliedBy w(y,x,e^{z}).
\]
Imitating the direct computation in~\cite{InahamaShirai2003}, we
obtain
\[
(\mathcal{I}\multipliedBy\derivative z\multipliedBy\mathcal{I}^{-1}w)(\realLineCoor,\hypSpaceFirstCoor,\hypSpaceSecondCoor)=\hypSpaceSecondCoor^{\frac{1}{2}}\multipliedBy(\derivative z\multipliedBy\mathcal{I}^{-1}w)(\hypSpaceFirstCoor,\realLineCoor,\log\hypSpaceSecondCoor)=\hypSpaceSecondCoor^{\frac{1}{2}}\multipliedBy\left(\hypSpaceSecondCoor^{\frac{1}{2}}\multipliedBy\derivative{\hypSpaceSecondCoor}+\frac{i}{2}\hypSpaceSecondCoor^{-\frac{1}{2}}\right)w\multipliedBy(\realLineCoor,\hypSpaceFirstCoor,\hypSpaceSecondCoor).
\]
Similarly, we obtain the following operator identities on $\smoothCompactlySupportedFunctions{\mathbb{R}\times\hyperbolicSpace}$
\[
\mathcal{I}\multipliedBy\derivative x\multipliedBy\mathcal{I}^{-1}=\derivative{\hypSpaceFirstCoor}\hspace{10mm}\mathcal{I}\multipliedBy\derivative y\multipliedBy\mathcal{I}^{-1}=\derivative{\realLineCoor}\hspace{10mm}\mathcal{I}\multipliedBy u(x,y,z)\multipliedBy\mathcal{I}^{-1}=u(\hypSpaceFirstCoor,\realLineCoor,\log\hypSpaceSecondCoor),
\]
where the last identity refers to multiplication by $u\in\smoothFunctions{\mathbb{R}^{3}}$.
Using that 
\[
\mathcal{I}\multipliedBy\derivative z^{2}\multipliedBy\mathcal{I}^{-1}=\left(\hypSpaceSecondCoor\multipliedBy\derivative{\hypSpaceSecondCoor}+\frac{i}{2}\right)^{2}=\hypSpaceSecondCoor^{2}\multipliedBy\derivative{\hypSpaceSecondCoor}^{2}-\frac{1}{4},
\]
we obtain from~(\ref{eq:EMSO_on_Sol}) 
\[
\mathcal{I}\multipliedBy\schroedingerOperator_{\magneticFieldStrength_{x},\magneticFieldStrength_{y}}^{\solvableGeometry}\multipliedBy\mathcal{I}^{-1}=\frac{1}{2}\left((\hypSpaceSecondCoor\multipliedBy\derivative{\hypSpaceFirstCoor}+\magneticFieldStrength_{x})^{2}+\hypSpaceSecondCoor^{2}\multipliedBy\derivative{\hypSpaceSecondCoor}^{2}\right)+\frac{1}{2}\left(\hypSpaceSecondCoor^{-1}\multipliedBy\derivative{\realLineCoor}+\magneticFieldStrength_{y}\right)^{2}-\frac{1}{8}.
\]
The first summand equals $\schroedingerOperator_{\magneticFieldStrength_{x}}^{\hyperbolicSpace}$
as in~(\ref{eq:Maass_Laplacian}) with spectrum given in~(\ref{eq:Spec_of_Hyp_space}).
In particular, $\groundStateEnergy(\schroedingerOperator_{\magneticFieldStrength_{x}}^{\hyperbolicSpace})=\frac{1}{2}\norm{\magneticFieldStrength_{x}}$
if $\norm{\magneticFieldStrength_{x}}>\frac{1}{2}$. Hence, there
exists a Weyl sequence $(w_{n})_{n\in\mathbb{N}}$ with $w_{n}\in\smoothCompactlySupportedFunctions{\hyperbolicSpace}$
and $\normComingFromInnerProduct{w_{n}}_{L^{2}(\hyperbolicSpace)}=1$
such that
\[
\normComingFromInnerProductThatAdapts{\left((\hypSpaceSecondCoor\multipliedBy\derivative{\hypSpaceFirstCoor}+\magneticFieldStrength_{x})^{2}+\hypSpaceSecondCoor^{2}\multipliedBy\derivative{\hypSpaceSecondCoor}^{2}-|\magneticFieldStrength_{x}|\right)w_{n}}_{L^{2}(\hyperbolicSpace)}\to0.
\]
We choose a real sequence $c_{n}\searrow0$ such that $w_{n}(\hypSpaceFirstCoor,\hypSpaceSecondCoor)=0$
if $\hypSpaceSecondCoor\leq c_{n}^{\frac{1}{2}}$, and let $(\chi_{n})_{n\in\mathbb{N}}$
be the sequence of cut-off functions given in~(\ref{eq:Smooth_Cutoff_Functions})
with~$s_{n}=c_{n}^{-2}$. The claim follows once we verified that
the Weyl sequence $(u_{n})_{n\in\mathbb{N}}$ with elements $u_{n}\in\smoothCompactlySupportedFunctions{\mathbb{R}\times\hyperbolicSpace}$
given by 
\[
u_{n}(\realLineCoor,\hypSpaceFirstCoor,\hypSpaceSecondCoor)=c_{n}^{\frac{1}{2}}e^{-\frac{1}{2}\Exponent_{n}^{2}\realLineCoor^{2}}\chi_{n}(\realLineCoor)\multipliedBy w_{n}(\hypSpaceFirstCoor,\hypSpaceSecondCoor)
\]
satisfies $\normComingFromInnerProduct{u_{n}}_{L^{2}(\mathbb{R}^{3})}\to\pi^{\frac{1}{4}}$
and 
\[
\normComingFromInnerProductThatAdapts{\left(\mathcal{I}\multipliedBy\schroedingerOperator_{\magneticFieldStrength_{x},\magneticFieldStrength_{y}}^{\solvableGeometry}\multipliedBy\mathcal{I}^{-1}-\frac{1}{2}\left(|\magneticFieldStrength_{x}|+|\magneticFieldStrength_{y}|^{2}-\frac{1}{4}\right)\right)u_{n}}_{L^{2}(\mathbb{R}^{3})}\to0.
\]
The first statement follows by dominated convergence as in~(\ref{eq:Norm_of_Quasi_Modes_for_EMSO_on_Sol}).
In order to prove the second statement, it suffices to show that
\[
\normComingFromInnerProduct{\hypSpaceSecondCoor^{-k}\derivative{\realLineCoor}u_{n}}_{L^{2}(\mathbb{R}\times\hyperbolicSpace)}\to0\qquad\textnormal{for }k=1,2.
\]
The corresponding computations can be carried out along the lines
of~(\ref{eq:Norm_of_exp_two_z_DxDx_f}), (\ref{eq:Norm_of_exp_two_z_DxDx_f2})
and (\ref{eq:Norm_of_exp_two_z_DxDx_f3}), where one additionally
uses that $\hypSpaceSecondCoor^{-1}\leq c_{n}^{-\frac{1}{2}}$ on
the support of~$w_{n}$.
\end{proof}
If one compares with~\cite{InahamaShirai2003}, it appears natural
to suspect that $\frac{1}{2}$$\left(\magneticFieldStrength-\frac{1}{4}\right)$
is an eigenvalue of $\closure{\schroedingerOperator_{\magneticFieldStrength_{x},\magneticFieldStrength_{y}}^{\solvableGeometry}}$
for $\magneticFieldStrength=\sqrt{\magneticFieldStrength_{x}^{2}+\magneticFieldStrength_{y}^{2}}>\frac{1}{2}$.
In the following, we briefly outline the spectral analysis of the
magnetic Schr\"odinger operator~(\ref{eq:EMSO_on_Sol}) on compact
quotients of $\solvableGeometry$ and their maximal abelian covers.
Let $A=(A_{ij})\in SL(2,\mathbb{Z})$ and $P=(P_{ij})\in GL(2,\mathbb{R})$
satisfy~(\ref{eq:Hyperbolic_Gluing_Map_Conjugated}), and denote
the corresponding cocompact lattice of $\solvableGeometry$ by $\Lambda_{A}$.
We have $A_{12}\neq0$ as $A$ has positive, distinct eigenvalues
$\lambda^{\pm1}=\frac{1}{2}\left(\mathrm{Tr}A\pm\sqrt{(\mathrm{Tr}A)^{2}-4}\right)\notin\mathbb{Q}$.
Since the columns of $P^{-1}$ are scalar multiples of the eigenvectors
$(A_{12},\lambda-A_{11})^{T}$ and $(A_{12},\lambda^{-1}-A_{11})^{T}$
of $A$, and since $\lambda^{\pm1}\notin\mathbb{Q}$, the only solution
to 
\begin{equation}
q_{1}\multipliedBy P_{11}+q_{2}\multipliedBy P_{12}=0\qquad\textnormal{or}\qquad q_{1}\multipliedBy P_{21}+q_{2}\multipliedBy P_{22}=0\label{eq:Only_zeros_give_continuous_spectrum}
\end{equation}
with $(q_{1},q_{2})\in\mathbb{Q}^{2}$ is given by $q_{1}=q_{2}=0$,
which we will use later on. In order to verify that $[\Lambda_{A},\Lambda_{A}]\backslash\solvableGeometry$
is homeomorphic to $\mathbb{T}^{2}\times\mathbb{R}$,  we note that
$[\Lambda_{A},\Lambda_{A}]$ is easily seen to be generated by 
\[
\left\{ \left(P\left(I-A^{l}\right)\left(\begin{array}{c}
m\\
n
\end{array}\right),0\right)\in\mathbb{R}^{3}\,\bigg|\,(l,m,n)\in\mathbb{Z}^{3}\right\} .
\]
In other words, $[\Lambda_{A},\Lambda_{A}]$ is a $\mathbb{Z}^{2}$-subgroup
of $\solvableGeometry$, and we can find $M=(M_{ij})\in GL(2,\mathbb{R})$
with integer entries $M_{ij}\in\mathbb{Z}$ such that
\[
a=M_{11}\left(\begin{array}{c}
P_{11}\\
P_{21}
\end{array}\right)+M_{21}\left(\begin{array}{c}
P_{12}\\
P_{22}
\end{array}\right)\qquad\textnormal{and}\qquad b=M_{12}\left(\begin{array}{c}
P_{11}\\
P_{21}
\end{array}\right)+M_{22}\left(\begin{array}{c}
P_{12}\\
P_{22}
\end{array}\right)
\]
form a $\mathbb{Z}^{2}$-basis of the corresponding lattice in $\mathbb{R}^{2}\simeq\mathbb{R}^{2}\times\{0\}\subset\solvableGeometry$.
We let $a^{*},b^{*}\in\left(\mathbb{R}^{2}\right)^{*}$ denote the
corresponding dual basis vectors, that is, 
\[
\left(\begin{array}{cc}
a_{1}^{*} & a_{2}^{*}\\
b_{1}^{*} & b_{2}^{*}
\end{array}\right)=\mathrm{Det}(PM)^{-1}\left(\begin{array}{cc}
M_{22} & -M_{12}\\
-M_{21} & M_{11}
\end{array}\right)\left(\begin{array}{cc}
P_{22} & -P_{12}\\
-P_{21} & P_{11}
\end{array}\right).
\]
Using the dual lattice $\mathbb{Z}a^{*}\oplus\mathbb{Z}b^{*}$, we
perform discrete Fourier analysis on $[\Lambda_{A},\Lambda_{A}]\backslash\solvableGeometry$.
The functions $(e_{m,n})_{m,n\in\mathbb{Z}}\subset\smoothFunctions{\mathbb{Z}a\oplus\mathbb{Z}b\backslash\mathbb{R}^{2}}$
given by 
\[
e_{m,n}(x,y)=e^{2\pi i\multipliedBy(ma^{*}+nb^{*})(x,y)}=e^{2\pi i\multipliedBy((ma_{1}^{*}+nb_{1}^{*})x+(ma_{2}^{*}+nb_{2}^{*})y)}
\]
yield a complete orthogonal set of smooth eigenfunctions of $\derivative x$
and $\derivative y$ in $L^{2}(\mathbb{Z}a\oplus\mathbb{Z}b\backslash\mathbb{R}^{2},dx\multipliedBy dy)$.
On each $\mathbb{C}e_{m,n}\times\smoothCompactlySupportedFunctions{\mathbb{R}}$,
the operator $\schroedingerOperator_{\magneticFieldStrength_{x},\magneticFieldStrength_{y}}^{[\Lambda_{A},\Lambda_{A}]\backslash\solvableGeometry}$
reduces to
\begin{equation}
\schroedingerOperator_{\magneticFieldStrength_{x},\magneticFieldStrength_{y},m,n}^{[\Lambda_{A},\Lambda_{A}]\backslash\solvableGeometry}=\frac{1}{2}\left((2\pi\multipliedBy(ma_{1}^{*}+nb_{1}^{*})\multipliedBy e^{z}+\magneticFieldStrength_{x})^{2}+(2\pi\multipliedBy(ma_{2}^{*}+nb_{2}^{*})\multipliedBy e^{-z}+\magneticFieldStrength_{y}){}^{2}+\derivative z^{2}\right).\label{eq:EMSO_on_Abel_Subcover_of_Sol}
\end{equation}
For $m=n=0,$ we obtain $\schroedingerOperator_{\magneticFieldStrength_{x},\magneticFieldStrength_{y},0,0}^{[\Lambda_{A},\Lambda_{A}]\backslash\solvableGeometry}=\frac{1}{2}\left(\magneticFieldStrength_{x}^{2}+\magneticFieldStrength_{y}^{2}+\derivative z^{2}\right)$
with purely continuous spectrum $\spectrum{\closure{\schroedingerOperator_{\magneticFieldStrength_{x},\magneticFieldStrength_{y},0,0}^{[\Lambda_{A},\Lambda_{A}]\backslash\solvableGeometry}}}=\left[\frac{1}{2}\magneticFieldStrength^{2},\infty\right)$.
With regard to $(m,n)\neq(0,0)$, note that, according to~(\ref{eq:Only_zeros_give_continuous_spectrum}),
we have $ma_{1}^{*}+nb_{1}^{*}=0$, that is, 
\[
(-n\multipliedBy M_{11}+m\multipliedBy M_{12})P_{21}+(-n\multipliedBy M_{21}+m\multipliedBy M_{22})P_{22}=0,
\]
if and only if $(-n,m)$ is in the kernel of $M$, that is, precisely
if $m=n=0$, and similarily for $ma_{2}^{*}+nb_{2}^{*}=0$. Hence,
any $(m,n)\neq(0,0)$ leads to a Schr\"odinger operator of the form
$\schroedingerOperator_{\magneticFieldStrength_{x},\magneticFieldStrength_{y},m,n}^{[\Lambda_{A},\Lambda_{A}]\backslash\solvableGeometry}=\frac{1}{2}\derivative z^{2}+\electricPotential(z)$
with $\electricPotential(z)\to\infty$ for $|z|\to\infty$. Such operators
are known to have purely discrete spectrum~\cite[Theorem XIII.16]{ReedSimon1978},
and we obtain 
\[
\spectrum{\closure{\schroedingerOperator_{\magneticFieldStrength_{x},\magneticFieldStrength_{y}}^{[\Lambda_{A},\Lambda_{A}]\backslash\solvableGeometry}}}=\left[\frac{1}{2}\magneticFieldStrength^{2},\infty\right)\cup\overline{\bigcup_{(m,n)\neq(0,0)}\purePointSpectrum{\closure{\schroedingerOperator_{\magneticFieldStrength_{x},\magneticFieldStrength_{y},m,n}^{[\Lambda_{A},\Lambda_{A}]\backslash\solvableGeometry}}}}.
\]
The spectral analysis on the compact quotient $\Lambda_{A}\backslash\solvableGeometry$
can be carried out as in~\cite[Section 5]{BolsinovDullinVeselov2006},
where the special case of the Laplacian $\Delta^{\Lambda_{A}\backslash\solvableGeometry}=2\multipliedBy\schroedingerOperator_{0,0}^{\Lambda_{A}\backslash\solvableGeometry}$
is considered. In particular, $\schroedingerOperator_{\magneticFieldStrength_{x},\magneticFieldStrength_{y}}^{\Lambda_{A}\backslash\solvableGeometry}$
allows for a similar decomposition in the $\mathbb{T}^{2}$-fibres
as its lift $\schroedingerOperator_{\magneticFieldStrength_{x},\magneticFieldStrength_{y}}^{[\Lambda_{A},\Lambda_{A}]\backslash\solvableGeometry}$.
We let $(w_{m,n,l})_{l\in\mathbb{N}}\subset\smoothFunctions{(\Lambda_{A}\cap\mathbb{R}^{2}\times\{0\})\backslash\solvableGeometry}$
denote the eigenfunctions of~(\ref{eq:EMSO_on_Abel_Subcover_of_Sol})
for $M_{11}=M_{22}=1$, $M_{12}=M_{21}=0$ and $(m,n)\neq(0,0)$.
In general, they do not descend to functions on $\Lambda_{A}\backslash\solvableGeometry$
since they are not invariant under $(x,y,z)\mapsto\left(\lambda x,\lambda^{-1}y,z+\log\lambda\right)$.
This can be overcome by averaging, namely, one considers

\[
u_{m,n,l}=\sum_{k\in\mathbb{Z}}w_{m,n,l}(\lambda^{k}x,\lambda^{-k}y,z+\log\lambda^{k})
\]
instead, where one has to use the exponential decay of the eigenfunctions
$(w_{m,n,l})_{l\in\mathbb{N}}$. For $m=n=0$, the $\Lambda_{A}$-invariant
eigenfunctions are easily seen to be of the form $u_{0,0,l}(x,y,z)=e^{2\pi ilz\multipliedBy(\log\lambda)^{-1}}$
with eigenvalues $\frac{1}{2}(\magneticFieldStrength_{x}^{2}+\magneticFieldStrength_{y}^{2}+4\pi^{2}l^{2}(\log\lambda)^{-2})$.
Finally, one can work along the proof of~\cite[Theorem 2]{BolsinovDullinVeselov2006}
to show that $(u_{m,n,l})_{m,n,l\in\mathbb{Z}}$ is a Hilbert basis
of~$L^{2}(\Lambda_{A}\backslash\solvableGeometry)$.

\subsubsection{Monopole case}

Bolsinov and Taimanov~\cite{BolsinovTaimanov2000} made the remarkable
discovery that the geodesic flow on compact quotients $\Lambda_{A}\backslash\solvableGeometry$
is completely integrable in the sense of Liouville despite having
non-zero topological entropy. This result motivated further study
of the classical and quantum dynamics on $\solvableGeometry$-quotients~\cite{BolsinovDullinVeselov2006}.
In particular, Butler and Paternain~\cite{ButlerPaternain2008} considered
the magnetic flow generated by a scalar multiple of the distinguished
monopole $dx\wedge dy$ in $\homologyGroup^{2}(\Lambda_{A}\backslash\solvableGeometry,\mathbb{R})$.
They showed that as soon as the magnetic field is turned on, the flow
gains positive Liouville entropy and therefore ceases being Liouville
integrable. Although the monopole $dx\wedge dy$ becomes exact when
lifted to the universal cover $\solvableGeometry$, none of its primitives
is bounded since $\pi_{1}(\Lambda_{A}\backslash\solvableGeometry)\simeq\Lambda_{A}$
is solvable and therefore amenable~\cite[Lemma 5.3]{Paternain2006}.
Hence, the critical value of the respective magnetic Hamiltonian on
$\solvableGeometry$ is infinite, see also~\cite{CieliebakFrauenfelderPaternain2010}.
In the following, we show how the spectral analysis of the corresponding
quantum system reduces to the study of Schr\"odinger operators on
$2$-dimensional hyperbolic space. Following~\cite{ButlerPaternain2008},
we let $\magneticFieldStrength\in\mathbb{R}$ and consider the left-invariant
magnetic field 
\[
\magneticField=\magneticFieldStrength\multipliedBy dx\wedge dy\in\smoothTwoForms{\solvableGeometry,\mathbb{R}}\qquad\textnormal{with potential}\qquad\magneticPotential=\magneticFieldStrength\multipliedBy x\multipliedBy dy\in\smoothOneForms{\solvableGeometry,\mathbb{R}}.
\]
The associated magnetic Schr\"odinger operator $\schroedingerOperator_{\magneticFieldStrength}^{\solvableGeometry}$
acts on its initial domain $C_{0}^{\infty}(\mathbb{R}^{3})$ as 
\begin{equation}
\schroedingerOperator_{\magneticFieldStrength}^{\solvableGeometry}=\frac{1}{2}\left(e^{2z}\multipliedBy\derivative x^{2}+e^{-2z}(\derivative y+\magneticFieldStrength\multipliedBy x){}^{2}+\derivative z^{2}\right).\label{eq:MSO_on_Sol}
\end{equation}

\begin{prop}
The spectrum of the closure of~(\ref{eq:MSO_on_Sol}) depends on
$\magneticFieldStrength$ as follows:
\begin{enumerate}
\item We have $\spectrum{\closure{\schroedingerOperator_{\magneticFieldStrength}^{\solvableGeometry}}}\subseteq\left[\frac{1}{2}\magneticFieldStrength,\infty\right).$
\item If $\hyperbolicSpace$ denotes the hyperbolic upper half-plane with
coordinates $(\hypSpaceFirstCoor,\hypSpaceSecondCoor)\in\mathbb{R}\times\mathbb{R}^{+}$
and metric $\hypSpaceSecondCoor^{-2}(d\hypSpaceFirstCoor^{2}+d\hypSpaceSecondCoor^{2})$,
then
\[
\spectrum{\closure{\schroedingerOperator_{\magneticFieldStrength}^{\solvableGeometry}}}=\spectrum{\closure{\schroedingerOperator_{0,\electricPotential_{\magneticFieldStrength}}^{\hyperbolicSpace}-\frac{1}{8}}}\quad\textnormal{with}\quad\electricPotential_{\magneticFieldStrength}(x_{\hypSpaceFirstCoor},\hypSpaceSecondCoor)=\frac{1}{2}\magneticFieldStrength^{2}\multipliedBy\frac{\hypSpaceFirstCoor^{2}}{\hypSpaceSecondCoor^{2}}.
\]

\end{enumerate}
\end{prop}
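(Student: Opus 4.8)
The plan is to prove~(1) by a Landau-type factorisation of $2\multipliedBy\schroedingerOperator_{\magneticFieldStrength}^{\solvableGeometry}$ into creation and annihilation operators, and~(2) by Fourier transformation in $y$ followed by a fibrewise translation and one explicit isometry onto $L^{2}(\hyperbolicSpace)$. Throughout I would assume $\magneticFieldStrength\geq0$, since complex conjugation on $\smoothCompactlySupportedFunctions{\mathbb{R}^{3}}$ conjugates $\schroedingerOperator_{\magneticFieldStrength}^{\solvableGeometry}$ to $\schroedingerOperator_{-\magneticFieldStrength}^{\solvableGeometry}$ and both sides of~(2) depend only on $\magneticFieldStrength^{2}$. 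For~(1), introduce $\mathcal{A}=e^{z}\multipliedBy\derivative x-i\multipliedBy e^{-z}(\derivative y+\magneticFieldStrength\multipliedBy x)$ on $\smoothCompactlySupportedFunctions{\mathbb{R}^{3}}$, whose formal adjoint is $\mathcal{A}^{\dagger}=e^{z}\multipliedBy\derivative x+i\multipliedBy e^{-z}(\derivative y+\magneticFieldStrength\multipliedBy x)$ because $\derivative x$, $\derivative y$ and multiplication by $x$ all commute with functions of $z$. Using $[\derivative x,\derivative y+\magneticFieldStrength\multipliedBy x]=\frac{1}{i}\magneticFieldStrength$ one gets the operator identity $\mathcal{A}^{\dagger}\multipliedBy\mathcal{A}=e^{2z}\multipliedBy\derivative x^{2}+e^{-2z}(\derivative y+\magneticFieldStrength\multipliedBy x)^{2}-\magneticFieldStrength$, hence $2\multipliedBy\schroedingerOperator_{\magneticFieldStrength}^{\solvableGeometry}=\mathcal{A}^{\dagger}\multipliedBy\mathcal{A}+\magneticFieldStrength+\derivative z^{2}$ on $\smoothCompactlySupportedFunctions{\mathbb{R}^{3}}$. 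Therefore $\innerProduct{(2\multipliedBy\schroedingerOperator_{\magneticFieldStrength}^{\solvableGeometry}-\magneticFieldStrength)u}{u}=\normComingFromInnerProduct{\mathcal{A}u}^{2}+\normComingFromInnerProduct{\derivative zu}^{2}\geq0$ for all $u\in\smoothCompactlySupportedFunctions{\mathbb{R}^{3}}$, so $\schroedingerOperator_{\magneticFieldStrength}^{\solvableGeometry}\geq\frac{1}{2}\magneticFieldStrength$ on its initial domain and $\spectrum{\closure{\schroedingerOperator_{\magneticFieldStrength}^{\solvableGeometry}}}\subseteq[\frac{1}{2}\magneticFieldStrength,\infty)$ follows from~\cite[Theorem 4.3.1]{Davies1995}, exactly as in the proof of Theorem~\ref{thm:EMSO_on_Sol}.

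For~(2), the key observation is that the coefficients in~(\ref{eq:MSO_on_Sol}) are independent of $y$, so $\derivative y$ is conserved. Conjugating by the partial Fourier transformation $\mathcal{F}_{y}$ in the $y$-variable, on a common core on which $\mathcal{F}_{y}$ restricts to an isometry (obtained as in the proofs of Theorems~\ref{thm:Spec_of_Sphere_Bundle_of_Hyp_Space} and~\ref{thm:EMSO_on_Sol}), identifies $\closure{\schroedingerOperator_{\magneticFieldStrength}^{\solvableGeometry}}$ with a direct integral over $L^{2}(\mathbb{R},d\momentumVariable y{},L^{2}(\mathbb{R}^{2},dx\multipliedBy dz))$ whose fibre at $\momentumVariable y{}$ is the closure of $\frac{1}{2}(e^{2z}\multipliedBy\derivative x^{2}+e^{-2z}(\momentumVariable y{}+\magneticFieldStrength\multipliedBy x)^{2}+\derivative z^{2})$ on $\smoothCompactlySupportedFunctions{\mathbb{R}^{2}}$. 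For $\magneticFieldStrength\neq0$, the fibrewise translation $x\mapsto x-\momentumVariable y{}/\magneticFieldStrength$ is a measurable field of unitaries which turns every fibre into the \emph{same} differential expression $\mathcal{T}=\frac{1}{2}(e^{2z}\multipliedBy\derivative x^{2}+\magneticFieldStrength^{2}\multipliedBy e^{-2z}\multipliedBy x^{2}+\derivative z^{2})$ on $\smoothCompactlySupportedFunctions{\mathbb{R}^{2}}$, whence $\spectrum{\closure{\schroedingerOperator_{\magneticFieldStrength}^{\solvableGeometry}}}=\spectrum{\closure{\mathcal{T}}}$ by~\cite[Theorem XIII.85]{ReedSimon1978}.

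It remains to identify $\closure{\mathcal{T}}$. I would use the isometry $\mathcal{I}\colon L^{2}(\mathbb{R}^{2},dx\multipliedBy dz)\to L^{2}(\hyperbolicSpace)$ given by $\mathcal{I}w(\hypSpaceFirstCoor,\hypSpaceSecondCoor)=\hypSpaceSecondCoor^{\frac{1}{2}}\multipliedBy w(\hypSpaceFirstCoor,\log\hypSpaceSecondCoor)$, the $\realLineCoor$-independent restriction of the isometry in the proof of Theorem~\ref{thm:EMSO_on_Sol}, which satisfies $\mathcal{I}\multipliedBy\derivative z^{2}\multipliedBy\mathcal{I}^{-1}=\hypSpaceSecondCoor^{2}\multipliedBy\derivative{\hypSpaceSecondCoor}^{2}-\frac{1}{4}$, $\mathcal{I}\multipliedBy\derivative x\multipliedBy\mathcal{I}^{-1}=\derivative{\hypSpaceFirstCoor}$, and sends multiplication by a function of $z$ (resp.\ $x$) to multiplication by the corresponding function of $\log\hypSpaceSecondCoor$ (resp.\ $\hypSpaceFirstCoor$). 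One then computes $\mathcal{I}\multipliedBy\mathcal{T}\multipliedBy\mathcal{I}^{-1}=\frac{1}{2}\hypSpaceSecondCoor^{2}(\derivative{\hypSpaceFirstCoor}^{2}+\derivative{\hypSpaceSecondCoor}^{2})+\frac{1}{2}\magneticFieldStrength^{2}\multipliedBy\frac{\hypSpaceFirstCoor^{2}}{\hypSpaceSecondCoor^{2}}-\frac{1}{8}=\schroedingerOperator_{0,\electricPotential_{\magneticFieldStrength}}^{\hyperbolicSpace}-\frac{1}{8}$ on $\smoothCompactlySupportedFunctions{\hyperbolicSpace}$, and both sides are essentially self-adjoint on this core by Theorem~\ref{thm:MSO_are_Essentially_Selfadjoint} (since $\hyperbolicSpace$ is complete and $\electricPotential_{\magneticFieldStrength}\geq0$), so $\closure{\mathcal{T}}$ and $\closure{\schroedingerOperator_{0,\electricPotential_{\magneticFieldStrength}}^{\hyperbolicSpace}}-\frac{1}{8}$ are unitarily equivalent; this proves~(2) for $\magneticFieldStrength\neq0$. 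For $\magneticFieldStrength=0$ the claimed identity reads, after multiplying by $2$, $\spectrum{\closure{\Delta^{\solvableGeometry}}}=\spectrum{\closure{\schroedingerOperator_{0,0}^{\hyperbolicSpace}}}-\frac{1}{8}$, and both sides equal $[0,\infty)$: the left-hand side as recalled earlier via Brooks' theorem~\cite{Brooks1981}, the right-hand side from Theorem~\ref{thm:Spec_of_Hyp_Space}, which gives $\spectrum{\closure{\schroedingerOperator_{0}^{\hyperbolicSpace}}}=[\frac{1}{8},\infty)$.

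I expect the main obstacle to be the direct-integral bookkeeping in~(2): one must check that the fibrewise translation genuinely defines a measurable field of unitaries that intertwines the \emph{closures}, not merely the differential expressions on $\smoothCompactlySupportedFunctions{\mathbb{R}^{2}}$. This is precisely where essential self-adjointness of the constant fibre $\mathcal{T}$ is needed, which follows from~\cite[Theorem 1.1]{Shubin2001} once $\mathcal{T}$ is recognised, via $\mathcal{I}$, as a Schr\"odinger operator on the complete manifold $\hyperbolicSpace$ with potential bounded below, together with the identification of a suitable common core for the $\mathcal{F}_{y}$-conjugation. Everything else reduces to the commutator computation above and the isometry formulas already established in the proof of Theorem~\ref{thm:EMSO_on_Sol}.
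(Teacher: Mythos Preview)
Your proof is correct and follows essentially the same approach as the paper. For~(1), your creation--annihilation pair differs from the paper's only by a harmless global factor of $-i$, and the resulting identity $2\,\schroedingerOperator_{\magneticFieldStrength}^{\solvableGeometry}=\mathcal{A}^{\dagger}\mathcal{A}+\derivative z^{2}+\magneticFieldStrength$ is identical. For~(2), the paper first applies the full isometry $\mathcal{I}\colon L^{2}(\solvableGeometry)\to L^{2}(\mathbb{R}\times\hyperbolicSpace)$ and \emph{then} Fourier-transforms in the $\realLineCoor$-variable (which is your $y$), whereas you Fourier-transform in $y$ first and apply the two-dimensional version of $\mathcal{I}$ afterwards; since these two operations act on independent variables and commute, the arguments are equivalent, and the fibrewise translation by $-\momentumVariable y{}/\magneticFieldStrength$ is the same in both. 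Your added remarks on essential self-adjointness of the constant fibre and the measurability of the translation field make explicit what the paper leaves implicit. One small quibble: in the $\magneticFieldStrength=0$ line, ``after multiplying by $2$'' applies only to the left-hand side (turning $\schroedingerOperator_{0}^{\solvableGeometry}$ into $\Delta^{\solvableGeometry}$), not to the right; the verification that both sides equal $[0,\infty)$ is nonetheless correct.
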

\begin{proof}
[Proof of 1.]We define creation and annihilation operators acting
on $\smoothCompactlySupportedFunctions{\mathbb{R}^{3}}$ as 
\begin{eqnarray*}
\mathcal{A} & = & i\multipliedBy e^{z}\multipliedBy\derivative x+e^{-z}(\derivative y+\magneticFieldStrength\multipliedBy x)\\
\mathcal{A}^{\dagger} & = & -i\multipliedBy e^{z}\multipliedBy\derivative x+e^{-z}(\derivative y+\magneticFieldStrength\multipliedBy x).
\end{eqnarray*}
Note that $\adjoint{\mathcal{A}}u=\mathcal{A}^{\dagger}u$ for $u\in\smoothCompactlySupportedFunctions{\mathbb{R}^{3}}$.
Since
\[
\mathcal{A}^{\dagger}\mathcal{A}=e^{2z}\multipliedBy\derivative x^{2}+e^{-2z}(\derivative y+\magneticFieldStrength\multipliedBy x)^{2}-\magneticFieldStrength,
\]
the claim follows from $2\multipliedBy\schroedingerOperator_{\magneticFieldStrength}^{\solvableGeometry}=\mathcal{A}^{\dagger}\mathcal{A}+\derivative z^{2}+\magneticFieldStrength$
by an application of~\cite[Theorem 4.3.1]{Davies1995}. We note that
$\schroedingerOperator_{\magneticFieldStrength}^{\solvableGeometry}$
has the so-called translation shape invariance~\cite{InahamaShirai2003}
\[
\mathcal{A}\mathcal{A}^{\dagger}+\derivative z^{2}+\magneticFieldStrength=\schroedingerOperator_{\magneticFieldStrength}^{\solvableGeometry}+2\multipliedBy\magneticFieldStrength.
\]

\emph{Proof of 2.} We reuse the isometry $\mathcal{I}\colon L^{2}(\mathbb{R}^{3},dx\multipliedBy dy\multipliedBy dz)\to L^{2}(\mathbb{R}\times\hyperbolicSpace,\hypSpaceSecondCoor^{-2}d\realLineCoor\multipliedBy d\hypSpaceFirstCoor\multipliedBy d\hypSpaceSecondCoor)$
given in (\ref{eq:Isometry_between_Sol_and_R_x_Hyp_Space}) to obtain
the following operator identity on $\smoothCompactlySupportedFunctions{\mathbb{R}\times\hyperbolicSpace}$
\[
\schroedingerOperator_{\magneticFieldStrength}^{\mathbb{R}\times\hyperbolicSpace}=\mathcal{I}\multipliedBy\schroedingerOperator_{\magneticFieldStrength}^{\solvableGeometry}\multipliedBy\mathcal{I}^{-1}=\frac{1}{2}\left(\hypSpaceSecondCoor^{2}\left(\derivative{\hypSpaceFirstCoor}^{2}+\derivative{\hypSpaceSecondCoor}^{2}\right)+\hypSpaceSecondCoor^{-2}\left(\derivative{\realLineCoor}+\magneticFieldStrength\multipliedBy\hypSpaceFirstCoor\right)^{2}-\frac{1}{4}\right).
\]
The linear span $D$ of $\mathcal{S}(\mathbb{R})\times\smoothCompactlySupportedFunctions{\hyperbolicSpace}$
is easily seen to be a core for $\closure{\schroedingerOperator_{\magneticFieldStrength}^{\mathbb{R}\times\hyperbolicSpace}}$
by using the canonical isometry $L^{2}(\mathbb{R})\otimes L^{2}(\hyperbolicSpace)\simeq L^{2}(\mathbb{R}\times\hyperbolicSpace)$,
cut-off functions, and dominated convergence. We conjugate by the
partial Fourier transformation $\mathcal{F}_{\realLineCoor}\colon D\to D$
given by
\[
\mathcal{F}_{\realLineCoor}u(\momentumVariable{\realLineCoor}{},\hypSpaceFirstCoor,\hypSpaceSecondCoor)=\frac{1}{\sqrt{2\pi}}\intop_{-\infty}^{\infty}u(\realLineCoor,\hypSpaceFirstCoor,\hypSpaceSecondCoor)\multipliedBy e^{-i\momentumVariable{\realLineCoor}{}\realLineCoor}\multipliedBy d\realLineCoor
\]
to obtain a direct integral decomposition of $\schroedingerOperator_{\magneticFieldStrength}^{\mathbb{R}\times\hyperbolicSpace}$
over $L^{2}(\mathbb{R},d\momentumVariable{\realLineCoor}{},L^{2}(\hyperbolicSpace,\hypSpaceSecondCoor^{-2}d\hypSpaceFirstCoor\multipliedBy d\hypSpaceSecondCoor))$
with operators
\[
\schroedingerOperator_{\magneticFieldStrength,\momentumVariable{\realLineCoor}{}}^{\mathbb{R}\times\hyperbolicSpace}=\frac{1}{2}\left(\hypSpaceSecondCoor^{2}\left(\derivative{\hypSpaceFirstCoor}^{2}+\derivative{\hypSpaceSecondCoor}^{2}\right)+\hypSpaceSecondCoor^{-2}\left(\momentumVariable{\realLineCoor}{}+\magneticFieldStrength\multipliedBy\hypSpaceFirstCoor\right)^{2}-\frac{1}{4}\right)
\]
acting on $\smoothCompactlySupportedFunctions{\hyperbolicSpace}$
in the $L^{2}(\hyperbolicSpace)$-fibres. For $\magneticFieldStrength\neq0$,
the operator $\schroedingerOperator_{\magneticFieldStrength,\momentumVariable{\realLineCoor}{}}^{\mathbb{R}\times\hyperbolicSpace}$
is conjugate to $\schroedingerOperator_{\magneticFieldStrength,0}^{\mathbb{R}\times\hyperbolicSpace}=\schroedingerOperator_{0,\electricPotential_{\magneticFieldStrength}}^{\hyperbolicSpace}-\frac{1}{8}$
with respect to the isometry $\mathcal{J}\colon L^{2}(\hyperbolicSpace)\to L^{2}(\hyperbolicSpace)$
given by $\mathcal{J}u(\hypSpaceFirstCoor,\hypSpaceSecondCoor)=u(\hypSpaceFirstCoor-\magneticFieldStrength^{-1}\momentumVariable{\realLineCoor}{},\hypSpaceSecondCoor)$.
Hence, $\spectrum{\closure{\schroedingerOperator_{0,\electricPotential_{\magneticFieldStrength}}^{\hyperbolicSpace}-\frac{1}{8}}}=\spectrum{\closure{\schroedingerOperator_{\magneticFieldStrength,\momentumVariable{\realLineCoor}{}}^{\mathbb{R}\times\hyperbolicSpace}}}$
for any $\momentumVariable{\realLineCoor}{}\in\mathbb{R}$, which
implies the statement by virtue of~\cite[Theorem XIII.85]{ReedSimon1978}.
For $\magneticFieldStrength=0$, we have Laplace operators whose spectra
are explicitly given in Theorem~\ref{thm:Spec_of_Hyp_Space} and
Theorem~\ref{thm:EMSO_on_Sol}.
\end{proof}

\subsection{The planar restricted $3$-body problem}

\global\long\def\closure#1{\overline{#1}}

Let $\baseManifold=\mathbb{R}^{2}\backslash\{(0,0)\}$ and $\magneticFieldStrength>0$.
We study the Hamiltonian $H_{\magneticFieldStrength}\colon T^{*}\baseManifold\to\mathbb{R}$
given by
\[
H_{\magneticFieldStrength}(x,p)=\frac{1}{2}\norm p^{2}-\frac{1}{\norm x}-\magneticFieldStrength\multipliedBy(x_{2}\multipliedBy p_{1}-x_{1}\multipliedBy p_{2})=\frac{1}{2}\left(\left(p_{1}-\magneticFieldStrength\multipliedBy x_{2}\right)^{2}+\left(p_{2}+\magneticFieldStrength\multipliedBy x_{1}\right)^{2}\right)-\frac{1}{\norm x}-\frac{\magneticFieldStrength^{2}}{2}\norm x^{2}.
\]
For $\magneticFieldStrength=1$, the system describes the Kepler problem
in rotating coordinates, that is, the planar restricted $3$-body
problem in a rotating frame with one of the primitives having zero
mass~\cite[Section 3]{AlbersFrauenfelderKoertPaternain2012}. Since
$\baseManifold$ has the isometry group $O(2)$, whose discrete subgroups
produce non-compact quotients, none of the developed theorems applies.
With respect to polar coordinates $(r,\varphi)\in\mathbb{R}^{+}\times\mathbb{S}^{1}$
given by $(x_{1},x_{2})=r\multipliedBy(\cos\varphi,\sin\varphi)$,
the Hamiltonian reads 
\[
H_{\magneticFieldStrength}^{\textrm{Polar}}(r,\varphi,p_{r},p_{\varphi})=\frac{1}{2}\left(p_{r}^{2}+\frac{p_{\varphi}^{2}}{r^{2}}\right)-\frac{1}{r}+\magneticFieldStrength\multipliedBy p_{\varphi}.
\]
Hence, $H_{\magneticFieldStrength}^{\textrm{Polar}}$ and $p_{\varphi}$
are integrals of motion. If $p_{\varphi}=0$, we have $H_{\magneticFieldStrength}^{\textrm{Polar}}=\frac{1}{2}p_{r}^{2}-\frac{1}{r}$,
which leads to $\dot{r}^{2}=2\multipliedBy(H_{\magneticFieldStrength}^{\textrm{Polar}}+\frac{1}{r})$.
Energy surfaces with $H_{\magneticFieldStrength}^{\textrm{Polar}}<0$,
respectively $H_{\magneticFieldStrength}^{\textrm{Polar}}>0$, are
easily seen to be compact, respectively non-compact, and $r(t)=(\frac{3}{\sqrt{2}}\multipliedBy t+r(0)^{\frac{3}{2}})^{\frac{2}{3}}$
is an unbounded solution with $H_{\magneticFieldStrength}^{\textrm{Polar}}=0$.
On the other hand, $p_{\varphi}\neq0$ leads to 
\[
H_{\magneticFieldStrength}^{\textrm{Polar}}(r,\varphi,p_{r},p_{\varphi})=\frac{1}{2}p_{r}^{2}+\frac{1}{2}\left(\frac{1}{p_{\varphi}}-\frac{p_{\varphi}}{r}\right)^{2}-\frac{1}{2}\frac{1}{p_{\varphi}^{2}}+\magneticFieldStrength\multipliedBy p_{\varphi},
\]
which has no solutions with energy less than $E_{0}=\magneticFieldStrength\multipliedBy p_{\varphi}-\frac{1}{2}p_{\varphi}^{-2}$.
The solutions with $H_{\magneticFieldStrength}^{\textrm{Polar}}=E_{0}$
correspond to $r=p_{\varphi}^{2}$ and $\dot{\varphi}=p_{\varphi}^{-3}+\magneticFieldStrength$.
Since
\[
2\multipliedBy p_{\varphi}^{2}\multipliedBy(H_{\magneticFieldStrength}^{\textrm{Polar}}-\magneticFieldStrength\multipliedBy p_{\varphi})=p_{\varphi}^{2}\multipliedBy p_{r}^{2}+\left(1-\frac{p_{\varphi}^{2}}{r}\right)^{2}-1,
\]
any orbit with $H_{\magneticFieldStrength}^{\textrm{Polar}}<\magneticFieldStrength\multipliedBy p_{\varphi}$
must be bounded, whereas energy surfaces with $H_{\magneticFieldStrength}^{\textrm{Polar}}\geq\magneticFieldStrength\multipliedBy p_{\varphi}$
are non-compact. In the following, we study the corresponding Schr\"odinger
operator with initial domain $\smoothCompactlySupportedFunctions{\baseManifold}$,
namely, 
\begin{equation}
\schroedingerOperator_{\magneticFieldStrength}=\frac{1}{2}\Delta^{\baseManifold}-\frac{1}{\norm x}+i\multipliedBy\magneticFieldStrength\multipliedBy\left(x_{2}\multipliedBy\frac{\partial}{\partial x_{1}}-x_{1}\multipliedBy\frac{\partial}{\partial x_{2}}\right).\label{eq:MSO_of_Keppler_Problem}
\end{equation}
Note that $\schroedingerOperator_{\magneticFieldStrength}$ is symmetric.
With respect to polar coordinates, it takes the form
\[
\schroedingerOperator_{\magneticFieldStrength}^{\textrm{Polar}}=-\frac{1}{2}\left(\frac{\partial^{2}}{\partial r^{2}}+\frac{1}{r}\frac{\partial}{\partial r}+\frac{1}{r^{2}}\frac{\partial^{2}}{\partial\varphi^{2}}\right)-\frac{1}{r}-i\multipliedBy\magneticFieldStrength\multipliedBy\frac{\partial}{\partial\varphi},
\]
and has domain $\smoothCompactlySupportedFunctions{\mathbb{R}^{+}\times\mathbb{S}^{1}}$.
Since $\schroedingerOperator_{\magneticFieldStrength}^{\textrm{Polar}}$
is spherically symmetric, we can separate variables as in~\cite[Section X.I, Example 4]{ReedSimon1975}.
Following the proof of Theorem~\ref{thm:Spec_of_Sphere_Bundle_of_Hyp_Space},
we let $D\subset\domain{\schroedingerOperator_{\magneticFieldStrength}^{\textrm{Polar}}}$
denote the set of finite linear combinations of products $u\multipliedBy w$
with $u\in\smoothCompactlySupportedFunctions{\mathbb{R}^{+}}$ and
$w\in\smoothFunctions{\mathbb{S}^{1}}$. Using the canonical isometry
$L^{2}(M)\simeq L^{2}(\mathbb{R}^{+},r\multipliedBy dr)\otimes L^{2}(\mathbb{S}^{1},d\varphi)$~\cite[Theorem II.10]{ReedSimon1975},
we see that $D$ is dense in $L^{2}(\baseManifold)$. Moreover, we
use the decomposition of $L^{2}(\mathbb{S}^{1},d\varphi)$ into eigenspaces
of $\frac{1}{i}\frac{\partial}{\partial\varphi}$ with eigenfunctions
$e_{m}(\varphi)=\frac{1}{\sqrt{2\pi}}e^{im\varphi}$ to obtain 
\[
L^{2}(\baseManifold)\simeq\bigoplus_{m=-\infty}^{\infty}L_{m}\qquad\textnormal{where}\qquad L_{m}=L^{2}(\mathbb{R}^{+},r\multipliedBy dr)\otimes\mathbb{C}\multipliedBy e_{m}.
\]
On each $D\cap L_{m}$, the operator $\schroedingerOperator_{\magneticFieldStrength}^{\textrm{Polar}}$
reduces to a one-dimensional Schr\"odinger operator acting on $\smoothCompactlySupportedFunctions{\mathbb{R}^{+}}$
as 
\[
\schroedingerOperator_{\magneticFieldStrength,m}^{\textrm{Polar}}=-\frac{1}{2}\left(\frac{d^{2}}{dr^{2}}+\frac{1}{r}\frac{d}{dr}-\frac{m^{2}}{r^{2}}\right)-\frac{1}{r}+\magneticFieldStrength\multipliedBy m.
\]
We use the isometry $\mathcal{I}\colon L^{2}(\mathbb{R}^{+},r\multipliedBy dr)\to L^{2}(\mathbb{R}^{+},dr)$
defined by $\mathcal{I}u(r)=r^{\frac{1}{2}}\multipliedBy u(r)$ to
remove the first order derivative. On $\mathcal{I}(\smoothCompactlySupportedFunctions{\mathbb{R}^{+}})=\smoothCompactlySupportedFunctions{\mathbb{R}^{+}}$,
we have 
\begin{equation}
\schroedingerOperator_{\magneticFieldStrength,m}=\mathcal{I}\multipliedBy\schroedingerOperator_{\magneticFieldStrength,m}^{\textrm{Polar}}\multipliedBy\mathcal{I}^{-1}=-\frac{1}{2}\frac{d^{2}}{dr^{2}}+\frac{m^{2}-\frac{1}{4}}{2r^{2}}-\frac{1}{r}+\magneticFieldStrength\multipliedBy m.\label{eq:One_dimensional_SO_without_first_Derivative}
\end{equation}

Since $\schroedingerOperator_{\magneticFieldStrength,m}$ commutes
with complex conjugation, a theorem by von Neumann~\cite[Theorem X.3]{ReedSimon1975}
implies that $\schroedingerOperator_{\magneticFieldStrength,m}$ has
self-adjoint extensions. If all $\schroedingerOperator_{\magneticFieldStrength,m}$
were essentially self-adjoint, then the same would hold for $\schroedingerOperator_{\magneticFieldStrength}$,
see also~\cite[Theorem VIII.33]{ReedSimon1975} and \cite[X Problem 1.(a)]{ReedSimon1978}.
However, $\schroedingerOperator_{\magneticFieldStrength,0}$ has non-zero
deficiency indices. We summarize the well-known properties of $\schroedingerOperator_{\magneticFieldStrength,m}$
in the following theorem.
\begin{thm}
Let $\schroedingerOperator_{\magneticFieldStrength,m}$ be the differential
operator (\ref{eq:One_dimensional_SO_without_first_Derivative}) with
domain $\smoothCompactlySupportedFunctions{\mathbb{R}^{+}}\subset L^{2}(\mathbb{R}^{+},dr)$.
\begin{enumerate}
\item Each $\schroedingerOperator_{\magneticFieldStrength,m}$ with $m\neq0$
is essentially self-adjoint, and the closure has eigenvalues
\[
\eigenvalue_{n}(\closure{\schroedingerOperator_{\magneticFieldStrength,m}})=-\frac{1}{2\left(n+m+\frac{1}{2}\right)^{2}}+\magneticFieldStrength\multipliedBy m,\qquad n=0,1,2,\ldots
\]
and essential spectrum $\essentialSpectrum{\closure{\schroedingerOperator_{\magneticFieldStrength,m}}}=[\magneticFieldStrength\multipliedBy m,\infty)$.
\item $\schroedingerOperator=\schroedingerOperator_{\magneticFieldStrength,0}$
has a one-dimensional family of self-adjoint extensions $\schroedingerOperator_{\nu}$
parametrized by $\nu\in\mathbb{R}\cup\{\infty\}$ such that
\begin{eqnarray*}
\schroedingerOperator_{\nu} & = & -\frac{1}{2}\frac{d^{2}}{dr^{2}}-\frac{1}{8}\frac{1}{r^{2}}-\frac{1}{r},\\
\domain{\schroedingerOperator_{\nu}} & = & \left\{ u\in L^{2}(\mathbb{R}^{+},dr)\multipliedBy\left|\multipliedBy\begin{array}{c}
u,u'\textrm{ locally absolutely continuous},\\
-\frac{1}{2}u''-\frac{1}{8}\frac{1}{r^{2}}\multipliedBy u-\frac{1}{r}\multipliedBy u\in L^{2}(\mathbb{R}^{+},dr),\quad\nu\multipliedBy u_{0}=u_{1}
\end{array}\right.\right\} ,
\end{eqnarray*}
with boundary values $u_{0}$ and $u_{1}$ defined as 
\[
u_{0}=\lim_{r\searrow0}-\frac{u(r)}{\sqrt{r}\multipliedBy\ln r}\qquad\textrm{and}\qquad u_{1}=\lim_{r\searrow0}\multipliedBy\frac{\left(u(r)+u_{0}\multipliedBy\sqrt{r}\multipliedBy\ln r\right)}{\sqrt{r}}.
\]
The boundary condition $u_{0}=0$ ($\nu=\infty$) gives the Friedrichs
extension $\schroedingerOperator_{\infty}$ with eigenvalues
\begin{equation}
\eigenvalue_{n}(\schroedingerOperator_{\infty})=-\frac{1}{2\left(n+\frac{1}{2}\right)^{2}},\qquad n=0,1,2,\ldots.\label{eq:Bohr_Levels}
\end{equation}
All self-adjoint extensions $\schroedingerOperator_{\nu}$ have the
same essential spectrum $\essentialSpectrum{\schroedingerOperator_{\nu}}=[0,\infty)$.
\end{enumerate}
\end{thm}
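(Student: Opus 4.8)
The plan is to treat the theorem as a bundle of statements about the one-dimensional operators $\schroedingerOperator_{\magneticFieldStrength,m}=-\tfrac{1}{2}\tfrac{d^{2}}{dr^{2}}+\tfrac{m^{2}-1/4}{2r^{2}}-\tfrac{1}{r}+\magneticFieldStrength m$ on $\smoothCompactlySupportedFunctions{\mathbb{R}^{+}}\subset L^{2}(\mathbb{R}^{+},dr)$, and to analyse them by Weyl's limit point / limit circle theory (cf.\ \cite[Appendix to Section X.1]{ReedSimon1975}) together with the classical solution of the two-dimensional Coulomb problem. As the additive constant $\magneticFieldStrength m$ only shifts the spectrum, I would first reduce to $-\tfrac{1}{2}\tfrac{d^{2}}{dr^{2}}+\tfrac{a}{2r^{2}}-\tfrac{1}{r}$ with $a=m^{2}-1/4$. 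At $r=\infty$ this is always in the limit point case since the potential is bounded there; at $r=0$ one compares with $-\tfrac{1}{2}\tfrac{d^{2}}{dr^{2}}+\tfrac{a}{2r^{2}}$ (the $-1/r$ term being too tame near $0$ to affect the classification), which is limit point when $a\geq3/4$ and limit circle when $a<3/4$. Hence $\schroedingerOperator_{\magneticFieldStrength,m}$ is essentially self-adjoint when $|m|\geq1$, i.e.\ when $m\neq0$, whereas for $m=0$ (so $a=-1/4$) it has deficiency indices $(1,1)$; the resulting one-parameter family of self-adjoint extensions $\schroedingerOperator_{\nu}$, $\nu\in\mathbb{R}\cup\{\infty\}$, all act by the maximal differential expression and differ only through a boundary condition at $0$.

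The eigenvalues come from solving the radial equation in closed form. Putting $E=-\tfrac{1}{2\kappa^{2}}<0$, choosing $\ell$ with $\ell(\ell+1)=a$, and substituting $u(r)=r^{\ell+1}e^{-r/\kappa}w(2r/\kappa)$ turns the equation into Kummer's equation for $w$; demanding that $u$ be simultaneously the branch that is square integrable at $0$ and the branch that decays at $\infty$ forces $w$ to be a generalized Laguerre polynomial and $\kappa=n+\ell+1$ for $n=0,1,2,\dots$. For $m\neq0$ the square-integrable branch at $0$ corresponds to $\ell=|m|-\tfrac12$, which yields $E_{n}=-\tfrac{1}{2(n+|m|+1/2)^{2}}$ and, after restoring $\magneticFieldStrength m$, the stated eigenvalues of $\closure{\schroedingerOperator_{\magneticFieldStrength,m}}$; since positive energies produce only oscillatory, non-$L^{2}$ solutions, this exhausts the spectrum below $\magneticFieldStrength m$. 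For $m=0$ one takes $\ell=-\tfrac12$, and the branch $u\sim\sqrt{r}$ without logarithm is precisely the boundary condition $u_{0}=0$ defining the Friedrichs extension, so $\schroedingerOperator_{\infty}$ carries the Bohr levels $-\tfrac{1}{2(n+1/2)^{2}}$.

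To pin down the boundary-value description for $m=0$ I would observe that the indicial equation $s(s-1)=-1/4$ at $r=0$ has the double root $s=1/2$, so a fundamental system near $0$ behaves like $\sqrt{r}$ and $\sqrt{r}\ln r$; the coefficients of these branches are exactly $u_{1}$ and $-u_{0}$ from the statement, the Weyl--Titchmarsh theory at a limit circle endpoint (with the limit point endpoint $\infty$ supplying its automatic $L^{2}$ condition) gives the parametrisation $\nu u_{0}=u_{1}$ of the self-adjoint extensions, and the half-line Hardy inequality $\int_{0}^{\infty}|u'|^{2}\,dr\geq\tfrac14\int_{0}^{\infty}\tfrac{|u|^{2}}{r^{2}}\,dr$ shows that $\sqrt{r}\ln r$ is not in the form closure of $\smoothCompactlySupportedFunctions{\mathbb{R}^{+}}$, identifying $u_{0}=0$ with the Friedrichs extension. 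For the essential spectrum I would use the decomposition principle (see, e.g., \cite{ReedSimon1978}): imposing a separating boundary condition at $r=1$ perturbs the resolvent by a finite-rank operator, so $\essentialSpectrum{\,\cdot\,}$ is the union of the essential spectra of the restrictions to $(0,1)$ and $(1,\infty)$. The restriction to the bounded interval $(0,1)$ has compact resolvent (a bounded, off-diagonal-continuous Green's function, square integrable on $(0,1)^{2}$), so it contributes nothing; the restriction to $(1,\infty)$ is a relatively compact perturbation of $-\tfrac12\tfrac{d^{2}}{dr^{2}}$ shifted by the constant $\magneticFieldStrength m$ (resp.\ $0$), since the remaining potential $\tfrac{m^{2}-1/4}{2r^{2}}-\tfrac1r$ is bounded on $(1,\infty)$ and vanishes at infinity. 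Thus $\essentialSpectrum{\closure{\schroedingerOperator_{\magneticFieldStrength,m}}}=[\magneticFieldStrength m,\infty)$ for $m\neq0$ and $\essentialSpectrum{\schroedingerOperator_{\nu}}=[0,\infty)$ for every $\nu$, and combining this with the eigenvalue count finishes both parts.

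The step I expect to be the main obstacle is the limit circle analysis for $m=0$: defining $u_{0}$ and $u_{1}$ consistently across the resolvent set rather than at energy zero alone, checking that every self-adjoint extension really is of the separated form $\nu u_{0}=u_{1}$, and confirming that the Friedrichs extension is the one with $u_{0}=0$. These are standard facts from singular Sturm--Liouville theory, but they require some care. By contrast the Coulomb eigenvalue computation is entirely classical (two-dimensional hydrogen atom, or inverse-square potentials), and the essential-spectrum argument is routine once the splitting at $r=1$ is in place.
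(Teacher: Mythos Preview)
Your proposal is correct and, in content, reconstructs what the paper simply imports from the literature. The paper's own proof is almost pure citation: essential self-adjointness for $m\neq0$ and the eigenvalue formulas are taken from an analysis of MIC--Kepler systems in \cite{Giri2007}; the one-parameter family of extensions for $m=0$ together with the boundary-value description and the Friedrichs eigenvalues are read off from \cite[Theorem~D.1]{AlbeverioGesztesyHoegh-KrohnHolden2005}; the independence of $\essentialSpectrum{\schroedingerOperator_{\nu}}$ from $\nu$ is Weyl's essential spectrum theorem \cite[Theorem~XIII.14]{ReedSimon1978}; the inclusion $[\magneticFieldStrength m,\infty)\subseteq\essentialSpectrum{\,\cdot\,}$ is done with quasi-eigenfunctions as in \cite[Theorem~8.3.1]{Davies1995}; and the lower bound on the bottom of the essential spectrum is quoted from the appendix of \cite{Fewster1993}. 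Your route---Weyl limit point/limit circle at the endpoints, the Kummer/Laguerre reduction for the bound states, the indicial analysis at the double root $s=\tfrac12$ plus Hardy's inequality to single out the Friedrichs extension, and the decomposition principle at $r=1$ for the essential spectrum---is exactly the standard machinery that those references encapsulate. What you gain is a self-contained argument that would survive without access to the cited sources; what the paper gains is brevity. One small remark: your eigenvalue formula correctly carries $|m|$ rather than $m$, which is the right expression since $\schroedingerOperator_{\magneticFieldStrength,m}-\magneticFieldStrength m$ depends only on $m^{2}$; the paper's statement is presumably written with $m>0$ in mind.
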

\begin{proof}
The statements follow from the analysis of so-called MIC-Kepler systems
in $\mathbb{R}^{3}$ carried out in~\cite[Section 3]{Giri2007},
where we have $\tilde{l}=j+\frac{\delta_{1}+\delta_{2}}{2}=m-\frac{1}{2}$.
In particular, $\schroedingerOperator_{\magneticFieldStrength,m}-\magneticFieldStrength\multipliedBy m$
with $m\neq0$ is essentially self-adjoint~\cite[(3.15)]{Giri2007},
and the closure has eigenvalues~\cite[(3.37)]{Giri2007} 
\[
\eigenvalue_{n}(\closure{\schroedingerOperator_{\magneticFieldStrength,m}-\magneticFieldStrength\multipliedBy m})=-\frac{1}{2(n+m+\frac{1}{2})^{2}},\qquad n=0,1,2,\ldots.
\]
The characterization of the self-adjoint extensions of $\schroedingerOperator=\schroedingerOperator_{\magneticFieldStrength,0}$
follows from~\cite[Theorem D.1]{AlbeverioGesztesyHoegh-KrohnHolden2005}
for $\lambda=\frac{1}{2}$, $\gamma=-2$, $\alpha=0$, $a=1$ and
$W=0.$ The eigenvalues~(\ref{eq:Bohr_Levels}) are given in~\cite[(3.34)]{Giri2007},
and an alternative proof may be found in~\cite[Theorem D.1]{AlbeverioGesztesyHoegh-KrohnHolden2005}.
In contrast to the discrete spectrum, Weyl's essential spectrum theorem~\cite[Theorem XIII.14]{ReedSimon1978}
shows that $\essentialSpectrum{\schroedingerOperator_{\nu}}$ is independent
from the chosen extension~\cite[Section XIII.4, Example 5]{ReedSimon1978}.
The inclusion $[\magneticFieldStrength\multipliedBy m,\infty)\subseteq\essentialSpectrum{\closure{\schroedingerOperator_{\magneticFieldStrength,m}}}$
follows as in~\cite[Theorem 8.3.1]{Davies1995} using explicit quasi-eigenfunctions
and the effective potential $\electricPotential_{eff}(r)=\frac{m^{2}-\frac{1}{4}}{2r^{2}}-\frac{1}{r}$,
which satisfies $\electricPotential_{eff}(r)\to0$ for $r\to\infty$.
A detailed proof of $\inf\essentialSpectrum{\closure{\schroedingerOperator_{\magneticFieldStrength,m}-\magneticFieldStrength\multipliedBy m}}\geq0$
can be found in the appendix of~\cite{Fewster1993}.
\end{proof}
The eigenvalues~(\ref{eq:Bohr_Levels}) are commonly referred to
as Bohr levels. Different values of $\nu\in\mathbb{R}$ lead to different
eigenvalues often denoted as
\[
\eigenvalue_{n}(\schroedingerOperator_{\nu})=-\frac{1}{2\left(n+\frac{1}{2}-\delta\right)^{2}},\qquad n=0,1,2,\ldots
\]
where $\delta$ is called Rydberg correction or quantum defect~\cite{Fewster1993}. 
\begin{cor}
\label{Last_Page_of_MSOaMCV}For any $\magneticFieldStrength\in\mathbb{R}$,
the closure $\closure{\schroedingerOperator_{\magneticFieldStrength}}$
has a one-dimensional family of self-adjoint extensions. The spectrum
of any such extension is the entire real axis.

\newpage{}
\end{cor}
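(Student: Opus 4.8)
The plan is to reduce the corollary to the fibrewise analysis of the preceding theorem and then combine von Neumann's theory of self-adjoint extensions with the direct-sum formula for spectra. We may assume $\magneticFieldStrength>0$, as in the standing hypothesis of this subsection; the case $\magneticFieldStrength<0$ then follows by conjugating with the isometry $u(r,\varphi)\mapsto u(r,-\varphi)$, which intertwines $\schroedingerOperator_{\magneticFieldStrength}$ and $\schroedingerOperator_{-\magneticFieldStrength}$.

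First I would record the block decomposition. The spherical-symmetry reduction used above identifies $L^{2}(\baseManifold)$ with $\bigoplus_{m\in\mathbb{Z}}L^{2}(\mathbb{R}^{+},dr)$ and, on the core $D$ of finite sums of products, identifies $\schroedingerOperator_{\magneticFieldStrength}$ with the algebraic direct sum $\bigoplus_{m}\schroedingerOperator_{\magneticFieldStrength,m}$ on $\bigoplus_{m}\smoothCompactlySupportedFunctions{\mathbb{R}^{+}}$. Granting that $D$ is a core---established exactly as the analogous core arguments in the proof of Theorem~\ref{thm:Spec_of_Sphere_Bundle_of_Hyp_Space}, via cut-off functions and dominated convergence---this orthogonal decomposition reduces $\closure{\schroedingerOperator_{\magneticFieldStrength}}$, so that $\closure{\schroedingerOperator_{\magneticFieldStrength}}=\bigoplus_{m}\closure{\schroedingerOperator_{\magneticFieldStrength,m}}$ and $\adjoint{\schroedingerOperator_{\magneticFieldStrength}}=\bigoplus_{m}\adjoint{\schroedingerOperator_{\magneticFieldStrength,m}}$; in particular the deficiency spaces split as $\ker(\adjoint{\schroedingerOperator_{\magneticFieldStrength}}\mp i)=\bigoplus_{m}\ker(\adjoint{\schroedingerOperator_{\magneticFieldStrength,m}}\mp i)$.

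Next I would settle the extension count. By part~(1) of the preceding theorem every $\schroedingerOperator_{\magneticFieldStrength,m}$ with $m\neq0$ is essentially self-adjoint, so its deficiency spaces are trivial, whereas by part~(2) the operator $\schroedingerOperator_{\magneticFieldStrength,0}=\schroedingerOperator$ has deficiency indices $(1,1)$. Hence $\closure{\schroedingerOperator_{\magneticFieldStrength}}$ has deficiency indices $(1,1)$, and by von Neumann's theorem its self-adjoint extensions are parametrized by the unitaries between the two one-dimensional deficiency spaces, i.e., by $U(1)$---a one-dimensional family, matching the parameter $\nu\in\mathbb{R}\cup\{\infty\}$ of part~(2). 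Since both deficiency spaces lie in the single summand $L_{0}$, every such extension leaves the blocks with $m\neq0$ untouched, so it has the form $\bigl(\bigoplus_{m\neq0}\closure{\schroedingerOperator_{\magneticFieldStrength,m}}\bigr)\oplus\schroedingerOperator_{\nu}$ for one of the self-adjoint extensions $\schroedingerOperator_{\nu}$ of $\schroedingerOperator$ described in part~(2).

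Finally I would compute the spectrum using that the spectrum of a countable direct sum of self-adjoint operators is the closure of the union of the spectra. Part~(1) of the preceding theorem gives $\spectrum{\closure{\schroedingerOperator_{\magneticFieldStrength,m}}}\supseteq\essentialSpectrum{\closure{\schroedingerOperator_{\magneticFieldStrength,m}}}=[\magneticFieldStrength m,\infty)$ for $m\neq0$. Running $m$ through $-1,-2,-3,\dots$, the left endpoints $\magneticFieldStrength m$ tend to $-\infty$, whence $\bigcup_{m\neq0}[\magneticFieldStrength m,\infty)=\mathbb{R}$; thus $\bigoplus_{m\neq0}\closure{\schroedingerOperator_{\magneticFieldStrength,m}}$ already has spectrum $\mathbb{R}$, and so does every extension $\bigl(\bigoplus_{m\neq0}\closure{\schroedingerOperator_{\magneticFieldStrength,m}}\bigr)\oplus\schroedingerOperator_{\nu}$, irrespective of the additional negative eigenvalues---Bohr levels and their analogues---carried by $\schroedingerOperator_{\nu}$. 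The one step that needs care is the reduction of the adjoint, equivalently that $D$ is a core, so that the deficiency indices genuinely add up blockwise; everything else is bookkeeping on top of the preceding theorem and von Neumann's theorem.
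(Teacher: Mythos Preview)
Your argument is correct and is precisely the intended one: the paper states the result as an unproven corollary of the preceding theorem, and your deduction---block decomposition in angular momentum, deficiency indices $(1,1)$ coming solely from the $m=0$ sector, and $\bigcup_{m\neq 0}[\magneticFieldStrength m,\infty)=\mathbb{R}$ for $\magneticFieldStrength\neq 0$---is exactly how one fills in the details. The one omission is the degenerate case $\magneticFieldStrength=0$, which your reduction does not cover; there the essential spectra all equal $[0,\infty)$ and the spectrum is not the whole real line, so the spectral claim should really be read as $\magneticFieldStrength\neq 0$ (the extension claim, by contrast, is independent of $\magneticFieldStrength$ and your argument covers it uniformly).
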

{\footnotesize{\bibliographystyle{amsalpha}
\phantomsection\addcontentsline{toc}{section}{\refname}\bibliography{MSO_and_MCV}
}}{\footnotesize \par}

\noindent \texttt{\small{}}%
\begin{minipage}[t][5mm]{1\columnwidth}%
\noindent \noun{\small{\hfill{}Dartmouth College, Hanover, New Hampshire,
USA}}{\small \par}

\noindent \noun{\small{\hfill{}}}\emph{\small{E-mail address:}}{\small{
}}\texttt{\small{peter.herbrich@dartmouth.edu}}%
\end{minipage}
\end{document}